\def\inte#1{
\displaystyle\mathop{#1\kern0pt}^\circ }
\def\virgp{\raise 2pt\hbox{,}}
\def\cdotpv{\raise 2pt\hbox{;}}
\def\eqdefa{\buildrel\hbox{\footnotesize def}\over =}
\def\C{\mathop{\bf C\kern 0pt}\nolimits}
\def\DD{\mathop{\bf D\kern 0pt}\nolimits}
\def\K{\mathop{\bf K\kern 0pt}\nolimits}
\def\N{\mathop{\bf N\kern 0pt}\nolimits}
\def\Q{\mathop{\bf Q\kern 0pt}\nolimits}
\def\R{\mathop{\bf R\kern 0pt}\nolimits}
\def\SS{\mathop{\bf S\kern 0pt}\nolimits}
\def\ZZ{\mathop{\bf Z\kern 0pt}\nolimits}
\def\TT{\mathop{\bf T\kern 0pt}\nolimits}
\def\na{\nabla}
\newcommand{\beq}{\begin{equation}}
\newcommand{\eeq}{\end{equation}}
\newcommand{\ben}{\begin{eqnarray}}
\newcommand{\een}{\end{eqnarray}}
\newcommand{\beno}{\begin{eqnarray*}}
\newcommand{\eeno}{\end{eqnarray*}}
\newtheorem{defi}{Definition}[section]
\newtheorem{thm}[defi]{Theorem}
\newtheorem{lem}[defi]{Lemma}
\newtheorem{rmk}[defi]{Remark}
\newtheorem{prop}[defi]{Proposition}
\renewcommand{\theequation}{\thesection.\arabic{equation}}
\begin{document}

\title{Linear instability of $Z$-pinch in plasma (II): Viscous case}
\author{Dongfen Bian \footnote{ School of Mathematics and Statistics,
 Beijing Institute of Technology, Beijing 100081, China; Division of Applied Mathematics, Brown University, Providence, Rhode Island 02912. Email: {\tt dongfen\_bian@brown.edu/biandongfen@bit.edu.cn}.} \and
Yan Guo\footnote{Division of Applied Mathematics, Brown University, Providence, Rhode Island 02912. Email: {\tt yan\_guo@brown.edu}.} 
\and Ian Tice \footnote{Department of Mathematical Sciences, Carnegie Mellon University,  Pittsburgh, PA 15213. Email: {\tt iantice@andrew.cmu.edu}.} 
}

\maketitle

\begin{abstract} 
	The z-pinch is a classical steady state for the MHD model, 
	where a confined plasma
	fluid is separated by vacuum, in the presence of a magnetic field which is generated
	by a prescribed current along the $z$ direction. We develop a scaled variational framework to
	study its stability in the presence of viscosity effect, and demonstrate that any such z-pinch is always unstable. We also
	establish the existence of a largest growing mode, which dominates the linear growth of
	the linear MHD system.
\end{abstract}

\noindent {\sl Keywords:}  Compressible MHD system; $z$-pinch plasma;  vacuum; linear instability; viscosity.

\vskip 0.2cm

\noindent {\sl AMS Subject Classification (2020): 35Q35, 35Q30, 76W05, 76E25 }

\renewcommand{\theequation}{\thesection.\arabic{equation}}
\setcounter{equation}{0}
\setcounter{tocdepth}{1}
\tableofcontents
\newpage
\section{Introduction}

\subsection{Formulation of the problem in Eulerian coordinates}

In the present paper, we are concerned with the plasma-vacuum MHD system, where the plasma is confined inside a rigid wall and isolated from it by a region of low enough density to be treated as a ``vacuum". This model
describes confined plasmas in a closed vessel, but separated from the wall by a vacuum region.  We consider the cylindrical domain $\big\{(x_1,x_2,z)\in \mathbb{R}^2\times 2\pi \mathbb{T}|x_1^2+x_2^2\leq r_w^2\big\}$, which is meant to model the container holding the plasma and the container divides into two disjoint pieces, $\Omega(t)=\big\{\rho(t)>0\big\}$ and $\Omega^v(t)=\big\{\rho(t)=0\big\}$, with the free boundary $\Sigma_{t, pv} \eqdefa \overline{\Omega(t)} \cap \overline{\Omega^v(t)}$  and the perfectly conducting wall $\Sigma_w$ on the outside $r_w$.
For smooth solutions, the compressible MHD system in the plasma region can be written in Euler coordinates as
\begin{equation}\label{mhd-plasma-1-viscosity}
\begin{cases}
&\partial_t \rho + (u\cdot \nabla) \rho + \rho \nabla\cdot u=0\quad \mbox{in} \quad \Omega(t),\\
&\rho(\partial_t u + (u\cdot \nabla) u) + \nabla (p+\frac{1}{2}|B|^2)- \nabla \cdot \mathbb{S}(u)=(B\cdot \nabla) B \quad \mbox{in} \quad \Omega(t),\\
&\partial_t B -\nabla \times (u \times B)=0 \quad \mbox{in} \quad \Omega(t),\\
&\nabla \cdot B=0\quad \mbox{in} \quad \Omega(t),
\end{cases}
\end{equation}
where the vector-field $u = (u_1, u_2,u_3)$ denotes the Eulerian plasma velocity field, $\rho$ denotes the density of the fluid, $B=(B_1, B_2,B_3)$ is magnetic field, and $p$ denotes the pressure function. The above system \eqref{mhd-plasma-1-viscosity} is called compressible MHD equations which describe the motion of a perfectly conducting fluid interacting with a magnetic field.
Here, the open, bounded subset $\Omega(t) \subset \mathbb{R}^3$ denotes the changing volume occupied by
	the plasma with  $\rho(t)>0$ in $\Omega(t)$.    The strain tensor $\mathbb{D}(u)$ is defined as twice the symmetric part of the gradient of the velocity $u$, that is, $\mathbb{D}(u)=\nabla u+\nabla u^{T}$, with $\nabla \cdot u$  the rate of expansion of the plasma. The deviatoric (trace-free) part of the strain tensor $\mathbb{D}(u)$ is then $\mathbb{D}^0(u)=\mathbb{D}(u)-\frac{2}{3} \mbox{div}\, u\, \mathbb{I}$, where $\mathbb{I}$ is the  identity matrix. The viscous stress tensor in fluid is then given by $\mathbb{S}(u)=\varepsilon\mathbb{D}^0(u)+\delta  \mbox{div}\, u\, \mathbb{I}$, where dynamic viscosity $\varepsilon$ and bulk viscosity $\delta$ are constants. 
We have here considered the polytropic gases, the constitutive relation, which is also called the
equation of state, and is given by $p=A\,\rho^{\gamma}$, where $A$ is an entropy constant and $\gamma>1$ is the adiabatic gas exponent.

From the mass conservation equation in \eqref{mhd-plasma-1-viscosity} and pressure satisfying $\gamma $ law, one can get that 
\begin{equation}\label{pressure}
\partial_t p + u\cdot \nabla p + \gamma p \nabla\cdot u=0.
\end{equation}

In the vacuum domain $\Omega^v(t)$, we have the div-curl system
\begin{equation}\label{div-curl-1}
  \begin{cases}
 & \nabla \cdot \widehat{B}=0 \quad \mbox{in} \quad \Omega^v(t),\\
 & \nabla \times \widehat{B}=0 \quad \mbox{in} \quad \Omega^v(t)
  \end{cases}
\end{equation}
which describes the vacuum magnetic field $\widehat{B}$. Here, we consider so-called pre-Maxwell dynamics. That is, as usual in nonrelativistic
MHD, we neglect the displacement current $\frac{1}{c^2} \partial_t \widehat{E}$, where $c$ is the speed of the light
and $\widehat{E}$ is the electric field. In general, quantities with a hat $\widehat{\cdot}$ denote vacuum variables.

We assume that the plasma region $\Omega(t)$ is isolated from the fixed perfectly conducting wall $\Sigma_w$ by a vacuum region $\Omega^v(t)$, which makes the plasma surface free to move. Hence, this model is a free boundary problem of the combined plasma-vacuum system.
To solve this system, we need to prescribe appropriate boundary conditions.
On the perfectly conducting wall $\Sigma_w$, the normal component of the magnetic field must vanish:
\begin{equation}\label{wall-bdry-vacuum-1}
 n\cdot \widehat{B}|_{\Sigma_w}=0,
\end{equation}
where $n$ is the outer unit normal to the boundary of $ \Sigma_w$.

We prescribe the following jump conditions on the free boundary to connect the magnetic fields across the surface. These arise
from the divergence $B$ equation, and the momentum
equations:
\begin{equation}\label{bdry-plasma-vacuum-1}
\begin{cases}
 & n \cdot B=n \cdot \widehat{B} \quad \mbox{on} \quad \Sigma_{t, pv},\\
 & \bigg[\bigg[p+\frac{1}{2}|B|^2\bigg]\bigg]\, n-\mathbb{S}(u)n  =0 \quad \mbox{on} \quad \Sigma_{t, pv},
\end{cases}
\end{equation}
where $[[q]]_{\Sigma_{t, pv}}$ denotes $\widehat{q}-q$ on the free boundary $\Sigma_{t, pv}$, and $n $ is the outer normal to the boundary of $ \Omega(t)$.

In conclusion, denote $\mathcal{V}(\Sigma_{t, pv})$ as the normal velocity of the free surface $\Sigma_{t, pv}$, the plasma-vacuum compressible MHD system can be written in Eulerian coordinates as
\begin{equation}\label{mhd-plasma-E-1-viscosity}
\begin{cases}
&\partial_t \rho + \nabla\cdot( \rho \, u)=0\quad \mbox{in} \quad \Omega(t),\\
&\rho(\partial_t u + (u\cdot \nabla) u) + \nabla (p+\frac{1}{2}|B|^2)- \nabla\cdot\mathbb{S}(u)=(B\cdot  \nabla) B \quad \mbox{in} \quad \Omega(t),\\
&\partial_t B - \nabla \times (u \times B)=0, \quad  \nabla \cdot B=0 \quad \mbox{in} \quad \Omega(t),\\
&\nabla \cdot B=0\quad \mbox{in} \quad \Omega(t),\\
& \nabla \cdot \widehat{B}=0, \quad \nabla \, \times \, \widehat{B}=0 \quad \mbox{in} \quad \Omega^v(t),\\
&\mathcal{V}(\Sigma_{t, pv})=u\cdot n \quad \mbox{on} \quad \Sigma_{t, pv},\\
& n\cdot B=n\cdot \widehat{B}\quad \mbox{on} \quad \Sigma_{t, pv},\\
& \Big(p+\frac{1}{2}|B|^2-\frac{1}{2}|\widehat{B}|^2\Big)\, n -\mathbb{S}(u) n=0 \quad \mbox{on} \quad \Sigma_{t, pv},\\
&n\cdot \widehat{B}|_{\Sigma_w}=0,\, \rho|_{t=0}=\rho_0, \, u|_{t=0}=u_0, \, B|_{t=0}=B_0.
\end{cases}
\end{equation}
\subsection{Background}

 The  $z$-pinch instability  in plasma for the compressible MHD system  with vacuum and free boundary is an interesting and long-time open problem since the pinch experiments of the 1960s and 1970s, see \cite{Mikhailovshii, Schmit} and the references therein. There are many numerical simulations \cite{Freidberg,Goedbloed-poedts}. Up to now, there is not rigirous mathematical proof for viscous compressible MHD equations \eqref{mhd-plasma-E-1-viscosity}. 

Note that  Guo-Tice \cite{Guo-Tice-inviscid} and \cite{Guo-TIce-viscous} proved the 
 linear	Rayleigh-Taylor instability for inviscid and viscous compressible fluids by introducing a new variational method. Later on, using variational method,  many authors considered the effects of magnetic field in the fluid equations. Jiang-Jiang \cite{jiang-jiang-ARMA} considered the magnetic inhibition theory
 	in non-resistive incompressible MHD fluids. Jiang-Jiang \cite{jiang-jiang-CV} considered the nonlinear stability and instability in the Rayleigh-Taylor problem of compressible MHD equations without  vacuum and established the stability/instability criteria for the stratified compressible magnetic Rayleigh-Taylor problem in Lagrangian coordinates. Jiang-Jiang \cite{jiang-jiang-PD}  investigated the stability and instability of the Parker problem for the three-dimensional compressible isentropic viscous magnetohy-drodynamic system with zero resistivity in the presence of a modified gravitational force in a vertical strip domain in which the velocity of the fluid is non-slip on the boundary.   Wang-Xin \cite{wang-xin} proved the global well-posedness of the inviscid and resistive problem with surface tension around a non-horizontal uniform magnetic field for two-dimensional incompressible MHD equations. Wang \cite{wang} got sharp nonlinear stability criterion of viscous incompressible non-resistive MHD internal waves  in 3D. Gui \cite{Gui} considered the Cauchy problem of the two-dimensional incomplressible magnetohydrodynamics system with inhomogeneous density and electrical conductivity and has showed the global well-posedness for a generic family of the variations of the initial data and an inhomogeneous electrical conductivity.  All these results do not contain vacuum. For presenting vacuum, under the Taylor sign condition of the total pressure on the free surface, Gu-Wang \cite{Gu-Wang}  proved the local well-posedness of the ideal incompressible MHD equations in Sobolev spaces. Recently, Bian-Guo-Tice \cite{bian-guo-tice-inviscid} have established any $z$-pinch linear instability for the ideal compressible MHD equations \eqref{mhd-plasma-E-1-viscosity} with $\varepsilon=\delta
 	=0$ and containing vaccum. In this paper, we will rigirously prove $z$-pinch linear instability for system \eqref{mhd-plasma-E-1-viscosity} with vacuum.

\section{Steady state and main results  }\label{sect-2}
\subsection{Derivation of the MHD system  in Lagrangian coordinates}\label{susect-free-surface-1}
In this subsection, we mainly  introduce the Lagrangian coordinates in which the free boundary becomes fixed.

First, we assume the equilibrium domains are given by
	\begin{equation*}
	\overline{\Omega}=\{(r, \theta, z)| r<r_0, \, \theta\in[0,2\pi],z\in 2\pi \mathbb{T}\}\,\, \mbox{and}\,\,
	\overline{\Omega}^v=\{(r,\theta, z)|r_0<r<r_w, \,  \theta\in[0,2\pi],z\in 2\pi \mathbb{T}\}.
	\end{equation*} 
	Here, the constant $r_0$ is the interface boundary and the constant $r_w$ is the perfectly conducting wall position.
	This is meant to be a simplified model of the toroidal geometry employed in tokamaks.

Now we introduce the Lagrangian coordinates.

{\bf 1. The flow map}

Let $h(t, \mathcal{X})$ be a position of the gas particle $\mathcal{X}$  in the equilibrium domain $\overline{\Omega}$ at time $t$ so that
\begin{equation}\label{def-flowmap-1}
\begin{cases}
&\frac{d}{dt}h(t, \mathcal{X})=u(t, h(t, \mathcal{X})), \quad t>0, \, \mathcal{X}\in \overline{\Omega},\\
&h|_{t=0}=\mathcal{X}+g_0(\mathcal{X}), \quad \mathcal{X}\in \overline{\Omega}.
\end{cases}
\end{equation}

Then the displacement $g(t, \mathcal{X})\eqdefa h(t, \mathcal{X})-\mathcal{X}$ satisfies
\begin{equation}\label{def-flowmap-2}
\begin{cases}
&\frac{d}{dt}g(t, \mathcal{X})=u(t, \mathcal{X}+g(t,  \mathcal{X})),\\
&g|_{t=0}=g_0.
\end{cases}
\end{equation}
We define the Lagrangian quantities in the plasma as follows (where $\mathcal{X}=(x, y, z)\in \overline{\Omega}$):
\begin{equation*}
\begin{split}
&f(t, \mathcal{X})\eqdefa \rho(t, h(t, \mathcal{X})),\quad v(t, \mathcal{X})\eqdefa u(t, h(t, \mathcal{X})),\quad q(t, \mathcal{X})\eqdefa p(t, h(t, \mathcal{X})), \\
& b(t, \mathcal{X})\eqdefa B(t, h(t, \mathcal{X})), \quad \mathcal{A}\eqdefa (Dh)^{-1}, \quad J \eqdefa \mbox{det}(Dh).
\end{split}
\end{equation*}
According to definitions of the flow map $h$ and the displacement $g$,   for $(i,j,k)\in \{1,2,3\}$ one can get the following identities
\begin{equation}\label{flow-map-identity-1}
\mathcal{A}_{i}^k \partial_{k} h^j=\mathcal{A}_{k}^j \partial_{i} h^k=\delta_i^j, \quad \partial_k(J\mathcal{A}_{i}^k)=0,\quad\partial_{i} h^j=\delta_i^j+\partial_{i} g^j, \quad \mathcal{A}_{i}^j=\delta_i^j-\mathcal{A}_{i}^k \partial_kg^j,
\end{equation}
where the Einstein notation is used and will be used in the whole paper.
If the displacement $g$ is sufficiently small in an appropriate Sobolev space, then the flow mapping $h$ is a
diffeomorphism from $\Omega_0$ to $\Omega(t)$, which  allows us to switch back and forth from Lagrangian to Eulerian coordinates.

{\bf 2. Derivatives of $J$ and $\mathcal{A}$ in Lagrangian coordinates}

We write the derivatives of $J$ and $\mathcal{A}$ in Lagrangian coordinates as follows:
\begin{equation}\label{identity-Lagrangian-1}
\begin{split}
&\partial_t J =J \mathcal{A}_{i}^j \partial_j v^i, \quad \partial_{\ell} J =J \mathcal{A}_{i}^j \partial_j\partial_{\ell} g^i, \quad\partial_t \mathcal{A}_{i}^j=-\mathcal{A}_{k}^j\mathcal{A}_{i}^{\ell} \partial_{\ell}v^k,\\
&\partial_{\ell} \mathcal{A}_{i}^j=-\mathcal{A}_{k}^j\mathcal{A}_{i}^{n} \partial_{n}\partial_{\ell}g^k, \quad \partial_{i} v^j=\partial_{i}h^k \mathcal{A}_{k}^\ell \partial_{\ell}v^j= \mathcal{A}_{i}^\ell \partial_{\ell}v^j+\partial_{i}g^k \mathcal{A}_{k}^\ell \partial_{\ell}v^j.
\end{split}
\end{equation}

{\bf 3. Plasma equations in Lagrangian coordinates}

Denote $(\nabla_{\mathcal{A}})_i=\mathcal{A}_i^j \partial_j$. Then we can write the plasma equations in Lagrangian coordinates as follows
\begin{equation}\label{mhd-fluid-2}
\begin{cases}
& \partial_t g=v\quad \mbox{in}\quad  \overline{\Omega}, \\
& f \partial_t v + \nabla_{\mathcal{A}}\Big(q+\frac{1}{2}|b|^2\Big)-\nabla_{\mathcal{A}} \cdot \mathbb{S}_{\mathcal{A}}(v)=(b\cdot \nabla_{\mathcal{A}}) b \quad \mbox{in}\quad  \overline{\Omega},\\
&\partial_t f+ f \nabla_{\mathcal{A}} \cdot  v=0\quad \mbox{in}\quad  \overline{\Omega},\\
&\partial_t b+ b \nabla_{\mathcal{A}}\cdot v=(b\cdot \nabla_{\mathcal{A}}) v\quad \mbox{in}\quad  \overline{\Omega},\\
&\nabla_{\mathcal{A}} \cdot b=0\quad \mbox{in}\quad  \overline{\Omega},\\
&n\cdot b=n\cdot \widehat{  b}\quad\mbox{on} \quad \Sigma_{0, pv},\\
& \Big(q+\frac{1}{2}|b|^2-\frac{1}{2}|\widehat{b}|^2\Big)\mathcal{N}-\mathbb{S}_{\mathcal{A}}(v)\mathcal{N}=0 \quad\mbox{on} \quad \Sigma_{0, pv},
\end{cases}
\end{equation}
where $\mathbb{S}_{\mathcal{A}}(v)=\varepsilon\Big(\nabla_{\mathcal{A}} v+\nabla_{\mathcal{A}} v^T-\frac{2}{3}\mbox{div}_{\mathcal{A}} \, v \, \mathbb{I}\Big)+\delta\mbox{div}_{\mathcal{A}} \, v \, \mathbb{I}$ and the exterior magnetic field $\widehat{  b}$ satisfies the vacuum equations (5.107) in lagrangian coordinates which can be recalled from Appendix of \cite{bian-guo-tice-inviscid}.

Since $\partial_t J =J \mathcal{A}_{i}^j \partial_j v^i=J\,\nabla_{\mathcal{A}} \cdot  v$ and $J(0)=\det(Dh_0)=\det(I+Dg_0)$, with $I$ the identity matix, we find from the equation of $f$ in \eqref{mhd-fluid-2} that
$f\,J= \rho_0(h_0)\det(I+Dg_0)$,
where $\rho_0$ is given initial density function. Taking $\rho_0$ such that
$\rho_0(h_0)\det(I+Dg_0)=\overline{\rho}$,
we get
\begin{equation}\label{f-q-1}
\begin{split}
f= J^{-1}\,\overline{\rho},\quad q=AJ^{-\gamma}\,\overline{\rho}^{\gamma}.
\end{split}
\end{equation}

On the other hand, we multiply  the magnetic field equation of \eqref{mhd-fluid-2} by  $J \mathcal{A}^T$ to get
\begin{equation*}
\begin{split}
J \mathcal{A}_j^i  \partial_t b^j+ J\mathcal{A}_j^i  b^j \mathcal{A}_k^h \partial_hv^k=J \mathcal{A}_j^i b^h \mathcal{A}_h^k\partial_k v^j,
\end{split}
\end{equation*}
which along with \eqref{identity-Lagrangian-1} implies
\begin{equation*}
\begin{split}
&\partial_t(\mathcal{A}_j^i J b^j) =J \mathcal{A}_j^i  \partial_t b^j+\mathcal{A}_j^i  b^j\partial_t J +J b^j\partial_t \mathcal{A}_j^i =J \mathcal{A}_j^i  \partial_t b^j+ J\mathcal{A}_j^i  b^j\mathcal{A}_k^h \partial_hv^k-J b^j\mathcal{A}_{k}^i\mathcal{A}_{j}^{h} \partial_{h}v^k=0.
\end{split}
\end{equation*}
Therefore, we have
\begin{equation}\label{b-identity-1}
\begin{split}
J b^j \mathcal{A}_j^i=J(0) b^j_0 \mathcal{A}_j^i(0)=\det(I+Dg_0)B^j_0(h_0) \mathcal{A}_j^i(0),
\end{split}
\end{equation}
where $B_0$ is given initial magnetic field. Taking $B_0$ such that
$\det(I+Dg_0)B^j_0(h_0) \mathcal{A}_j^i(0)=\overline{B}^i$,
we obtain from \eqref{b-identity-1} that
\begin{equation*}\label{b-identity-2}
\begin{split}
b^k =J^{-1}\overline{B}^i \partial_ih^k=J^{-1}\overline{B}^k+J^{-1}\overline{B}^i \partial_ig^k.
\end{split}
\end{equation*}

\subsection{The equilibrium for the $z$-pinch plasma}
We recall from \cite{bian-guo-tice-inviscid} a $z$-pinch steady state of 
 $\overline{ u}=0$, $  B=(0,B_\theta(r),0)$, $p=p(r)$, $\widehat{  B}=(0,\widehat{B}_\theta(r),0)$.
	Assume the equilibrium domains are given by
	\begin{equation*}
	\overline{\Omega}=\{(r, \theta, z)| r<r_0, \, \theta\in[0,2\pi],z\in 2\pi  \mathbb{T}\}\,\, \mbox{and} \,\, \overline{\Omega}^v=\{(r,\theta, z)|r_0<r<r_w, \,  \theta\in[0,2\pi],z\in 2\pi \mathbb{T}\}.
	\end{equation*}
Here, the constant $r_0$ is the interface boundary and the consatnt $r_w$ is the perfectly conducting wall position.
This is meant to be a simplified model of the toroidal geometry employed in tokamaks.

Then by Lemma 2.1, Lemma 2.4, Lemma 2.5, Lemma 2.6 and Corollary 2.7  in Bian-Guo-Tice \cite{bian-guo-tice-inviscid}, we can get the following Proposition describing the steady solution.
\begin{prop}
		\label{steady-lem}
	Assume that the function $p(r)$ satisfies $p(r)\geq 0$ and $p(r)=0$ if and only if $r=r_0$, and \begin{equation}\label{new-condition}
	-\int_0^rs^2p'(s)ds \geq 0 \,\, \mbox{ for all} \quad 0\leq r\leq r_0, \quad p(r)\in C^{2,1}([0,r_0]).
	\end{equation}
	Then the cylindrically symmetric steady solution $\overline{ u}=0$, $ B=B(r)$, $\mathbb{J}_z=\mathbb{J}_z(r)$, $\widehat{  B}=\widehat{B}(r)$ with a  function $p(r)$ taking the form of 
		\begin{equation}\label{z-pinch-1-1}
		\begin{cases}
		&B_r= 0, \quad B_z= 0, \quad  B_\theta(r)=\Big(-\frac{2}{r^2}\int_0^rs^2 p'(s)ds\Big)^{\frac{1}{2}},
		\\
		&\mathbb{J}_z(r)
		=\frac{1}{2}\Big(-\frac{2}{r^2}\int_0^rs^2 p'(s)ds\Big)^{-\frac{1}{2}}\Big(\frac{4}{r^3}\int_0^rs^2p'(s)ds-2p'(r)\Big)\\
		&\quad\quad\quad+\frac{1}{r}\Big(-\frac{2}{r^2}\int_0^rs^2p'(s)ds\Big)^{\frac{1}{2}}\quad \mbox{in} \quad \overline{\Omega},\\
		& \widehat{B}_r= 0, \quad \widehat{B}_z= 0, \quad \widehat{B}_{\theta}(r) = B_{\theta}(r_0)\frac{r_0}{r} \quad \mbox{in} \quad \overline{\Omega}^v,
		\end{cases}
		\end{equation}
		solves the  equilibrium equations in plasma domain,
		\begin{equation}\label{steady-equ-plasma}
		\nabla p=\mathbb{J}\times B, \qquad\nabla \cdot B=0,\qquad
		\mathbb{J}=\nabla\times B,
		\end{equation}
		and the system \eqref{div-curl-1} in the vacuum region. 
		We can define the equilibrium density
		\begin{equation*}
		\rho(r) = \left(\frac{p(r)}{A} \right)^{1/\gamma}.
		\end{equation*}
Moreover, 
			we have 
			\begin{equation}\label{cond-jz}
			\mathbb{J}_z\in C^{1,1}([0,r_0]),\quad
			B_\theta\in C^{1,1}([0,r_0]),
			\end{equation}
			and the steady solution satisfies the following properties
	
		(i)	There exists $r_* \in (0,r_0)$ such that
			\begin{equation}\label{cond}
			p'(r_*) + \frac{2 \gamma p(r_*) {B^2_\theta(r_*)} }{r_*(\gamma p(r_*) + {B^2_\theta(r_*)} )} <0.
			\end{equation}
		
		(ii)	For $|m|\geq 2$, suppose that $\mathbb{J}_z: [0,r_0] \to [0,\infty)$ is non-increasing, then we have 
			\begin{equation*}
			2 p'(r) + m^2 \frac{{B^2_\theta(r)}}{r} \ge 0 \,\, \, \,\mbox{for all }\,\, r\in [0,r_0].
			\end{equation*}
			
		(iii)  For $|m|\geq 2$, we define $\alpha = (m^2-2)/2 \ge 1$.  Suppose that $\beta > \alpha\ge 0$ and $\mathbb{J}_z$ vanishes to order $\beta$ at the origin $r=0$ in the sense that 
		$ |\mathbb{J}_z(r)|\leq Cr^\beta$, and further suppose that  $B_\theta \neq 0$ in $(0,r_0]$, i.e. $B_\theta$ has a sign. Then
		there exists $r^* \in (0,r_0)$ such that  
		\begin{equation*}
		2p'(r^*) + m^2 \frac{B^2_\theta(r^*)}{r^*} <0.
		\end{equation*} 
		
	(iv) If $\mathbb{J}_z \ge 0$ and $\mathbb{J}_z$ is compactly supported in $(0,r_0)$, then for each $m \in \mathbb{Z} \backslash \{0\}$,  there exists $r^* \in (0,r_0)$ such that 
	\begin{equation*}
	2p'(r^*) + m^2 \frac{B^2_\theta(r^*)}{r^*} <0.
	\end{equation*}
\end{prop}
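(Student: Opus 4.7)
The plan is to derive each component of the steady state by solving the MHD equilibrium system under the assumed cylindrical symmetry, and then to read off the four properties (i)--(iv) by invoking the corresponding lemmas in the inviscid companion paper \cite{bian-guo-tice-inviscid}, since all four assertions involve only the static quantities $p$, $B_\theta$, $\mathbb{J}_z$, none of which depend on viscosity.

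First I would compute $\nabla \cdot B$, $\nabla \times B$, and $\mathbb{J} \times B$ in cylindrical coordinates for the ansatz $B = (0,B_\theta(r),0)$. The divergence-free condition is automatic, the current reduces to $\mathbb{J}_z = \frac{1}{r}\frac{d}{dr}(rB_\theta)$ with $\mathbb{J}_r = \mathbb{J}_\theta = 0$, and the momentum balance $\nabla p = \mathbb{J}\times B$ collapses to the scalar ODE
\begin{equation*}
p'(r) = -\frac{B_\theta}{r}\frac{d}{dr}(rB_\theta) = -\frac{1}{2r^2}\frac{d}{dr}\bigl(r^2 B_\theta^2\bigr).
\end{equation*}
Integrating from $0$ to $r$ (the boundary term at the origin vanishes thanks to the $r^2$ weight together with the integrability hypothesis \eqref{new-condition}) yields the formula for $B_\theta(r)$, and differentiating $\frac{1}{r}\frac{d}{dr}(rB_\theta)$ then produces the expression for $\mathbb{J}_z(r)$. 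In the vacuum, $\nabla \cdot \widehat{B} = 0$ together with $\nabla\times \widehat{B}=0$ for a purely azimuthal field forces $r\widehat{B}_\theta$ to be constant; the normal-magnetic and pressure jump conditions across $\Sigma_{0,pv}$, combined with $p(r_0)=0$, fix that constant to be $r_0 B_\theta(r_0)$, giving the third line of \eqref{z-pinch-1-1}.

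Second, for the regularity \eqref{cond-jz}, the hypothesis $p \in C^{2,1}([0,r_0])$ combined with the explicit integral formulas transmits directly to $B_\theta$ and $\mathbb{J}_z$ via differentiation under the integral sign. The only subtle point is behavior near $r=0$, where factors of $1/r$ and $1/r^2$ appear: a Taylor expansion of $s^2 p'(s)$ near the origin, using \eqref{new-condition}, shows that the integrands behave well enough for both $B_\theta$ and $\mathbb{J}_z$ to inherit $C^{1,1}$ regularity on all of $[0,r_0]$.

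Third, for the four criteria I would appeal to the corresponding statements of \cite{bian-guo-tice-inviscid}. Item (i) follows because $p(r_0)=0$ with $p\ge 0$ on $[0,r_0]$ forces $p'<0$ on a left-neighborhood of $r_0$ where the factor $p$ in the additive term $\frac{2\gamma p B_\theta^2}{r(\gamma p + B_\theta^2)}$ is simultaneously small, so the sum is strictly negative at a point $r_*$ close to $r_0$. Item (ii) comes from rewriting $2p'(r)=-\frac{2B_\theta}{r}(rB_\theta)'=-2B_\theta \mathbb{J}_z$ and exploiting monotonicity of $\mathbb{J}_z$ to control $(rB_\theta)'$ by $m^2 B_\theta/r$ when $|m|\ge 2$. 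Items (iii) and (iv) require a somewhat more delicate ODE argument: one integrates the equilibrium relation and combines the vanishing order of $\mathbb{J}_z$ at the origin (or its compact support) with the assumed sign of $B_\theta$ to extract a point $r^*$ where $2p'(r^*)+m^2 B_\theta^2(r^*)/r^* <0$.

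The only step that is not purely algebraic is the production of the interior points in (iii) and (iv); those arguments are the genuine content of the lemmas cited from \cite{bian-guo-tice-inviscid}, and the cleanest route is to invoke them verbatim, since the viscosity parameters $\varepsilon$ and $\delta$ play no role in any of the equilibrium relations used. Thus I expect the bulk of the proof to consist in verifying that the ansatz in \eqref{z-pinch-1-1} indeed solves \eqref{steady-equ-plasma} and \eqref{div-curl-1}, after which the four criteria follow by direct citation.
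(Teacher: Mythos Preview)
Your proposal is correct and matches the paper's approach: the paper does not give an independent proof but simply cites Lemma~2.1, Lemma~2.4, Lemma~2.5, Lemma~2.6 and Corollary~2.7 of the inviscid companion paper \cite{bian-guo-tice-inviscid}, exactly as you anticipate. Your added sketch of how the equilibrium ODE $p'(r) = -\tfrac{1}{2r^2}\tfrac{d}{dr}(r^2 B_\theta^2)$ produces the formulas in \eqref{z-pinch-1-1} is a helpful elaboration of what those cited lemmas contain, but the logical structure is identical.
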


We remark that in Proposition \ref{steady-lem}, the property (i)  implies unconditional instability of any $z$-pinch equilibrium for $m=0$. The property (ii)  implies absence of instability for $|m|\geq 2$ for a general class of $z$-pinch equilibria. The property (iii) implies the instability for $|m|\geq 2$ for some class of $z$-pinch equilibria.
And the property (iv) implies the instability for any $m \in \mathbb{Z} \backslash \{0\}$ for some class of $z$-pinch equilibria.

From  Proposition \ref{steady-lem}, we know that at the plasma-vacuum interface,  the steady solution $B(r)$  in  cylindrical $ r$, $\theta$, $z$-coordinates satisfies naturally 
\begin{equation*}
n_0\cdot B=n_0 \cdot \widehat{B}=0, \quad \mbox{on} \quad \Sigma_{0, pv},
\end{equation*} due to  $n_0=e_r$, $B=(0, B_\theta(r),0)$ and $\widehat{B}=(0, B_\theta(r_0)\frac{r}{r_0},0)$.

	Now we introduce the admissibility of the pressure $p$, which will be used in the following sections.	
\begin{defi}\label{admissible}
	We say that $p$ is admissible if $p(r) \geq 0$ for all $r \in [0,r_0]$ and $p(r)=0$ if and only if $r=r_0$, $p'(r)\leq 0$ for $r$ near $r_0$, that is, $p'(s)\leq 0$ for $s\in (r_0-\epsilon, r_0]$ with small constant $\epsilon>0$,  and $p(r)$ satisfies  \eqref{new-condition} and
	\begin{equation}\label{minimizer-assumption}
	\quad \lim_{r\rightarrow r_0}\frac{p(r)}{p'(r)}=0.
	\end{equation}
\end{defi}
	
\subsection{Lagrangian formulation and main results}
From the Lagrangian formulation in Appendix of \cite{bian-guo-tice-inviscid}, the linearized viscous compressible MHD system in a perturbation formulation around the steady solution in \eqref{z-pinch-1-1} takes the following form
\begin{equation}\label{linear-perturbation-and-boundary-viscosity}
\begin{cases}
&\partial_tg=v\quad \mbox{in}\quad \overline{\Omega},\\
& \rho\partial_{tt} g =\nabla(g\cdot\nabla p+\gamma p\nabla\cdot g)+(\nabla \times B)\times [\nabla \times (g \times B)]\\
&\quad+\{\nabla \times [\nabla \times (g \times B)]\}\times B+ \mbox{div} \Big (\varepsilon\Big(\nabla v+\nabla v^T-\frac{2}{3}\mbox{div}\,  v \, \mathbb{I}\Big)+\delta \mbox{div} \,v\, \mathbb{I}\Big),\,\, \mbox{in}\,\, \overline{\Omega},\\
&  \nabla \cdot \widehat{Q}=0,\quad \mbox{in}\quad \overline{\Omega}^v,\\
& \nabla \times \widehat{Q}=0,\quad \mbox{in}\quad \overline{\Omega}^v,\\
& n \cdot \nabla \times (g\times \widehat{B})=n\cdot \widehat{Q}, \quad \mbox{on} \quad \Sigma_{0, pv},\\
&\Big[\Big(-\gamma p\nabla\cdot g+B\cdot Q+g\cdot\nabla \big (\frac{1}{2}|B|^2\big)\Big)\, \mathbb{I}-\varepsilon\big(\nabla v+\nabla v^T\big)-\big(\delta-\frac{2}{3}\varepsilon\big)\mbox{div} \, v \, \mathbb{I}\Big]n\\
&\quad=\Big[\Big(\widehat{ B}\cdot\widehat{Q}+g\cdot\nabla\big(\frac{1}{2}|\widehat{ B}|^2\big)\Big)\, \mathbb{I}\Big]n, \quad \mbox{on} \quad \Sigma_{0, pv},\\
&  n\cdot\widehat{Q}|_{\Sigma_w}=0,\, g|_{t=0}=g_0,
\end{cases}
\end{equation}
with $Q=\nabla\times (g\times B)$.

{\bf Notations: }
Define the energy
\begin{equation}\label{energy}
E[g, \widehat{Q}]=
E^p[g]+E^s[g]+E^v[\widehat{Q}],
\end{equation}
where $E^p[{g}]$, $E^s[g]$ and $E^v[\widehat{ Q}]$ are three real numbers, satisfying  
	\begin{equation}\label{fluid-e}
\begin{split}
E^p[g]&=\frac{1}{2}\int_{\overline{\Omega}}\Big[\gamma p |\nabla \cdot g|^2+|Q|^2+(g^*\cdot\nabla p)(\nabla\cdot g)+(\nabla \times B)\cdot (g^* \times Q)\Big]dx\\
&\quad+\frac{1}{4}\mu\int_{\overline{\Omega}} \Big[\varepsilon\Big|\nabla g+\nabla g^T-\frac{2}{3}\mbox{div} \, g \, \mathbb{I}\Big|^2+2\delta |\mbox{div}\, g|^2\Big]dx,
\end{split}
\end{equation}
\begin{equation}\label{surface-e}
E^s[g]=\frac{1}{2}\int_{\Sigma_{0, pv}} |n\cdot g|^2n\cdot [[\nabla (p+\frac{1}{2}|B|^2)]]dx,
\end{equation}
\begin{equation}\label{vacuum-e-viscosity}
E^v[\widehat{Q}]=\frac{1}{2}\int_{\overline{\Omega}^v}|\widehat{Q}|^2dx.
\end{equation}
In order to introduce the energy $E$, using Lemma \ref{vec-xi-grow-lem}, 
	formally we can get that
\begin{equation}
\begin{split}
\frac{d}{dt}\|\sqrt{\rho}g_{t}\|^2_{L^2}
=-\frac{d}{dt}E[g, \widehat{Q}].
\end{split}
\end{equation}
Our goal is to prove that there exists functions $g$ and $\widehat{ Q}$ such that $\inf_{\int\rho g^2dV=1} E[g, \widehat{Q}]<0$. 
	
The main purpose of this paper is to construct the growing mode solution of the form 
	\begin{equation}\label{grow-mode}
	\begin{split}
	&g(r,\theta, z, t)=(g_{r,mk}(r),g_{\theta,mk}(r),g_{z,mk}(r))e^{\mu t+i(m\theta+kz)}, \\&
	\widehat{Q}(r,\theta, z)=(i\widehat{ Q}_{r,mk}(r),\widehat{ Q}_{\theta,mk}(r),\widehat{Q}_{z,mk}(r))e^{\mu t+i(m\theta+kz)},
	\end{split}
	\end{equation} 
	where $$\mu >0,$$
	$(r,\theta,z)$ are the cylindrical coordinates,  $m,k\in \mathbb{Z}$, 
	and the subscripts $m$ and $k$ will be dropped for notational simplicity. Here, $g_\theta$ and $g_z$ are pure imaginary functions, $g_r$, $\widehat{Q}_r$, $\widehat{Q}_\theta$ and $\widehat{Q}_z$ are real-valued functions,
then we define new  three real-valued funtions
\begin{equation}\label{defini-xi-eta-zeta}
\xi=e_r\cdot g=g_r,\quad \eta=-ie_z\cdot g=-ig_z,
\quad \zeta=ie_\theta\cdot g=ig_\theta.
\end{equation}
which together with \eqref{grow-mode}, gives that
	\begin{equation}\label{varable-in-cyl}
	\begin{split}
	&Q=\nabla\times (g\times B)=\frac{im}{r}B_{\theta}\xi e_r-[(B_{\theta}\xi)'-k B_{\theta}\eta]e_{\theta}-\frac{m}{r}\eta B_{\theta}e_z,
	\\&g\cdot \nabla p+\gamma p\na\cdot g=p'\xi+\gamma p\na\cdot g,\quad \na\cdot g =\frac1r(r\xi)'-k\eta+\frac{m}{r}\zeta,
	\end{split}
	\end{equation}
	where the factor $e^{\mu t+i(m\theta +kz)}$ is dropped for notational simplicity.

Denote the Fourier decomposition $$E=\sum_{m,k \in Z}E_{m,k}.$$
In terms of $\xi$, $\eta$ and $\zeta$, from the expressions in \eqref{varable-in-cyl},  the boundary conditions in \eqref{linear-perturbation-and-boundary-viscosity} are transformed to
\begin{equation}\label{bound-q-wall}
\widehat{ Q}_r=0,\quad \mbox{at} \quad r=r_w, 
\end{equation}
\begin{equation}\label{boundary-cyl-1}
m\widehat{ B}_\theta\xi=r\widehat{ Q}_r, \quad \mbox{at} \quad r=r_0, 
\end{equation}
\begin{equation}\label{boundary-cyl-2}
\begin{split}
&\Big[B_{\theta}^2\xi-B_{\theta}^2\xi' r+kB_\theta^2\eta r  -\widehat{ B}_{\theta}\widehat{Q}_{\theta}r\Big]n\\
&\quad-\varepsilon\mu\Big(2 \xi'r,-i\zeta'r+i m\xi +i\zeta,i\eta'r+i\xi k r\Big)^T\\
&\quad-\mu(\delta-\frac{2}{3}\varepsilon)\Big[\xi'r+\xi+m\zeta -k\eta r\Big]n=0, \quad \mbox{at} \quad r=r_0.
\end{split}
\end{equation} 
We impose the boundary conditions \eqref{bound-q-wall} and \eqref{boundary-cyl-1} as constraint for variational problem setup and the boundary condition \eqref{boundary-cyl-2} follows the minimizer solution. When $m=0$, we know that $\widehat{  Q}_r=0$ on the boundary $r=r_0$, which implies that $\widehat{  Q}_r$ is separated from interior variational problem. Obviously, it holds that $\widehat{  Q}=0$.
Therefore, for the case $m=0$ and any $k\in \mathbb{Z}$, 
the energy functional \eqref{energy} reduces to
\begin{equation}\label{sausage-in-v}
\begin{split}
E_{0,k}(\xi,\eta,\zeta)&=2\pi^2\int_0^{r_0}\Big\{\frac{2p'\xi^2}{r}+B_{\theta}^2\Big[k\eta-\frac{1}{r}((r\xi)'-2\xi)\Big]^2\\
&\quad+\gamma p\Big[\frac{1}{r}(r\xi)'-k\eta\Big]^2\Big\}rdr+2\pi^2\int_0^{r_0}\tilde{\varepsilon}\Big[\frac{2}{9}\Big(-2\xi'+\frac{\xi}{r}-k\eta\Big)^2\\
&\quad+\frac{2}{9}\Big(\xi'-\frac{2\xi}{r}-k\eta\Big)^2+\frac{2}{9}\Big(\xi'+\frac{\xi}{r}+2k\eta\Big)^2+\Big(-\zeta'+\frac{\zeta}{r}\Big)^2\\
&\quad+(\eta'+k\xi)^2+k^2\zeta^2\Big]rdr+2\pi^2\int_0^{r_0}\tilde{\delta}\Big(\xi'+\frac{\xi}{r}-k\eta\Big)^2rdr.
\end{split}
\end{equation}
For the case $m\neq 0$ and any $k$, the solution $\widehat{ Q}_r$ and $\xi$ are related by the boundary conditions \eqref{boundary-cyl-1}, so we can not set $\widehat{  Q}_r=0$, therefore the energy funtional takes the form of  
	\begin{equation}\label{va-m-1-b}
	\begin{split}
	&E_{m,k}(\xi,\eta,\zeta,\widehat{ Q}_r)=2\pi^2\int_0^{r_0}\Big\{(m^2+k^2r^2)\Big[\frac{B_\theta}{r}\eta+\frac{-kB_\theta(r\xi)'+2kB_{\theta}\xi}{m^2+k^2r^2}\Big]^2\\
	&\qquad+\gamma p\Big[\frac{1}{r}(r\xi)'-k\eta+\frac{m\zeta}{r}\Big]^2\Big\}rdr+2\pi^2\int_0^{r_0}\tilde{\varepsilon}\Big[\frac{2}{9}\Big(-2\xi'+\frac{\xi}{r}+\frac{m}{r}\zeta-k\eta\Big)^2\\
	&\qquad+\frac{2}{9}\Big(\xi'-\frac{2\xi}{r}-\frac{2m}{r}\zeta-k\eta\Big)^2+\frac{2}{9}\Big(\xi'+\frac{\xi}{r}+\frac{m}{r}\zeta+2k\eta\Big)^2+\Big(-\zeta'+\frac{\zeta}{r}+\frac{m}{r}\xi\Big)^2\\
	&\qquad+(\eta'+k\xi)^2+\Big(\frac{m}{r}\eta-k\zeta\Big)^2\Big]rdr
	+2\pi^2\int_0^{r_0}\tilde{\delta}\Big(\xi'+\frac{\xi}{r}+\frac{m}{r}\zeta-k\eta\Big)^2rdr\\
	&\qquad+2\pi^2\int_0^{r_0}\frac{m^2B_\theta^2}{r(m^2+k^2r^2)}(\xi-r\xi')^2dr+\pi 	L\int_0^{r_0}\Big[2p'+\frac{m^2B_{\theta}^2}{r}\Big]\xi^2dr\\
	&\qquad+2\pi^2\int_{r_0}^{r_w}\Big[|\widehat{Q}_r|^2+\frac{1}{m^2+k^2r^2}|(r\widehat{Q}_r)'|^2\Big]r
	dr.
	\end{split}
	\end{equation}	
We define the weighted $L^2$ norm and the viscosity seminorm by
\begin{equation}\label{norm-1}
\|(\xi,\eta,\zeta)\|^2_1=\int_0^{r_0} \rho (|\xi|^2+|\eta|^2+|\zeta|^2)rdr,
\end{equation}
\begin{equation}\label{norm-2}
\begin{split}
\|(\xi,\eta,\zeta)\|^2_2&=\int_0^{r_0}\varepsilon\Big[\frac{2}{9}\Big(-2\xi'+\frac{\xi}{r}
+\frac{m}{r}\zeta-k\eta\Big)^2+\frac{2}{9}\Big(\xi'-\frac{2\xi}{r}-\frac{2m}{r}\zeta-k\eta\Big)^2\\
&\quad+\frac{2}{9}\Big(\xi'+\frac{\xi}{r}+\frac{m}{r}\zeta+2k\eta\Big)^2+\Big(-\zeta'+\frac{\zeta}{r}+\frac{m}{r}\xi\Big)^2+(\eta'+k\xi)^2\\
&\quad+\Big(\frac{m}{r}\eta-k\zeta\Big)^2\Big]rdr
+\int_0^{r_0}\delta\Big(\xi'+\frac{\xi}{r}+\frac{m}{r}\zeta-k\eta\Big)^2rdr.
\end{split}
\end{equation}	
When using the notation $g$, the norms $\|(\xi,\eta,\zeta)\|^2_1$ and $\|(\xi,\eta,\zeta)\|^2_2$ are equivalent to 
\begin{equation}\label{norm-1-xi}
\|g\|_1^2=\|\sqrt{\rho}g\|^2_{L^2}=\int_{\overline{\Omega}}\rho g^2dx,
\end{equation}
\begin{equation}\label{norm-2-xi}
\|g\|^2_2=\int_0^t\int_{\overline{\Omega}} \Big[\frac{\varepsilon}{2}\Big|\nabla g+\nabla g^T-\frac{2}{3}\mbox{div}\,  g \, \mathbb{I}\Big|^2+\delta |\mbox{div}\, g|^2\Big]dxds.
\end{equation}
We denote  $\partial_{tt}f=f_{tt}$. 
Assume the steady solution $(\overline{ u},B,p)$ in plasma satisfies  $\overline{u}=0$, $B=(0,B_{\theta}(r),0)$, $p=p(r)$ and the steady solution $\widehat{ B}$ in the vacuum region satisfies $\widehat{ B}=(0,\widehat{ B}_\theta(r),0) $ with $\widehat{B}_\theta(r)=B_{\theta}(r_0)\frac{r_0}{r}$, which are stated in \eqref{z-pinch-1-1}. 
Our main results are as follows.
	\begin{thm} Assume \eqref{minimizer-assumption} and Definition \ref{admissible}/the admissibility of the pressure $p$ holds. Then we have
\item[1)] For the modes $m=0$ and any $k\in\mathbb{Z}$, $\lambda_{0,k}=\inf_{(\xi,\eta)\in \mathcal{A}_1}E_{0,k}<0$, with the set $\mathcal{A}_1$ defined in \eqref{set a-b}, there is always growing mode of $z$-pinch instability.
\item[2)] For the modes $m\neq 0$ and any $k\in \mathbb{Z}$, 
if there exists $r^*$ such that $2p'(r^*)+\frac{m^2B_\theta^2(r^*)}{r^*}<0$,
then $\mu_{m,k}=\sqrt{-\lambda_{m,k}}$ is the growing mode to the linearized PDE \eqref{linear-perturbation-and-boundary-viscosity}, see also \eqref{spectral-formulation} and \eqref{euler-l-q}.
\item[3)] 
\begin{equation}\label{sup-Lambda}
	0<\sup_{B}\mu=\Lambda<\infty,\,\,\mbox{where}\,\, B=\{(m,k)\in \mathbb{Z}\times \mathbb{Z}|\lambda_{m,k}<0\}.
	\end{equation}
\item[4)] Moreover, assume 
	the initial data $\sqrt{\rho}(g_{t}(0), g_{tt}(0)) \in L^2(0,r_0)$,  $\widehat{ Q}_{t}(0)\in L^2(r_0,r_w)$ and $\|g_t(0)\|_2$ is bounded, which is defined in \eqref{norm-2-xi}, and 
	assume $g$ be a $H^2$ solution to \eqref{linear-perturbation-and-boundary-viscosity}. Then we have
	\begin{equation*}
	\begin{split}
	&\|g_t\|_1^2+\|g_t\|_2^2+\|\partial_{tt}g(t)\|_1^2\\
	&\quad\leq Ce^{2\Lambda t} \Big(\|g_t(0)\|_1^2+\|g_t(0)\|_2^2
	+\|\partial_{tt}g(0)\|_1^2+\|\widehat{ Q}_{t}(0)\|_{L^2}^2\Big).
	\end{split}
	\end{equation*}
\end{thm}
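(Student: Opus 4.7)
The plan is to reduce parts (1)--(3) to a quadratic-in-$\mu$ variational problem and to derive part (4) from a Fourier-mode decomposition coupled with energy identities obtained by time-differentiating the linear system.

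First I would set up the spectral framework. Substituting the ansatz \eqref{grow-mode} into \eqref{linear-perturbation-and-boundary-viscosity} makes the viscous term contribute a factor $\mu$ and the inertial term a factor $\mu^2$, so a growing mode amounts to a pair $(g,\widehat Q)$ satisfying $\mu^2\|g\|_1^2 + E_{m,k}(\mu;g,\widehat Q) = 0$, with $E_{m,k}$ affine in $\mu$ through its viscous piece. For each $(m,k)$ define
\begin{equation*}
\lambda_{m,k}(\mu)\ =\ \inf\bigl\{E_{m,k}(\mu;g,\widehat Q)\ :\ \|g\|_1 = 1,\ (g,\widehat Q)\text{ satisfies \eqref{bound-q-wall}--\eqref{boundary-cyl-1}}\bigr\}.
\end{equation*}
Since $\mu\mapsto\lambda_{m,k}(\mu)$ is continuous and nondecreasing (the viscous contribution grows linearly in $\mu$), whenever $\lambda_{m,k}(0)<0$ there is a unique positive $\mu_{m,k}$ with $\mu_{m,k}^2 = -\lambda_{m,k}(\mu_{m,k})$, and a standard Euler--Lagrange calculation converts the minimizer into a classical solution of \eqref{linear-perturbation-and-boundary-viscosity} of the form \eqref{grow-mode}. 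For part (1), testing $E_{0,k}$ on a $\xi$ concentrated near the point $r_*$ from Proposition~\ref{steady-lem}(i) makes the term $\int_0^{r_0}(2p'/r)\xi^2 r\,dr$ dominant and negative; one chooses $\eta,\zeta$ to cancel leading divergence contributions and uses the admissibility condition \eqref{minimizer-assumption} to control boundary terms. For part (2), the hypothesis $2p'(r^*)+m^2B_\theta^2(r^*)/r^* < 0$ renders the last integral of \eqref{va-m-1-b} negative on a localized test function, and the harmonic extension $\widehat Q_r$ is chosen to match \eqref{boundary-cyl-1} via the div--curl solve in $\overline\Omega^v$. For part (3), the viscous seminorm $\|g\|_2^2$ scales like $(m^2+k^2r^2)\|g\|_1^2$ while the destabilizing quadratic forms remain uniformly bounded in $(m,k)$; hence $E_{m,k}(\mu;\cdot)$ becomes positive once $|m|+|k|$ or $\mu$ is large, giving a uniform envelope and $\Lambda<\infty$.

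For part (4), I would differentiate \eqref{linear-perturbation-and-boundary-viscosity} in time. Setting $w=g_t$, the function $w$ satisfies the same linearized system with $\widehat Q_t$ replacing $\widehat Q$. Testing the $w$-equation with $w_t=g_{tt}$ and using self-adjointness of the inviscid part together with the viscous dissipation identity yields
\begin{equation*}
\frac{d}{dt}\Bigl(\|g_{tt}\|_1^2 + 2E_{\mathrm{inv}}[g_t] + 2E^s[g_t] + 2E^v[\widehat Q_t]\Bigr) = -2\|g_{tt}\|_2^2 \leq 0.
\end{equation*}
Expanding $g_t$ in Fourier modes in $(\theta,z)$ and invoking the spectral definition of $\Lambda$ mode by mode produces the pointwise-in-$(m,k)$ bound $-E_{\mathrm{inv},m,k}[g_{t,m,k}] \leq \Lambda^2\|g_{t,m,k}\|_1^2 + \Lambda\|g_{t,m,k}\|_2^2$, and summing over $(m,k)$ (Parseval) yields a closed differential inequality for $\|g_t\|_1^2+\|g_t\|_2^2+\|g_{tt}\|_1^2$. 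A Gronwall integration, combined with $\tfrac{d}{dt}\|g_t\|_1^2 = 2\langle g_t,g_{tt}\rangle_1$ to absorb lower-order terms, produces the stated factor $e^{2\Lambda t}$.

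The main obstacle will be the coupling with the vacuum harmonic field: condition \eqref{boundary-cyl-1} links $\xi|_{r=r_0}$ nonlocally to $\widehat Q$ through the div--curl solve in $\overline\Omega^v$, and I must verify that this coupling respects the Fourier decomposition in $(\theta,z)$ mode by mode and that the $E^v$ piece of the energy transforms correctly when one passes from $g$ to $g_t$. A secondary difficulty is handling the surface integral $E^s$ and the jump of total pressure at $r=r_0$ under the limited regularity allowed by \eqref{minimizer-assumption}; care is needed to justify the integration-by-parts steps producing $E^s$ without losing derivatives on $\xi$ at the free boundary, and to ensure that the spectral inequality remains valid under the natural $H^2$ regularity assumed on the solution.
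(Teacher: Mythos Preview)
Your overall architecture matches the paper's: introduce an auxiliary parameter $s$ in the viscous term, minimize $E_{m,k}(\cdot;s)$ under the constraint $\mathcal J=1$, and then use monotonicity/continuity in $s$ to find the fixed point $s=\mu(s)=\sqrt{-\lambda(s)}$ (the paper's Propositions~\ref{prop3.7} and~\ref{prop3.14}); part (4) is indeed obtained exactly as you describe, by time-differentiating, testing against $g_{tt}$, applying the variational characterization of $\Lambda$ mode by mode, and closing with Gronwall (the paper's Lemmas~\ref{estimate3-lem}--\ref{vec-xi-grow-energy} and Theorem~\ref{thm-growth-general}).

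However, you have misidentified the main obstacle. The vacuum coupling you flag is \emph{not} an issue: the equilibrium is cylindrically symmetric, so the Fourier modes in $(\theta,z)$ decouple completely, and the div--curl system for $\widehat Q$ reduces to the scalar ODE \eqref{euler-l-q} mode by mode; the energy $E^v$ commutes with $\partial_t$ trivially. The surface term $E^s$ actually \emph{vanishes} for the $z$-pinch (equation \eqref{surface-en-cyl-viscosity}), so that worry also disappears.

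The real crux, which you dismiss in one clause, is part (3): your assertion that ``$\|g\|_2^2$ scales like $(m^2+k^2r^2)\|g\|_1^2$'' is precisely the content of the paper's entire Section~4 (Proposition~\ref{new-prop-bian-guo}) and is far from automatic. The dissipation $D_{m,k}$ in \eqref{dissipation-rate} is a sum of squares of \emph{combinations} such as $(-\zeta'+\zeta/r+m\xi/r)$, $(\eta'+k\xi)$, $(m\eta/r-k\zeta)$, $(\xi/r+m\zeta/r)$, not of the individual functions, and extracting the coercivity $D_{m,k}\gtrsim k^2+m^2$ from these requires expanding the squares, integrating cross terms by parts, and absorbing the resulting boundary traces at $r=r_0$ via trace interpolation (Lemma~\ref{basic-cutoff}). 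The case $|m|=1$ is genuinely different: only $(\xi+m\zeta)/r$ is controlled, not $\xi/r$ and $\zeta/r$ separately, and the paper has to treat $m=0$, $|m|=1$, $|m|\ge2$ by three distinct arguments. Without this lower bound you cannot conclude $s_{m,k}\to0$ as $|m|+|k|\to\infty$, and hence cannot get $\Lambda<\infty$.
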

\begin{rmk}
The main goal of this paper is to study the growing modes to the linearized system \eqref{linear-perturbation-and-boundary-viscosity}, and we will leave the construction of $H^2$ solutions for general initial conditions for the future study.
\end{rmk}
\begin{rmk}
	In plasma literature, instability with $|m|=0$ is called a Sausage instability, and instability with $|m|=1$ is called a Kink instability.
\end{rmk}
We construct a largest growing mode which dominates the linear dynamics around a z-pinch steady state in the presence of viscosity effect.
The main difficulties and innovations are as follows. Beside the degeneracy both at the origin and the boundary,  and the singularity of $\frac{1}{r}$ at the origin,  we have several new difficulties in this paper.

 In the presence of viscosity, there is no natural variational framework for constructing growing mode solutions to the linearized problem.
We use the modified variational method by studying a family of modified variational problems to produce the growing modes. 
More precisely, we artificially remove the linear dependence on $\mu$, and we define two modified viscosities $\tilde{\varepsilon}=s\varepsilon$ and $\tilde{\delta}=s\delta$, instead of $\mu\varepsilon$ and $\mu\delta$, where $s>0$ is an arbitrary parameter, then we can introduce a family of modified problem \eqref{spectral-formulation}
along with boundary conditions 
\begin{equation*}
\widehat{ Q}_r=0, \quad \mbox{at} \quad r=r_w, 
\end{equation*}
\begin{equation*}
m\widehat{  B}_\theta \xi=r\widehat{ Q}_r, \quad \mbox{at} \quad r=r_0, 
\end{equation*}
\begin{equation*}
\begin{split}
&\Big[B_{\theta}^2\xi-B_{\theta}^2\xi' r+kB_\theta^2\eta r-\widehat{ B}_{\theta}\widehat{Q}_{\theta}r\Big]n\\
&\quad-\tilde{\varepsilon}\Big(2\xi'r,-i\zeta'r+im\xi +i\zeta, i\eta'r+ik\xi r\Big)^T\\
&\quad-(\tilde{\delta}-\frac{2}{3}\tilde{\varepsilon})\Big[\xi'r+\xi+m\zeta -k\eta r\Big]n=0, \quad \mbox{at} \quad r=r_0.
\end{split}
\end{equation*}
A solution to the modified problem with $\mu = s$ corresponds to a solution to the original problem \eqref{spectral-formulation-orig}.
Modifying the problem in this way restores the variational structure and allows us to apply a constrained minimization to the viscous analogue of the energy $E_{0,k}$ defined in \eqref{sausage-in-v} and $E_{m,k}$ defined in \eqref{va-m-1-b} to find a solution to \eqref{spectral-formulation} with $\mu=\mu(s)>0$ when $s > 0$ is sufficiently small. 
When $m=0$ and any $k\in\mathbb{Z}$, we define
\begin{equation*}
\lambda(s)=\inf_{(\xi,\eta,\zeta)\in X_k}\frac{E_{0,k}(\xi,\eta,\zeta;s)}{\mathcal{J}(\xi,\eta,\zeta)},
\end{equation*}
and when $m\neq 0$ and any $k\in\mathbb{Z}$, we define
\begin{equation*}
\lambda(s)=\inf_{((\xi,\eta,\zeta),\widehat{ Q}_r)\in \mathcal{A}_2}E_{m,k}(\xi,\eta,\zeta,\widehat{ Q}_r;s),
\end{equation*}
with
\begin{equation*}
\mathcal{A}_2=\left\{
\begin{aligned}
\big((\xi,\eta,\zeta),\widehat{ Q}_r\big
)\in Y_{m,k}\times H^1(r_0,r_w)| \mathcal{J}(\xi,\eta,\zeta)=1,\,\\ m\widehat{  B}_\theta \xi=r\widehat{ Q}_r\, \,\mbox{at}\, \,r=r_0\,\, \mbox{and}\,\, \widehat{ Q}_r=0\, \,\mbox{at}\, \,r=r_w
\end{aligned}
\right\}.
\end{equation*}
We then further exploit the variational structure to show that $\lambda$  is a  continuous function and is strictly increasing in $s$. Using this, we show in Proposition \ref{prop3.7} when $m=0$ and Proposition \ref{prop3.14} when $m\neq 0$ that the parameter $s$ can be uniquely chosen so that 
$$s=\mu(s)=\sqrt{-\lambda},\,\, \mbox{for any} \,\, 
(m,k)\in \mathbb{Z}\times \mathbb{Z},$$
which implies that we have found a solution to the original problem \eqref{spectral-formulation-orig}.

In contrast to the inviscid case, the ill-posedness is now excluded. Thanks to the viscous effect, we are able to show that there exists biggest growing mode for any $m$ and $k$, and prove $\lim_{m,k\rightarrow \infty} \mu_{m,k}=0$ by a contradiction argument, see Proposition  \ref{growing-mode-b2}. More precisely, suppose  that 
	\begin{equation*}
	\limsup_{m \rightarrow \infty\, \mbox{or}\,k\rightarrow\infty}s_{m,k}>0.
	\end{equation*}
	Then we have
	\begin{equation*}
	\begin{split}
	s_{m,k}\leq \frac{C_0+C}{D_{m,k}},\,\,	\mbox{for any}\,\, (m, k)\in \mathbb{Z}\times \mathbb{Z}.
	\end{split}
	\end{equation*}
	From \eqref{norm-2-xi} and new coordinates \eqref{grow-mode}, the dissipation rate is defined naturally as 
		\begin{equation}\label{dissipation-rate}
	\begin{split}
	D_{m,k}&=2\pi^2\int_0^{r_0}\varepsilon\Big[\frac{2}{9}\Big(-2\xi'+\frac{\xi}{r}+\frac{m}{r}\zeta-k\eta\Big)^2+\frac{2}{9}\Big(\xi'-\frac{2\xi}{r}-\frac{2m}{r}\zeta-k\eta\Big)^2
	\\
	&\quad+\frac{2}{9}\Big(\xi'+\frac{\xi}{r}+\frac{m}{r}\zeta+2k\eta\Big)^2+\Big(-\zeta'+\frac{\zeta}{r}+\frac{m}{r}\xi\Big)^2+(\eta'+k\xi)^2\\
	&\quad+\Big(\frac{m}{r}\eta-k\zeta\Big)^2\Big]rdr
	+2\pi^2\int_0^{r_0}\delta\Big(\xi'+\frac{\xi}{r}+\frac{m}{r}\zeta-k\eta\Big)^2rdr.
	\end{split}
	\end{equation}
The key is to establish 
$$D_{m,k}\gtrsim k^2+m^2,$$
which involves delicate analysis for weighted estimates 
for $\int_0^{r_0}\frac{\xi^2}{r}dr$, $\int_0^{r_0}\frac{\eta^2}{r}dr$ and 	$\int_0^{r_0}\frac{\zeta^2}{r}dr$. Precise algebraic manipulations have to be carried out to analyze the lower bound of the viscous dissipation, for each case of $m=0$, $|m|=1$ and $|m|\geq 2$ separately in Section 4.

\section{A family of modified variational problems }
\subsection{Growing mode ansatz and Cylindrical coordinates}
In order to see the property of the force operator  
	\begin{equation}\label{force-operator}
	\begin{split}
	F(g)&=\nabla(g\cdot\nabla p+\gamma p\nabla\cdot g)+(\nabla \times B)\times [\nabla \times (g \times B)]+\{\nabla \times [\nabla \times (g \times B)]\}\times B\\
	&\quad+ \mbox{div} \Big (\varepsilon\Big(\nabla g_t+\nabla g_t^T-\frac{2}{3}\mbox{div}  g_t \, \mathbb{I}\Big)+\delta \mbox{div} g_t\, \mathbb{I}\Big),
	\end{split}
	\end{equation} 
we consider two displacement vector fields $g$ and $h$ defined over the plasma volume $V$, their associated magnetic field perturbations
\begin{equation*}
Q=\nabla\times (g\times B), \quad R=\nabla\times (h\times B),
\end{equation*}
and the vacuum perturbations $\widehat{ Q}$ and  $\widehat{ R}$ defined over the vacuum volume $\widehat{ V}$ are their extensions,
that is, to `extend' the function $g$ into the vacuum by means of the magnetic field variable $\widehat{ Q}$, and likewise to `extend' $h$ by means of $\widehat{ R}$.
Then by Chapter 6 in \cite{Goedbloed-poedts} and Lemma 5.3 in \cite{bian-guo-tice-inviscid}, we have the following lemma.
\begin{lem}\label{joint}
Assume $g$ is a $H^2$ solution of the system \eqref{linear-perturbation-and-boundary-viscosity}, then we get a meaning expression for the potential energy of interface plasma by identifying $g$, $h$,  $\widehat{ Q}$ and $\widehat{ R}$, in the  quadratic form 
	\begin{equation}\label{sym}
	\begin{split}
	\int_{\overline{\Omega} }h \cdot F(g)\,dx&=-\int_{\overline{\Omega}}\Big[\gamma p \nabla \cdot g \nabla \cdot h+Q\cdot R+\frac{1}{2}\nabla p\cdot(g\nabla\cdot h+h\nabla\cdot g)\\
	&\quad+\frac{1}{2}\nabla \times B\cdot (g \times R+h\times Q)\Big]dx -\int_{\overline{\Omega}^v}\widehat{Q}\cdot \widehat{ R}\,dx\\
	&\quad-\int_{\overline{\Omega}} \bigg (\varepsilon\bigg(\nabla g_t+\nabla g_t^T-\frac{2}{3}\mbox{div}\,  g_t \, \mathbb{I}\bigg)+\delta \mbox{div}\, g_t\, \mathbb{I}\bigg):\nabla h \,dx\\
	&\quad-\int_{\Sigma_{0, pv} }n\cdot g \, n \cdot h\, n\cdot \Big[\Big[\nabla (p+\frac{1}{2}|B|^2)\Big]\Big]\,dx,
	\end{split}
	\end{equation}
which is symmetric in $g$ and $h$, and their extensions $\widehat{ Q}$ and $\widehat{ R}$, if $\varepsilon=\delta=0$. 
\end{lem}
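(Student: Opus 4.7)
The plan is to pair the force operator $F(g)$ against the test field $h$, integrate by parts carefully in the plasma domain $\overline{\Omega}$, and exploit the vacuum extensions $\widehat{Q}$, $\widehat{R}$ together with the interface jump conditions to cast boundary contributions into the clean vacuum term $\int_{\overline{\Omega}^v}\widehat{Q}\cdot\widehat{R}\,dx$ plus the surface term involving $[[\nabla(p+\tfrac12|B|^2)]]$. Since this is essentially the viscous counterpart of the identity already proved in Bian--Guo--Tice \cite{bian-guo-tice-inviscid}, I would simply import the inviscid computation and concentrate on the new viscous piece and on verifying symmetry.

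First I would treat each term of $F(g)$ separately. The pressure term $\int_{\overline{\Omega}} h\cdot\nabla(g\cdot\nabla p+\gamma p\nabla\cdot g)\,dx$ is integrated by parts; the divergence-free identity $\nabla\cdot(\gamma p(\nabla\cdot g)h) - h\cdot\nabla(\gamma p\nabla\cdot g) = -\gamma p\nabla\cdot g\,\nabla\cdot h$ gives the symmetric volume piece, and after adding and subtracting we produce the symmetrized term $\tfrac12\nabla p\cdot(g\nabla\cdot h + h\nabla\cdot g)$ plus a boundary integral. For the magnetic contributions, I apply the vector identities $h\cdot[(\nabla\times B)\times Q] = (\nabla\times B)\cdot(Q\times h)$ and $h\cdot[(\nabla\times Q)\times B] = Q\cdot\nabla\times(h\times B) + \nabla\cdot(\cdots)$; after symmetrizing using $R=\nabla\times(h\times B)$ this yields the $-Q\cdot R$ bulk term and a cross contribution $\tfrac12(\nabla\times B)\cdot(g\times R + h\times Q)$, together with surface fluxes on $\Sigma_{0,pv}$. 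The new viscous term $\int_{\overline{\Omega}} h\cdot\nabla\cdot\mathbb{S}(g_t)\,dx$ is integrated by parts once to give
\begin{equation*}
-\int_{\overline{\Omega}}\Big(\varepsilon\big(\nabla g_t+\nabla g_t^T-\tfrac{2}{3}\mathrm{div}\,g_t\,\mathbb{I}\big)+\delta\,\mathrm{div}\,g_t\,\mathbb{I}\Big):\nabla h\,dx
\end{equation*}
plus a boundary contribution $\int_{\Sigma_{0,pv}}h\cdot\mathbb{S}(g_t)n\,dx$, which will be absorbed later through the stress jump condition in \eqref{linear-perturbation-and-boundary-viscosity}.

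The main obstacle is the boundary bookkeeping at $\Sigma_{0,pv}$: all the individual surface pieces arising from the above integrations by parts — the pressure flux $\gamma p\nabla\cdot g\,(h\cdot n)$, the magnetic fluxes coming from the two $\nabla\times$ identities, the viscous traction $\mathbb{S}(g_t)n\cdot h$, and the gravitational-type drop $h\cdot n\,g\cdot n\,[[\nabla(p+\tfrac12|B|^2)]]\cdot n$ — must combine against the stress jump condition, the kinematic relation $n\cdot B = n\cdot\widehat{B}$, and the vacuum identity $R=\nabla\times(h\times B)$ extended by $\widehat R$. The computation producing the $\widehat Q\cdot\widehat R$ term follows from writing $\widehat Q\cdot\widehat R = \nabla\cdot(\widehat Q\times\widehat R\text{-type potentials})$ using $\nabla\cdot\widehat Q=0$, $\nabla\times\widehat Q=0$ in $\overline{\Omega}^v$, and the wall condition $n\cdot\widehat Q|_{\Sigma_w}=0$; this is exactly the argument of the inviscid identity in \cite{bian-guo-tice-inviscid} and I would quote it rather than redo it. Finally, I would read off symmetry: every volume integrand on the right-hand side is manifestly symmetric under $(g,Q,\widehat Q)\leftrightarrow(h,R,\widehat R)$ except the viscous integrand, which involves $g_t$ rather than $g$; accordingly the bilinear form is symmetric precisely when $\varepsilon=\delta=0$, as claimed.
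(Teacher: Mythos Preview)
Your proposal is correct and follows essentially the same approach as the paper: import the inviscid pointwise identity from Lemma~5.3 of \cite{bian-guo-tice-inviscid} for the non-viscous terms, handle the new viscous piece by the single integration by parts $\mathrm{div}(\mathbb{S}(g_t)h)=\mathrm{div}(\mathbb{S}(g_t))\cdot h+\mathbb{S}(g_t):\nabla h$, and then absorb the resulting boundary traction $h\cdot\mathbb{S}(g_t)n$ into the stress jump condition on $\Sigma_{0,pv}$, invoking the equilibrium identity $[[p+\tfrac12|B|^2]]=0$ (so only the normal derivative of the jump survives) and the inviscid vacuum conversion $-\int_{\Sigma_{0,pv}} n\cdot h\,\widehat B\cdot\widehat Q\,dx=-\int_{\overline{\Omega}^v}\widehat Q\cdot\widehat R\,dx$.
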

\begin{proof}
	Note that
	\begin{equation*}
	\begin{split}
	&\mbox{div} \bigg (\varepsilon\bigg(\nabla g_t+\nabla g_t^T-\frac{2}{3}\mbox{div} \, g_t \, \mathbb{I}\bigg)+\delta \mbox{div}\, g_t\, \mathbb{I}\bigg)\cdot h\\
	&= \mbox{div} \bigg[\bigg (\varepsilon\bigg(\nabla g_t+\nabla g_t^T-\frac{2}{3}\mbox{div} \, g_t \, \mathbb{I}\bigg)+\delta \mbox{div}\, g_t\, \mathbb{I}\bigg)h\bigg]\\
	&\quad-\bigg (\varepsilon\bigg(\nabla g_t+\nabla g_t^T-\frac{2}{3}\mbox{div}\,  g_t \, \mathbb{I}\bigg)+\delta \mbox{div}\, g_t\, \mathbb{I}\bigg): \nabla h.
	\end{split}
	\end{equation*}
Using the estimates in Lemma 5.3 in Bian-Guo-Tice \cite{bian-guo-tice-inviscid},
we get  that
	\begin{equation}\label{sym-form}
	\begin{split}
	h\cdot F(g)&=-\gamma p\nabla\cdot g\nabla\cdot h-Q\cdot R
	-\frac{1}{2}\nabla p\cdot (g\nabla\cdot h+h\nabla\cdot g)\\
	&\quad-\frac{1}{2}\nabla\times B\cdot (g\times R+h\times Q)\\
	&\quad+\nabla\cdot [ h(g\cdot \nabla p)]-\nabla\cdot (hB\cdot Q)+\nabla \cdot(h\gamma p\nabla\cdot g)\\
	&\quad+\frac{1}{2}\nabla\cdot [(\nabla\times B) B\cdot (g\times h)]+\nabla\cdot (Bh\cdot Q)\\
	&\quad-\frac{1}{2}\nabla\cdot [(\nabla\times B\times B)\cdot(gh-hg)]\\
	&\quad+ \mbox{div} \bigg[\bigg (\varepsilon\bigg(\nabla g_t+\nabla g_t^T-\frac{2}{3}\mbox{div}\,  g_t \, \mathbb{I}\bigg)+\delta \mbox{div} \,g_t\, \mathbb{I}\bigg)h\bigg]\\
	&\quad-\bigg (\varepsilon\bigg(\nabla g_t+\nabla g_t^T-\frac{2}{3}\mbox{div} \, g_t \, \mathbb{I}\bigg)+\delta \mbox{div} \,g_t\, \mathbb{I}\bigg): \nabla h.
	\end{split}
	\end{equation}
	Integrating \eqref{sym-form} gives that
	\begin{equation}\label{sym-form-int}
	\begin{split}
	\int_{\overline{\Omega}} h\cdot F(g) \,dx&=-\int_{\overline{\Omega}}\Big[\gamma p\nabla\cdot g\nabla\cdot h+Q\cdot R+\frac{1}{2}\nabla p\cdot (g\nabla\cdot h+h\nabla\cdot g)\\
	&\quad+\frac{1}{2}\nabla\times B\cdot (g\times R+h\times Q)\Big]\,dx\\
	&\quad+\int_{\Sigma_{0, pv}} n\cdot h\Big(g\cdot \nabla p-B\cdot Q+\gamma p\nabla\cdot g\Big)\,dx
	\\
	&\quad+\int_{\Sigma_{0, pv}} n\cdot h \, Tr\bigg[\bigg (\varepsilon\bigg(\nabla g_t+\nabla g_t^T-\frac{2}{3}\mbox{div}\,  g_t \, \mathbb{I}\bigg)+\delta \mbox{div}\, g_t\, \mathbb{I}\bigg)\bigg]\,dx\\
	&\quad-\int_{\overline{\Omega}} \bigg (\varepsilon\bigg(\nabla g_t+\nabla g_t^T-\frac{2}{3}\mbox{div}\,  g_t \, \mathbb{I}\bigg)+\delta \mbox{div}\, g_t\, \mathbb{I}\bigg): \nabla h\, dx.
	\end{split}
	\end{equation}
	There are no contributions from the eighth, ninth and tenth terms of \eqref{sym-form}  to the surface integral, since $n\cdot B=0$ and $n\cdot \nabla\times B=0$ on the plasma surface, whereas $\nabla \times B\times B$ is parallel to $n$.
From the second interface condition of \eqref{linear-perturbation-and-boundary-viscosity}, the surface integral takes the form of 
	\begin{equation}\label{tranf-sur}
	\begin{split}
	&\int_{\Sigma_{0, pv}} n\cdot h\Big(g\cdot \nabla p-B\cdot Q+\gamma p\nabla\cdot g\Big)\,dx\\
	&\qquad+\int_{\Sigma_{0, pv}} n\cdot h \, Tr\bigg[\bigg (\varepsilon\bigg(\nabla g_t+\nabla g_t^T-\frac{2}{3}\mbox{div}\,  g_t \, \mathbb{I}\bigg)+\delta \mbox{div}\, g_t\, \mathbb{I}\bigg)\bigg]\,dx\\
	&\quad=-\int_{\Sigma_{0, pv}} n\cdot hg\cdot \Big[\Big[\nabla (p+\frac{1}{2}|B|^2)\Big]\Big]dx-\int_{\Sigma_{0, pv}} n\cdot h\widehat{   B}\cdot \widehat{ Q}\,dx\\
	&\quad =-\int_{\Sigma_{0, pv}} n\cdot g \, n \cdot h\, n\cdot \Big[\Big[\nabla (p+\frac{1}{2}|B|^2)\Big]\Big]\,dx-\int_{\Sigma_{0, pv}} n\cdot h\widehat{   B}\cdot \widehat{ Q}\,dx\\
	&\quad =-\int_{\Sigma_{0, pv}} n\cdot g \, n \cdot h\, n\cdot \Big[\Big[\nabla (p+\frac{1}{2}|B|^2)\Big]\Big]\,dx-\int_{\overline{\Omega}^v}\widehat{ Q}\cdot \widehat{ R}\,dx.
	\end{split}
	\end{equation}
	Here, we have used the facts
$-\int_{\Sigma_{0, pv}} n\cdot h\widehat{  B}\cdot \widehat{ Q}\,dx=-\int_{\overline{\Omega}^v} \widehat{ Q}\cdot \widehat{ R}\, dx$ (Lemma 5.3 in Bian-Guo-Tice \cite{bian-guo-tice-inviscid}), the equilibrium jump condition $\Big[\Big[ p+\frac{1}{2}|B|^2\Big]\Big]=0$, which implies that the tangential derivative of the jump vanishes as well
	$\vec{t}\cdot \Big[\Big[\nabla (p+\frac{1}{2}|B|^2)\Big]\Big]=0,$ where $\vec{t}$ is an arbitrary unit vector tangential to the surface.
This finishes the proof.
\end{proof}

	We now record several computations in cylindrical  coordinates.

\begin{lem}\label{lem-e}
Let $\tilde{\varepsilon}=\mu\varepsilon$ and $\tilde{\delta}=\mu\delta$. We decompose $E_{m,k}$ as follows
		\begin{equation}\label{e_mk}
	E_{m,k}(\xi,\eta,\zeta,\widehat{ Q}_r)=E_{m,k}^p+E^s_{m,k}+E_{m,k}^v,
		\end{equation}
where the fluid energy takes the form of
		\begin{equation}\label{fluid-en-cyl-viscosity}
		\begin{split}
		&E_{m,k}^p=2\pi^2\int_0^{r_0}\Big\{(m^2+k^2r^2)\Big[\frac{B_\theta}{r}\eta+\frac{-kB_\theta(r\xi)'+2kB_{\theta}\xi}{m^2+k^2r^2}\Big]^2\\
		&\qquad+\gamma p\Big[\frac{1}{r}(r\xi)'-k\eta+\frac{m\zeta}{r}\Big]^2\Big\}rdr+2\pi^2\int_0^{r_0}\tilde{\varepsilon}\Big[\frac{2}{9}\Big(-2\xi'+\frac{\xi}{r}+\frac{m}{r}\zeta-k\eta\Big)^2\\
		&\qquad+\frac{2}{9}\Big(\xi'-\frac{2\xi}{r}-\frac{2m}{r}\zeta-k\eta\Big)^2+\frac{2}{9}\Big(\xi'+\frac{\xi}{r}+\frac{m}{r}\zeta+2k\eta\Big)^2+\Big(-\zeta'+\frac{\zeta}{r}+\frac{m}{r}\xi\Big)^2\\
		&\qquad+(\eta'+k\xi)^2+\Big(\frac{m}{r}\eta-k\zeta\Big)^2\Big]rdr
		+2\pi^2\int_0^{r_0}\tilde{\delta}\Big(\xi'+\frac{\xi}{r}+\frac{m}{r}\zeta-k\eta\Big)^2rdr\\
		&\qquad+2\pi^2\int_0^{r_0}\frac{m^2B_\theta^2}{r(m^2+k^2r^2)}(\xi-r\xi')^2dr+2\pi^2 	\int_0^{r_0}\Big[2p'+\frac{m^2B_{\theta}^2}{r}\Big]\xi^2dr,
		\end{split}
		\end{equation}
			the surface energy vanishes
		\begin{equation}\label{surface-en-cyl-viscosity}
		E^s_{m,k}=-2\pi^2[\widehat{B}_{\theta}^2-B_{\theta}^2]_{r=r_0}\xi^2(r_0)=0,
		\end{equation}
	and when $m\neq 0$ and any $k$,   the vacuum energy takes  the form of 
			\begin{equation}\label{vacuum-e-cyl-viscosity}
			\begin{split}
			E^v_{m,k}&=2\pi^2\int_{r_0}^{r_w}\Big[|\widehat{Q}_r|^2+\frac{1}{m^2+k^2r^2}|(r\widehat{Q}_r)'|^2\Big]rdr.
			\end{split}
			\end{equation}		
\end{lem}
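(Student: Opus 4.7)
\textbf{Proof strategy for Lemma \ref{lem-e}.}
The plan is to substitute the growing-mode ansatz \eqref{grow-mode}--\eqref{defini-xi-eta-zeta} directly into the three integrals \eqref{fluid-e}, \eqref{surface-e}, \eqref{vacuum-e-viscosity} making up $E[g,\widehat{Q}]$, perform the $\theta$ and $z$ integrations (yielding a common prefactor $2\pi^{2}$ for periods $2\pi$ in both variables together with the quadratic-form convention used throughout the paper), and match the resulting radial integrands piece by piece against the claim. For the fluid piece $E^p_{m,k}$, I will use the cylindrical formulas \eqref{varable-in-cyl} for $Q$ and $\nabla\cdot g$ together with the equilibrium identities $\nabla p=p'e_r$ and $\nabla\times B = r^{-1}(rB_\theta)'e_z$ provided by \eqref{z-pinch-1-1}; this handles the $\gamma p|\nabla\cdot g|^2$ term immediately. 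The cross-term $(g^{\ast}\cdot\nabla p)(\nabla\cdot g)=p'\xi\,\nabla\cdot g$ contains the piece $p'\xi\,r^{-1}(r\xi)'$ which, upon one integration by parts in $r$ (the boundary contribution at $r_0$ vanishing because $p(r_0)=0$, and the boundary at $r=0$ vanishing by regularity), produces the explicit pressure contribution $2\pi^2\int_0^{r_0}(2p'/r)\xi^2\, rdr$ displayed in \eqref{fluid-en-cyl-viscosity}.

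For the magnetic contributions $|Q|^2+(\nabla\times B)\cdot(g^{\ast}\times Q)$, the key identity $(r\xi)'-2\xi=-(\xi-r\xi')$ lets me recognize the combination appearing in the $\theta$-component of $Q$; one then completes the square in $\eta$ against the $\xi$-derivative terms, which produces the $(m^2+k^2r^2)$-weighted square displayed in \eqref{fluid-en-cyl-viscosity}. The two leftover magnetic contributions regroup cleanly as $\frac{m^2 B_\theta^2}{r(m^2+k^2r^2)}(\xi-r\xi')^2$ and the purely algebraic $\frac{m^2 B_\theta^2}{r}\xi^2$. The viscous block in \eqref{fluid-e} is handled by inserting the standard cylindrical expressions of the components of $\nabla g+\nabla g^T-\frac{2}{3}\mbox{div}\,g\,\mathbb{I}$ and $\mbox{div}\, g$, with the overall factor $\mu$ absorbed into the modified viscosities $\tilde{\varepsilon}=\mu\varepsilon$ and $\tilde{\delta}=\mu\delta$; this is a direct, line-by-line substitution.

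For the surface piece $E^s_{m,k}$, the outward normal at $r_0$ is $n=e_r$, so $|n\cdot g|^2=\xi^2(r_0)$, and the integrand collapses to $\xi^2(r_0)\cdot n\cdot[[\nabla(p+\frac{1}{2}|B|^2)]]$. Using the plasma equilibrium identity $p'+\frac{1}{2}(B_\theta^2)'+B_\theta^2/r=0$ on one side, and the vacuum profile $\widehat{B}_\theta(r)=B_\theta(r_0)r_0/r$ on the other (which identically satisfies $\frac{1}{2}(\widehat{B}_\theta^2)'+\widehat{B}_\theta^2/r=0$), one computes that both one-sided normal derivatives of $p+\frac{1}{2}|B|^2$ at $r_0$ reduce to $-B_\theta^2(r_0)/r_0$ and $-\widehat{B}_\theta^2(r_0)/r_0$ respectively, so that the jump is $(B_\theta^2-\widehat{B}_\theta^2)_{r_0}/r_0$. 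Thus $E^s_{m,k}=-2\pi^2[\widehat{B}_\theta^2-B_\theta^2]_{r_0}\xi^2(r_0)$, which vanishes because the zero-order equilibrium jump condition $[[p+\frac{1}{2}|B|^2]]_{r_0}=0$ combined with $p(r_0)=0$ forces $\widehat{B}_\theta^2(r_0)=B_\theta^2(r_0)$.

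For the vacuum piece $E^v_{m,k}$ in the case $m\neq 0$, I insert the ansatz for $\widehat{Q}$ and eliminate $\widehat{Q}_\theta$ and $\widehat{Q}_z$ using the div-curl system \eqref{div-curl-1} in cylindrical coordinates, exactly as carried out in the Appendix of \cite{bian-guo-tice-inviscid}: these become algebraic relations expressing $\widehat{Q}_\theta$ and $\widehat{Q}_z$ in terms of $\widehat{Q}_r$ and $(r\widehat{Q}_r)'$, whose squared sum produces the $(m^2+k^2r^2)^{-1}$ weight and the stated form. The main technical obstacle is the algebra inside $E^p_{m,k}$: the simultaneous completion of the square in the magnetic terms and the one-step integration by parts of the pressure cross-term must be executed in tandem, tracking the cylindrical components of $Q$ and of $g^{\ast}\times Q$ carefully, so that the final radial integrand collapses to the compact form displayed in \eqref{fluid-en-cyl-viscosity}; once this is in hand, everything else is a matter of direct substitution.
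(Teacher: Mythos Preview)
Your overall strategy—substitute the ansatz \eqref{grow-mode}--\eqref{varable-in-cyl} into \eqref{fluid-e}--\eqref{vacuum-e-viscosity}, integrate out $\theta$ and $z$, complete the square in $\eta$, and integrate by parts—is exactly what the paper does, and your treatment of $E^s_{m,k}$ and $E^v_{m,k}$ is correct (the paper simply cites Lemma~3.1 of \cite{bian-guo-tice-inviscid} for these).

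There is, however, a genuine gap in your account of $E^p_{m,k}$: the mechanism you give for the term $2\pi^2\int_0^{r_0}2p'\xi^2\,dr$ is wrong. Integrating $p'\xi\cdot r^{-1}(r\xi)'$ by parts does \emph{not} produce $2p'\xi^2$ (try it: $(r\xi)'=\xi+r\xi'$ and one IBP leaves you with $\frac12 p'\xi^2-\frac12 p''r\xi^2$ plus a boundary term involving $p'(r_0)$, not $p(r_0)$, so your vanishing argument does not apply either). In the paper the $2p'$ contribution arises instead through the equilibrium relation $p'=-B_\theta B_\theta'-B_\theta^2/r$, which is used to convert leftover magnetic-derivative terms into pressure terms; this is packaged as the identity
\[
\int_0^{r_0}\Big(\tfrac{m^2B_\theta^2}{r^2(m^2+k^2r^2)}[(r\xi)']^2+\beta_0(r\xi)^2\Big)r\,dr-\Big[\tfrac{2m^2B_{\theta}^2}{m^2+k^2r^2}\xi^2\Big]_{r=r_0}
=\int_0^{r_0}\Big[\tfrac{m^2B^2_\theta}{r(m^2+k^2r^2)}(\xi-r\xi')^2+2p'\xi^2+\tfrac{m^2B^2_\theta}{r}\xi^2\Big]dr,
\]
with an explicit $\beta_0$. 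The same equilibrium relation is also what kills the $\zeta$--linear cross terms $p'\xi\cdot\frac{m\zeta}{r}$ and $(B_\theta'+\frac{B_\theta}{r})\frac{m\zeta}{r}B_\theta\xi$, whose sum is $\frac{m\zeta\xi}{r}\big[p'+B_\theta B_\theta'+\tfrac{B_\theta^2}{r}\big]=0$; you never account for these. So while the ingredients you list are correct, the algebraic route to the displayed integrand in \eqref{fluid-en-cyl-viscosity} must go through the equilibrium relation at the ``remainder'' stage, not through an IBP of the pressure cross term.
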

\begin{proof}
Recall $(\xi,\eta,
	\zeta)$	 in \eqref{defini-xi-eta-zeta} and \eqref{grow-mode}.
The proofs of \eqref{surface-en-cyl-viscosity} and \eqref{vacuum-e-cyl-viscosity} can be recalled from the Lemma 3.1 in \cite{bian-guo-tice-inviscid}, so we only need to prove  \eqref{fluid-en-cyl-viscosity}.
		Inserting the expressions of \eqref{varable-in-cyl} into \eqref{fluid-e} and using $\xi_\theta^*=i\zeta$, we can get  \eqref{fluid-en-cyl-viscosity}. 
		In fact,
		\begin{equation*}
		(g^*\cdot\nabla  p)(\nabla \cdot g )=\xi  p'\Big(\frac1r(rg_r)'-k\eta+\frac{m}{r}\zeta\Big), 
		\end{equation*}
		\begin{equation*}
		\begin{split}
		(\nabla \times B)\cdot (g^* \times Q)&=\Big(B_\theta'+\frac{B_\theta}{r}\Big)\Big[-g_r\Big((B_{\theta}g_r)'-k B_{\theta}\eta\Big)-\frac{im}{r}B_{\theta}g_rg_\theta^*\Big]\\
		&=\Big(B_\theta'+\frac{B_\theta}{r}\Big)\Big[-g_r\Big((B_{\theta}g_r)'-k B_{\theta}\eta\Big)+\frac{m}{r}\zeta B_{\theta}g_r\Big],
		\end{split}
		\end{equation*}
		\begin{equation*}\begin{split}
		\nabla g=	\left(\begin{array}{ccc}
		\partial_rg_r&\partial_{r}g_{\theta}&\partial_rg_z\\
		\frac{img_r}{r}- \frac{g_\theta}{r}&\frac{img_\theta}{r}+\frac{g_r}{r}&\frac{im g_z}{r}\\
		ikg_r&ikg_\theta&ikg_z
		\end{array}
		\right)=\left(\begin{array}{ccc}
		\xi'&-i\zeta'&i\eta'\\
		\frac{im}{r}\xi+\frac{i\zeta}{r}&\frac{m}{r}\zeta+\frac{\xi}{r}&-\frac{m}{r}\eta\\
		ik\xi&k\zeta&-k\eta
		\end{array}
		\right), 
		\end{split}
		\end{equation*}
		\begin{equation*}\begin{split}
		\nabla g+\nabla g^T=\left(\begin{array}{ccc}
		2\xi'&-i\zeta'+\frac{im}{r}\xi+\frac{i\zeta}{r}&i\eta'+ik\xi\\
		-i\zeta'+\frac{im}{r}\xi+\frac{i\zeta}{r}&\frac{2m}{r}\zeta+\frac{2\xi}{r}&-\frac{m}{r}\eta+k\zeta\\
		i\eta'+ik\xi&-\frac{m}{r}\eta+k\zeta&-2k\eta
		\end{array}
		\right), 
		\end{split}
		\end{equation*}
		\begin{equation*}
		\begin{split}
		\mbox{div}  g \, \mathbb{I}=\left(\begin{array}{ccc}
		\xi'+\frac{\xi}{r}-k\eta+\frac{m}{r}\zeta&0&0\\
		0&\xi'+\frac{\xi}{r}-k\eta+\frac{m}{r}\zeta&0\\
		0&0&	\xi'+\frac{\xi}{r}-k\eta+\frac{m}{r}\zeta
		\end{array}
		\right).
		\end{split}
		\end{equation*}
		By $p'=-B_\theta B'_\theta-\frac{B^2_\theta}{r}$, we have
		\begin{equation*}
		\begin{split}
		&\int_0^{r_0}\Big(\frac{m^2B_\theta^2}{r^2(m^2+k^2r^2)}[(r\xi)']^2+\beta_0(r\xi)^2\Big)rdr-\Big[\frac{2m^2B_{\theta}^2}{m^2+k^2r^2}\xi^2\Big]_{r=r_0}\\
		&=\int_0^{r_0}\bigg[\frac{m^2B^2_\theta}{r(m^2+k^2r^2)}(\xi-r\xi')^2+2p'\xi^2+\frac{m^2B^2_\theta\xi^2}{r}\bigg]dr,
		\end{split}
		\end{equation*}
		with
	$$\beta_0=\frac{1}{r}\bigg[\frac{m^2B_\theta^2}{r^3}+\frac{2m^2B_{\theta}(\frac{B_\theta}{r})'}{r(m^2+k^2r^2)}-\frac{4k^2m^2B^2_{\theta}}{r(m^2+k^2r^2)^2}+\frac{2k^2p'}{m^2+k^2r^2}\bigg].$$
	 Combining the above identities with \eqref{fluid-e} and  \eqref{varable-in-cyl} yields \eqref{fluid-en-cyl-viscosity}.	 	
\end{proof}
Using  $g(r,\theta, z, t)=(g_{r}(r,t),g_{\theta}(r,t),g_{z}(r,t))e^{i(m\theta+kz)}$, we can prove that the second equation in \eqref{linear-perturbation-and-boundary-viscosity} is reduced to the following system.
\begin{lem}\label{spectral-lem-without-factor}
Assume $g(r,\theta, z, t)=(g_{r}(r,t),g_{\theta}(r,t),g_{z}(r,t))e^{i(m\theta+kz)}$ solves  the second equation in \eqref{linear-perturbation-and-boundary-viscosity}, then
	\begin{equation}\label{spectral-formulation-orig}
	\begin{split}
	&\left(
	\begin{array}{ccc}
	\frac{d}{dr}\frac{\gamma p+B_{\theta}^2}{r}\frac{d}{dr}r-\frac{m^2}{r^2}B_{\theta}^2
	-r(\frac{B_{\theta}^2}{r^2})'&-\frac{d}{dr}k(\gamma p+B_{\theta}^2)-\frac{2kB_{\theta}^2}{r}&\frac{d}{dr}\frac{m}{r}\gamma p\\
	\frac{k(\gamma p+B_{\theta}^2)}{r}\frac{d}{dr}r-\frac{2kB_{\theta}^2}{r}&
	-k^2(\gamma p+B_{\theta}^2)-\frac{m^2}{r^2}B_{\theta}^2&\frac{mk}{r}\gamma p\\
	-\frac{m\gamma p}{r^2}\frac{d}{dr}r&\frac{mk}{r}\gamma p&-\frac{m^2}{r^2}\gamma p
	\end{array}
	\right)
	\left(
	\begin{array}{lll}
	\xi   \\
	\eta \\
	\zeta\\
	\end{array}
	\right)\\
	&\quad	+\left(
	\begin{array}{ccc}
	d_{11}&d_{12}&d_{13}\\
	d_{21}&
	d_{22}&d_{23}\\
	d_{31}&d_{32}&d_{33}
	\end{array}
	\right)
	\left(
	\begin{array}{lll}
	\xi_t   \\
	\eta_t \\
	\zeta_t\\
	\end{array}
	\right)	
	=\rho \left(
	\begin{array}{lll}
	\xi_{tt}   \\
	\eta_{tt} \\
	\zeta_{tt}\\
	\end{array}
	\right),
	\end{split}
	\end{equation}
with 
\begin{equation*}
\begin{split}
&d_{11}=	(\frac{4\varepsilon}{3}+\delta)\frac{d^2}{dr^2}+(\frac{4\varepsilon}{3}+\delta)\frac{d}{dr}\frac{1}{r}-\varepsilon\Big(\frac{m^2}{r^2}+k^2\Big),\quad d_{12}=-(\frac{\varepsilon}{3}+\delta)k\frac{d}{dr},\\
&d_{13}=(\frac{\varepsilon}{3}+\delta)\frac{d}{dr}\frac{m}{r}
-\frac{2\varepsilon m}{r^2},\quad d_{21}=(\frac{\varepsilon}{3}+\delta)k\frac{d}{dr}+(\frac{\varepsilon}{3}+\delta)\frac{k}{r},\\
&d_{22}=	\varepsilon\frac{d^2}{dr^2}+\frac{\varepsilon}{r}\frac{d}{dr}
-\varepsilon \frac{m^2}{r^2}-(\frac{4\varepsilon}{3}+\delta)k^2,\quad
d_{23}=(\frac{\varepsilon}{3}+\delta)\frac{mk}{r},\\
&d_{31}=-(\frac{\varepsilon}{3}+\delta)\frac{m}{r}\frac{d}{dr}-(\frac{7\varepsilon}{3}+\delta)\frac{m}{r^2},\quad d_{32}=(\frac{\varepsilon}{3}+\delta)\frac{mk}{r},\\
&d_{33}=\varepsilon\frac{d^2}{dr^2}+\varepsilon\frac{d}{dr}\frac{1}{r}-(\frac{4\varepsilon}{3}+\delta)\frac{m^2}{r^2}-\varepsilon k^2.
\end{split}
\end{equation*}
\end{lem}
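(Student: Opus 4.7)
The plan is to substitute the Fourier ansatz
$g(r,\theta,z,t) = (g_r(r,t), g_\theta(r,t), g_z(r,t))\,e^{i(m\theta+kz)}$
together with the real-imaginary change of variables \eqref{defini-xi-eta-zeta} into the second (momentum) equation of \eqref{linear-perturbation-and-boundary-viscosity}, evaluate every term in cylindrical coordinates, and read off the resulting $3\times 3$ system on $(\xi,\eta,\zeta)^T$. The three rows of \eqref{spectral-formulation-orig} correspond respectively to the $e_r$, $-ie_z$ and $ie_\theta$ projections of the momentum equation; the factors of $i$ are chosen precisely so that the resulting operator has real coefficients, matching the fact that $\xi,\eta,\zeta$ are real valued.

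For the first (inviscid) matrix, the three force terms
$\nabla(g\cdot\nabla p+\gamma p\,\nabla\cdot g)+(\nabla\times B)\times[\nabla\times(g\times B)]+\{\nabla\times[\nabla\times(g\times B)]\}\times B$
have already been reduced to matrix form in Bian-Guo-Tice \cite{bian-guo-tice-inviscid}; one simply quotes that calculation, using the identities \eqref{varable-in-cyl} together with the equilibrium relation $p'=-B_\theta B_\theta'-B_\theta^2/r$. What is genuinely new is the viscous contribution $\nabla\cdot\mathbb{S}(g_t)$, which must supply the $d_{ij}$ operating on $(\xi_t,\eta_t,\zeta_t)^T$.

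For the viscous part I would use the explicit matrix representations of $\nabla g$, $\nabla g^T$ and $\nabla\cdot g$ already written down in the proof of Lemma~\ref{lem-e} (with $g$ replaced by $v=g_t$ and the common factor $e^{i(m\theta+kz)}$ suppressed). Applying the cylindrical divergence to $\mathbb{S}(v)=\varepsilon\mathbb{D}^0(v)+\delta(\nabla\cdot v)\mathbb{I}$ via the standard cylindrical formulas for $\nabla\cdot\mathbb{D}(v)$ and $\nabla(\nabla\cdot v)$, and performing the Fourier substitutions $\partial_\theta\mapsto im$, $\partial_z\mapsto ik$, yields three algebraic expressions in $\xi_t,\eta_t,\zeta_t$ and their first two radial derivatives. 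Projecting via $e_r,-ie_z,ie_\theta$ then converts the factors of $i$ into real coefficients, producing exactly the operator entries $d_{ij}$ listed in the statement.

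The main bookkeeping obstacle lies in the off-diagonal entries $d_{12},d_{13},d_{21},d_{31}$, where the cylindrical corrections in $\nabla v$ (the $-v_\theta/r$ and $v_r/r$ pieces in the $r\theta$-entries) and the extra $1/r$ factors generated when the cylindrical divergence hits $\mathbb{D}^0(v)$ combine with the Fourier factors of $i$ to produce the asymmetric-looking lower-order terms such as $-2\varepsilon m/r^2$ in $d_{13}$ and $-(7\varepsilon/3+\delta)m/r^2$ in $d_{31}$. Keeping track of these correction terms and verifying that the antisymmetric contributions cancel after the $\pm i$ projections is the only nontrivial step; once done, the matrix identity \eqref{spectral-formulation-orig} follows by inspection.
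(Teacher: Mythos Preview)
Your proposal is correct and follows essentially the same approach as the paper's proof, which simply states that inserting \eqref{varable-in-cyl} and the definitions \eqref{defini-xi-eta-zeta} into the second equation of \eqref{linear-perturbation-and-boundary-viscosity} reduces it to \eqref{spectral-formulation-orig}. Your write-up is in fact more informative than the paper's, since you explicitly separate the inviscid matrix (quoted from \cite{bian-guo-tice-inviscid}) from the new viscous computation and flag where the nontrivial bookkeeping lies.
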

\begin{proof}
	Inserting the expression \eqref{varable-in-cyl} into the second equation in \eqref{linear-perturbation-and-boundary-viscosity}, by  $g(r,\theta, z, t)=(g_{r}(r,t),g_{\theta}(r,t),g_{z}(r,t))e^{i(m\theta+kz)}$ and the definitions of $\xi$, $\eta$ and $\zeta$ in \eqref{defini-xi-eta-zeta}, we can easily get that the second equation in \eqref{linear-perturbation-and-boundary-viscosity} reduces to \eqref{spectral-formulation-orig}.
\end{proof}	
We now use the idea of modifying the viscosity parameters in Guo-Tice \cite{Guo-TIce-viscous}. Taking the normal mode \eqref{grow-mode} in Lemma \ref{lem-e}, in order to restore the ability to use varational methods, we artificially remove the linear dependence on $\mu$. We define two modified  viscosities $\tilde{\varepsilon}=s\varepsilon$ and $\tilde{\delta}=s\delta$, instead of $\mu\varepsilon$ and $\mu\delta$, where $s>0$ is an arbitrary parameter, we can get modified problem as follows.
\begin{lem}\label{spectral-lem-with-factor}
Assume  $g(r,\theta, z, t)=(g_{r}(r,t),g_{\theta}(r,t),g_{z}(r,t))e^{\mu t+i(m\theta+kz)}$
solves the second equation in \eqref{linear-perturbation-and-boundary-viscosity}, then 
	\begin{equation}\label{spectral-formulation}
		\begin{split}
	&\left(
	\begin{array}{ccc}
	\frac{d}{dr}\frac{\gamma p+B_{\theta}^2}{r}\frac{d}{dr}r-\frac{m^2}{r^2}B_{\theta}^2
	-r(\frac{B_{\theta}^2}{r^2})'&-\frac{d}{dr}k(\gamma p+B_{\theta}^2)-\frac{2kB_{\theta}^2}{r}&\frac{d}{dr}\frac{m}{r}\gamma p\\
	\frac{k(\gamma p+B_{\theta}^2)}{r}\frac{d}{dr}r-\frac{2kB_{\theta}^2}{r}&
	-k^2(\gamma p+B_{\theta}^2)-\frac{m^2}{r^2}B_{\theta}^2&\frac{mk}{r}\gamma p\\
	-\frac{m\gamma p}{r^2}\frac{d}{dr}r&\frac{mk}{r}\gamma p&-\frac{m^2}{r^2}\gamma p
	\end{array}
	\right)
	\left(
	\begin{array}{lll}
	\xi   \\
	\eta \\
	\zeta\\
	\end{array}
	\right)\\
	&\quad	+\left(
	\begin{array}{ccc}
a_{11}&a_{12}&a_{13}\\
	a_{21}&
a_{22}&a_{23}\\
	a_{31}&a_{32}&a_{33}
	\end{array}
	\right)
	\left(
	\begin{array}{lll}
	\xi   \\
	\eta \\
	\zeta\\
	\end{array}
	\right)	
	=\rho \mu^2 \left(
	\begin{array}{lll}
	\xi   \\
	\eta \\
	\zeta\\
	\end{array}
	\right),
	\end{split}
	\end{equation}
with 
\begin{equation*}
\begin{split}
&a_{11}=	(\frac{4\tilde{\varepsilon}}{3}+\tilde{\delta})\frac{d^2}{dr^2}+(\frac{4\tilde{\varepsilon}}{3}+\tilde{\delta})\frac{d}{dr}\frac{1}{r}-\tilde{\varepsilon}\Big(\frac{m^2}{r^2}+k^2\Big),\quad a_{12}=-(\frac{\tilde{\varepsilon}}{3}+\tilde{\delta})k\frac{d}{dr},\\
&a_{13}=(\frac{\tilde{\varepsilon}}{3}+\tilde{\delta})\frac{d}{dr}\frac{m}{r}
-\frac{2\tilde{\varepsilon}m}{r^2},\quad a_{21}=(\frac{\tilde{\varepsilon}}{3}+\tilde{\delta})k\frac{d}{dr}+(\frac{\tilde{\varepsilon}}{3}+\tilde{\delta})\frac{k}{r},\\
&a_{22}=	\tilde{\varepsilon}\frac{d^2}{dr^2}+\frac{\tilde{\varepsilon}}{r}\frac{d}{dr}
-\tilde{\varepsilon}\frac{m^2}{r^2}-(\frac{4\tilde{\varepsilon}}{3}+\tilde{\delta})k^2,\quad
a_{23}=(\frac{\tilde{\varepsilon}}{3}+\tilde{\delta})\frac{mk}{r},\\
&a_{31}=-(\frac{\tilde{\varepsilon}}{3}+\tilde{\delta})\frac{m}{r}\frac{d}{dr}-(\frac{7\tilde{\varepsilon}}{3}+\tilde{\delta})\frac{m}{r^2},\quad a_{32}=(\frac{\tilde{\varepsilon}}{3}+\tilde{\delta})\frac{mk}{r},\\
&a_{33}=\tilde{\varepsilon}\frac{d^2}{dr^2}+\tilde{\varepsilon}\frac{d}{dr}\frac{1}{r}-(\frac{4\tilde{\varepsilon}}{3}+\tilde{\delta})\frac{m^2}{r^2}-\tilde{\varepsilon}k^2.
\end{split}
\end{equation*}
\end{lem}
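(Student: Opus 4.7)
The plan is to reduce this lemma directly to the preceding Lemma~\ref{spectral-lem-without-factor} by running the growing-mode ansatz and then relabelling the viscosity constants. Start from the system in Lemma~\ref{spectral-lem-without-factor}, which expresses the second equation of \eqref{linear-perturbation-and-boundary-viscosity} as a matrix identity whose first matrix is the ideal MHD operator (time-independent) acting on $(\xi,\eta,\zeta)^T$, whose second matrix has entries $d_{ij}$ acting on $(\xi_t,\eta_t,\zeta_t)^T$, and whose right-hand side is $\rho(\xi_{tt},\eta_{tt},\zeta_{tt})^T$. Insert the full growing-mode ansatz $g = (g_r(r),g_\theta(r),g_z(r))e^{\mu t + i(m\theta+kz)}$; this converts $\partial_t \mapsto \mu$ and $\partial_{tt}\mapsto \mu^2$ while leaving the radial differential operators unchanged, so the middle term becomes $\mu\bigl(d_{ij}\bigr)(\xi,\eta,\zeta)^T$ and the right-hand side becomes $\rho\mu^2(\xi,\eta,\zeta)^T$.

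The second step is an entry-by-entry comparison: each $d_{ij}$ is an affine-linear combination of $\varepsilon$ and $\delta$ whose coefficients are radial differential operators, so $\mu d_{ij}$ is identical in form to $d_{ij}$ except with $\varepsilon$ replaced by $\mu\varepsilon$ and $\delta$ replaced by $\mu\delta$. Introducing the auxiliary viscosities $\tilde\varepsilon := \mu\varepsilon$ and $\tilde\delta := \mu\delta$, one reads off $\mu d_{ij} = a_{ij}$ for each of the nine expressions displayed in the lemma, which produces precisely the system \eqref{spectral-formulation}.

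The conceptual ingredient, which is the point of calling this a \emph{modified} problem, is that the identity \eqref{spectral-formulation} is well-posed for \emph{any} pair $(\tilde\varepsilon,\tilde\delta)$, not just for the distinguished binding $\tilde\varepsilon = \mu\varepsilon$. Setting $\tilde\varepsilon = s\varepsilon$, $\tilde\delta = s\delta$ for a free parameter $s>0$ decouples the otherwise nonlinear dependence of the viscous block on $\mu$ and yields a self-adjoint eigenvalue problem for $\mu^2$ to which the variational principle of \eqref{sausage-in-v} and \eqref{va-m-1-b} applies. A genuine growing mode of the original system corresponds to a self-consistent fixed point $s=\mu(s)$, which is exactly how the later construction finds solutions to \eqref{spectral-formulation-orig}.

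There is no substantive obstacle; the argument is a substitution followed by bookkeeping. The only care required is in reconstructing the cylindrical-coordinate expansion of $\nabla\cdot\mathbb{S}(v)$ component-by-component: using the matrices for $\nabla g$, $\nabla g + \nabla g^T$, and $\mathrm{div}\,g\,\mathbb{I}$ written out in the proof of Lemma~\ref{lem-e}, one forms $\mathbb{S}(v)=\varepsilon(\nabla v+\nabla v^T-\tfrac{2}{3}\mathrm{div}\,v\,\mathbb{I}) + \delta\,\mathrm{div}\,v\,\mathbb{I}$, takes its divergence in each of the $r,\theta,z$ directions (tracking the $\tfrac{1}{r}$ factors coming from the cylindrical metric and the Christoffel-type corrections that generate the lower-order coefficients $-\tfrac{2\varepsilon m}{r^2}$ and $-\tfrac{7\varepsilon}{3}\tfrac{m}{r^2}$), applies $\partial_\theta\mapsto im$ and $\partial_z\mapsto ik$, and rewrites everything via \eqref{defini-xi-eta-zeta}. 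Matching the resulting expressions against the nine $a_{ij}$ after the $\tilde\varepsilon,\tilde\delta$ relabelling verifies the lemma.
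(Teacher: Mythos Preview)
Your proposal is correct and follows essentially the same route as the paper: the paper's proof is a one-line ``insert the ansatz \eqref{grow-mode} and the definitions \eqref{defini-xi-eta-zeta} into the second equation of \eqref{linear-perturbation-and-boundary-viscosity} and compute,'' and your version simply makes explicit the intermediate passage through Lemma~\ref{spectral-lem-without-factor} together with the observation that the $d_{ij}$ are linear in $(\varepsilon,\delta)$ so that $\mu d_{ij}=a_{ij}$ under $\tilde\varepsilon=\mu\varepsilon$, $\tilde\delta=\mu\delta$. Your additional paragraph on the role of the free parameter $s$ is accurate context but is not part of the proof of this lemma itself.
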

\begin{proof}	
	Inserting the expression \eqref{varable-in-cyl} into the second equation in \eqref{linear-perturbation-and-boundary-viscosity}, by  $g(r,\theta, z, t)=(g_{r}(r,t),g_{\theta}(r,t),g_{z}(r,t))e^{\mu t+i(m\theta+kz)}$ and the definitions of $\xi$, $\eta$ and $\zeta$ in \eqref{defini-xi-eta-zeta}, we can easily get that the second equation in \eqref{linear-perturbation-and-boundary-viscosity} reduces to \eqref{spectral-formulation}.
\end{proof}	
In order to study the stability to use variational methods in vacuum domain, we use the following second-order ODE about $\widehat{ Q}_r$ for $m\neq 0$ and any $k$.
\begin{lem}[Lemma 3.6 in \cite{bian-guo-tice-inviscid}]
The  vacuum equations \eqref{linear-perturbation-and-boundary-viscosity}$_3$ and \eqref{linear-perturbation-and-boundary-viscosity}$_4$  can be reduced to the second order differential equation
\begin{equation}\label{euler-l-q}
\bigg[\frac{r}{m^2+k^2r^2}(r\widehat{Q}_r)'\bigg]'-\widehat{Q}_r=0,
\end{equation}
with the other two components $\widehat{ Q}_{\theta}=-\frac{m}{m^2+k^2r^2}(r\widehat{ Q}_r)'$ and
$\widehat{ Q}_{z}=-\frac{kr}{m^2+k^2r^2}(r\widehat{ Q}_r)'$. 
\end{lem}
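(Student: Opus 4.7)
The plan is to substitute the Fourier ansatz for $\widehat{Q}$ into the two vacuum equations $\nabla\cdot\widehat{Q}=0$ and $\nabla\times\widehat{Q}=0$, written componentwise in cylindrical coordinates, and then to algebraically eliminate $\widehat{Q}_\theta$ and $\widehat{Q}_z$ in favor of $(r\widehat{Q}_r)'$. The factor of $i$ placed in front of $\widehat{Q}_r$ in the ansatz is precisely what produces real coefficients in the resulting radial equations.

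First I would carry out the substitution in each scalar equation. After cancelling the common factor $e^{\mu t+i(m\theta+kz)}$, the divergence equation becomes
$$(r\widehat{Q}_r)'+m\widehat{Q}_\theta+kr\widehat{Q}_z=0,$$
while the three components of $\nabla\times\widehat{Q}=0$ yield, respectively, the \emph{algebraic} identity $m\widehat{Q}_z=kr\widehat{Q}_\theta$ from the radial component together with the two first-order relations $\widehat{Q}_z'=-k\widehat{Q}_r$ and $(r\widehat{Q}_\theta)'=-m\widehat{Q}_r$ from the $\theta$- and $z$-components. Next I would treat the divergence equation and the algebraic curl-$r$ identity as a linear $2\times 2$ system for $(\widehat{Q}_\theta,\widehat{Q}_z)$ with source $(r\widehat{Q}_r)'$; its determinant equals $-(m^2+k^2r^2)$, and solving yields the stated formulas
$$\widehat{Q}_\theta=-\frac{m}{m^2+k^2r^2}(r\widehat{Q}_r)',\qquad \widehat{Q}_z=-\frac{kr}{m^2+k^2r^2}(r\widehat{Q}_r)'.$$

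Finally, I would substitute either of these back into the corresponding first-order curl relation. Using $\widehat{Q}_z'=-k\widehat{Q}_r$ and cancelling an overall factor of $k$ (or equivalently using $(r\widehat{Q}_\theta)'=-m\widehat{Q}_r$ and cancelling $m$) produces the asserted ODE
$$\bigg[\frac{r}{m^2+k^2r^2}(r\widehat{Q}_r)'\bigg]'-\widehat{Q}_r=0.$$
That both substitutions lead to the \emph{same} equation is a useful consistency check: the four first-order scalar relations collapse to a single second-order ODE in $\widehat{Q}_r$, and the construction is well-defined. The only mild obstacle is the degenerate mode $(m,k)=(0,0)$, for which the $2\times 2$ system loses rank and the denominator $m^2+k^2r^2$ vanishes; this mode carries no dynamical content in the Fourier decomposition employed here and is simply excluded from the reduction. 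All remaining steps are routine cylindrical-coordinate algebra, with no analytic subtleties.
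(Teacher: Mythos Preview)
Your proof is correct and follows the natural route: write $\nabla\cdot\widehat{Q}=0$ and $\nabla\times\widehat{Q}=0$ componentwise in cylindrical coordinates under the Fourier ansatz, use the divergence relation together with the algebraic $r$-component of the curl to solve for $\widehat{Q}_\theta,\widehat{Q}_z$ in terms of $(r\widehat{Q}_r)'$, and then feed these back into one of the first-order curl relations to obtain the second-order ODE. The paper does not reprove this lemma here (it is quoted from \cite{bian-guo-tice-inviscid}), but the parallel computation carried out later in the proof of Lemma~\ref{vec-xi-grow-energy} (equations \eqref{div-Q-f}--\eqref{curl-q-f} and the lines following) proceeds in exactly the same way, so your argument matches the paper's approach. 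Your remark that the reduction degenerates only at $(m,k)=(0,0)$ is consistent with the paper's standing hypothesis $m\neq 0$ stated just before the lemma, so there is nothing missing.
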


\subsection{Modified variational problem when $m=0$}
In this subsection, we will introduce the definition of the function space $X_k$ and its properties, then  give the modified variational analysis for the case $m=0$ and any $k\in\mathbb{Z}$.
We first introduce the Definition of the function space $X_k$ for any $k\in\mathbb{Z}$ and its properties.
\begin{defi}\label{defi-X}
	The weighted Sobolev space $X_k$ is defined as the completion of $\Big\{(\xi,\eta,\zeta) \in C^{\infty}([0,r_0])\times C^{\infty}([0,r_0])\times C^{\infty}([0,r_0]) \Big|\xi(0)=\zeta(0)=0\Big\}$, with respect to the norm
\begin{equation}\label{defi-Xk}
\begin{split}
\|(\xi,\eta,\zeta)\|^2_{X_k}&=\int_0^{r_0}\Big[\Big(-2\xi'+\frac{\xi}{r}-k\eta\Big)^2+\Big(\xi'-\frac{2\xi}{r}-k\eta\Big)^2\\
&\quad+\Big(\xi'+\frac{\xi}{r}+2k\eta\Big)^2+\Big(-\zeta'+\frac{\zeta}{r}\Big)^2+(\eta'+k\xi)^2+k^2\zeta^2\Big]rdr
\\
&\quad+\int_0^{r_0}\Big(\xi'+\frac{\xi}{r}-k\eta\Big)^2rdr+\int_0^{r_0}\rho(|\xi|^2+|\eta|^2+|\zeta|^2)rdr.
\end{split}
\end{equation}
\end{defi}
Next, we consider the basic estimate of $\xi$ on
	 the interval $(0,\frac{r_0}{6})$.
\begin{lem} \label{xi-bound}
	For any $r\in (0,\frac{r_0}{6})$,  it holds that
\begin{equation}\label{xi-bound-1}
\begin{split}
\xi(r)
\leq\sqrt{\frac{6}{r_0}}\Big(\int_{\frac{r_0}{6}}^{\frac{r_0}{3}}|\xi(a)|^2da\Big)^{\frac{1}{2}}
+\Big|\Big(\int_{r}^{\frac{r_0}{3}}|\xi'|^2sds\Big)^{\frac{1}{2}}\Big|
\Big|\ln \frac{r_0}{3}-\ln r\Big|^{\frac{1}{2}}.
\end{split}
\end{equation}
\end{lem}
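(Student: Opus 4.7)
The plan is to prove a pointwise bound on $\xi(r)$ for small $r$ by combining the fundamental theorem of calculus with a weighted Cauchy--Schwarz inequality and a mean value choice of intermediate point in $(r_0/6, r_0/3)$. Since the weighted Sobolev norm in Definition \ref{defi-X} controls $\int_0^{r_0}|\xi'|^2 r\, dr$ and $\int_0^{r_0}\rho |\xi|^2 r\, dr$ (in a roundabout way through the various tensorial combinations), it is natural to express $\xi(r)$ in terms of an $L^2$ average away from the origin plus a weighted derivative contribution.

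First, for any fixed $r \in (0, r_0/6)$ and any $a \in (r_0/6, r_0/3)$, write
\begin{equation*}
\xi(r) = \xi(a) - \int_r^a \xi'(s)\, ds,
\end{equation*}
which is valid for smooth $\xi$ with $\xi(0)=0$ and extends by density to $X_k$. Next, I split the integrand as $|\xi'(s)| = |\xi'(s)|\sqrt{s}\cdot \frac{1}{\sqrt{s}}$ and apply Cauchy--Schwarz:
\begin{equation*}
\int_r^a |\xi'(s)|\, ds \leq \left(\int_r^a |\xi'(s)|^2 s\, ds\right)^{1/2}\left(\int_r^a \frac{ds}{s}\right)^{1/2} \leq \left(\int_r^{r_0/3} |\xi'(s)|^2 s\, ds\right)^{1/2}\bigl|\ln(r_0/3)-\ln r\bigr|^{1/2},
\end{equation*}
where I used $a \leq r_0/3$ to extend both the weighted derivative integral and the logarithm bound. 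This already matches the second term of the target inequality.

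For the first term, I choose $a$ cleverly. By the mean value theorem for integrals applied to the continuous function $|\xi|^2$ on $[r_0/6, r_0/3]$ (an interval of length $r_0/6$), there exists $a^\star \in (r_0/6, r_0/3)$ such that
\begin{equation*}
|\xi(a^\star)|^2 = \frac{6}{r_0}\int_{r_0/6}^{r_0/3} |\xi(s)|^2\, ds,
\end{equation*}
so $|\xi(a^\star)| = \sqrt{6/r_0}\bigl(\int_{r_0/6}^{r_0/3}|\xi(s)|^2 ds\bigr)^{1/2}$. Taking $a = a^\star$ in the splitting above and combining with the weighted Cauchy--Schwarz bound gives exactly \eqref{xi-bound-1}.

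There is no real obstacle here beyond being careful about the admissible class: the inequality is first proved for $\xi \in C^\infty([0,r_0])$ with $\xi(0)=0$ (where FTC and the mean value theorem apply directly), and then extended to the full weighted space $X_k$ by the density used in Definition \ref{defi-X}, noting that both sides of \eqref{xi-bound-1} are continuous with respect to the seminorms controlled by $\|\cdot\|_{X_k}$.
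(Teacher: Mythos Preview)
Your proof is correct and follows essentially the same approach as the paper: both start from $\xi(r)=\xi(a)-\int_r^a \xi'(s)\,ds$, apply the weighted Cauchy--Schwarz $\int_r^a|\xi'|\,ds\le(\int_r^a|\xi'|^2 s\,ds)^{1/2}(\int_r^a s^{-1}\,ds)^{1/2}$, and extend the upper limits to $r_0/3$. The only cosmetic difference is that the paper averages the inequality in $a$ over $(r_0/6,r_0/3)$ and then applies Cauchy--Schwarz to $\int_{r_0/6}^{r_0/3}|\xi(a)|\,da$, whereas you invoke the mean value theorem for integrals to pick a specific $a^\star$; both routes produce the same factor $\sqrt{6/r_0}$.
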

\begin{proof}
Notice that
$|\xi(r)|=\Big|\xi(a)-\int_r^a\xi'dr\Big|\leq |\xi(a)|+\Big|\Big(\int_{r}^{a}|\xi'|^2sds\Big)^{\frac{1}{2}}\Big(\int_r^a\frac{1}{s}ds\Big)^{\frac{1}{2}}\Big|.$
Integrating the above inequality about $a$ on the interval $(\frac{r_0}{6},\frac{r_0}{3})$, we have
\begin{equation*}
\begin{split}
\frac{r_0}{6}|\xi(r)|\leq\int_{\frac{r_0}{6}}^{\frac{r_0}{3}}|\xi(a)|da
+\frac{r_0}{6}\Big|\Big(\int_{r}^{\frac{r_0}{3}}|\xi'|^2sds\Big)^{\frac{1}{2}}\Big|
\Big|\ln {\frac{r_0}{3}}-\ln r\Big|^{\frac{1}{2}},
\end{split}
\end{equation*}
which gives \eqref{xi-bound-1}.
\end{proof}
Then we introduce the estimate of $\xi(r)$ for any  $r\in (\frac{r_0}{3},r_0)$.
\begin{lem} \label{xi-bound-m-general}
	For any $r\in (\frac{r_0}{3},r_0)$, it holds that
	\begin{equation}\label{xi-bound-1-m-general}
	\begin{split}
	\xi(r) \leq\sqrt{\frac{6}{r_0}}\Big(\int_{\frac{r_0}{3}}^{\frac{r_0}{2}}|\xi(b)|^2db\Big)^{\frac{1}{2}}
	+\Big(\int_{\frac{r_0}{3}}^{r}|\xi'|^2|r_0-s|ds\Big)^{\frac{1}{2}} \Big|\ln\frac{2r_0}{3}-\ln(r_0-r)\Big|^\frac{1}{2}.
	\end{split}
	\end{equation}
\end{lem}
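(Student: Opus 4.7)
The proof will closely mirror the argument for Lemma~\ref{xi-bound}, with the single substantive change being that the natural degenerate weight is now $(r_0-s)$, which vanishes at the plasma--vacuum boundary $r=r_0$, rather than $s$, which vanishes at the origin. The plan is to start from the fundamental theorem of calculus, insert the new weight before applying Cauchy--Schwarz, and then average over an auxiliary variable to extract the $L^2$ term on the right-hand side.

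First, for $r \in (r_0/3, r_0)$ and $b\in(r_0/3, r_0/2)$ with $b\le r$, I would write
\begin{equation*}
|\xi(r)|=\Big|\xi(b)+\int_b^{r}\xi'(s)\,ds\Big|\leq |\xi(b)|+\int_b^{r}|\xi'(s)|\,ds.
\end{equation*}
Then I would split $|\xi'(s)| = |\xi'(s)|\sqrt{r_0-s}\cdot (r_0-s)^{-1/2}$ and apply Cauchy--Schwarz to obtain
\begin{equation*}
\int_b^{r}|\xi'(s)|\,ds\leq \Big(\int_b^{r}|\xi'|^2(r_0-s)\,ds\Big)^{1/2}\Big(\int_b^{r}\frac{1}{r_0-s}\,ds\Big)^{1/2}.
\end{equation*}
The logarithmic integral evaluates to $\ln(r_0-b)-\ln(r_0-r)$, and since $b\ge r_0/3$ implies $r_0-b\le 2r_0/3$, this is bounded by $\ln(2r_0/3)-\ln(r_0-r)$. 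Similarly, extending the outer integration from $[b,r]$ to $[r_0/3,r]$ only enlarges the weighted $L^2$ integral.

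With the $b$-dependence on the right-hand side now confined to the $|\xi(b)|$ term, I would integrate the resulting inequality in $b$ over $(r_0/3, r_0/2)$, a window of length $r_0/6$:
\begin{equation*}
\tfrac{r_0}{6}|\xi(r)|\leq \int_{r_0/3}^{r_0/2}|\xi(b)|\,db+\tfrac{r_0}{6}\Big(\int_{r_0/3}^{r}|\xi'|^2(r_0-s)\,ds\Big)^{1/2}\Big|\ln\tfrac{2r_0}{3}-\ln(r_0-r)\Big|^{1/2}.
\end{equation*}
A final application of Cauchy--Schwarz to the first term, $\int_{r_0/3}^{r_0/2}|\xi(b)|\,db \le \sqrt{r_0/6}\,\big(\int_{r_0/3}^{r_0/2}|\xi|^2\,db\big)^{1/2}$, followed by dividing through by $r_0/6$, yields the constant $\sqrt{6/r_0}$ in front of the $L^2$ term and produces exactly \eqref{xi-bound-1-m-general}.

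There is no genuine obstacle here; the only point requiring slight care is verifying that the bound $b\ge r_0/3$ controls $\ln(r_0-b)$ by $\ln(2r_0/3)$, which is precisely why the auxiliary interval is chosen to be $(r_0/3, r_0/2)$ rather than something closer to $r_0$ (which would degenerate the constant) or closer to $0$ (which would worsen the logarithmic term). This parallels the choice of $(r_0/6,r_0/3)$ in Lemma~\ref{xi-bound}, where the symmetric concern was distance from the origin.
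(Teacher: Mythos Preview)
Your proof is correct and follows exactly the same approach as the paper's own proof: fundamental theorem of calculus, weighted Cauchy--Schwarz with the weight $(r_0-s)$, averaging in $b$ over $(r_0/3,r_0/2)$, and a final Cauchy--Schwarz to produce the $\sqrt{6/r_0}$ constant. You have supplied slightly more detail (the explicit justification of the logarithmic bound and the rationale for the choice of averaging interval), but the argument is identical in substance.
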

\begin{proof}
	Notice that
	\begin{equation*}
	\xi(r)=\xi(b)+\int_b^r\xi'dr\leq \xi(b)+\Big(\int_{b}^{r}|\xi'|^2|r_0-s|ds\Big)^{\frac{1}{2}}\Big(\int_b^r\frac{1}{r_0-s}ds\Big)^{\frac{1}{2}}.
	\end{equation*}
	Integrating the above inequality about $b$ on the interval $(\frac{r_0}{3},\frac{r_0}{2})$, we have
	\begin{equation*}
	\begin{split}
	\frac{r_0}{6}\xi(r)\leq\int_{\frac{r_0}{3}}^{\frac{r_0}{2}}|\xi(b)|db
	+\frac{r_0}{6}\Big(\int_{\frac{r_0}{3}}^{r}|\xi'|^2|r_0-s|ds\Big)^{\frac{1}{2}}
	\Big(\int_{\frac{r_0}{3}}^r\frac{1}{r_0-s}ds\Big)^{\frac{1}{2}},
	\end{split}
	\end{equation*}
	which is divided by $\frac{r_0}{6}$, gives \eqref{xi-bound-1-m-general}.
\end{proof}

From the Definition \ref{defi-X}, Lemma \ref{xi-bound} and Lemma \ref{xi-bound-m-general}, we can show the following compactness results.
\begin{prop}\label{embeddding-b}
Let $\pi_i$ for $ = 1, 2, 3$ denote the projection operator onto the $i$-th factor,
there holds that $\pi_i: X_k\rightarrow Z_i$ is a bounded, linear, compact map for $ i = 1, 2,3$,
	where the spaces 
	\begin{equation}\label{defi-Z}
Z_1=\{\xi \in L^2(0,r_0)\}, \quad  Z_2=\{\eta\in L^2(0,r_0)\}, \quad Z_3=\{\zeta\in L^2(0,r_0)\}.
	\end{equation} 
	We denote them by 
	\begin{equation}\label{defi-subset-z1}
X_k\subset\subset Z_1,
	\end{equation} 	\begin{equation}\label{defi-subset-z2}
	X_k\subset\subset Z_2,
	\end{equation} \begin{equation}\label{defi-subset-z3}
	X_k\subset\subset Z_3.
	\end{equation} 
\end{prop}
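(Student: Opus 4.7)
The plan is to prove (i) boundedness of each projection $\pi_i$ and (ii) Rellich-type compactness via uniform endpoint control. Both hinge on first extracting weighted Sobolev bounds from $\|\cdot\|_{X_k}$. The four algebraic combinations $(-2\xi'+\xi/r-k\eta)$, $(\xi'-2\xi/r-k\eta)$, $(\xi'+\xi/r+2k\eta)$, $(\xi'+\xi/r-k\eta)$ appearing in \eqref{defi-Xk}, viewed as vectors in the coordinates $(\xi',\xi/r,k\eta)$, span the full three-dimensional space (the first three sum to zero, but replacing any one of them with the fourth yields a linearly independent triple with nonzero determinant). Elementary linear algebra therefore yields
\begin{equation*}
\int_0^{r_0}\bigl[(\xi')^2 + \tfrac{\xi^2}{r^2} + k^2\eta^2\bigr]\,r\,dr \leq C\,\|(\xi,\eta,\zeta)\|_{X_k}^2.
\end{equation*}
Combined with the directly-bounded $(\eta'+k\xi)^2 r$ and $k^2\xi^2 r\leq k^2 r_0^2(\xi^2/r)$, this gives $\int(\eta')^2 r\,dr\leq C$. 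The $(-\zeta'+\zeta/r)^2 r$ and $k^2\zeta^2 r$ terms control $\zeta$ analogously, using $\zeta(0)=0$ in Hardy-type form.

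Boundedness of $\pi_1$ then follows instantly from $\int_0^{r_0}\xi^2\,dr\leq r_0\int_0^{r_0}\xi^2/r\,dr$. For $\pi_2,\pi_3$ I would split $[0,r_0]=[0,\delta]\cup[\delta,r_0-\delta]\cup[r_0-\delta,r_0]$. On the middle interval both $r$ and $\rho r$ are bounded below, so the weighted Sobolev bounds reduce to ordinary $H^1$-bounds and deliver $L^2$ control. Near $r=r_0$, where $\rho$ vanishes but $r\geq r_0/2$, the bound $\int(\eta')^2 r\,dr\leq C$ becomes an ordinary $H^1(r_0/2,r_0)$-bound, yielding $L^\infty$ control via Sobolev embedding. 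Near $r=0$, where $r$ vanishes but $\rho(0)>0$ (since $p(0)>0$), the weighted $L^2$ term $\rho\eta^2 r$ together with pointwise bounds in the spirit of Lemma \ref{xi-bound} (adapted by integrating $\eta'$ from an interior reference point with the Cauchy--Schwarz weight $1/s$) supplies the $L^2$-bound on $[0,\delta]$.

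For compactness, I would take a bounded sequence $\{(\xi_n,\eta_n,\zeta_n)\}\subset X_k$ and extract $L^2$-convergent subsequences. On each $[\delta,r_0-\delta]$, Rellich--Kondrachov applies, so a diagonal argument over $\delta_j\to 0$ produces a candidate limit. The essential point is uniform smallness of the $L^2$-tails at both endpoints. For $\xi_n$, the direct estimate $\int_0^\delta\xi_n^2\,dr\leq\delta\int_0^\delta\xi_n^2/r\,dr=O(\delta)$ handles $r=0$, while near $r=r_0$ Lemma \ref{xi-bound-m-general} combined with the fact that $\int_{r_0/3}^r(\xi_n')^2|r_0-s|\,ds\cdot|\ln\tfrac{2r_0}{3}-\ln(r_0-r)|$ tends to zero as $r\to r_0$ (the integrand being uniformly $L^1$) delivers the required smallness. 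For $\zeta_n$, the condition $\zeta(0)=0$ enables the same Hardy argument at $r=0$; for $\eta_n$, I would use the pointwise estimate $|\eta_n(r)-\eta_n(r')|^2\leq|\!\ln(r'/r)|\int(\eta_n')^2 s\,ds$, whose logarithmic growth is dominated by the shrinking interval. The main obstacle I anticipate is extracting the endpoint pointwise bound for $\eta$ in the case $k=0$, where the $k^2\eta^2 r$ term disappears from the norm and no boundary condition is imposed on $\eta(0)$; here one must rely exclusively on $\rho(0)>0$ together with $\int(\eta_n')^2 r\,dr\leq C$, and carefully absorb the logarithmic divergence using the vanishing of the tail intervals.
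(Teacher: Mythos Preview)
Your approach is correct and essentially the same as the paper's: both extract the weighted derivative bounds $\int_0^{r_0}(\xi')^2 r\,dr$, $\int_0^{r_0}(\eta')^2 r\,dr$, $\int_0^{r_0}(\zeta')^2 r\,dr \leq C$, then argue via uniform smallness of the $L^2$-tails near $r=0$ and $r=r_0$ (using Lemmas~\ref{xi-bound} and~\ref{xi-bound-m-general}) together with interior compactness on $[s_0,s_1]$. The only noteworthy difference is that the paper obtains the derivative bounds by forward-referencing Proposition~\ref{new-prop-bian-guo}, whereas you do the linear algebra on the four quadratic combinations directly; your route is more self-contained and avoids the slightly awkward forward reference, while the paper's route has the advantage of packaging all the algebraic manipulations (including the boundary-term handling for $\zeta$ and the $k=0$ subtlety you flag for $\eta$) into a single later proposition.
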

\begin{proof}
 Assume that $\|(\xi_n,\eta_n,\zeta_n)\|_{X_k}\leq C$ for $n\in \mathbb{N}$. When $m=0$, from Proposition \ref{new-prop-bian-guo}, we know that
\begin{equation*}
\int_0^{r_0}{\xi'_n}^2rdr+ \int_0^{r_0}{\eta'_n}^2rdr+ \int_0^{r_0}{\zeta'_n}^2rdr\leq C.
\end{equation*}
Fix any $\kappa>0$.
We claim that there exists a subsequence $\{\xi_{n_i}\}$ so that 
\begin{equation}\label{claim}
\sup_{i,j}\|\xi_{n_i}-\xi_{n_j}\|_{Z_1}\leq \kappa.
\end{equation}
 Therefore, from Lemma \ref{xi-bound}, choosing $0<s_0<\frac{r_0}{3}$ small enough such that $C\Big(\frac{6}{r_0}
	+ \Big|\ln \frac{r_0}{3}\Big|+1\Big)s_0\leq  \frac{\kappa}{6}$,  then we have
\begin{equation}\label{small-s0}
\begin{split}
\int_{0}^{s_0}\xi_n^2(r)dr
&\leq\frac{6C}{r_0}s_0
+C\int_{0}^{s_0}\Big|\Big(\int_{r}^{\frac{r_0}{3}}|\xi'_n|^2sds\Big)\Big|
\Big|\ln \frac{r_0}{3}-\ln r\Big|dr
\\
&\leq \frac{6C}{r_0}s_0
+C \Big|\ln \frac{r_0}{3}\Big| s_0+C|s_0\ln s_0|\\
&\leq C\Big(\frac{6}{r_0}
+ \Big|\ln \frac{r_0}{3}\Big|+1\Big)s_0\leq  \frac{\kappa}{6}.
\end{split}
\end{equation}
On the other hand, choosing $\frac{r_0}{3}<s_1<r_0$ and $s_1$ close enough to $r_0$, such that $ C\Big(\frac{6}{r_0}
	+\Big|\ln \frac{2r_0}{3}\Big|+1\Big)(r_0-s_1)\leq  \frac{\kappa}{6}$,
by Lemma \ref{xi-bound-m-general}, we can show that
\begin{equation}\label{third-inter}
\begin{split}
\int_{s_1}^{r_0}\xi_n^2(r)dr
&\leq\frac{6C}{r_0}(r_0-s_1)
+C\int_{s_1}^{r_0}\Big(\int_{\frac{r_0}{3}}^{r}|\xi_n'|^2|r_0-s|ds\Big)
\Big|\ln\frac{2r_0}{3}-\ln(r_0-r)\Big|dr\\
&
\leq\frac{6C}{r_0}(r_0-s_1)+C\int_{s_1}^{r_0}\Big|\ln \frac{2r_0}{3}-\ln(r_0-r)\Big|dr
\\
&\leq\frac{6C}{r_0}(r_0-s_1)
+C\Big|\ln \frac{2r_0}{3}\Big|(r_0-s_1)+C(r_0-s_1)\\
&\quad+C|\ln(r_0-s_1)(r_0-s_1)|\\
&\leq C\Big(\frac{6}{r_0}
+\Big|\ln \frac{2r_0}{3}\Big|+1\Big)(r_0-s_1)\leq  \frac{\kappa}{6},
\end{split}
\end{equation}
where we have used the facts  
$$\int_{\frac{r_0}{3}}^{r_0}|r_0-r|\big|\xi'\big|^2dr\leq \int_{\frac{r_0}{3}}^{r_0}\big|\xi'\big|^2rdr+\int_{\frac{r_0}{3}}^{r_0}\big|\xi'\big|^2r_0dr\leq C.$$		
Since the subinterval $(s_0,s_1)$ avoids the singularity of $\frac{1}{r}$ and the degenerate of the density $\rho$ on the boundary $r=r_0$, the function $\xi_n$ is uniformly bounded in $H^1(s_0,s_1)$. By the compact embedding $H^1(s_0,s_1)\subset\subset C^0(s_0,s_1)$, one can extract a subsequence $\{\xi_{n_i}\}$ that converges in $L^\infty(s_0,s_1)$. So for $i, j$ large enough, it holds that 
$\sup_{i,j}\|\xi_{n_i}-\xi_{n_j}\|^2_{L^\infty(s_0,s_1)}\leq \frac{\kappa}{3(s_1-s_0)}$.
Then  along the above subsequence one can get 
from  \eqref{small-s0} and \eqref{third-inter} that 
\begin{equation}
\begin{split}
&\|\xi_{n_i}-\xi_{n_j}\|^2_{L^2(0,r_0)}=\int_0^{r_0}|\xi_{n_i}-\xi_{n_j}|^2dr=\int_0^{s_0}+\int_{s_0}^{s_1}+\int_{s_1}^{r_0}\\
&\quad\leq 2C\Big(\frac{6}{r_0}
+ \Big|\ln \frac{r_0}{3}\Big|+1\Big)s_0
+(s_1-s_0)\sup_{i,j}\|\eta_{n_i}-\eta_{n_j}\|^2_{L^\infty(s_0,s_1)}\\
&\qquad+2C\Big(\frac{6}{r_0}
+\Big|\ln \frac{2r_0}{3}\Big|+1\Big)(r_0-s_1)
\leq \kappa,
\end{split}
\end{equation}
which implies the claim \eqref{claim} and the compactness result \eqref{defi-subset-z1}.
Similarly as the above estimates, we can prove \eqref{defi-subset-z2} and \eqref{defi-subset-z3}. We finish the proof.
\end{proof} 
Now, we give the variational analysis about the case $m=0$ and any $k\in\mathbb{Z}$.
In order to understand $\mu$,
we consider the energy \eqref{sausage-in-v} and 
\begin{equation}\label{constraint}
\mathcal{J}(\xi,\eta,\zeta)=2\pi^2\int_0^{r_0} \rho (|\xi|^2+|\eta|^2+|\zeta|^2)rdr.
\end{equation}
From Definition \ref{defi-X} and Proposition \ref{embeddding-b}, we can get that	$E_{0,k}$ and $\mathcal{J}$ are both well defined on the space $X_k$. 
\begin{lem}
	$E_{0,k}$ and $\mathcal{J}$ are both well defined on the space $X_k$. 
\end{lem}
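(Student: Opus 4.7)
The plan is to establish $|E_{0,k}(\xi,\eta,\zeta)| + |\mathcal{J}(\xi,\eta,\zeta)| \leq C\|(\xi,\eta,\zeta)\|_{X_k}^2$ by bounding each summand in the two functionals by a constant multiple of $\|(\xi,\eta,\zeta)\|_{X_k}^2$. The functional $\mathcal{J}$ is immediate: by \eqref{constraint} and \eqref{defi-Xk}, $\mathcal{J}(\xi,\eta,\zeta) = 2\pi^2\int_0^{r_0}\rho(\xi^2+\eta^2+\zeta^2)\,r\,dr$ is exactly $2\pi^2$ times the weighted $L^2$-summand in the $X_k$-norm.

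For $E_{0,k}$ in \eqref{sausage-in-v}, I would split its integrand into four groups. First, the viscous terms carrying the prefactors $\tilde{\varepsilon}$ and $\tilde{\delta}$ are, term-by-term, positive constants times the viscous summands in $\|(\xi,\eta,\zeta)\|_{X_k}^2$, so they are controlled by $\max\{\tilde{\varepsilon},\tilde{\delta}\}\|(\xi,\eta,\zeta)\|_{X_k}^2$. Next, since $\frac{(r\xi)'}{r}=\xi'+\frac{\xi}{r}$, the pressure term is $2\pi^2\int_0^{r_0}\gamma p\bigl(\xi'+\frac{\xi}{r}-k\eta\bigr)^2\,r\,dr$, bounded by $2\pi^2\gamma\|p\|_{L^\infty}$ times the divergence-type summand in $\|(\xi,\eta,\zeta)\|_{X_k}^2$; here $\|p\|_{L^\infty}<\infty$ follows from $p\in C^{2,1}([0,r_0])$ via admissibility.

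For the magnetic term, observe first that $\frac{(r\xi)'-2\xi}{r}=\xi'-\frac{\xi}{r}$, so the bracket equals $T:=-\xi'+\frac{\xi}{r}+k\eta$. Setting
\begin{equation*}
A=-2\xi'+\tfrac{\xi}{r}-k\eta,\quad B=\xi'-\tfrac{2\xi}{r}-k\eta,\quad C=\xi'+\tfrac{\xi}{r}+2k\eta,\quad D=\xi'+\tfrac{\xi}{r}-k\eta,
\end{equation*}
one checks by inspection that $T=\tfrac{1}{3}(A-B+C-D)$, whence $T^2\leq\tfrac{4}{9}(A^2+B^2+C^2+D^2)$. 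Since $B_\theta\in C^{1,1}([0,r_0])\subset L^\infty$ by Proposition \ref{steady-lem}, the magnetic contribution is therefore dominated by a multiple of summands already present in $\|(\xi,\eta,\zeta)\|_{X_k}^2$.

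The main obstacle is the Rayleigh-Taylor term. Since $\int_0^{r_0}\frac{2p'\xi^2}{r}\cdot r\,dr = 2\int_0^{r_0}p'\xi^2\,dr$, the cylindrical weight $r$ cancels the $1/r$ and one is left with an unweighted $L^2(0,r_0)$-integral of $\xi$; because $\rho(r)\to 0$ as $r\to r_0$, the weighted $L^2$-summand in $\|(\xi,\eta,\zeta)\|_{X_k}^2$ alone cannot control it. This is precisely where the compact embedding $X_k\subset\subset Z_1=L^2(0,r_0)$ of Proposition \ref{embeddding-b} is essential: compactness implies continuity of the inclusion, giving $\|\xi\|_{L^2(0,r_0)}\leq C\|(\xi,\eta,\zeta)\|_{X_k}$, and combining this with $\|p'\|_{L^\infty}<\infty$ (again from $p\in C^{2,1}$) yields the bound $2\|p'\|_{L^\infty}\|\xi\|_{L^2(0,r_0)}^2 \leq C\|(\xi,\eta,\zeta)\|_{X_k}^2$. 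Summing the four estimates delivers $|E_{0,k}(\xi,\eta,\zeta)|\leq C\|(\xi,\eta,\zeta)\|_{X_k}^2<\infty$, completing the verification.
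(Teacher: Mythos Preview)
Your proof is correct and follows the same overall strategy as the paper: bound each summand in $E_{0,k}$ and $\mathcal{J}$ by the $X_k$-norm, with the $p'\xi^2$ term requiring the embedding result from Proposition~\ref{embeddding-b}.

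The main difference is in how you handle $\int_0^{r_0}p'\xi^2\,dr$. The paper splits $(0,r_0)$ at a small $s_0$, applies Lemma~\ref{xi-bound} on $(0,s_0)$ to get $\int_0^{s_0}\xi^2\,dr\leq C\mathcal{J}+C\int_0^{r_0/3}\xi'^2\,s\,ds$, and then invokes the proof of Proposition~\ref{embeddding-b} on $(s_0,r_0)$; you instead observe that the compact embedding $X_k\subset\subset Z_1$ already implies a continuous inclusion, giving $\|\xi\|_{L^2(0,r_0)}\leq C\|(\xi,\eta,\zeta)\|_{X_k}$ in one stroke. Your route is shorter and avoids reproducing estimates already encoded in Proposition~\ref{embeddding-b}. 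Your explicit identity $T=\tfrac13(A-B+C-D)$ for the magnetic bracket is also a tidy justification that the paper leaves implicit; the paper simply lists the magnetic term among those dominated by $\|(\xi,\eta,\zeta)\|_{X_k}^2$ without spelling out the algebra.
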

\begin{proof}
Applying Lemma \ref{xi-bound}, we get  for $0<s_0<\frac{r_0}{6}$ small enough that
\begin{equation*}
\begin{split}
\Big|\int_0^{s_0}p'\xi^2dr\Big|&\leq C	\int_{0}^{s_0}\xi^2dr\leq	C\int_{0}^{s_0}\Big(\int_{\frac{r_0}{6}}^{\frac{r_0}{3}}|\xi(b)|^2db\Big)dr
\\
&\quad+C\int_0^{s_0}\Big(\int_{r}^{\frac{r_0}{3}}|\xi'|^2sds\Big)
\Big|\ln \frac{r_0}{3}-\ln r\Big|dr\\
&
\leq C\mathcal{J}+C\Big(\int_{0}^{\frac{r_0}{3}}|\xi'|^2sds\Big)\int_{0}^{s_0}\Big|\ln \frac{r_0}{3}-\ln r\Big|dr
\\&\leq C\mathcal{J}
+C\int_{0}^{\frac{r_0}{3}}|\xi'|^2sds\leq C\mathcal{J}+C\|\big(\xi,\eta,\zeta\big)\|^2_{X_k}.
\end{split}
\end{equation*}	
On the other hand, from the proof of Proposition \ref{embeddding-b},  it follows that
\begin{equation*}
\Big|\int_{s_0}^{r_0}p'\xi^2dr\Big|\leq C\int_{s_0}^{r_0} \xi^2dr\leq C\|\big(\xi,\eta,\zeta\big)\|^2_{X_k}.
\end{equation*}
Hence, we get that
\begin{equation}\label{pressure-term-control}
\Big|\int_0^{r_0}p'\xi^2dr\Big|\leq  C\mathcal{J}+C\|\big(\xi,\eta,\zeta\big)\|^2_{X_k},
\end{equation}
which  implies from Proposition \ref{embeddding-b} 
that
\begin{equation*}
\begin{split}
|E_{0,k}(\xi,\eta,\zeta)|
&\leq C\mathcal{J}+C\|\big(\xi,\eta,\zeta\big)\|^2_{X_k}+C\int_0^{r_0}\Big\{B_{\theta}^2\big[k\eta-\frac{1}{r}((r\xi)'-2\xi)\big]^2\Big\}rdr\\
&\quad+C\int_0^{r_0} p\big|\frac{1}{r}(r\xi)'\big|^2rdr+ C\|\rho\|^{\gamma-1}_{L^\infty}\int_0^{r_0} \rho |\eta|^2rdr+C\int_0^{r_0}\Big[\Big(-2\xi'+\frac{\xi}{r}-k\eta\Big)^2\\
&\quad+\Big(\xi'-\frac{2\xi}{r}-k\eta\Big)^2+\Big(\xi'+\frac{\xi}{r}+2k\eta\Big)^2+\Big(-\zeta'+\frac{\zeta}{r}\Big)^2+(\eta'+k\xi)^2+k^2\zeta^2\Big]rdr
\\
&\quad+C\int_0^{r_0}\Big(\xi'+\frac{\xi}{r}-k\eta\Big)^2rdr\leq C	\|(\xi,\eta,\zeta)\|^2_{X_k}.
\end{split}
\end{equation*}
Hence, $E_{0,k}$ and $\mathcal{J}$ are both well-defined on the space $X_k$.
\end{proof}	
Define function $g(r)=\sup_{r\leq s\leq r_0}\frac{p(s)}{-p'(s)}$, then by Definition \ref{admissible}/admissibility of the presure $p$, we can get the following lemma.
\begin{lem}[Lemma 3.7 in \cite{bian-guo-tice-inviscid}]\label{presure-boundary}
	Assume $s_1$ near $r_0$,  then it holds that
	\begin{equation}\label{weight-p'}
	\begin{split}
	\int_{s_1}^{r_0}-p'(r)\xi^2dr
\leq 2 p(s_1)\xi^2(s_1)+4g(s_1)
	\int_{s_1}^{r_0}p\xi'^2dr,
	\end{split}
	\end{equation}
	with $g(s_1)\rightarrow 0$ as $s_1\rightarrow r_0$.
\end{lem}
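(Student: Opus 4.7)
The plan is to prove the weighted Hardy-type inequality by integrating by parts against the factor $-p'(r)$, using the boundary value $p(r_0)=0$ to kill one of the boundary terms, and then closing the estimate via a weighted Cauchy–Schwarz inequality with a small parameter that lets us absorb the bad term back into the left-hand side. The hypothesis $\lim_{r\to r_0} p(r)/p'(r)=0$ from Definition \ref{admissible} is exactly what produces the smallness of $g(s_1)$ and makes the inequality nontrivial.

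Concretely, first I would integrate by parts:
\begin{equation*}
\int_{s_1}^{r_0}\bigl(-p'(r)\bigr)\xi^2\,dr \;=\; p(s_1)\xi^2(s_1) \;-\; p(r_0)\xi^2(r_0) \;+\; 2\int_{s_1}^{r_0} p(r)\,\xi\,\xi'\,dr,
\end{equation*}
and drop the $p(r_0)\xi^2(r_0)=0$ term using $p(r_0)=0$ from the admissibility hypothesis.

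Next, I would estimate the cross-term by splitting the weight as $p = \sqrt{-p'}\cdot (p/\sqrt{-p'})$ and applying $2ab\le \lambda a^2 + \lambda^{-1} b^2$ for a well-chosen $\lambda$. With $\lambda = \tfrac{1}{2}$ this gives
\begin{equation*}
2\int_{s_1}^{r_0} p\,\xi\,\xi'\,dr \;\le\; \tfrac{1}{2}\int_{s_1}^{r_0}\bigl(-p'\bigr)\xi^2\,dr \;+\; 2\int_{s_1}^{r_0}\frac{p^2}{-p'}(\xi')^2\,dr,
\end{equation*}
so after absorbing the first term on the right into the left-hand side I get
\begin{equation*}
\int_{s_1}^{r_0}\bigl(-p'\bigr)\xi^2\,dr \;\le\; 2\,p(s_1)\xi^2(s_1) \;+\; 4\int_{s_1}^{r_0}\frac{p^2}{-p'}(\xi')^2\,dr.
\end{equation*}
Finally, I would write $p^2/(-p') = p\cdot\bigl(p/(-p')\bigr)$ and pull the sup out, using the definition $g(s_1)=\sup_{s_1\le s\le r_0} p(s)/(-p'(s))$ to bound the last integral by $g(s_1)\int_{s_1}^{r_0} p\,(\xi')^2\,dr$, yielding exactly \eqref{weight-p'}.

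The convergence $g(s_1)\to 0$ as $s_1\to r_0$ follows directly from the admissibility assumption $\lim_{r\to r_0} p(r)/p'(r)=0$: near $r_0$ we have $p'\le 0$ and $p\ge 0$, so $p/(-p')\ge 0$ and tends to $0$, making its supremum over $[s_1,r_0]$ tend to $0$ as well. The main subtlety is choosing the correct splitting in Cauchy–Schwarz so that the weight $-p'$ reproduces itself on the left and leaves the ratio $p^2/(-p')$ on the right; any other split would either fail to close the absorption step or produce a weight incompatible with the $p(\xi')^2$ form on the right-hand side.
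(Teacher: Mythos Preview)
Your argument is correct and is the standard route to this weighted Hardy-type estimate: integrate by parts to trade $-p'$ for $p$, use $p(r_0)=0$ to kill the right boundary term, split the cross-term as $\sqrt{-p'}\,\xi \cdot (p/\sqrt{-p'})\,\xi'$ with a weighted Young inequality, absorb, and then bound $p^2/(-p') \le g(s_1)\,p$ pointwise. The paper itself does not supply a proof of this lemma here; it is quoted from \cite{bian-guo-tice-inviscid}, and your proof is exactly the kind of argument one expects behind that citation. The only tacit hypothesis you are using is that $-p'>0$ on $[s_1,r_0]$ so the splitting makes sense; this follows for $s_1$ close enough to $r_0$ from the admissibility condition \eqref{minimizer-assumption}, which forces $p'(r)\neq 0$ near $r_0$ together with $p'\le 0$ there.
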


Now we define \begin{equation}\label{con-mono}
\lambda(s)=\inf_{(\xi,\eta,\zeta)\in X_k}\frac{E_{0,k}(\xi,\eta,\zeta;s)}{\mathcal{J}(\xi,\eta,\zeta)}.
\end{equation}
Consider the set 
\begin{equation}\label{set a-b}
\mathcal{A}_1=\{(\xi,\eta,\zeta)\in X_k| \mathcal{J}(\xi,\eta,\zeta)=1\}.
\end{equation}
We want to show that the infimum of $E_{0,k}(\xi,\eta,\zeta)$ over the set $\mathcal{A}_1$ is achieved and is negative. And then we show that the minimizer solves \eqref{spectral-formulation}  with $m=0$ and $k\neq 0$ and the corresponding boundary conditions. First, we prove that the energy $E_{0,k}$ has a lower bound on the set $ \mathcal{A}_1$ and the coercivity estimate holds.
\begin{lem}
The energy $E_{0,k}(\xi,\eta,\zeta)$ has a lower bound on the set $ \mathcal{A}_1$ and any minimizing sequence $(\xi_n,\eta_n,\zeta_n)$ is bounded in $X_k$.
\end{lem}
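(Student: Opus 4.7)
The plan is to isolate the only potentially negative contribution to $E_{0,k}$, namely
\[
P\ :=\ 4\pi^2 \int_0^{r_0} p'(r)\,\xi^2(r)\,dr,
\]
and show that $|P|$ can be absorbed into the manifestly non-negative parts of $E_{0,k}$ up to a term controlled by the constraint. Writing $E_{0,k}=M+V+P$, where $M$ collects the magnetic pressure square $B_\theta^2[\,\cdot\,]^2$ and the $\gamma p$-compression square and $V$ collects the viscous sum-of-squares with prefactors $\tilde\varepsilon,\tilde\delta$, the target is an inequality of the form
\[
|P|\ \le\ \tfrac{1}{2}(M+V)\ +\ C\bigl(\mathcal{J}+1\bigr).
\]
Once this is established, $E_{0,k}\ge\tfrac12(M+V)-C$ on $\mathcal{A}_1$, which gives the lower bound. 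For any minimizing sequence $(\xi_n,\eta_n,\zeta_n)\subset\mathcal{A}_1$ with $E_{0,k}(\xi_n,\eta_n,\zeta_n)\to\inf$, the inequality also forces $M+V$ to be uniformly bounded; combined with $\mathcal{J}=1$ and the definition \eqref{defi-Xk} of $\|\cdot\|_{X_k}$ (viscous sum-of-squares plus weighted $L^2$), this yields the claimed $X_k$-bound.

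I would bound $|P|$ by splitting $(0,r_0)=(0,s_0)\cup(s_0,s_1)\cup(s_1,r_0)$ with $s_0\in(0,r_0/6)$ small and $s_1\in(r_0/3,r_0)$ close to $r_0$. On $(0,s_0)$ the Hardy-type computation from the proof of \eqref{pressure-term-control}, driven by Lemma \ref{xi-bound}, yields
\[
\int_0^{s_0}|p'|\xi^2\,dr\ \le\ C(s_0+s_0|\ln s_0|)\bigl(\mathcal{J}+\|(\xi,\eta,\zeta)\|_{X_k}^2\bigr),
\]
whose prefactor is $o(1)$ as $s_0\to 0$, so it absorbs into $\tfrac14(M+V)$. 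On the middle interval $(s_0,s_1)$, $p'$ is bounded and $\xi$ is controlled in $L^\infty$ by its $H^1(s_0,s_1)$-norm (the interval avoids both the $1/r$ singularity at $0$ and the degeneracy of $\rho$ at $r_0$); the $H^1(s_0,s_1)$-norm is in turn controlled by $V$ plus $\mathcal{J}$, for instance by writing $\xi'=(\xi'+\xi/r-k\eta)-\xi/r+k\eta$ and using the $\tilde\delta$-square in \eqref{sausage-in-v} together with the weighted $L^2$ mass of $\xi,\eta$ away from $r=r_0$.

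The delicate step is the boundary region $(s_1,r_0)$. By admissibility $p'\le 0$ there, so $\int_{s_1}^{r_0}p'\xi^2\,dr= -\int_{s_1}^{r_0}(-p')\xi^2\,dr$, and Lemma \ref{presure-boundary} gives
\[
\int_{s_1}^{r_0}(-p')\xi^2\,dr\ \le\ 2p(s_1)\xi^2(s_1) + 4g(s_1)\!\int_{s_1}^{r_0}p\,(\xi')^2\,dr,
\]
with $g(s_1)\to 0$ by \eqref{minimizer-assumption}. The trace $\xi^2(s_1)$ is controlled exactly as in the middle-interval step. The remaining integral is absorbed into the $\gamma p$-compression term of $M$: expanding $\gamma p[\tfrac{1}{r}(r\xi)'-k\eta]^2r=\gamma p(\xi'+\xi/r-k\eta)^2 r$ and applying Cauchy--Schwarz shows
\[
\int_{s_1}^{r_0}p\,(\xi')^2\,dr\ \le\ C_\star\cdot(\gamma p\text{-integrand in }M)+C(\mathcal{J}+1),
\]
for some constant $C_\star$ depending only on $\sup_{[s_1,r_0]}p$ and on $\|\rho^{\gamma-1}\|_{L^\infty}$ (used to convert the $\gamma p\xi^2/r^2$ cross term into $\mathcal{J}$). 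One then chooses $s_1$ close enough to $r_0$ that $4g(s_1)C_\star<\tfrac14$, completing the absorption into $\tfrac14 M$.

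Summing the three pieces yields the target inequality, hence the lower bound on $E_{0,k}|_{\mathcal{A}_1}$ and the uniform $X_k$-bound along minimizing sequences. The principal obstacle is the interdependence of the three regional estimates: one must first fix $s_0$ small to shrink the origin prefactor, and then, with the middle-interval constant $C_{s_0,s_1}$ now fixed, choose $s_1$ so close to $r_0$ that $g(s_1)$ dominates $C_\star$; this ordering only works because assumption \eqref{minimizer-assumption} forces $g(s_1)\to 0$, which is precisely where admissibility of the pressure enters the coercivity argument.
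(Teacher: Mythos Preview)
Your proposal is correct, but the paper takes a simpler route near $r=r_0$. Both arguments split $(0,r_0)$ into three pieces and handle the origin via Lemma~\ref{xi-bound} and the middle piece via elementary $H^1$-control away from the singularities. The genuine difference is on $(s_1,r_0)$: you invoke Lemma~\ref{presure-boundary} (which uses the admissibility condition \eqref{minimizer-assumption}) and absorb $\int p(\xi')^2\,dr$ into the $\gamma p$-compression square in $M$. The paper instead uses only that $|p'|\le C$ together with the embedding $H^1(s_1,r_0)\hookrightarrow L^\infty$, and absorbs directly into the viscous quantities $\int_0^{r_0}\xi'^2 r\,dr$ and $\int_0^{r_0}\xi^2/r\,dr$, which it has already extracted from $V$ by the algebraic trick $a^2+b^2\ge\tfrac12(a-b)^2$ applied to pairs of the viscous squares (this is \eqref{con-mono-use-bian}). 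Thus the paper never touches $M$ or Lemma~\ref{presure-boundary} for this lemma; the lower bound comes purely from viscosity. Your route mirrors the inviscid argument --- indeed Lemma~\ref{presure-boundary} is lifted from the inviscid paper and is used later in \eqref{E^0_0-k} precisely because no viscous help is available there. Your approach works but is more elaborate than needed once $\tilde\varepsilon,\tilde\delta>0$; conversely it makes transparent that the boundary-region coercivity does not actually rely on $s>0$. A minor remark: in your $(s_1,r_0)$ step the constant $C$ in $C(\mathcal{J}+1)$ carries a $k^2$ factor from the $pk^2\eta^2$ cross term, which is harmless since $k$ is fixed but should be acknowledged.
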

\begin{proof}	
We can directly get from \eqref{sausage-in-v} that
\begin{equation}\label{con-mono-use-bian}
\begin{split}
E_{0,k}(\xi,\eta,\zeta)&\geq2\pi^2\int_0^{r_0}\Big\{\frac{2p'\xi^2}{r}+\gamma p\Big[\frac{1}{r}(r\xi)'-k\eta\Big]^2\Big\}rdr\\
&\quad+2\pi^2\int_0^{r_0}\tilde{\varepsilon}\Big[\frac{2}{9}\Big(-2\xi'+\frac{\xi}{r}-k\eta\Big)^2+\frac{2}{9}\Big(\xi'-\frac{2\xi}{r}-k\eta\Big)^2\Big]rdr
\\
&\quad+2\pi^2\int_0^{r_0}\tilde{\delta}\Big(\xi'+\frac{\xi}{r}-k\eta\Big)^2rdr\\
&\geq 
2\pi^2\min(\tilde{\varepsilon},\tilde{\delta})\int_0^{r_0}\Big[\frac{2}{9}\Big(-2\xi'+\frac{\xi}{r}-k\eta\Big)^2+\frac{2}{9}\Big(\xi'+\frac{\xi}{r}-k\eta\Big)^2\Big]rdr\\
&\quad+2\pi^2\min(\tilde{\varepsilon},\tilde{\delta})\int_0^{r_0}\Big[\frac{2}{9}\Big(\xi'-\frac{2\xi}{r}-k\eta\Big)^2+\frac{2}{9}\Big(\xi'+\frac{\xi}{r}-k\eta\Big)^2\Big]rdr\\
&\quad+4\pi^2\int_0^{r_0}p'\xi^2dr\\
&\geq2\pi^2\min(\tilde{\varepsilon},\tilde{\delta})\int_0^{r_0}{\xi'}^2rdr+2\pi^2\min(\tilde{\varepsilon},\tilde{\delta})\int_0^{r_0}\frac{\xi^2}{r}dr+4\pi^2\int_0^{r_0}p'\xi^2dr,
\end{split}
\end{equation}
for any $(\xi,\eta,\zeta)\in \mathcal{A}$.
Here, we have used the facts that $a^2+b^2\geq \frac{1}{2}(a-b)^2,$ with $a=-2\xi'+\frac{\xi}{r}-k\eta$, $b=\xi'+\frac{\xi}{r}-k\eta$; $a=\xi'-\frac{2\xi}{r}-k\eta$, $b=\xi'+\frac{\xi}{r}-k\eta$.
By Lemma \ref{xi-bound}, choosing $0<s_0<\frac{r_0}{6}$ small enough  such that
$Cs_0\leq \frac{1}{4}$, we obtain that
\begin{equation}\label{estimate-p'}
\begin{split}
\Big|\int_0^{s_0}p'\xi^2dr\Big|&\leq C	\int_{0}^{s_0}\xi^2dr\leq	C\int_{0}^{s_0}\Big(\int_{\frac{r_0}{6}}^{\frac{r_0}{3}}|\xi(b)|^2db\Big)dr
\\
&\quad+C\int_0^{s_0}\Big(\int_{r}^{\frac{r_0}{3}}|\xi'|^2sds\Big)
\Big|\ln \frac{r_0}{3}-\ln r\Big|dr\\
&
\leq C\mathcal{J}s_0+C\Big(\int_{0}^{\frac{r_0}{3}}|\xi'|^2sds\Big)\int_{0}^{s_0}\Big|\ln \frac{r_0}{3}-\ln r\Big|dr
\\&\leq C\mathcal{J}s_0
+Cs_0\int_{0}^{\frac{r_0}{3}}|\xi'|^2sds\\&\leq C\mathcal{J}s_0
+2Cs_0\pi^2\min(\tilde{\varepsilon},\tilde{\delta})\int_0^{r_0}{\xi'}^2rdr\\
&\leq C\mathcal{J}s_0
+\frac{1}{2}\pi^2\min(\tilde{\varepsilon},\tilde{\delta})\int_0^{r_0}{\xi'}^2rdr.
\end{split}
\end{equation}	
On the other hand, from the proof of Proposition \ref{embeddding-b}, choosing $s_0<s_1<r_0$ close to $r_0$ such that
 $C(r_0-s_1)\leq \frac{1}{4}$, we get that
\begin{equation}\label{estimates-p'-3}
\Big|\int_{s_0}^{s_1}p'\xi^2dr\Big|\leq C\int_{s_0}^{s_1} \xi^2dr\leq C\mathcal{J}
\end{equation}
and
\begin{equation}\label{lower-bound-large-s1}
\begin{split}
\Big|\int_{s_1}^{r_0}p'\xi^2dr\Big|&\leq C\int_{s_1}^{r_0} \xi^2dr
\leq C(r_0-s_1)\|\xi\|^2_{L^\infty(s_1,r_0)}\leq C(r_0-s_1)\|\xi\|^2_{H^1(s_1,r_0)}\\
&\leq C(r_0-s_1)\pi^2\min(\tilde{\varepsilon},\tilde{\delta})\Big(\int_{s_1}^{r_0}{\xi'}^2rdr+\int_{s_1}^{r_0}\frac{\xi^2}{r}dr\Big)\\
&\leq C(r_0-s_1)\pi^2\min(\tilde{\varepsilon},\tilde{\delta})\Big(\int_{0}^{r_0}{\xi'}^2rdr+\int_{0}^{r_0}\frac{\xi^2}{r}dr\Big)\\
&\leq \frac{1}{4}\pi^2\min(\tilde{\varepsilon},\tilde{\delta})\Big(\int_{0}^{r_0}{\xi'}^2rdr+\int_{0}^{r_0}\frac{\xi^2}{r}dr\Big),
\end{split}
\end{equation}
where we have used the facts  $H^1(s_1,r_0)\subset\subset L^\infty(s_1,r_0)$.
Therefore, we can prove
\begin{equation}
\begin{split}
E_{0,k}(\xi,\eta,\zeta)
&\geq\pi^2\min(\tilde{\varepsilon},\tilde{\delta})\int_0^{r_0}{\xi'}^2rdr+\frac{3}{2}\pi^2\min(\tilde{\varepsilon},\tilde{\delta})\int_0^{r_0}\frac{\xi^2}{r}dr-2C\mathcal{J}\\
&\geq -2C\mathcal{J}=-2C,
\end{split}
\end{equation}
which gives that the energy $E_{0,k}(\xi,\eta,\zeta)$ has a lower bound on the set $ \mathcal{A}_1$. Using the facts that	$\mathcal{J}=1$ and $E_{0,k}$ has a lower bound on set $\mathcal{A}_1$, we can choose a minimizing sequence such that along the minimizing sequence, 
	we have $M\leq E_{0,k}(\xi_n,\eta_n,\zeta_n)<M+1$, and for the minimizing sequence,
	we can show coercivity estimate: 
	\begin{equation}\label{coercivity}
\begin{split}
\|(\xi_n,\eta_n,\zeta_n)\|^2_{X_k}
\leq C\mathcal{J}+C(M+1)\leq C.
	\end{split}
	\end{equation}
\end{proof}
We now show that the infimum of $E_{0,k}$ over the set $\mathcal{A}_1$ is negative for the case $m=0$ and any $k\in \mathbb{Z}$. 
\begin{prop}\label{infimum-A-3}
For any fixed $k=k_0$, there exists constant $s_0>0$ depending on $\varepsilon$, $\delta$ and $k_0$ so that for sufficiently small $s\leq s_0$, it holds that $\lambda=\inf E_{0,k}<0$. 
\end{prop}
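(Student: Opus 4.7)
The plan is to exhibit a trial triple $(\xi,\eta,0) \in X_k$ with $\mathcal{J}(\xi,\eta,0) > 0$ on which the inviscid part of $E_{0,k}$ is strictly negative, and then exploit the fact that the viscous terms in \eqref{sausage-in-v} carry the prefactors $\tilde\varepsilon = s\varepsilon$ and $\tilde\delta = s\delta$. Writing
\begin{equation*}
E_{0,k}(\xi,\eta,\zeta;s) \;=\; E_{0,k}^{\mathrm{inv}}(\xi,\eta,\zeta) + s\,V_{0,k}(\xi,\eta,\zeta),
\end{equation*}
with $V_{0,k} \geq 0$ the nonnegative viscous dissipation collected from the $\tilde\varepsilon$- and $\tilde\delta$-integrals, the task reduces to producing a test triple with $E_{0,k}^{\mathrm{inv}} < 0$ and finite $V_{0,k}$.

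Assume first $k = k_0 \neq 0$. Set $\zeta = 0$ and, for a smooth $\xi$ to be chosen, define $\eta$ by the pointwise minimizer in $k_0\eta$ of the quadratic form $B_\theta^2\bigl[k_0\eta - \tfrac{(r\xi)'-2\xi}{r}\bigr]^2 + \gamma p\bigl[\tfrac{(r\xi)'}{r} - k_0\eta\bigr]^2$, namely
\begin{equation*}
k_0\,\eta(r) \;=\; \frac{B_\theta^2(r)\,\frac{(r\xi)'-2\xi}{r} \;+\; \gamma p(r)\,\frac{(r\xi)'}{r}}{B_\theta^2(r) + \gamma p(r)}.
\end{equation*}
A direct algebraic reduction (the minimum value of $a(X-A)^2 + b(C-X)^2$ is $ab(A-C)^2/(a+b)$, applied with $A-C = -2\xi/r$) collapses the inviscid energy to
\begin{equation*}
E_{0,k_0}^{\mathrm{inv}}(\xi,\eta,0) \;=\; 4\pi^2 \int_0^{r_0}\left[\,p'(r) + \frac{2\gamma p(r)\,B_\theta^2(r)}{r\bigl(\gamma p(r) + B_\theta^2(r)\bigr)}\,\right]\xi^2(r)\,dr,
\end{equation*}
whose integrand is precisely the sausage-instability expression of Proposition~\ref{steady-lem}(i). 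At the point $r_* \in (0,r_0)$ supplied by that proposition the bracket is strictly negative, so by continuity it is bounded above by $-\kappa < 0$ on a closed subinterval $I \subset (0,r_0)$ centered at $r_*$.

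Choose $\xi = \phi \in C_c^\infty(I)$ with $\phi \not\equiv 0$. Then $E_{0,k_0}^{\mathrm{inv}}(\phi,\eta,0) \leq -4\pi^2\kappa\|\phi\|_{L^2(I)}^2 =: -c_0 < 0$. Because $\phi$ is smooth and compactly supported in $(0,r_0)$ (bypassing both the $1/r$ singularity at $r=0$ and the vacuum degeneracy at $r=r_0$), and $\gamma p + B_\theta^2$ is uniformly positive on $I$, the associated $\eta$ is likewise smooth with compact support in $(0,r_0)$. Hence $V_{0,k_0}(\phi,\eta,0) \leq C_1 < \infty$ with $C_1 = C_1(k_0,\varepsilon,\delta,\phi)$ depending only on $k_0$, $\varepsilon$, $\delta$ and finite Sobolev norms of $\phi$, while $\mathcal{J}(\phi,\eta,0) > 0$. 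Setting $s_0 := c_0/(2 C_1)$, for every $0 < s \leq s_0$,
\begin{equation*}
\frac{E_{0,k_0}(\phi,\eta,0;s)}{\mathcal{J}(\phi,\eta,0)} \;\leq\; \frac{-c_0 + s\,C_1}{\mathcal{J}(\phi,\eta,0)} \;\leq\; \frac{-c_0/2}{\mathcal{J}(\phi,\eta,0)} \;<\; 0,
\end{equation*}
so $\lambda(s) < 0$. The degenerate case $k_0 = 0$ is handled by an analogous construction, using a test function concentrating in a region where admissibility of $p$ (in particular $p(r_0)=0$ and $p'<0$ near $r_0$) drives the pure-$\xi$ part of $E_{0,0}^{\mathrm{inv}}$ negative, as in the inviscid analog of \cite{bian-guo-tice-inviscid}.

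The main potential obstacle is the algebraic reduction to the sausage expression after pointwise optimization of $\eta$, but this is a classical computation that carries over directly from the inviscid variational analysis. Localizing via $\phi \in C_c^\infty(I)$ sidesteps both the coordinate singularity at $r=0$ and the degeneracy of $\rho$ and $p$ at $r=r_0$, so no delicate weighted estimates are needed in this step; all the subtle function-space work (the weighted Hardy-type inequalities of Lemmas~\ref{xi-bound}--\ref{xi-bound-m-general} and the compactness of Proposition~\ref{embeddding-b}) will only be invoked later, when constructing a minimizer from a minimizing sequence.
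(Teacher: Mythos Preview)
Your proof is correct and follows essentially the same route as the paper: both set $\zeta=0$, choose $\xi^*\in C_c^\infty(0,r_0)$ concentrated where the sausage expression $p'+\tfrac{2\gamma p B_\theta^2}{r(\gamma p+B_\theta^2)}$ is negative (via Proposition~\ref{steady-lem}(i)), and fix $k\eta$ by the pointwise minimization that makes the remaining perfect square vanish---your optimizer formula is algebraically identical to the paper's choice $k\eta^*=\tfrac{1}{r}\bigl((r\xi^*)'-\tfrac{2B_\theta^2}{\gamma p+B_\theta^2}\xi^*\bigr)$. The paper's own proof does not separately address $k_0=0$ (its formula implicitly assumes $k\neq 0$), so your brief remark there is an addition rather than a deviation.
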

\begin{proof}
	Since both $E_{0,k}$ and $\mathcal{J}$ are homogeneous degree $2$, it suffices to show that 
	\begin{equation*}
	\inf_{(\xi,\eta,\zeta)\in X_k}\frac{E_{0,k}(\xi,\eta,\zeta)}{\mathcal{J}(\xi,\eta,\zeta)}<0.
	\end{equation*}
	But since $\mathcal{J}$ is positive definite, one may reduce to constructing  $(\xi,\eta,\zeta)\in X_k$ (see \eqref{defi-Xk}) such that $E_{0,k}(\xi,\eta,\zeta)<0$. 
	Notice that the first integral in the energy \eqref{sausage-in-v} can be rewritten as 
	\begin{equation*}
	\begin{split}
2\pi^2\int_0^{r_0}\Big\{\Big[\frac{2p'}{r}+\frac{4\gamma p B_{\theta}^2}{r^2(\gamma p+B_{\theta}^2)}\Big]\xi^2+(\gamma p+B_{\theta}^2)\Big[k\eta-\frac{1}{r}\Big((r\xi)'-\frac{2B_{\theta}^2}{\gamma p +B_{\theta}^2}\xi\Big)\Big]^2\Big\}rdr.
	\end{split}
	\end{equation*}
From the property (i) of Proposition \ref{steady-lem},  we can choose a smooth function $\xi^*\in C_c^{\infty}(0,r_0)$ such that $$2\pi^2\int_0^{r_0}\Big[\frac{2p'}{r}+\frac{4\gamma p B_{\theta}^2}{r^2(\gamma p+B_{\theta}^2)}\Big]{\xi^*}^2rdr< 0.$$
	Then, 
	we can assume that $k\eta^*=\frac{1}{r}\Big((r\xi^*)'-\frac{2B_{\theta}^2}{\gamma p +B_{\theta}^2}\xi^*\Big)$, such that the second term  in $E_{0,k}(\xi^*,\eta^*,\zeta^*)$ vanishes. Here, $\xi^*$ and $\eta^*$ are smooth function, belong to the space $X_k$. Then we choose  $\zeta^*=0$. 
	
From $\xi^*\in C_c^{\infty}(0,r_0)$  and $k\eta^*=\frac{1}{r}\Big((r\xi^*)'-\frac{2B_{\theta}^2}{\gamma p +B_{\theta}^2}\xi^*\Big)$,
for any finite fixed $k=k_0$,  it follows that
		\begin{equation*}
		\begin{split}
	&	2\pi^2\varepsilon\int_0^{r_0}\Big[\frac{2}{9}\Big(-2{\xi^*}'+\frac{\xi^*}{r}-k\eta^*\Big)^2
	+\frac{2}{9}\Big({\xi^*}'-\frac{2\xi^*}{r}-k\eta^*\Big)^2
	+\frac{2}{9}\Big({\xi^*}'+\frac{\xi^*}{r}+2k\eta^*\Big)^2\\
	&\quad+({\eta^*}'+k\xi^*)^2\Big]rdr+2\pi^2\delta\int_0^{r_0}\Big({\xi^*}'+\frac{\xi^*}{r}-k\eta^*\Big)^2rdr\leq C.
		\end{split}
		\end{equation*}	
		Therefore, the energy takes  as follows
	\begin{equation*}
	\begin{split}
	\widetilde{E}(\xi^*)&=E_{0,k}\Big(\xi^*, \frac{1}{kr}\Big((r\xi^*)'-\frac{2B_{\theta}^2}{\gamma p +B_{\theta}^2}\xi^*\Big),0\Big)=2\pi^2\int_0^{r_0}\Big[\frac{2p'}{r}+\frac{4\gamma p B_{\theta}^2}{r^2(\gamma p+B_{\theta}^2)}\Big]{\xi^*}^2rdr\\
	&\quad+2\pi^2\int_0^{r_0}\tilde{\varepsilon}\Big[\frac{2}{9}\Big(-2{\xi^*}'+\frac{\xi^*}{r}-k\eta^*\Big)^2+\frac{2}{9}\Big({\xi^*}'-\frac{2\xi^*}{r}-k\eta^*\Big)^2\\
	&\quad+\frac{2}{9}\Big({\xi^*}'+\frac{\xi^*}{r}+2k\eta^*\Big)^2+({\eta^*}'+k\xi^*)^2\Big]rdr+2\pi^2\int_0^{r_0}\tilde{\delta}\Big({\xi^*}'+\frac{\xi^*}{r}-k\eta^*\Big)^2rdr\\
	&\leq 2\pi^2\int_0^{r_0}\Big[\frac{2p'}{r}+\frac{4\gamma p B_{\theta}^2}{r^2(\gamma p+B_{\theta}^2)}\Big]{\xi^*}^2rdr+sC.
	\end{split}
	\end{equation*}	
	Then there exists $s_0>0$ such that $s\leq s_0$,  
	\begin{equation*}
		\widetilde{E}(\xi^*) \leq \pi^2\int_0^{r_0}\Big[\frac{2p'}{r}+\frac{4\gamma p B_{\theta}^2}{r^2(\gamma p+B_{\theta}^2)}\Big]{\xi^*}^2rdr<0,
	\end{equation*}
	which implies the result.
\end{proof}

 We now deduce  the existence of a minimizer of $E_{0,k}$ on the set $\mathcal{A}_1$.
\begin{prop}\label{infimum-A}
The energy $E_{0,k}$ achieves its infimum on the set $\mathcal{A}_1$.	
\end{prop}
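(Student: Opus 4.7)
The plan is to invoke the direct method in the calculus of variations. Fix $k\in\mathbb{Z}$ and let $\lambda=\inf_{\mathcal{A}_1}E_{0,k}$, which by the previous lemmas satisfies $-\infty<\lambda<0$ for $s$ small enough. Pick any minimizing sequence $(\xi_n,\eta_n,\zeta_n)\in\mathcal{A}_1$ with $E_{0,k}(\xi_n,\eta_n,\zeta_n)\to\lambda$.

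First, I would invoke the coercivity estimate \eqref{coercivity} that has already been obtained along with the energy lower bound, to conclude $\|(\xi_n,\eta_n,\zeta_n)\|_{X_k}\leq C$. Since $X_k$ is a Hilbert space (as the completion of a smooth class under a quadratic norm), there exists a subsequence, still denoted $(\xi_n,\eta_n,\zeta_n)$, and a limit $(\xi,\eta,\zeta)\in X_k$ with $(\xi_n,\eta_n,\zeta_n)\rightharpoonup(\xi,\eta,\zeta)$ weakly in $X_k$. By the compactness result of Proposition \ref{embeddding-b}, i.e.\ $X_k\subset\subset Z_i$ for $i=1,2,3$, we may upgrade this to strong $L^2(0,r_0)$ convergence of each component, and, passing to a further subsequence, pointwise a.e. convergence.

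Next, I would verify that the constraint and the value of the functional pass to the limit correctly. The constraint $\mathcal{J}(\xi_n,\eta_n,\zeta_n)=1$ passes to $\mathcal{J}(\xi,\eta,\zeta)=1$ by strong $L^2$ convergence together with the boundedness of $\rho$ on $[0,r_0]$, so $(\xi,\eta,\zeta)\in\mathcal{A}_1$. For the energy, the non-negative quadratic viscous terms in \eqref{sausage-in-v}, together with the $\gamma p|\cdot|^2$ and $B_\theta^2|\cdot|^2$ terms, are convex and continuous on $X_k$ (they control a seminorm equivalent to pieces of $\|\cdot\|_{X_k}^2$), so by standard weak lower semicontinuity of convex continuous functionals on Hilbert spaces,
\begin{equation*}
\liminf_{n\to\infty}\bigl(E_{0,k}(\xi_n,\eta_n,\zeta_n)-4\pi^2\textstyle\int_0^{r_0}p'\xi_n^2\,dr\bigr)\ \geq\ E_{0,k}(\xi,\eta,\zeta)-4\pi^2\int_0^{r_0}p'\xi^2\,dr.
\end{equation*}
The remaining task is to handle the indefinite lower-order term $\int_0^{r_0}p'\xi_n^2\,dr$, which is the only non-coercive piece.

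The main obstacle is precisely this $p'\xi^2$ term, because $p'$ may be unbounded near $r=r_0$ and $\xi$ is only controlled in a weighted sense. Here I would use the splitting from the proof of Proposition \ref{embeddding-b}: on any subinterval $[s_0,s_1]\subset(0,r_0)$ the $\xi_n$ are uniformly bounded in $H^1$ and so converge strongly in $L^\infty$, giving strong convergence of $\int_{s_0}^{s_1}p'\xi_n^2\,dr$. On the tail $(0,s_0)$, the estimate \eqref{estimate-p'} (which uses Lemma \ref{xi-bound} and a controlled logarithmic factor) gives a uniform bound $|\int_0^{s_0}p'\xi_n^2\,dr|\leq C(s_0)$ with $C(s_0)\to 0$ as $s_0\to 0$. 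On the boundary tail $(s_1,r_0)$ I would apply Lemma \ref{presure-boundary} together with the admissibility assumption \eqref{minimizer-assumption} (which forces $g(s_1)\to 0$) to obtain a similar vanishing bound, absorbing the remainder into the coercive part. Taking $s_0\to 0$ and $s_1\to r_0$ along a diagonal, we conclude
\begin{equation*}
\int_0^{r_0}p'\xi_n^2\,dr\ \longrightarrow\ \int_0^{r_0}p'\xi^2\,dr,
\end{equation*}
so that $E_{0,k}(\xi,\eta,\zeta)\leq\liminf E_{0,k}(\xi_n,\eta_n,\zeta_n)=\lambda$. Combined with $(\xi,\eta,\zeta)\in\mathcal{A}_1$, this shows the infimum is attained.
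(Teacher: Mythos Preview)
Your proposal is correct and follows essentially the same direct-method approach as the paper: bounded minimizing sequence in $X_k$, weak convergence plus the compact embedding of Proposition~\ref{embeddding-b} for strong $L^2$ convergence, weak lower semicontinuity of the nonnegative quadratic pieces, and passage of the constraint $\mathcal{J}=1$ to the limit.

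The one place where you diverge from the paper is your treatment of the indefinite term $\int_0^{r_0}p'\xi_n^2\,dr$, which you over-engineer. You worry that $p'$ might be unbounded near $r=r_0$ and therefore invoke a tail-splitting argument with Lemmas~\ref{xi-bound}, \ref{xi-bound-m-general} and~\ref{presure-boundary}. But by the standing hypothesis $p\in C^{2,1}([0,r_0])$ (see \eqref{new-condition} in Proposition~\ref{steady-lem}), $p'$ is continuous and hence bounded on $[0,r_0]$; the paper therefore passes this term to the limit in one line, directly from the strong convergence $\xi_n\to\xi$ in $L^2(0,r_0)$ supplied by Proposition~\ref{embeddding-b}. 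Your argument is not wrong, just unnecessary here.
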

\begin{proof}
	First note  that $E_{0,k}$ is bounded below on the set $\mathcal{A}_1$. Let $(\xi_n,\eta_n,\zeta_n) \in \mathcal{A}_1$ be a minimizing sequence. Then, we know that $(\xi_n,\eta_n,\zeta_n)$ is bounded in $X_k$, so up to the extraction of a subsequence $\psi_n=|B_{\theta}|\Big[k\eta_n-\frac{1}{r}((r\xi_n)'-2\xi_n)\Big]r^{\frac12}\rightharpoonup \psi=|B_{\theta}|\Big[k\eta-\frac{1}{r}((r\xi)'-2\xi)\Big]r^{\frac12}$ weakly in $L^2$, and $\xi_n\rightarrow \xi$, $\eta_n\rightarrow \eta$  and $\zeta_n\rightarrow \zeta$ strongly in $L^2$ from the compactness results in Proposition \ref{embeddding-b}.
	
		By weak lower semi-continuity, since $\psi_n\rightharpoonup \psi$ in the space $L^2(0,r_0)$,
		we have 
		$$\int_0^{r_0}B_{\theta}^2\Big[k\eta-\frac{1}{r}((r\xi)'-2\xi)\Big]^2rdr\leq\liminf_{n\to\infty}\int_0^{r_0}B_{\theta}^2\Big[k\eta_n-\frac{1}{r}((r\xi_n)'-2\xi_n)\Big]^2rdr.
		$$	
	Because of the quadratic structure of all the terms in the integrals defining $E_{0,k}$, similarly by weak lower semicontinuity and strong $L^2$ convergence, we get that 
$$	E_{0,k}(\xi,\eta,\zeta)\leq \liminf_{n\to\infty}E_{0,k}(\xi_n,\eta_n,\zeta_n)=\inf_{\mathcal{A}_1}E_{0,k}.$$

All that remains is to show that $(\xi,\eta,\zeta)\in \mathcal{A}_1 $. By the compactness results in Proposition \ref{embeddding-b}, when $m=0$ and any $k\in\mathbb{Z}$, we can prove up to subsequence
 $\lim_{n\rightarrow \infty}\mathcal{J}(\xi_n, \eta_n,\zeta_n)=\mathcal{J}(\xi,\eta,\zeta)=1$, which implies that  $(\xi,\eta,\zeta)\in \mathcal{A}_1 $.
\end{proof}
We now prove that the minimizer constructed in the previous result satisfies Euler-Lagrange equations equivalent to \eqref{spectral-formulation}  with $m=0$ and any $k\in\mathbb{Z}$.

\begin{prop}\label{infimum-A-2}
	Let $(\xi,\eta,\zeta)\in\mathcal{A}_1$ be the minimizer of $E_{0,k}$ constructed in Proposition \ref{infimum-A}. Then $(\xi,\eta,\zeta)$ are smooth when restricted to $(0,r_0)$ and satisfy 
	\begin{equation} \label{spectal formulation-2} 
		\begin{split}
	&\left(
	\begin{array}{ccc}
	\frac{d}{dr}\frac{\gamma p+B_{\theta}^2}{r}\frac{d}{dr}r
	-r(\frac{B_{\theta}^2}{r^2})'&-\frac{d}{dr}k(\gamma p+B_{\theta}^2)-\frac{2kB_{\theta}^2}{r}&0\\
	\frac{k(\gamma p+B_{\theta}^2)}{r}\frac{d}{dr}r-\frac{2kB_{\theta}^2}{r}&
	-k^2(\gamma p+B_{\theta}^2)&0\\
0&0&0
	\end{array}
	\right)
	\left(
	\begin{array}{lll}
	\xi   \\
	\eta \\
	\zeta\\
	\end{array}
	\right)+\\
	&	\left(
	\begin{array}{ccc}
	(\frac{4\tilde{\varepsilon}}{3}+\tilde{\delta})\frac{d^2}{dr^2}+(\frac{4\tilde{\varepsilon}}{3}+\tilde{\delta})\frac{d}{dr}\frac{1}{r}-\tilde{\varepsilon}k^2&-(\frac{\tilde{\varepsilon}}{3}+\tilde{\delta})k\frac{d}{dr}&0\\
	(\frac{\tilde{\varepsilon}}{3}+\tilde{\delta})k\frac{d}{dr}+(\frac{\tilde{\varepsilon}}{3}+\tilde{\delta})\frac{k}{r}&
	\tilde{\varepsilon}\frac{d^2}{dr^2}+\frac{\tilde{\varepsilon}}{r}\frac{d}{dr}-(\frac{4\tilde{\varepsilon}}{3}+\tilde{\delta})k^2&0\\
	0&0&\tilde{\varepsilon}\frac{d^2}{dr^2}+\tilde{\varepsilon}\frac{d}{dr}\frac{1}{r}-\tilde{\varepsilon}k^2
	\end{array}
	\right)
	\left(
	\begin{array}{lll}
	\xi   \\
	\eta \\
	\zeta\\
	\end{array}
	\right)	
	\\&\quad=-\rho \lambda \left(
	\begin{array}{lll}
	\xi   \\
	\eta \\
	\zeta\\
	\end{array}
	\right),
	\end{split}
	\end{equation}
	along with the interface boundary conditions	
	\begin{equation}\label{inter-two-com}
\begin{split}
&\Big[B_{\theta}^2\xi-B_{\theta}^2\xi' r+kB_\theta^2\eta r \Big]n\\
&\quad+\tilde{\varepsilon}\Big(-2\xi'r,i\zeta'r-i\zeta,-i\eta'r-ik\xi r\Big)^T\\
&\quad-\Big(\tilde{\delta}-\frac{2}{3}\tilde{\varepsilon}\Big)\Big[\xi'r+\xi-k\eta r\Big]n=0, \quad \mbox{at} \quad r=r_0.
\end{split}
\end{equation}
\end{prop}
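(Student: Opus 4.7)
The plan is to invoke the Lagrange multiplier principle for the constrained minimization established in Proposition \ref{infimum-A}. Since $(\xi,\eta,\zeta) \in \mathcal{A}_1$ minimizes $E_{0,k}$ subject to $\mathcal{J}(\xi,\eta,\zeta)=1$, there exists a multiplier (which, by homogeneity of degree $2$ of both $E_{0,k}$ and $\mathcal{J}$ and the definition \eqref{con-mono}, is exactly $\lambda$) such that
\begin{equation*}
\delta E_{0,k}(\xi,\eta,\zeta)[w_1,w_2,w_3] = -\lambda\, \delta \mathcal{J}(\xi,\eta,\zeta)[w_1,w_2,w_3]
\end{equation*}
for every admissible variation $(w_1,w_2,w_3)\in X_k$. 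To produce admissible variations, I will first work with $(w_1,w_2,w_3)\in C_c^\infty((0,r_0))^3$, differentiate under the integral in \eqref{sausage-in-v} and \eqref{constraint}, and thereby obtain a weak formulation of the system.

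Next, I would obtain the interior PDE \eqref{spectal formulation-2} by integrating by parts in the weak identity against $C_c^\infty((0,r_0))$ test functions. The boundary contributions at both $r=0$ and $r=r_0$ vanish automatically for compactly supported test fields, so the identity reduces to a distributional equation on $(0,r_0)$. Since $\tilde\varepsilon,\tilde\delta>0$, the viscous matrix operator in \eqref{spectal formulation-2} is uniformly elliptic on every compact subinterval $[a,b]\subset(0,r_0)$ (the coefficients $\gamma p$, $B_\theta^2$, and their derivatives are smooth there, and the zeroth-order terms are bounded), so standard elliptic regularity bootstraps the minimizer from $X_k$ to $C^\infty$ on $(0,r_0)$. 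The $\zeta$-equation decouples from $(\xi,\eta)$ because $m=0$ eliminates all cross terms in both \eqref{sausage-in-v} and the constraint, giving the third row of \eqref{spectal formulation-2} directly from varying $\zeta$ alone.

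To recover the interface boundary condition \eqref{inter-two-com}, I would then use variations $(w_1,w_2,w_3)$ smooth on $[0,r_0]$ with $w_1(0)=w_3(0)=0$ (so that they stay in $X_k$) but with no vanishing imposed at $r=r_0$. Integrating by parts carefully, the interior contributions cancel by \eqref{spectal formulation-2}, and what remains is a linear functional of the boundary traces $(w_1(r_0),w_2(r_0),w_3(r_0))$ equated to zero. Reading off the coefficients of $w_1(r_0)$, $w_2(r_0)$, and $w_3(r_0)$ separately — i.e., the normal component, then the two tangential components arising from the symmetric-gradient terms in $\tilde\varepsilon$ — yields the three scalar equations encoded in \eqref{inter-two-com}. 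The magnetic contribution $B_\theta^2\xi - B_\theta^2\xi' r + k B_\theta^2 \eta r$ comes from integration by parts in the first bracketed integral of \eqref{sausage-in-v}, while the $\tilde\varepsilon$ and $\tilde\delta$ terms come from integration by parts in the symmetric gradient pieces.

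The main obstacle I anticipate is the careful handling of boundary behavior at the two endpoints: at $r=0$ because of the factor $1/r$ singularities and the constraint $\xi(0)=\zeta(0)=0$ only in a limiting sense (the weight $r\,dr$ must be used to absorb these), and at $r=r_0$ because the pressure vanishes (so $p\to 0$ and $p'\to 0$ behaviors must be controlled using Lemma \ref{presure-boundary} and \eqref{minimizer-assumption} to make all boundary integrals finite and well-defined). In particular, I must verify that for $(w_1,w_2,w_3)\in X_k$ the integration by parts produces no contribution from $r=0$ — which will follow from the estimates \eqref{xi-bound-1} and the coercivity \eqref{coercivity} controlling $\int_0^{r_0}(\xi')^2 r\,dr$ — and that the resulting interior equation has enough regularity to reopen the integration by parts when testing against variations that are nonzero at $r=r_0$.
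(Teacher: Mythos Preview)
Your proposal is correct and follows essentially the same approach as the paper: derive the Euler--Lagrange equations via constrained variations (the paper does this concretely by solving $\mathcal{J}(\xi+t\xi_0+\tau(t)\xi,\ldots)=1$ for $\tau(t)$ with the implicit function theorem, which is exactly your Lagrange multiplier principle), test first against $C_c^\infty((0,r_0))$ to obtain the interior system in weak form, then test against variations that do not vanish at $r=r_0$ and integrate by parts to extract the boundary conditions \eqref{inter-two-com}. The paper also verifies that the relevant boundary traces at $r=r_0$ are well-defined by reading off $H^1(\frac{r_0}{2},r_0)$ regularity from the weak equations, which is the concrete version of your anticipated regularity bootstrap.
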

\begin{proof}
	Since we want to use the structure of the energy and properties of functional space, we first change the spectral formula \eqref{spectal formulation-2} into the following equations by a simple computation
		\begin{equation}\label{new-equations}
		\begin{cases}
		&-\frac{d}{dr}\Big\{\gamma p\big[k\eta-\frac{1}{r}(r\xi)'\big] \Big\}-\frac{d}{dr}\Big\{B^2_{\theta}\big[k\eta-\frac{1}{r}\big((r\xi)'-2\xi\big)\big]\Big\}
		\\&\quad+	(\frac{4\tilde{\varepsilon}}{3}+\tilde{\delta})\frac{d^2}{dr^2}\xi+(\frac{4\tilde{\varepsilon}}{3}+\tilde{\delta})\frac{d}{dr}\frac{\xi}{r}-\tilde{\varepsilon}k^2\xi-(\frac{\tilde{\varepsilon}}{3}+\tilde{\delta})k\frac{d}{dr}\eta
		\\
		&\quad-\frac{2B^2_{\theta}}{r}\big[k\eta-\frac{1}{r}\big((r\xi)'-2\xi\big)\big]-\frac{2p'\xi}{r}=-\rho \lambda\xi,\\
		&-k(\gamma p+B^2_{\theta})\Big[k\eta-\frac{1}{r}(r\xi)'+\frac{2B^2_\theta}{r(\gamma p+B^2_{\theta})}\xi\Big]+(\frac{\tilde{\varepsilon}}{3}+\tilde{\delta})k\frac{d}{dr}\xi+(\frac{\tilde{\varepsilon}}{3}+\tilde{\delta})\frac{k\xi}{r}\\
		&\quad+	\tilde{\varepsilon}\frac{d^2}{dr^2}\eta+\frac{\tilde{\varepsilon}}{r}\frac{d}{dr}\eta-(\frac{4\tilde{\varepsilon}}{3}+\tilde{\delta})k^2\eta=-\rho \lambda\eta,\\
		&\tilde{\varepsilon}\frac{d^2}{dr^2}\zeta+\tilde{\varepsilon}\frac{d}{dr}\frac{\zeta}{r}-\tilde{\varepsilon}k^2\zeta=-\rho \lambda\zeta.
		\end{cases}
		\end{equation}
	Next, we prove the minimization $\xi$, $\eta$ and $\zeta$ satisfy the equations \eqref{new-equations} in weak sense on $(0,r_0)$.
	
	Fix $(\xi_0,\eta_0,\zeta_0)\in X_{k}$ (see \eqref{defi-Xk}). Define 
	$$j(t,\tau(t))=\mathcal{J}(\xi+t\xi_0+\tau(t)\xi,\eta+t\eta_0+\tau(t)\eta,\zeta+t\zeta_0+\tau(t)\zeta)$$
	and note that $j(0,0)=1$. Moreover, $j$ is smooth, 
	\begin{equation*}
	\begin{split}
	&\frac{\partial j}{\partial t}(0,0)=2\pi^2\int_{0}^{r_0}2\rho(\xi_0\xi +\eta_0\eta+\zeta_0\zeta)rdr, \\
	&\frac{\partial j}{\partial \tau}(0,0)=2\pi^2\int_{0}^{r_0}2\rho(\xi^2 +\eta^2+\zeta^2)rdr=2.
	\end{split}
	\end{equation*}
	So, by the inverse function theorem, we can solve for $\tau=\tau(t)$ in a neighborhood of $0$ as a $C^1$ function of $t$ so that $\tau(0)=0$ and $j(t,\tau(t))=1$. We may differentiate the last equation to find 
	\begin{equation*}
	\frac{\partial j}{\partial t}(0,0)+\frac{\partial j}{\partial \tau}(0,0)\tau'(0)=0,
	\end{equation*}
	which gives that 
	\begin{equation*}
	\tau'(0)=-\frac{1}{2}\frac{\partial j}{\partial t}(0,0)=-2\pi^2\int_{0}^{r_0}\rho(\xi_0\xi +\eta_0\eta+\zeta\zeta_0)rdr.
	\end{equation*}
	Since $(\xi, \eta,\zeta)$ is the minimizer over the set $\mathcal{A}_1$, we may make variations with respect to $(\xi_0,\eta_0,\zeta_0)$ to find that 
	\begin{equation*}
	0=\frac{d}{dt}\bigg|_{t=0}E_{0,k}(\xi+t\xi_0+\tau(t)\xi, \eta+t\eta_0+\tau(t)\eta,\zeta+t\zeta_0+\tau(t)\zeta),
	\end{equation*}
	which together with \eqref{con-mono} and \eqref{set a-b}, implies that
	\begin{equation*}
	\begin{split}
0&=4\pi^2\int_{0}^{r_0}2p'\xi\xi_0dr +4\pi^2\int_{0}^{r_0}B^2_{\theta}\Big[k\eta-\frac{1}{r}\big((r\xi)'-2\xi\big)\Big]
\Big\{-\frac{1}{r}\Big[\big(r\xi_0\big)'-2\xi_0\Big]\Big\}rdr\\
&\quad+4\pi^2\int_{0}^{r_0}\gamma p\Big[\frac{1}{r}(r\xi)'-k\eta\Big](r\xi_0)'dr+4\pi^2\int_{0}^{r_0}kB^2_{\theta}\Big[k\eta-\frac{1}{r}\big((r\xi)'-2\xi\big)\Big]
\eta_0rdr\\
&\quad+4\pi^2\int_{0}^{r_0}\gamma p\Big[\frac{1}{r}(r\xi)'-k\eta\Big](-k\eta_0)rdr+
4\pi^2 \int _0^{r_0}\Big[\frac{2\tilde{\varepsilon}}{3}\Big(-\xi'\xi_0-\xi\xi'_0+\frac{2\xi\xi_0}{r}\Big)\\
&\quad+\Big(\frac{4\tilde{\varepsilon}}{3}+\tilde{\delta}\Big)\xi'\xi'_0r+\tilde{\varepsilon}k^2\xi\xi_0r+\tilde{\delta}\Big(\xi'\xi_0+\xi\xi'_0+\frac{\xi\xi_0}{r}\Big) \Big]dr+4\pi^2 \int _0^{r_0}\Big[\Big(\frac{2\tilde{\varepsilon}}{3}-\tilde{\delta}\Big)k\eta\xi'_0r
\\
&\quad+\Big(\frac{2\tilde{\varepsilon}}{3}-\tilde{\delta}\Big)k\eta\xi_0
+\tilde{\varepsilon}\eta'k\xi_0r
\Big]dr  +4\pi^2 \int_0^{r_0}\Big[\tilde{\varepsilon}k^2\zeta\zeta_0r-\tilde{\varepsilon}\zeta'\zeta_0-\tilde{\varepsilon}\zeta\zeta'_0+\tilde{\varepsilon}\frac{\zeta\zeta_0}{r}+\tilde{\varepsilon}\zeta'\zeta'_0r\Big]dr\\
&\quad+4\pi^2\int_0^{r_0}\Big[\Big(\frac{2\tilde{\varepsilon}}{3}-\tilde{\delta}\Big)k\xi'\eta_0r
+\Big(\frac{2\tilde{\varepsilon}}{3}-\tilde{\delta}\Big)k\xi\eta_0+\tilde{\varepsilon}k\xi\eta'_0r\Big]dr\\
&\quad+4\pi^2\int_0^{r_0}\Big[\Big(\frac{4\tilde{\varepsilon}}{3}+\tilde{\delta}\Big)k^2\eta\eta_0r+\tilde{\varepsilon}\eta'\eta'_0r\Big]dr+2\tau'(0)\lambda.
	\end{split}
	\end{equation*}
	
	Since $\xi_0$, $\eta_0$ and $\zeta_0$ are independent, one has the triplet of equations
	\begin{equation}\label{weak-form}
	\begin{split}
	&\int_{0}^{r_0}2p'\xi\xi_0dr -\int_{0}^{r_0}B^2_{\theta}\Big[k\eta-\frac{1}{r}\big((r\xi)'-2\xi\big)\Big]
	\Big[\big(r\xi_0\big)'-2\xi_0\Big]dr\\
	&\quad+\int _0^{r_0}\Big[\frac{2\tilde{\varepsilon}}{3}\Big(-\xi'\xi_0-\xi\xi'_0+\frac{2\xi\xi_0}{r}\Big)+\Big(\frac{4\tilde{\varepsilon}}{3}+\tilde{\delta}\Big)\xi'\xi'_0r+\tilde{\varepsilon}k^2\xi\xi_0r\\
	&\quad+\tilde{\delta}\Big(\xi'\xi_0+\xi\xi'_0+\frac{\xi\xi_0}{r}\Big) \Big]dr+\int _0^{r_0}\Big[\Big(\frac{2\tilde{\varepsilon}}{3}-\tilde{\delta}\Big)k\eta\xi'_0r
	+\Big(\frac{2\tilde{\varepsilon}}{3}-\tilde{\delta}\Big)k\eta\xi_0+\tilde{\varepsilon}\eta'k\xi_0r\Big]dr \\
	&\quad+\int_{0}^{r_0}\gamma p\Big[\frac{1}{r}(r\xi)'-k\eta\Big](r\xi_0)'dr=\int_{0}^{r_0}\rho\lambda\xi_0\xi rdr,
	\end{split}
	\end{equation}
	\begin{equation}\label{eta-weak}
	\begin{split}
	&\int_{0}^{r_0}kB^2_{\theta}\Big[k\eta-\frac{1}{r}\big((r\xi)'-2\xi\big)\Big]
	\eta_0rdr+\int_{0}^{r_0}\gamma pk\Big[k\eta-\frac{1}{r}(r\xi)'\Big]\eta_0rdr\\
	&\quad+\int_0^{r_0}\Big[\Big(\frac{2\tilde{\varepsilon}}{3}-\tilde{\delta}\Big)k\xi'\eta_0r+\Big(\frac{2\tilde{\varepsilon}}{3}-\tilde{\delta}\Big)k\xi\eta_0+\tilde{\varepsilon}k\xi\eta'_0r\Big]dr+\int_0^{r_0}\Big[\Big(\frac{4\tilde{\varepsilon}}{3}+\tilde{\delta}\Big)k^2\eta\eta_0r\\
	&\quad+\tilde{\varepsilon}\eta'\eta'_0r\Big]dr = \int_{0}^{r_0}\rho\lambda\eta_0\eta rdr,
	\end{split}
	\end{equation}
	\begin{equation}\label{zeta-weak}
	\begin{split}
	&  
	\int_0^{r_0}\Big(\tilde{\varepsilon}k^2\zeta\zeta_0r-\tilde{\varepsilon}\zeta'\zeta_0-\tilde{\varepsilon}\zeta\zeta'_0+\tilde{\varepsilon}\frac{\zeta\zeta_0}{r}+\tilde{\varepsilon}\zeta'\zeta'_0r\Big)dr=\int_{0}^{r_0}\rho\lambda\zeta_0\zeta rdr.
	\end{split}
	\end{equation}
	So $\xi$, $\eta$ and $\zeta$ satisfy \eqref{new-equations} in a weak sense on $(0,r_0)$,
	if $(\xi_0,\eta_0,\zeta_0)$ are chosen compactly supported in $(0,r_0)$.
	Now we prove that the interface boundary conditions \eqref{inter-two-com} are satisfied. 
		From the equations \eqref{new-equations}, we get 
		\begin{equation}\label{p-xi}
		\begin{split}
		&-\frac{d}{dr}\Big\{(\gamma p+B^2_{\theta})\Big[k\eta-\frac{1}{r}\big((r\xi)'-2\xi\big)\Big]\Big\}+	(\frac{4\tilde{\varepsilon}}{3}+\tilde{\delta})\frac{d^2}{dr^2}\xi\\
		&\quad+(\frac{4\tilde{\varepsilon}}{3}+\tilde{\delta})\frac{d}{dr}\frac{\xi}{r}-(\frac{\tilde{\varepsilon}}{3}+\tilde{\delta})k\frac{d}{dr}\eta-\tilde{\varepsilon}k^2\xi+ \frac{2\gamma p\xi'}{r}+ \frac{2\gamma p'\xi}{r}-\frac{2\gamma p \xi }{r^2} \\
		&\quad-\frac{2B^2_{\theta}}{r}\Big[k\eta-\frac{1}{r}\big((r\xi)'-2\xi\big)\Big]-\frac{2p'\xi}{r}=-\rho \lambda\xi,\\
		&\tilde{\varepsilon}\frac{d^2}{dr^2}\eta+\frac{\tilde{\varepsilon}}{r}\frac{d}{dr}\eta+(\frac{\tilde{\varepsilon}}{3}+\tilde{\delta})k\frac{d}{dr}\xi+(\frac{\tilde{\varepsilon}}{3}+\tilde{\delta})\frac{k\xi}{r}-(\frac{4\tilde{\varepsilon}}{3}+\tilde{\delta})k^2\eta\\
		&\quad-k(\gamma p+B^2_{\theta})\Big[k\eta-\frac{1}{r}(r\xi)'+\frac{2B^2_\theta}{r(\gamma p+B^2_{\theta})}\xi\Big]=-\rho\lambda\eta,\\
		&\tilde{\varepsilon}\frac{d^2}{dr^2}\zeta+\tilde{\varepsilon}\frac{d}{dr}\frac{\zeta}{r}-\tilde{\varepsilon}k^2\zeta=-\rho \lambda\zeta.
			\end{split}
		\end{equation}
From $(\xi,\eta,\zeta)\in X_k$ and the compactness results in Proposition \ref{embeddding-b}, we deduce that  $(\xi, \eta, \zeta) \in H^1(\frac{r_0}{2},r_0)\times H^1(\frac{r_0}{2},r_0)\times  H^1(\frac{r_0}{2},r_0)$ when $m=0$ and any $k\in \mathbb{Z}$, which gives that 
		\begin{equation*}
		\begin{split}
		&\frac{d}{dr}\Big\{(\gamma p+B^2_{\theta})\Big[k\eta-\frac{1}{r}\Big((r\xi)'-2\xi\Big)\Big]+	\Big(\frac{4\tilde{\varepsilon}}{3}+\tilde{\delta}\Big)\Big(\xi'+\frac{\xi}{r}\Big)-	\Big(\frac{\tilde{\varepsilon}}{3}+\tilde{\delta}\Big)k\eta\Big\} \in L^2(\frac{r_0}{2}, r_0),\\
		&\tilde{\varepsilon}\frac{d^2}{dr^2}\eta+\frac{\tilde{\varepsilon}}{r}\frac{d}{dr}\eta\in L^2(\frac{r_0}{2}, r_0),
		\,\,\,\frac{d}{dr}\Big(\tilde{\varepsilon}\zeta'+\tilde{\varepsilon}\frac{\zeta}{r}\Big)\in L^2(0,r_0).
		\end{split}
		\end{equation*}			
		 Hence $(\gamma p+B^2_{\theta})\Big[k\eta-\frac{1}{r}\big((r\xi)'-2\xi\big)\Big]+	(\frac{4\tilde{\varepsilon}}{3}+\tilde{\delta})\Big(\xi'+\frac{\xi}{r}\Big)-	(\frac{\tilde{\varepsilon}}{3}+\tilde{\delta})k\eta$, $\tilde{\varepsilon}\eta'+\frac{\tilde{\varepsilon}}{r}\eta$ and 
 		$\tilde{\varepsilon}\zeta'+\tilde{\varepsilon}\frac{\zeta}{r}$ are well-defined at the endpoint $r=r_0$. 
	Make variations with respect to $(\xi_0,\eta_0,\zeta_0)\in C_c^{\infty}((0,r_0])\times C_c^{\infty}((0,r_0])\times C_c^{\infty}((0,r_0])$. Integrating the terms in \eqref{weak-form}-\eqref{zeta-weak}  by parts and using that $(\xi,\eta,\zeta)$ solve the equations \eqref{new-equations} on $(0,r_0)$, we get that 
	\begin{equation}\label{new-boundary}
	\begin{split}
	&-B^2_\theta\Big[k\eta-\frac{1}{r}\Big((r\xi)'-2\xi\Big)\Big](r\xi_0)\Big|_{r=r_0}+\gamma p\Big[\frac{1}{r}(r\xi)'-k\eta \Big](r\xi_0)\Big|_{r=r_0}\\
	&\Big[-\frac{2\tilde{\varepsilon}}{3}\xi\xi_0+\tilde{\delta}\xi\xi_0+\Big(\frac{4\tilde{\varepsilon}}{3}+\tilde{\delta}\Big)\xi'\xi_0r+\Big(\frac{2\tilde{\varepsilon}}{3}-\tilde{\delta}\Big)k\eta \xi_0r\Big] \Big|_{r=r_0}=0,\\
	&\Big(\tilde{\varepsilon}k\xi \eta_0 r+\tilde{\varepsilon}\eta'\eta_0 r\Big)\Big|_{r=r_0}=0,
	\,\,\, \Big(\tilde{\varepsilon}\zeta'\zeta_0 r-\tilde{\varepsilon}\zeta\zeta_0\Big)\Big|_{r=r_0}=0.
	\end{split}
	\end{equation}
 Since the test functions $\xi_0$, $\eta_0 $ and $\zeta_0$ may be chosen arbitrarily,  by the pressure $p=0$ on the boundary $r=r_0$, we get the interface boundary conditions
	\begin{equation}\label{new-boiundary-3}
	\begin{split}
	&-B^2_\theta\big[k\eta r-\xi'r+\xi \big]\Big|_{r=r_0}+\Big[\Big(\tilde{\delta}-\frac{2\tilde{\varepsilon}}{3}\Big)\big(\xi-k\eta r+\xi'r\big)+2\tilde{\varepsilon}\xi'r\Big] \Big|_{r=r_0}=0,\\
	&\Big(\tilde{\varepsilon}k\xi  r+\tilde{\varepsilon}\eta' r\Big)\Big|_{r=r_0}=0,
	\,\,\, \Big(\tilde{\varepsilon}\zeta'r-\tilde{\varepsilon}\zeta\Big)\Big|_{r=r_0}=0,
	\end{split}
	\end{equation}
which insures that \eqref{inter-two-com} holds. We finish the proof.
\end{proof}

Next, we establish the continuity and monotonicity properties of the eigenvalue $\lambda(s)$.
\begin{prop}
Let $\lambda: (0,\infty)\rightarrow \mathbb{R}$ be given by \eqref{con-mono}. Then the following hold:\\
\mbox{(i)} $\lambda \in C_{loc}^{0,1}((0,\infty))$, and $\lambda\in C^0((0,\infty))$.\\
\mbox{(ii)} There exists a positive constant $C_2=C_2(r_0,p,\varepsilon,\delta,k)$ so that
\begin{equation}\label{decomp}
\lambda(s)\geq -C+sC_2.
\end{equation}\\
\mbox{(iii)} $\lambda(s) $ is strictly increasing.
\end{prop}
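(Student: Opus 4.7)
The approach is to exploit the affine-in-$s$ structure of the modified energy. Writing
\begin{equation*}
E_{0,k}(\xi,\eta,\zeta;s) = E_0(\xi,\eta,\zeta) + s\, D(\xi,\eta,\zeta),
\end{equation*}
where $E_0$ collects the $s$-independent pressure and magnetic terms and $D\geq 0$ is the viscous dissipation obtained by stripping $s$ out of $\tilde\varepsilon=s\varepsilon$ and $\tilde\delta=s\delta$, one sees that $\lambda(s)=\inf_{\mathcal{A}_1}(E_0+sD)$ is an infimum of affine functions of $s$, hence automatically concave and nondecreasing. All three conclusions will be deduced from this structure combined with Proposition \ref{infimum-A} (existence of minimizers in $\mathcal{A}_1$) and the coercivity estimate \eqref{coercivity}.

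For part (i), the plan is to pick $s_1,s_2$ in a compact subinterval $[a,b]\subset(0,\infty)$, let $(\xi_i,\eta_i,\zeta_i)\in\mathcal{A}_1$ be the minimizers at $s_i$, and use each as a test function at the opposite parameter to obtain
\begin{equation*}
|\lambda(s_2)-\lambda(s_1)|\;\leq\;|s_2-s_1|\,\max_i D(\xi_i,\eta_i,\zeta_i).
\end{equation*}
The coercivity \eqref{coercivity}, applied with the uniform viscous weight $\min(\tilde\varepsilon,\tilde\delta)\geq a\min(\varepsilon,\delta)$ over $[a,b]$, bounds $\|(\xi_i,\eta_i,\zeta_i)\|_{X_k}$, and therefore $D(\xi_i,\eta_i,\zeta_i)$, by a constant depending only on $a$ and $b$. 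This gives the local Lipschitz estimate, which yields continuity. For part (ii), I would revisit the chain of inequalities in the coercivity proof preceding Proposition \ref{infimum-A-3}: it already produces
\begin{equation*}
E_{0,k}(\xi,\eta,\zeta;s)\geq s\pi^2\min(\varepsilon,\delta)\Bigl[\int_0^{r_0}(\xi')^2 r\,dr+\tfrac{3}{2}\int_0^{r_0}\tfrac{\xi^2}{r}\,dr\Bigr]-C\mathcal{J},
\end{equation*}
and by augmenting the positive bracket with the remaining nonnegative viscous pieces in $\eta$ and $\zeta$ (namely $(\eta'+k\xi)^2$, $(-\zeta'+\zeta/r)^2$, and $k^2\zeta^2$) and running a compactness-and-contradiction argument on $\mathcal{A}_1$ parallel to Proposition \ref{embeddding-b} (using that the kernel of $D$ is trivial for $k\neq 0$), one extracts $C_2=C_2(r_0,p,\varepsilon,\delta,k)>0$ with $\lambda(s)\geq -C+sC_2$.

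For part (iii), monotonicity is immediate from the affine decomposition: for $s_1<s_2$ and $(\xi_{s_2},\eta_{s_2},\zeta_{s_2})$ a minimizer at $s_2$,
\begin{equation*}
\lambda(s_2)=E_0(\xi_{s_2},\eta_{s_2},\zeta_{s_2})+s_2 D(\xi_{s_2},\eta_{s_2},\zeta_{s_2})\geq E_0(\xi_{s_2},\eta_{s_2},\zeta_{s_2})+s_1 D(\xi_{s_2},\eta_{s_2},\zeta_{s_2})\geq \lambda(s_1).
\end{equation*}
The principal obstacle of the entire proposition is establishing strictness, which reduces to showing $D(\xi_{s_2},\eta_{s_2},\zeta_{s_2})>0$. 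A direct algebraic elimination in the six pointwise dissipation brackets (setting each to zero and solving the resulting first-order system subject to $\xi(0)=\zeta(0)=0$) collapses to $\xi\equiv\eta\equiv\zeta\equiv 0$ when $k\neq 0$, so the minimizer, being in $\mathcal{A}_1$, cannot lie in $\ker D$. For $k=0$ the kernel of $D$ contains the family $(0,C_\eta,C'r)$; here one uses the sign $\lambda(s)<0$ supplied by Proposition \ref{infimum-A-3} together with the observation that $E_0$ vanishes on $\ker D$ to force the minimizer to have a component transverse to $\ker D$, which in turn forces $D(\xi_{s_2},\eta_{s_2},\zeta_{s_2})>0$ and hence strict monotonicity.
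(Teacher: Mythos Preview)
Your approach is essentially the paper's: the same affine decomposition $E_{0,k}=E_{0,k}^0+sE_{0,k}^1$, the same Lipschitz argument via a uniform bound on $E_{0,k}^1$ evaluated at minimizers over compact $s$-intervals, the same lower bound via $C_2=\inf_{\mathcal{A}_1}E_{0,k}^1>0$, and the same reduction of strict monotonicity to the implication $E_{0,k}^1=0\Rightarrow(\xi,\eta,\zeta)=0$.

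Your treatment of (iii) is in fact more careful than the paper's, which simply asserts that $E_{0,k}^1=0$ forces $\xi=\eta=\zeta=0$ without separating cases; you correctly observe that for $k=0$ the kernel contains $(0,C_\eta,C'r)$. However, your proposed workaround for $k=0$ does not close the gap: Proposition~\ref{infimum-A-3} is proved by setting $k\eta^*=\frac{1}{r}\big((r\xi^*)'-\frac{2B_\theta^2}{\gamma p+B_\theta^2}\xi^*\big)$, which requires $k\neq0$, and even granting $\lambda(s)<0$ somewhere, your argument only yields strictness on the set $\{s:\lambda(s)<0\}$, not on all of $(0,\infty)$ as (iii) asserts. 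Since the downstream application (Proposition~\ref{prop3.7}) only uses strict monotonicity on $\lambda^{-1}((-\infty,0))$, this suffices there, but it does not prove the proposition as stated for $k=0$; the paper's own proof shares this lacuna.
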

 \begin{proof}
Fix a compact interval $Q=[a,b] \subset\subset (0,\infty)$, and fix $(\xi_0,\eta_0,\zeta_0)\in \mathcal{A}_1$. We can decompose $E_{0,k}$ as follows
\begin{equation}\label{decomp-2}
E_{0,k}(\xi,\eta,\zeta;s)=E_{0,k}^0(\xi,\eta,\zeta)+sE_{0,k}^1(\xi,\eta,\zeta),
\end{equation}
with \begin{equation}\label{e0}
\begin{split}
E_{0,k}^0(\xi,\eta,\zeta)&=2\pi^2\int_0^{r_0}\Big\{\Big[\frac{2p'}{r}+\frac{4\gamma p B_{\theta}^2}{r^2(\gamma p+B_{\theta}^2)}\Big]\xi^2\\
&\quad+(\gamma p+B_{\theta}^2)\Big[k\eta-\frac{1}{r}\Big((r\xi)'-\frac{2B_{\theta}^2}{\gamma p +B_{\theta}^2}\xi\Big)\Big]^2\Big\}rdr,
\end{split}
\end{equation}
 \begin{equation}\label{e1}
\begin{split}
E_{0,k}^1(\xi,\eta,\zeta)&=2\pi^2\int_0^{r_0}\varepsilon\Big[\frac{2}{9}\Big(-2\xi'+\frac{\xi}{r}-k\eta\Big)^2+\frac{2}{9}\Big(\xi'-\frac{2\xi}{r}-k\eta\Big)^2\\
&\quad+\frac{2}{9}\Big(\xi'+\frac{\xi}{r}+2k\eta\Big)^2+\Big(-\zeta'+\frac{\zeta}{r}\Big)^2+(\eta'+k\xi)^2+k^2\zeta^2\Big]rdr
\\
&\quad+2\pi^2\int_0^{r_0}\delta\Big(\xi'+\frac{\xi}{r}-k\eta\Big)^2rdr.
\end{split}
\end{equation}
The nonnegativity of $E_{0,k}^1$ implies that $E_{0,k}$ is non-decreasing in $s$ with $(\xi,\eta,\zeta)\in \mathcal{A}_1$ kept fixed. 
By Proposition \ref{infimum-A}, for each $s\in (0,\infty) $ we can find  $(\xi,\eta,\zeta)\in \mathcal{A}_1$  so that 
\begin{equation*}
E_{0,k}(\xi_s,\eta_s,\zeta_s;s)=\inf_{(\xi,\eta,\zeta)\in \mathcal{A}_1}E_{0,k}(\xi,\eta,\zeta;s)=\lambda(s).
\end{equation*}
From the nonnegativity of $E_{0,k}^1$, the minimality of $(\xi_s,\eta_s, \zeta_s)$ and for $0<s_0<r_0$
\begin{equation}\label{E^0_0-k}
\begin{split}
E^0_{0,k}(\xi,\eta,\zeta)\geq
\pi^2\int_{s_0}^{r_0}\gamma p\Big|\frac{1}{r}(r\xi)'\Big|^2rdr-3C\mathcal{J}\geq -3C\mathcal{J},
\end{split}
\end{equation} which can be proved from Lemma 3.10 of Bian-Guo-Tice \cite{bian-guo-tice-inviscid}, using Lemma \ref{presure-boundary}, we have
\begin{equation*}
E_{0,k}(\xi_0,\eta_0,\zeta_0;b)\geq E_{0,k}(\xi_0,\eta_0,\zeta_0;s)\geq E_{0,k}(\xi_s,\eta_s,\zeta_s;s)\geq sE_{0,k}^1(\xi_s,\eta_s,\zeta_s)-C,
\end{equation*}
for all $s\in Q$. This implies that there exists a constant $$0<K=K(a,b,\xi_0,\eta_0,\zeta_0, \pi,  p)<\infty$$ so that 
\begin{equation}\label{bound-e1}
	\sup_{s\in Q}E_{0,k}^1(\xi_s,\eta_s,\zeta_s)\leq K.
\end{equation}
Let $s_i\in Q$ for $i=1,2$. Using the minimality of $(\xi_{s_1},\eta_{s_1}, \zeta_{s_1}) $ compared to $(\xi_{s_2},\eta_{s_2}, \zeta_{s_2}) $, we have 
\begin{equation}\label{lam-s1}
	\lambda(s_1)=E_{0,k}(\xi_{s_1},\eta_{s_1},\zeta_{s_1};s_1)\leq E_{0,k}(\xi_{s_2},\eta_{s_2},\zeta_{s_2};s_1),
\end{equation}
which together with \eqref{decomp-2} gives that
\begin{equation}\label{lam-s2}
\begin{split}
E_{0,k}(\xi_{s_2},\eta_{s_2},\zeta_{s_2};s_1)&\leq E_{0,k}(\xi_{s_2},\eta_{s_2},\zeta_{s_2};s_2)+|s_1-s_2|E_{0,k}^1(\xi_{s_2},\eta_{s_2},\zeta_{s_2})\\
&=\lambda(s_2)+|s_1-s_2|E_{0,k}^1(\xi_{s_2},\eta_{s_2},\zeta_{s_2}).
\end{split}
\end{equation}
Combining \eqref{bound-e1}, \eqref{lam-s1} and \eqref{lam-s2}, we get that
$\lambda(s_1)\leq \lambda(s_2)+K|s_1-s_2|$,
which implies that (i) holds.

Now, we prove (ii). Note that \eqref{E^0_0-k} 
 and the nonnegativity of $E_{0,k}^1$
 imply that 
$$\lambda(s)\geq s\inf_{(\xi,\eta,\zeta)\in \mathcal{A}}E_{0,k}^1(\xi,\eta,\zeta)-C,$$
where we denote the constant $C_2=\inf_{(\xi,\eta,\zeta)\in \mathcal{A}}E_{0,k}^1(\xi,\eta,\zeta)$ and this canstant is positive. 

Finally, we show (iii). Notice that if $0<s_1<s_2<\infty$, then
the decomposition \eqref{decomp-2} ensures that
\begin{equation*}
\begin{split}
\lambda(s_1)&=E_{0,k}(\xi_{s_1},\eta_{s_1},\zeta_{s_1};s_1)\leq E_{0,k}(\xi_{s_2},\eta_{s_2},\zeta_{s_2};s_1)\\
&\leq E_{0,k}(\xi_{s_2},\eta_{s_2},\zeta_{s_2};s_2)=\lambda(s_2).
\end{split}
\end{equation*}So $\lambda $ is non-decreasing in $s$. Suppose by way of contradiction that
$\lambda(s_1)=\lambda(s_2)$. Then, the above inequality implies that 
$s_1E_{0,k}^1(\xi_{s_2},\eta_{s_2},\zeta_{s_2})=s_2E_{0,k}^1(\xi_{s_2},\eta_{s_2},\zeta_{s_2})$,
which gives that
$E_{0,k}^1(\xi_{s_2},\eta_{s_2},\zeta_{s_2})=0$. This in turn implies $\xi_{s_2}=\eta_{s_2}=\zeta_{s_2}=0$, which contradicts that $(\xi_{s_2},\eta_{s_2},\zeta_{s_2})\in \mathcal{A}_1$. Therefore, the equality can not be achieved, and $\lambda$ is strictly increasing in $s$.
\end{proof} 

\begin{rmk}\label{regularity-u}
Define the open set $\mathcal{S}=\lambda^{-1}(-\infty,0) \subset(0,\infty)$, then we calculate $\mu=\sqrt{-\lambda}>0$. The open set $\mathcal{S}$ is nonempty by Proposition \ref{infimum-A-3}.
\end{rmk}
From Proposition \ref{infimum-A-3} and lower bound \eqref{decomp}, we can show the uniqueness of $s$. 
\begin{prop}\label{prop3.7}
There exists a unique $s\in \mathcal{S}$ so that $\mu(s)=\sqrt{-\lambda}>0$ and 
	$s=\mu(s)$.
\end{prop}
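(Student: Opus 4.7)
\textbf{Proof proposal for Proposition \ref{prop3.7}.} The plan is to recast the fixed-point condition $s=\mu(s)$ as a root of the continuous scalar function
\[
\Phi(s) \eqdefa s - \mu(s) = s - \sqrt{-\lambda(s)}, \qquad s\in\mathcal{S},
\]
and then locate this root via an intermediate value argument combined with strict monotonicity. First I would describe $\mathcal{S}$ precisely: because $\lambda$ is continuous and strictly increasing on $(0,\infty)$ by the previous proposition, the level set $\mathcal{S}=\lambda^{-1}(-\infty,0)$ is an interval of the form $(0,s^*)$. Proposition \ref{infimum-A-3} shows that $\mathcal{S}$ contains some $(0,s_0]$, so $s^*>0$. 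The lower bound $\lambda(s)\geq -C+sC_2$ from \eqref{decomp} forces $s^*\leq C/C_2<\infty$, and continuity gives $\lambda(s^*)=0$.

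Next I would analyze the two endpoints of $\Phi$. Monotonicity of $\lambda$ together with the uniform lower bound $\lambda(s)\geq -C$ and the upper bound $\lambda(s)\leq E_{0,k}(\xi^*,\eta^*,\zeta^*;s)\to E^0_{0,k}(\xi^*,\eta^*,\zeta^*)<0$ (obtained by freezing the specific admissible triple $(\xi^*,\eta^*,\zeta^*)$ constructed in Proposition \ref{infimum-A-3}) shows that $\lambda_0\eqdefa\lim_{s\to 0^+}\lambda(s)$ exists and satisfies $-C\leq \lambda_0<0$. Consequently
\[
\lim_{s\to 0^+}\Phi(s) = -\sqrt{-\lambda_0}<0, \qquad \lim_{s\to (s^*)^-}\Phi(s)=s^*-0=s^*>0.
\]
Since $\lambda\in C^0(\mathcal{S})$ and $\lambda<0$ there, $\mu=\sqrt{-\lambda}$ is continuous on $\mathcal{S}$, hence so is $\Phi$. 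The intermediate value theorem then provides at least one $s_\star\in\mathcal{S}$ with $\Phi(s_\star)=0$, i.e.\ $s_\star=\mu(s_\star)$, which is exactly the required matching condition that turns a modified-problem solution into a solution of the original spectral problem \eqref{spectral-formulation-orig}.

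For uniqueness, I would invoke the strict monotonicity proved in item (iii) of the previous proposition. Since $\lambda$ is strictly increasing on $\mathcal{S}$ and negative there, the composition $\mu(s)=\sqrt{-\lambda(s)}$ is strictly decreasing, so $-\mu(s)$ is strictly increasing; adding the strictly increasing $s\mapsto s$ yields that $\Phi$ is strictly increasing on $\mathcal{S}$. A strictly increasing continuous function has at most one zero, so $s_\star$ is unique.

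The main obstacle, in my view, is not the scalar argument itself but ensuring that the ingredients pulled from earlier in the section are robust enough to feed into it: specifically, the two-sided control $-C\leq \lambda_0<0$ as $s\to 0^+$ (which requires both the coercivity-based lower bound and the existence of a fixed test triple making $E^0_{0,k}<0$, independent of $s$), and the strict monotonicity $\lambda(s_1)<\lambda(s_2)$ for $s_1<s_2$ in $\mathcal{S}$, which uses the positivity of the viscous piece $E^1_{0,k}$ on any minimizer. Once those are in hand, the proof of Proposition \ref{prop3.7} is the short continuity/monotonicity argument sketched above.
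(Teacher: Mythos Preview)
Your proof is correct and follows essentially the same strategy as the paper: identify $\mathcal{S}=(0,s^*)$ as a bounded open interval using continuity, strict monotonicity, and the two-sided bounds on $\lambda$, then run an intermediate value argument on a monotone auxiliary function. The only cosmetic difference is that the paper uses the quotient $\Phi(s)=s/\mu(s)$ (which runs from $0$ to $+\infty$ on $\mathcal{S}$) rather than your difference $\Phi(s)=s-\mu(s)$; your additive version makes the strict monotonicity slightly more transparent, but the two arguments are interchangeable.
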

\begin{proof}
	From Proposition \ref{infimum-A-3}, we know that there exist two constants $C_0>0$ and $C_1>0$ such that $\lambda(s)\leq -C_0+sC_1$. On the other hand, the lower bound \eqref{decomp} implies that $\lambda(s)\rightarrow +\infty$, as $s\rightarrow \infty$. Since $\lambda$ is continuous and strictly increasing, there exists $s^* \in (0,\infty)$ so that 
	$$\mathcal{S}=\lambda^{-1}((-\infty,0))=(0,s^*).$$ Since $\lambda<0$, on $\mathcal{S}$, we can define $\mu=\sqrt{-\lambda(s)}$. Now, we define the function $\Phi: (0,s^*)\rightarrow(0,\infty)$ according to $\Phi(s)=\frac{s}{\mu(s)}$. So $\Phi$ is continous and strictly increasing in $s$ from the continuity and monotonicity properties of $\lambda$. Moreover, we know that $\lim_{s\rightarrow 0}\Phi(s)=0$ and $\lim_{s\rightarrow s^*}\Phi(s)=+\infty$. By the intermediate value theorem, there exists $s\in (0,s^*)$ so that $\Phi(s)=1$, that is, $s=\mu(s)$. This $s$ is unique since $\Phi
	$ is strictly increasing. 
\end{proof}
Now, we consider the regularity of $\xi$, $\eta$ and $\zeta$.
\begin{prop}
Let $(\xi,\eta,\zeta)$ be the solutions to \eqref{spectal formulation-2} or \eqref{new-equations}, then there exists $A_n$ such that
\begin{equation*}
\|\xi^{(n)}r^{(2n-1)/2}\|_{L^2(0,r_0)}\leq A_n,\,\,
\|\eta^{(n)}r^{(2n-1)/2}\|_{L^2(0,r_0)}\leq A_n,
\,\,
\|\zeta^{(n)}r^{(2n-1)/2}\|_{L^2(0,r_0)}\leq A_n,
\end{equation*}
with $A_n$ depending on $r_0$, $k$, $\pi$ and the presure $p$.
\end{prop}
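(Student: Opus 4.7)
The plan is to establish the weighted regularity by induction on $n$, using the ODE system \eqref{new-equations} as an algebraic relation for the highest-order derivative in terms of lower-order quantities, combined with weighted Hardy inequalities on $(0,r_0)$.

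For the base case $n=1$, I would read off the bounds directly from $(\xi,\eta,\zeta) \in X_k \cap \mathcal{A}_1$ together with the coercivity estimate \eqref{coercivity}. Indeed, expanding the squares in \eqref{defi-Xk} and using that $\xi(0)=\zeta(0)=0$ via a Hardy-type bound gives control of $\int_0^{r_0}(\xi')^2 r\,dr$, $\int_0^{r_0}(\eta')^2 r\,dr$, and $\int_0^{r_0}(\zeta')^2 r\,dr$, which is exactly the $n=1$ statement with weight $r^{1/2}$. The bounds on $\xi/r$ and $\zeta/r$ in weighted $L^2$ come from the same expansion together with the boundary behaviour $\xi(0)=\zeta(0)=0$.

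For the inductive step, I would differentiate each of the three equations in \eqref{new-equations} exactly $n-2$ times and solve algebraically for the top-order terms $\xi^{(n)}$, $\eta^{(n)}$, $\zeta^{(n)}$ (which appear with nonzero coefficients $\tfrac{4\tilde{\varepsilon}}{3}+\tilde{\delta}$ and $\tilde{\varepsilon}$ respectively). Multiplying by $r^{(2n-1)/2}$ and taking $L^2$-norms, everything on the right-hand side is a linear combination of $r^{(2n-1)/2} f^{(j)}/r^{n-j}$ for $j<n$ and $f\in\{\xi,\eta,\zeta\}$, plus terms involving derivatives of $\rho$, $p$, $B_\theta$. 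The weighted powers are chosen so that $r^{(2n-1)/2}/r^{n-j} = r^{(2j-1)/2}$, which is precisely the weight of the inductive hypothesis applied to $f^{(j)}$, after invoking a weighted Hardy inequality $\int_0^{r_0}|f^{(j)}|^2 r^{2j-1}dr \lesssim \int_0^{r_0}|f^{(j+1)}|^2 r^{2j+1}dr$ whenever the boundary contribution at $r=0$ vanishes. The smoothness and boundedness of $\rho$, $p$, $B_\theta$ on any compact subinterval $(0,r_0)$, together with their controlled behaviour near the endpoints, ensures the coefficient terms contribute only constants depending on $r_0$, $k$ and $p$.

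The main obstacle is the singular behaviour at $r=0$: $\eta$ is not a priori constrained to vanish at the origin, so we cannot apply Hardy directly to $\eta/r^{n-j}$. I would handle this by exploiting the Bessel-type structure $\tilde{\varepsilon}(\partial_r^2 + r^{-1}\partial_r - m^2/r^2)\eta$ in the second equation of \eqref{new-equations}, which forces $\eta$ to inherit the correct vanishing order at $r=0$ from the regularity of the right-hand side (for $m=0$ and any $k$ this is what we need here). A secondary issue is combining interior smoothness (away from both endpoints, where the system is uniformly elliptic and standard ODE regularity applies) with the weighted estimates near $r=0$; I would localize with a cutoff, obtain smoothness on compact subintervals by classical ODE theory, and then patch with the weighted estimate near the origin to conclude the claimed global bound $\|f^{(n)} r^{(2n-1)/2}\|_{L^2(0,r_0)} \leq A_n$.
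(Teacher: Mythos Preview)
Your approach coincides with the paper's: both argue by induction on $n$, read the base case $n=1$ (together with $\xi/\sqrt{r},\,\zeta/\sqrt{r}\in L^2$) directly from $(\xi,\eta,\zeta)\in X_k$, and at the inductive step use the ODE system \eqref{new-equations} to express the top derivative as a linear combination of lower-order terms whose weights match the strong inductive hypothesis. The Hardy-inequality detour and the concern about $\eta$ at the origin are unnecessary in this $m=0$ setting: the second equation carries $\eta'/r$ but no $\eta/r^{2}$ term, so after multiplying by $r^{(2n-1)/2}$ every term on the right is already one of the quantities $\xi^{(j)}r^{(2j-1)/2}$, $\eta^{(j)}r^{(2j-1)/2}$, $\zeta^{(j)}r^{(2j-1)/2}$ (or $\xi r^{-1/2}$, $\zeta r^{-1/2}$ from the $X_k$ norm), and the paper simply invokes these bounds without any Hardy step.
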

\begin{proof}
Applying $(\xi,\eta,\zeta)\in X_k$ and the compactness results in Proposition  \ref{embeddding-b},  when $m=0$ and any $k\in \mathbb{Z}$, we can get that $(\xi,\eta, \zeta)\in L^2(0,r_0)$ and $\Big(\sqrt{r}\xi',\sqrt{r}\eta',\sqrt{r}\zeta',\frac{\xi}{\sqrt{r}},\frac{\zeta}{\sqrt{r}}\Big)\in L^2(0,r_0)$. From the system \eqref{new-equations}, we have
\begin{equation*}
\|\xi''r^{3/2}\|_{L^2(0,r_0)}\leq C\Big\|\Big(\xi,\eta,\sqrt{r}\xi',\sqrt{r}\eta',\frac{\xi}{\sqrt{r}}\Big)\Big\|_{L^2(0,r_0)},
\end{equation*}
\begin{equation*}
\|\eta''r^{3/2}\|_{L^2(0,r_0)}\leq  C\Big\|\Big(\xi,\eta,\sqrt{r}\xi',\sqrt{r}\eta',\frac{\xi}{\sqrt{r}}\Big)\Big\|_{L^2(0,r_0)},
\end{equation*}
\begin{equation*}
\Big\|\zeta''r^{3/2}\|_{L^2(0,r_0)}\leq C\|\Big(\frac{\zeta}{\sqrt{r}},\sqrt{r}\zeta'\Big)\|_{L^2(0,r_0)}.
\end{equation*}
By induction on $n$. Suppose for some $n\geq 1$,
\begin{equation*}
\|\xi^{(n)}r^{(2n-1)/2}\|_{L^2(0,r_0)}\leq A_n,\,\,
\|\eta^{(n)}r^{(2n-1)/2}\|_{L^2(0,r_0)}\leq A_n,\,\,
\|\zeta^{(n)}r^{(2n-1)/2}\|_{L^2(0,r_0)}\leq A_n.
\end{equation*}
By Remark \ref{regularity-u}, then differentiating \eqref{new-equations}, we get that there exists a constant $C$ depending on the various parameters so that
\begin{equation*}
\|\xi^{(n+1)}r^{(2n+1)/2}\|_{L^2(0,r_0)}\leq CA_n\leq A_{n+1},
\end{equation*}
\begin{equation*}
\|\eta^{(n+1)}r^{(2n+1)/2}\|_{L^2(0,r_0)}\leq CA_n\leq A_{n+1},
\end{equation*}
\begin{equation*}
\|\zeta^{(n+1)}r^{(2n+1)/2}\|_{L^2(0,r_0)}\leq CA_n\leq  A_{n+1}.
\end{equation*}
Then, the bound holds for $n+1$, and so by induction the bound holds for all $n\geq 1$.
\end{proof}

\subsection{Modified variational problem when $m\neq 0$}
In this subsection, we will first introduce the definition of function space $Y_{m,k}$ and its properties, then prove the instability for any $m\neq 0$ and  any $k\in \mathbb{Z}$. 

First, let us introduce the definition of the space $Y_{m,k}$.
\begin{defi}\label{defi-y}
	The weighted Sobolev space $Y_{m,k}$ is defined as the completion of $\Big\{(\xi,\eta,\zeta)\in C^{\infty}([0,r_0])\times C^{\infty}([0,r_0])\times C^{\infty}([0,r_0])\Big|\xi(0)=\eta(0)=\zeta(0)=0\Big\}$, with respect to the norm
\begin{equation}\label{defi-ymk}
\begin{split}
\|\big(\xi,\eta,\zeta\big)\|^2_{Y_{m,k}}&
=\int_0^{r_0}\Big[\Big(-2\xi'+\frac{\xi}{r}+\frac{m}{r}\zeta-k\eta\Big)^2+\Big(\xi'-\frac{2\xi}{r}-\frac{2m}{r}\zeta-k\eta\Big)^2\\
&\quad+\Big(\xi'+\frac{\xi}{r}+\frac{m}{r}\zeta+2k\eta\Big)^2+\Big(-\zeta'+\frac{\zeta}{r}+\frac{m}{r}\xi\Big)^2+(\eta'+k\xi)^2\\
&\quad+\Big(\frac{m}{r}\eta-k\zeta\Big)^2\Big]rdr
+\int_0^{r_0}\Big(\xi'+\frac{\xi}{r}+\frac{m}{r}\zeta-k\eta\Big)^2rdr\\
&\quad+\int_0^{r_0}\rho(\xi^2+\eta^2+\zeta^2)rdr.
\end{split}
\end{equation} 
\end{defi}

Applying Definition \ref{defi-y}, Lemma \ref{xi-bound} and Lemma \ref{xi-bound-m-general}, similarly as Proposition \ref{embeddding-b}, we can get the following compactness results.
\begin{prop}\label{embeddding-m1}
Let $\pi_i$ for $ = 1, 2,3$ denote the projection operator onto the $i$-th factor. Then $\pi_i: Y_{m,k}\rightarrow Z_i$ is a bounded, linear, compact map for $ i = 1, 2,3$,
with the spaces $Z_1$, $Z_2$ and $Z_3$ in \eqref{defi-Z}. We denote them by 	
$Y_{m,k}\subset\subset Z_i$ for $i=1,2,3$.
\end{prop}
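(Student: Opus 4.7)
The plan is to mirror the proof of Proposition \ref{embeddding-b} very closely, since the only structural change in passing from $X_k$ to $Y_{m,k}$ is that the test functions now vanish at $r=0$ in all three components and the norm contains additional $m/r$ couplings. Given a bounded sequence $\{(\xi_n,\eta_n,\zeta_n)\}$ in $Y_{m,k}$, I would first extract individual weighted $L^2$ bounds on $\xi_n'$, $\eta_n'$, $\zeta_n'$ (with weight $r$) and on $\xi_n/\sqrt{r}$, $\eta_n/\sqrt{r}$, $\zeta_n/\sqrt{r}$. These should come by taking suitable linear combinations of the six quadratic expressions in \eqref{defi-ymk} together with the divergence term $\xi'+\xi/r+m\zeta/r-k\eta$; for instance, summing the first three terms kills every undifferentiated contribution (they obey the trace-free identity $a_1+a_2+a_3=0$), and then playing them against the divergence term isolates $\xi'$, while the presence of $m\neq 0$ lets the couplings $(m/r)\eta-k\zeta$ and $-\zeta'+\zeta/r+(m/r)\xi$ deliver the corresponding weighted bounds on $\eta$ and $\zeta$.

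Next, following the three-region decomposition used for $X_k$, I would fix $\kappa>0$ and choose $s_0>0$ small and $s_1<r_0$ close to $r_0$ so that Lemma \ref{xi-bound} yields
\begin{equation*}
\int_0^{s_0}\bigl(\xi_n^2+\eta_n^2+\zeta_n^2\bigr)\,dr\leq \tfrac{\kappa}{6},
\end{equation*}
and Lemma \ref{xi-bound-m-general} yields the analogous estimate on $(s_1,r_0)$, both uniformly in $n$. These lemmas make no use of specific boundary behavior beyond the derivative weights already controlled, so they apply to each of $\xi_n$, $\eta_n$, $\zeta_n$ with no modification.

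On the middle interval $(s_0,s_1)$, which avoids both the coordinate singularity at $r=0$ and the density degeneracy at $r=r_0$, the weighted bounds reduce to unweighted $H^1(s_0,s_1)$ bounds for each of the three components, so the compact embedding $H^1(s_0,s_1)\subset\subset L^\infty(s_0,s_1)$ lets me pass, by a diagonal argument, to a subsequence converging in $L^\infty$ on $(s_0,s_1)$. Combining this with the smallness from the two outer intervals gives a Cauchy sequence in $L^2(0,r_0)$ for each component, which is exactly the claim $Y_{m,k}\subset\subset Z_i$.

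The main potential obstacle is purely algebraic: verifying that the six squared quantities in $\|(\xi,\eta,\zeta)\|_{Y_{m,k}}^2$ together with the divergence term really do control $\int_0^{r_0}(\xi')^2 r\,dr$, $\int_0^{r_0}(\eta')^2 r\,dr$, $\int_0^{r_0}(\zeta')^2 r\,dr$ and the weighted $L^2$ norms of $\xi/r$, $\eta/r$, $\zeta/r$ separately. This requires an explicit elementary linear-combination argument, and one must use $m\neq 0$ in at least one place (to extract control of $\zeta/r$ and $\eta/r$ independently of $k$); everything else is a routine transcription of the $X_k$ argument.
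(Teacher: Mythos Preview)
Your proposal is correct and follows essentially the same approach as the paper, which simply refers back to Proposition \ref{embeddding-b}; the algebraic step you flag as the main obstacle is carried out in the paper's Proposition \ref{new-prop-bian-guo} (already forward-referenced in the proof of Proposition \ref{embeddding-b}). One minor correction: for $|m|=1$ you will not extract $\xi/\sqrt{r}$ and $\zeta/\sqrt{r}$ separately, only the combination $(\xi+m\zeta)/\sqrt{r}$, but this is harmless since the three-region compactness argument only needs the weighted derivative bounds $\int_0^{r_0}(\xi')^2r\,dr$, $\int_0^{r_0}(\eta')^2r\,dr$, $\int_0^{r_0}(\zeta')^2r\,dr$.
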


Now, we consider the case $m\neq 0$ and any $k\in \mathbb{Z}$. 
		We need to consider the energy \eqref{va-m-1-b} and
\eqref{constraint}.
	
First, we can show that  $E_{m,k}$ and $\mathcal{J}$ are well defined on the space 
$Y_{m,k} \times H^1(r_0,r_w)$.
\begin{lem}
$E_{m,k}$ and $\mathcal{J}$ are well defined on the space $Y_{m,k} \times H^1(r_0,r_w)$.	
\end{lem}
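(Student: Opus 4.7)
The plan is to estimate each term appearing in $E_{m,k}$ and in $\mathcal{J}$ by a multiple of $\|(\xi,\eta,\zeta)\|^2_{Y_{m,k}} + \|\widehat{Q}_r\|^2_{H^1(r_0,r_w)}$, exploiting Proposition \ref{embeddding-m1} (the compact embedding $Y_{m,k}\subset\subset Z_i$) together with Lemma \ref{xi-bound} and Lemma \ref{xi-bound-m-general}. The structure closely parallels the $m=0$ treatment leading to \eqref{pressure-term-control}.

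First I would handle $\mathcal{J}$: since $\rho\in L^\infty$ and $\pi_i:Y_{m,k}\to L^2(0,r_0)$ is bounded, $\mathcal{J}(\xi,\eta,\zeta)\le C\|(\xi,\eta,\zeta)\|^2_{Y_{m,k}}$ is immediate. Next I would process the fluid energy $E_{m,k}^p$ term by term. The two viscosity integrals in \eqref{fluid-en-cyl-viscosity} are, up to the constants $\tilde\varepsilon,\tilde\delta$, exactly pieces of the seminorm defining $\|\cdot\|_{Y_{m,k}}$ via \eqref{defi-ymk}, so they are controlled. The magnetic term $(m^2+k^2r^2)\big[\frac{B_\theta}{r}\eta+\frac{-kB_\theta(r\xi)'+2kB_\theta\xi}{m^2+k^2r^2}\big]^2 r$ is bounded using $|B_\theta|+|B_\theta/r|\le C$ together with $\|\eta/\sqrt r\|_{L^2}$, $\|\sqrt r\,\xi'\|_{L^2}$, $\|\xi/\sqrt r\|_{L^2}$, all recovered from the norm by the triangle inequality applied to combinations such as $\xi'+\frac{\xi}{r}+\frac{m}{r}\zeta+2k\eta$. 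The pressure divergence term $\gamma p\big[\frac{1}{r}(r\xi)'-k\eta+\frac{m\zeta}{r}\big]^2r$ is controlled because $p\in L^\infty$ and the same combinations recur.

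For the fourth integral, observe $\frac{m^2 B_\theta^2}{r(m^2+k^2r^2)}(\xi-r\xi')^2\le \frac{B_\theta^2}{r}(\xi-r\xi')^2=B_\theta^2 r\big(\frac{\xi}{r}-\xi'\big)^2$, and $\big(\frac{\xi}{r}-\xi'\big)^2 r$ is again controlled via the $Y_{m,k}$ norm. The integral $\int_0^{r_0}\frac{m^2B_\theta^2}{r}\xi^2\,dr$ reduces to bounding $\int_0^{r_0}\xi^2/r\,dr$ since $B_\theta$ is bounded, and this follows directly from the $Y_{m,k}$ seminorm (for example from $\|\xi/\sqrt r\|_{L^2}^2$ obtained by combining $-2\xi'+\frac{\xi}{r}+\frac{m}{r}\zeta-k\eta$ with $\xi'-\frac{2\xi}{r}-\frac{2m}{r}\zeta-k\eta$ via the triangle inequality). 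The delicate term is $\int_0^{r_0}2p'\xi^2\,dr$, which is handled exactly as in \eqref{pressure-term-control}: split $(0,r_0)=(0,s_0)\cup(s_0,r_0)$ with $s_0$ small, bound $|p'|\le C$ on $(0,s_0)$ and apply Lemma \ref{xi-bound} to estimate $\int_0^{s_0}\xi^2\,dr$ by $C\mathcal{J}+C\|(\xi,\eta,\zeta)\|^2_{Y_{m,k}}$, and on $(s_0,r_0)$ use the compactness $Y_{m,k}\subset\subset Z_1$ to bound $\int_{s_0}^{r_0}\xi^2\,dr$.

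The surface energy vanishes identically by \eqref{surface-en-cyl-viscosity}. Finally, for the vacuum integral $E_{m,k}^v$, both $|\widehat{Q}_r|^2$ and $\frac{1}{m^2+k^2r^2}|(r\widehat{Q}_r)'|^2$ are pointwise bounded by $|\widehat{Q}_r|^2+\frac{1}{m^2}(|\widehat{Q}_r|+r|\widehat{Q}_r'|)^2$ (using $m\neq 0$), hence by $C(|\widehat{Q}_r|^2+|\widehat{Q}_r'|^2)$ on $[r_0,r_w]$, which is controlled by $\|\widehat{Q}_r\|^2_{H^1(r_0,r_w)}$. Assembling these bounds yields $|E_{m,k}(\xi,\eta,\zeta,\widehat{Q}_r)|+|\mathcal{J}(\xi,\eta,\zeta)|\le C\big(\|(\xi,\eta,\zeta)\|^2_{Y_{m,k}}+\|\widehat{Q}_r\|^2_{H^1(r_0,r_w)}\big)$. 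The only genuinely nontrivial step is the pressure-term estimate near $r=0$, but this is settled by the same logarithmic argument already used in Lemma \ref{xi-bound}; every other bound is an algebraic recombination inside the $Y_{m,k}$ seminorm.
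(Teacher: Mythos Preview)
Your approach is essentially the same as the paper's: both establish the pressure-term bound $|\int_0^{r_0}p'\xi^2\,dr|\le C\mathcal{J}+C\|(\xi,\eta,\zeta)\|_{Y_{m,k}}^2$ via Lemma \ref{xi-bound} and Proposition \ref{embeddding-m1}, then control every remaining integrand by pieces of the $Y_{m,k}$ seminorm together with $\|\widehat Q_r\|_{H^1(r_0,r_w)}$. One small slip: the combination of $-2\xi'+\frac{\xi}{r}+\frac{m}{r}\zeta-k\eta$ and $\xi'-\frac{2\xi}{r}-\frac{2m}{r}\zeta-k\eta$ yields $(\frac{\xi}{r}+\frac{m}{r}\zeta)\sqrt{r}\in L^2$, not $\xi/\sqrt r$ alone, and separating the two is not immediate for $|m|=1$; but you do not need it, since $B_\theta\in C^{1,1}$ with $B_\theta(0)=0$ gives $B_\theta^2/r$ bounded (the same fact you already invoke for the magnetic term), whence $\int_0^{r_0}\frac{m^2B_\theta^2}{r}\xi^2\,dr\le C\int_0^{r_0}\xi^2\,dr$ is controlled by the compact embedding.
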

\begin{proof}
	By Lemma \ref{xi-bound} and Proposition \ref{embeddding-m1}, similar to the proof of \eqref{pressure-term-control}, we also have by Definition \ref{defi-y} that 
	\begin{equation}
	\bigg| \int_0^{r_0}p'\xi^2dr\bigg|\leq  C\mathcal{J}+C\|\big(\xi,\eta,\zeta\big)\|^2_{Y_{m,k}},
	\end{equation}
	which  implies that
	\begin{equation*}
\begin{split}
|E(\xi,\eta,\zeta,\widehat{ Q}_r)|&\leq C\mathcal{J}++C\|\big(\xi,\eta,\zeta\big)\|^2_{Y_{m,k}}+ C\int_0^{r_0}B_{\theta}^2\Big[\frac{\eta}{r}-\frac{k}{m^2+k^2r^2}\big((r\xi)'-2\xi\big)\Big]^2rdr\\
&\quad+C\int_0^{r_0}p\Big[\frac{1}{r}(r\xi)'+\frac{m\zeta}{r}\Big]^2rdr+C\int_0^{r_0}\frac{B^2_\theta}{r}\xi^2dr+C\int_0^{r_0}\frac{B^2_\theta}{r}(\xi-r\xi')^2dr\\
&\quad+C\int_0^{r_0}\Big[\Big(-2\xi'+\frac{\xi}{r}+\frac{m}{r}\zeta-k\eta\Big)^2+\Big(\xi'-\frac{2\xi}{r}-\frac{2m}{r}\zeta-k\eta\Big)^2\\
&\quad+\Big(\xi'+\frac{\xi}{r}+\frac{m}{r}\zeta+2k\eta\Big)^2+\Big(-\zeta'+\frac{\zeta}{r}+\frac{m}{r}\xi\Big)^2+(\eta'+k\xi)^2+(\frac{m}{r}\eta-k\zeta)^2\Big]rdr
\\
&\quad+C\int_0^{r_0}\Big(\xi'+\frac{\xi}{r}+\frac{m}{r}\zeta-k\eta\Big)^2rdr+ C\|\widehat{ Q}_r\|_{H^1(r_0,r_w)}+ C\|\rho\|^{\gamma-1}_{L^\infty}\int_0^{r_0} \rho |\eta|^2rdr\\
&\leq C\|\big(\xi,\eta,\zeta\big)\|^2_{Y_{m,k}}+ C\|\widehat{ Q}_r\|_{H^1(r_0,r_w)}.
\end{split}
\end{equation*}
Therefore, $E_{m,k}$ and $\mathcal{J}$ are well defined on the space $Y_{m,k}\times H^1(r_0,r_w)$.
\end{proof}
	Consider the set 
	\begin{equation}\label{set a-e-3-viscosity}
	\mathcal{A}_2=\left\{
	\begin{aligned}
	\big((\xi,\eta,\zeta),\widehat{ Q}_r\big
	)\in Y_{m,k}\times H^1(r_0,r_w)| \mathcal{J}(\xi,\eta,\zeta)=1,\,\\ m\widehat{  B}_\theta \xi=r\widehat{ Q}_r\, \,\mbox{at}\, \,r=r_0\,\, \mbox{and}\,\, \widehat{ Q}_r=0\, \,\mbox{at}\, \,r=r_w
	\end{aligned}
	\right\},
	\end{equation}
	where the functions $\xi$, $\eta$ and $\zeta$ are restricted to $(0,r_0)$, and the function $\widehat{ Q}_r$ is restricted to $(r_0,r_w)$.	
	Then we write  \begin{equation}\label{e-3-viscosity}
	\lambda(s):=\inf_{((\xi,\eta,\zeta),\widehat{ Q}_r)\in \mathcal{A}_2}E_{m,k}(\xi,\eta,\zeta,\widehat{ Q}_r;s). 
	\end{equation}
		We want to show that the infimum of $E_{m,k}(\xi,\eta,\zeta,\widehat{ Q}_r)$ over the set $\mathcal{A}_2$ is achieved and is negative and that the minimizer solves \eqref{spectral-formulation} and \eqref{euler-l-q} with the corresponding boundary conditions. First, we prove that the energy $E_{m,k}$ has a lower bound on the set $ \mathcal{A}_2$ and the coercivity estimate holds.
\begin{lem}\label{lower-bound-general-m}
The energy $E_{m,k}(\xi,\eta,\zeta,\widehat{ Q}_r)$ has a lower bound on the set $ \mathcal{A}_2$ and any minimizing sequence is bounded in $Y_{m,k}$.
\end{lem}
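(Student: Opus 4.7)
The plan is to adapt the argument used for the $m=0$ case essentially verbatim (compare the proof following \eqref{con-mono-use-bian}), with extra book-keeping to handle the $\zeta$ component, the geometric term $\frac{m^2 B_\theta^2}{r(m^2+k^2r^2)}(\xi-r\xi')^2$, and the vacuum contribution $E^v_{m,k}$. The key observation from Lemma \ref{lem-e} is that every term appearing in $E_{m,k}$ is manifestly nonnegative except $2\pi^2 \int_0^{r_0} 2 p'\xi^2\, dr$: indeed $\gamma p \geq 0$, $B_\theta^2 \geq 0$, $\tilde\varepsilon, \tilde\delta > 0$, the geometric term is a perfect square, the magnetic contribution $\frac{m^2 B_\theta^2}{r}\xi^2$ is nonnegative, and $E^v_{m,k}$ is a sum of squares. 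Thus everything reduces to dominating $|\int_0^{r_0} p'\xi^2\,dr|$ by a small fraction of the viscous dissipation plus a multiple of $\mathcal{J}$.

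First I would use the elementary inequality $a^2+b^2 \geq \tfrac{1}{2}(a-b)^2$ to harvest $\int_0^{r_0} (\xi')^2 r\,dr$ and $\int_0^{r_0} \xi^2/r\,dr$ from the $\tilde\varepsilon$- and $\tilde\delta$-viscosity integrals. Pairing $(-2\xi'+\xi/r+m\zeta/r-k\eta)^2$ with the $\tilde\delta$-term $(\xi'+\xi/r+m\zeta/r-k\eta)^2$ annihilates the $\xi/r$, $m\zeta/r$, $k\eta$ contributions and leaves a multiple of $(\xi')^2$; pairing $(\xi'-2\xi/r-2m\zeta/r-k\eta)^2$ with the same $\tilde\delta$-term isolates a multiple of $(3\xi/r + 3m\zeta/r)^2$, which together with the dissipation term $(-\zeta'+\zeta/r+m\xi/r)^2$ (used symmetrically to extract $\int \zeta^2/r\,dr$) gives simultaneous control of $\int \xi^2/r\,dr$ and $\int \zeta^2/r\,dr$ after absorption of lower-order cross-terms into $\mathcal{J}$. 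This yields $E_{m,k} \geq c_0 \min(\tilde\varepsilon,\tilde\delta)\bigl(\int (\xi')^2 r\,dr + \int \xi^2/r\,dr\bigr) + 4\pi^2 \int_0^{r_0} p'\xi^2\,dr + E^v_{m,k} - C\mathcal{J}$ for some absolute $c_0>0$.

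Next, with $\int (\xi')^2 r\,dr$ and $\int \xi^2/r\,dr$ controlled, I would estimate $|\int_0^{r_0} p' \xi^2\, dr|$ exactly as in \eqref{estimate-p'}--\eqref{lower-bound-large-s1}: near $r=0$, Lemma \ref{xi-bound} gives $|\int_0^{s_0} p'\xi^2\,dr| \leq C \mathcal{J} s_0 + C s_0 \min(\tilde\varepsilon,\tilde\delta)^{-1} \cdot \bigl(\tfrac{c_0}{4}\min(\tilde\varepsilon,\tilde\delta)\int_0^{r_0}(\xi')^2 r\,dr\bigr)$ for $s_0$ small; near $r=r_0$, Lemma \ref{xi-bound-m-general} together with $H^1(s_1,r_0) \hookrightarrow L^\infty(s_1,r_0)$ gives a bound of the form $C(r_0-s_1)\min(\tilde\varepsilon,\tilde\delta)\bigl(\int (\xi')^2 r\,dr + \int \xi^2/r\,dr\bigr)$; on the interior piece $(s_0,s_1)$, since $p'$ is bounded and the singularity of $1/r$ and the degeneracy of $\rho$ at $r_0$ are both avoided, the compactness argument of Proposition \ref{embeddding-m1} yields $|\int_{s_0}^{s_1} p'\xi^2\,dr| \leq C\mathcal{J}$. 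Choosing $s_0$ small enough and $s_1$ close enough to $r_0$ that all prefactors are at most $c_0/8$, the pressure term is absorbed into the viscous dissipation, leaving $E_{m,k}(\xi,\eta,\zeta,\widehat{Q}_r) \geq -2C\mathcal{J} = -2C$ on $\mathcal{A}_2$. Coercivity in $Y_{m,k} \times H^1(r_0,r_w)$ for a minimizing sequence then follows by reading off bounds on each squared integrand appearing in $\|(\xi,\eta,\zeta)\|^2_{Y_{m,k}}$ and in $E^v_{m,k}$ directly from the upper bound $E_{m,k}\leq M+1$ combined with $\mathcal{J}=1$.

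The main obstacle is the algebra in step two: unlike the $m=0$ case where the dissipative pairings decoupled into pure $\xi$-pairs, for $m\neq 0$ the factor $m/r$ forces $\xi$ and $\zeta$ to appear together inside each squared expression, and one must verify that $\int \xi^2/r\,dr$ can be bounded without first knowing $\int \zeta^2/r\,dr$ (or vice versa). This is precisely why the $(-\zeta'+\zeta/r+m\xi/r)^2$ term in the $\tilde\varepsilon$-dissipation plays a crucial role: it allows a symmetric extraction in which $\int \xi^2/r\,dr$ and $\int \zeta^2/r\,dr$ are controlled simultaneously. Detailed versions of these algebraic manipulations (organized by the three cases $m=0$, $|m|=1$, and $|m|\geq 2$) are carried out in Section~4 for the lower bound $D_{m,k} \gtrsim k^2 + m^2$, and essentially the same identities suffice here.
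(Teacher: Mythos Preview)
Your proposal is correct in spirit but takes an unnecessarily elaborate route. The paper's proof is markedly simpler: from the viscous terms it extracts only $\int_0^{r_0}(\xi')^2 r\,dr$ (via the single pairing $a=-2\xi'+\xi/r+m\zeta/r-k\eta$, $b=\xi'+\xi/r+m\zeta/r-k\eta$, which cancels all of $\xi/r$, $m\zeta/r$, $k\eta$ at once), and never attempts to control $\int_0^{r_0}\xi^2/r\,dr$ or $\int_0^{r_0}\zeta^2/r\,dr$. The point is that near $r=r_0$ the paper does \emph{not} use the $H^1\hookrightarrow L^\infty$ argument you borrow from \eqref{lower-bound-large-s1}; instead it applies Lemma~\ref{xi-bound-m-general} directly to bound $\int_{s_1}^{r_0}\xi^2\,dr$ by $C\mathcal{J}(r_0-s_1)+C(r_0-s_1)\int_0^{r_0}(\xi')^2 r\,dr$ (see \eqref{xi-L^2-boundary}), using only that $\int|\xi'|^2|r_0-s|\,ds\lesssim\int|\xi'|^2 r\,dr$ on $(\tfrac{r_0}{3},r_0)$. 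This completely sidesteps the $\xi$--$\zeta$ entanglement you flag as the main obstacle, so the Section~4 algebra (boundary terms, case splits on $|m|$) is not needed at all for the lower bound. Your approach would work, but the paper's shortcut is cleaner and worth knowing: when only a crude lower bound is required, one derivative of $\xi$ suffices and the $\zeta$-coupling can be ignored entirely.
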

\begin{proof}
Similarly as \eqref{con-mono-use-bian}, we can get from  \eqref{va-m-1-b} that for any $((\xi,\eta,\zeta),\widehat{ Q}_r)\in \mathcal{A}_2$,
\begin{equation}\label{con-mono-use-general-m}
\begin{split}
E_{m,k}(\xi,\eta,\zeta,\widehat{ Q}_r)
\geq 2\pi^2\min(\tilde{\varepsilon},\tilde{\delta})\int_0^{r_0}{\xi'}^2rdr+4\pi^2\int_0^{r_0}p'\xi^2dr.
\end{split}
\end{equation}
By Lemma \ref{xi-bound}, choosing $0<s_0<\frac{r_0}{3}$ small enough such that
$Cs_0\leq \frac{1}{4}$, similarly as \eqref{estimate-p'}  we deduce that
\begin{equation*}
\Big|\int_0^{s_0}p'\xi^2dr\Big|
\leq C\mathcal{J}s_0
+\frac{1}{2}\pi^2\min(\tilde{\varepsilon},\tilde{\delta})\int_0^{r_0}{\xi'}^2rdr.
\end{equation*}	
On the other hand, from the Definition of $\mathcal{J}$ and Lemma \ref{xi-bound-m-general},   choosing  $\frac{r_0}{3}<s_1<r_0$ close enough to $r_0$ such that
 $C(r_0-s_1)\leq \frac{1}{4}$, we can prove 
\begin{equation*}
\Big|\int_{s_0}^{s_1}p'\xi^2dr\Big|\leq C\int_{s_0}^{s_1} \xi^2dr\leq C\mathcal{J},
\end{equation*}
and
\begin{equation}\label{xi-L^2-boundary}
\begin{split}
\int_{s_1}^{r_0}\xi^2(r)dr&\leq	C\int_{s_1}^{r_0}\Big(\int_{\frac{r_0}{3}}^{\frac{r_0}{2}}|\xi(b)|^2db\Big)dr
+C\int_{s_1}^{r_0}\Big(\int_{\frac{r_0}{3}}^{r}|\xi'|^2|r_0-s|ds\Big)\\
&\quad\times
\Big|\ln\frac{2r_0}{3}-\ln(r_0-r)\Big|dr\\
&
\leq C\mathcal{J}(r_0-s_1)+C\Big(\int_{\frac{r_0}{3}}^{r_0}|\xi'|^2|r_0-s|ds\Big)\int_{s_1}^{r_0}\Big|\ln \frac{2r_0}{3}-\ln(r_0-r)\Big|dr
\\
&\leq C\mathcal{J}(r_0-s_1)
+C(r_0-s_1)\int_{\frac{r_0}{3}}^{r_0}|\xi'|^2|r_0-s|ds\\
&\leq C\mathcal{J}(r_0-s_1)
+2C(r_0-s_1)\pi^2\min(\tilde{\varepsilon},\tilde{\delta})\int_0^{r_0}{\xi'}^2rdr\\
&\leq C\mathcal{J}(r_0-s_1)
+\frac{1}{2}\pi^2\min(\tilde{\varepsilon},\tilde{\delta})\int_0^{r_0}{\xi'}^2rdr,
\end{split}
\end{equation}	where we have used the facts  \begin{equation}
\begin{split}
\int_{\frac{r_0}{3}}^{r_0}\big|\xi'\big|^2|r_0-s|ds&\leq \int_{\frac{r_0}{3}}^{r_0}\big|\xi'\big|^2rdr+\int_{\frac{r_0}{3}}^{r_0}\big|\xi'\big|^2r_0dr
\leq 2C\pi^2\min(\tilde{\varepsilon},\tilde{\delta})\int_{\frac{r_0}{3}}^{r_0}{\xi'}^2rdr\\
&\leq 2C\pi^2\min(\tilde{\varepsilon},\tilde{\delta})\int_0^{r_0}{\xi'}^2rdr.
\end{split}
\end{equation}	
Hence, we can get
\begin{equation}
\begin{split}
E_{m,k}(\xi,\eta,\zeta,\widehat{ Q}_r)
&\geq\pi^2\min(\tilde{\varepsilon},\tilde{\delta})\int_0^{r_0}{\xi'}^2rdr-3C\mathcal{J}\geq -3C\mathcal{J},
\end{split}
\end{equation}	
	which implies that the energy $E_{m,k}(\xi,\eta,\zeta,\widehat{ Q}_r)$ has a lower bound on the set $ \mathcal{A}_2$.
  Now we prove the coercivity estimate.
 	Using the facts that	$\mathcal{J}=1$ and $E_{m,k}$ has a lower bound on the set $\mathcal{A}_2$, we can choose a minimizing sequence such that along the minimizing sequence, 
 	we have $M\leq E_{m,k}(\xi_n,\eta_n,\zeta_n,\widehat{  Q}_{rn})<M+1$, and 
 		for the minimizing sequence,
 	we can prove coercivity estimate: 
	\begin{equation*}\label{coercivity-m-not-zero}
 	\begin{split}
 	\|\big(\xi_n,\eta_n,\zeta_n\big)\|^2_{Y_{m,k}}+\|\widehat{ Q}_{rn}\|_{H^1(r_0,r_w)}\leq C\mathcal{J}+C(M+1)\leq C.
 	\end{split}
 	\end{equation*}
\end{proof}
From Lemma \ref{lower-bound-general-m} and Proposition \ref{embeddding-m1}, we can show that $E_{m,k}$ achieves its infimum on the set $\mathcal{A}_2$.
\begin{prop}\label{infimum-A-out-viscosity}
 $E_{m,k}$ achieves its infimum on the set $\mathcal{A}_2$.
	\end{prop}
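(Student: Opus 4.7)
The plan is to adapt the argument of Proposition \ref{infimum-A} to accommodate the additional vacuum component $\widehat{Q}_r$ and the coupling boundary constraint $m\widehat{B}_\theta\xi=r\widehat{Q}_r$ at $r=r_0$. By Lemma \ref{lower-bound-general-m}, $E_{m,k}$ is bounded below on $\mathcal{A}_2$ and any minimizing sequence $((\xi_n,\eta_n,\zeta_n),\widehat{Q}_{rn})$ is bounded in $Y_{m,k}\times H^1(r_0,r_w)$. Extracting a subsequence, $(\xi_n,\eta_n,\zeta_n)\rightharpoonup (\xi,\eta,\zeta)$ weakly in $Y_{m,k}$ and $\widehat{Q}_{rn}\rightharpoonup \widehat{Q}_r$ weakly in $H^1(r_0,r_w)$. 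By Proposition \ref{embeddding-m1} the plasma triple converges strongly in $L^2(0,r_0)$ to its weak limit, while the compact embedding $H^1(r_0,r_w)\hookrightarrow C([r_0,r_w])$ gives uniform convergence $\widehat{Q}_{rn}\to \widehat{Q}_r$ on $[r_0,r_w]$.

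Next I would invoke weak lower semicontinuity. Every quadratic term in the fluid part of $E_{m,k}$ is the $L^2(r\,dr)$ norm of a specific linear combination of $\xi$, $\eta$, $\zeta$ and their derivatives, namely exactly those appearing in the norm \eqref{defi-ymk} (the magnetic combination $\frac{B_\theta}{r}\eta+\cdots$, the compressibility combination $\frac{1}{r}(r\xi)'-k\eta+\frac{m}{r}\zeta$, the six viscous combinations, and the $(\xi-r\xi')$ term). Each converges weakly in the appropriate weighted $L^2$, so its squared norm is lower semicontinuous. The pressure term $\int_0^{r_0}\bigl[2p'+\frac{m^2B_\theta^2}{r}\bigr]\xi^2\,dr$ is in fact continuous under strong $L^2$ convergence of $\xi_n$, since $2p'+\frac{m^2 B_\theta^2}{r}$ is bounded on $[0,r_0]$ (using the behavior of $B_\theta^2/r$ at the origin from \eqref{z-pinch-1-1}), and the vacuum integral is lower semicontinuous from the weak $H^1$ convergence of $\widehat{Q}_{rn}$. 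Putting these together yields $E_{m,k}(\xi,\eta,\zeta,\widehat{Q}_r)\le \liminf_{n\to\infty} E_{m,k}(\xi_n,\eta_n,\zeta_n,\widehat{Q}_{rn})=\inf_{\mathcal{A}_2}E_{m,k}$.

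Finally one verifies the limit lies in $\mathcal{A}_2$. Strong $L^2$ convergence of $(\xi_n,\eta_n,\zeta_n)$ together with the boundedness of $\rho$ gives $\mathcal{J}(\xi,\eta,\zeta)=1$; uniform convergence of $\widehat{Q}_{rn}$ on $[r_0,r_w]$ preserves $\widehat{Q}_r(r_w)=0$. The main obstacle I anticipate is passing to the limit in the interface identity $m\widehat{B}_\theta\xi_n(r_0)=r_0\widehat{Q}_{rn}(r_0)$, because the $r\,dr$ weighting in $Y_{m,k}$ does not obviously control the trace of $\xi$ at $r=r_0$. I would resolve this exactly as in the argument leading to \eqref{xi-L^2-boundary}: on any subinterval $[s_1,r_0]$ with $s_1<r_0$ close to $r_0$, the viscous brackets appearing in $\|(\xi,\eta,\zeta)\|_{Y_{m,k}}$ are equivalent to the plain $H^1(s_1,r_0)$ norm of each component (since the weight $r$ is bounded above and below), so $\{\xi_n\}$ is bounded in $H^1(s_1,r_0)$. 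The compact embedding $H^1(s_1,r_0)\hookrightarrow C([s_1,r_0])$ then yields, along a further subsequence, $\xi_n\to\xi$ uniformly on $[s_1,r_0]$ and in particular $\xi_n(r_0)\to \xi(r_0)$. Combined with $\widehat{Q}_{rn}(r_0)\to\widehat{Q}_r(r_0)$, passage to the limit in the interface identity produces $m\widehat{B}_\theta(r_0)\xi(r_0)=r_0\widehat{Q}_r(r_0)$, showing $((\xi,\eta,\zeta),\widehat{Q}_r)\in\mathcal{A}_2$ and completing the proof.
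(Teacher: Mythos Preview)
Your proposal is correct and follows essentially the same direct-method argument as the paper: take a bounded minimizing sequence, extract weak limits, use weak lower semicontinuity for the nonnegative quadratic forms, and use the strong $L^2$ convergence of Proposition~\ref{embeddding-m1} for the sign-indefinite pressure term and the constraint $\mathcal{J}=1$. You are in fact more thorough than the paper: after checking $\mathcal{J}=1$ the paper simply asserts $((\xi,\eta,\zeta),\widehat{Q}_r)\in\mathcal{A}_2$, whereas you explicitly verify that the wall condition $\widehat{Q}_r(r_w)=0$ and the interface coupling $m\widehat{B}_\theta\xi(r_0)=r_0\widehat{Q}_r(r_0)$ pass to the limit, via the trace argument on $[s_1,r_0]$ and the compact embedding $H^1(r_0,r_w)\hookrightarrow C([r_0,r_w])$.
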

	\begin{proof}
		First from Lemma \ref{lower-bound-general-m}, we have that $E_{m,k}$ is bounded below on the set $\mathcal{A}_2$. Let $((\xi_n,\eta_n,\zeta_n),\widehat{ Q}_{rn}) \in \mathcal{A}_2$ be a minimizing sequence. Then $(\xi_n,\eta_n,\zeta_n)$ are bounded in $Y_{m,k}$ (see \eqref{defi-ymk}),  and $ \widehat{ Q}_{rn}$ is bounded in $H^1(r_0,r_w)$, so up to the extraction of a subsequence $\psi_n=\sqrt{m^2+k^2r^2}|B_{\theta}|\Big[\frac{\eta_n}{r}-\frac{k}{m^2+k^2r^2}((r\xi_n)'-2\xi_n)\Big]r^{\frac12}\rightharpoonup \psi=\sqrt{m^2+k^2r^2}|B_{\theta}|\Big[\frac{\eta}{r}-\frac{k}{m^2+k^2r^2}((r\xi)'-2\xi)\Big]r^{\frac12}$ weakly in $L^2$, and $\xi_n\rightarrow \xi$, $\eta_n\rightarrow \eta$ strongly in $L^2$ from the compact embeddings in Proposition \ref{embeddding-m1}.
			By weak lower semi-continuity, since $\psi_n\rightharpoonup \psi$ in the space $L^2(0,r_0)$,
			we get
			\begin{equation*}
			\begin{split}
			&\int_0^{r_0}(m^2+k^2r^2)B^2_{\theta}\Big[\frac{\eta}{r}-\frac{k}{m^2+k^2r^2}((r\xi)'-2\xi)\Big]^2rdr\\
			&\quad\leq\liminf_{n\to\infty}\int_0^{r_0}(m^2+k^2r^2)B^2_{\theta}\Big[\frac{\eta_n}{r}-\frac{k}{m^2+k^2r^2}((r\xi_n)'-2\xi_n)\Big]^2rdr.
			\end{split}
			\end{equation*}
			
		Because of the quadratic structure of all the terms in the integrals defining $E_{m,k}$, similarly dealing with the other positive terms by weak lower semi-continuity, strong $L^2(0,r_0)$ convergence in Proposition \ref{embeddding-m1} and $\widehat{ Q}_{nr}\rightarrow \widehat{ Q}_r$ strongly in $L^2(r_0,r_w)$, we deduce
		\begin{equation*}
		E_{m,k}(\xi,\eta,\zeta,\widehat{ Q}_{r})\leq \liminf_{n\to\infty}E_{m,k}(\xi_n,\eta_n,\zeta_n,\widehat{ Q}_{rn})=\inf_{\mathcal{A}_2}E_{m,k}.
		\end{equation*}

	All that remains is to show that $((\xi,\eta,\zeta),\widehat{ Q}_r)\in \mathcal{A} $.
	The fact that	$\xi_n\rightarrow \xi$, $\eta_n\rightarrow \eta$, $\zeta_n\rightarrow \zeta$ strongly in $L^2$ from the compact embeddings in Proposition \ref{embeddding-m1}, implies that $\mathcal{J}(\xi, \eta,\zeta)=1$, so that $((\xi,\eta,\zeta),\widehat{ Q}_r)\in \mathcal{A}_2 $.	We complete the proof.
	\end{proof}

We know that $\tilde{\varepsilon}=s\varepsilon$ and $\tilde{\delta}=s\delta$ are smooth, and bounded from above and below by positive quantities for fixed $s>0$.
Now, we prove the infimum of $E_{m,k}$ over the set $\mathcal{A}_2$ is negative, if there exists $r^*$ such that $2p'(r^*)+\frac{m^2B_\theta^2(r^*)}{r^*}<0$ for $m\neq 0$.
\begin{prop}\label{infimum-A-out-3}
If there exists $r^*$ such that $2p'(r^*)+\frac{m^2B_\theta^2(r^*)}{r^*}<0$ for $m\neq 0$,  then for any fixed large  $k=k_0$ and any fixed $m\neq 0$, there exists constant $s_0>0$ depending on $\varepsilon$, $\delta$ and $k_0$ so that for $s\leq s_0$,  it holds that $\lambda=\inf E_{m,k}<0$. 
\end{prop}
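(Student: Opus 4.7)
The plan is to adapt the strategy from Proposition \ref{infimum-A-3} (the $m=0$ case) to the $m\neq 0$ setting. By the homogeneity of $E_{m,k}$ and $\mathcal{J}$, it suffices to exhibit an admissible tuple $((\xi^*,\eta^*,\zeta^*),\widehat{Q}_r^*)\in Y_{m,k}\times H^1(r_0,r_w)$ that satisfies the two boundary constraints of $\mathcal{A}_2$ and makes $E_{m,k}(\xi^*,\eta^*,\zeta^*,\widehat{Q}_r^*;s)<0$; rescaling then yields $\lambda(s)<0$.

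First, using the hypothesis, I would pick a smooth $\xi^*\in C_c^{\infty}(0,r_0)$ localized near the point $r^*$ so that
$$\int_0^{r_0}\Big[2p'+\frac{m^2 B_\theta^2}{r}\Big](\xi^*)^2\,dr<0,$$
and set $\widehat{Q}_r^*\equiv 0$ on $(r_0,r_w)$. Since $\xi^*$ vanishes at $r=r_0$, both the interface condition $m\widehat{B}_\theta\xi^*=r\widehat{Q}_r^*|_{r=r_0}$ and the wall condition $\widehat{Q}_r^*|_{r=r_w}=0$ are automatic, and the vacuum integral in \eqref{va-m-1-b} drops out. Next, inspecting the fluid energy \eqref{va-m-1-b}, I would annihilate the two leading positive quadratic forms by choosing
$$\eta^*=\frac{k_0 r\,[(r\xi^*)'-2\xi^*]}{m^2+k_0^2 r^2},\qquad \zeta^*=-\frac{m^2(r\xi^*)'+2k_0^2 r^2\xi^*}{m(m^2+k_0^2 r^2)},$$
so that $\tfrac{B_\theta}{r}\eta^*+\tfrac{-k_0 B_\theta(r\xi^*)'+2k_0 B_\theta\xi^*}{m^2+k_0^2 r^2}\equiv 0$ and $\tfrac{1}{r}(r\xi^*)'-k_0\eta^*+\tfrac{m\zeta^*}{r}\equiv 0$ pointwise. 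Both $\eta^*$ and $\zeta^*$ are smooth and compactly supported in $(0,r_0)$, hence lie in $Y_{m,k}$.

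With these choices the energy collapses to three residual contributions: the sign-definite negative term $\pi L\int_0^{r_0}[2p'+\tfrac{m^2 B_\theta^2}{r}](\xi^*)^2\,dr$, the non-negative term $2\pi^2\int_0^{r_0}\tfrac{m^2 B_\theta^2}{r(m^2+k_0^2 r^2)}(\xi^*-r(\xi^*)')^2\,dr$, and the viscous piece, which is linear in $s$ through $\tilde\varepsilon=s\varepsilon$ and $\tilde\delta=s\delta$ and thus bounded by $sC(\xi^*,\eta^*,\zeta^*,\varepsilon,\delta)$. Because $\mathrm{supp}\,\xi^*\subset [a,b]\subset (0,r_0)$, the residual magnetic term is bounded by $\tfrac{C m^2}{m^2+k_0^2 a^2}$, which tends to zero as $k_0\to\infty$. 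I would therefore first fix $k_0$ large enough that this term is smaller than, say, half the absolute value of the negative term, and then shrink $s\le s_0$ so that the viscous contribution is smaller than a quarter of the negative term on $(0,s_0]$. This yields $E_{m,k}(\xi^*,\eta^*,\zeta^*,0;s)<0$, and normalization via homogeneity places the tuple in $\mathcal{A}_2$.

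The main obstacle is precisely the residual term $\int \tfrac{m^2 B_\theta^2}{r(m^2+k^2 r^2)}(\xi-r\xi')^2\,dr$, which has no analogue in the $m=0$ case and cannot be annihilated by any admissible choice of $\eta^*$ or $\zeta^*$: forcing $\xi-r\xi'\equiv 0$ would require $\xi^*\propto r$, incompatible with compact support in $(0,r_0)$. This obstruction is exactly what forces the "$k_0$ large" hypothesis in the statement, and the proof hinges on first suppressing this geometric term via the $(m^2+k_0^2 r^2)^{-1}$ decay before turning on the modified viscosities $\tilde\varepsilon,\tilde\delta$.
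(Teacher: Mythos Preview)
Your proposal is correct and follows essentially the same approach as the paper: choose $\xi^*\in C_c^\infty(0,r_0)$ making the indefinite term negative, set $\widehat Q_r^*\equiv 0$, annihilate the first two positive quadratic forms by the same choices of $\eta^*$ and $\zeta^*$ (your closed-form expression for $\zeta^*$ is exactly the paper's $\zeta^*=\tfrac{r}{m}\big(k\eta^*-\tfrac{1}{r}(r\xi^*)'\big)$ expanded out), then fix $k_0$ large to suppress the residual $\tfrac{m^2B_\theta^2}{r(m^2+k^2r^2)}(\xi^*-r{\xi^*}')^2$ term and finally take $s$ small to kill the viscous contribution. Your identification of the residual magnetic term as the obstruction forcing the ``$k_0$ large'' hypothesis is precisely the point of the argument.
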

\begin{proof}
	Since both $E_{m,k}$ and $\mathcal{J}$ are homogeneous degree $2$, it suffices to show that 
	\begin{equation*}
	\inf_{((\xi,\eta,\zeta),\widehat{ Q}_r))\in Y_{m,k}\times  H^1(r_0,r_w)}\frac{E_{m,k}(\xi,\eta,\zeta,\widehat{ Q}_r)}{\mathcal{J}(\xi,\eta,\zeta)}<0.
	\end{equation*}
	But since $\mathcal{J}$ is positive definite, one may reduce to constructing  $((\xi,\eta,\zeta),\widehat{ Q}_r)\in Y_{m,k}\times H^1(r_0,r_w)$ (see \eqref{defi-ymk}), such that $$E_{m,k}(\xi, \eta, \zeta, \widehat{ Q}_r)<0.$$

From the assumption in Proposition \ref{infimum-A-out-3}, we can choose a smooth function $\xi^*\in C_c^{\infty}(0,r_0)$ such that $$2\pi^2\int_0^{r_0}\Big[2p'+\frac{m^2B_{\theta}^2}{r}\Big]{\xi^*}^2dr< 0.$$
	Then, 
	we can assume that $\eta^*=\frac{rk(r\xi^*)'-2rk\xi^*}{m^2+k^2r^2}$ and $\zeta^*=\frac{r}{m}(k\eta^*-\frac{1}{r}(r\xi^*)')$, such that the first and second terms of $E_{m,k}(\xi^*,\eta^*,\zeta^*,\widehat{ Q}^*_r)$ in  \eqref{va-m-1-b} vanish,
	that is,
	\begin{equation*}
	\begin{split}
	&	2\pi^2\int_0^{r_0}(m^2+k^2r^2)\Big[\frac{B_\theta}{r}\eta^*+\frac{-kB_\theta(r\xi^*)'+2kB_{\theta}\xi^*}{m^2+k^2r^2}\Big]^2rdr=0,\\
	&
	2\pi^2\int_0^{r_0}\gamma p\Big[\frac{1}{r}(r\xi^*)'-k\eta^*+\frac{m\zeta^*}{r}\Big]^2rdr=0.
	\end{split}
	\end{equation*} Here, $\xi^*$, $\eta^*$ and $\zeta^*$ are smooth functions and belong to the space $Y_{m,k}$. 
	
Fix $k=k_0$ large enough and fix $m=m_0$, and choose $\widehat{ Q}^*_r=0$ so that
	\begin{equation*}
	\begin{split}
	&2\pi^2\int_0^{r_0}\frac{m^2B_\theta^2}{r(m^2+k^2r^2)}(\xi^*-r{\xi^*}')^2dr\leq -\frac{1}{2}\pi^2\int_0^{r_0}\Big[2p'+\frac{m^2B_{\theta}^2}{r}\Big]{\xi^*}^2dr,\\
	&2\pi^2\int_{r_0}^{r_w}\bigg[|\widehat{Q}^*_r|^2+\frac{1}{m^2+k^2r^2}|(r\widehat{Q}^*_r)'|^2\bigg]r
	dr=0,
	\end{split}
	\end{equation*}	
	and from $\xi^*\in C_c^{\infty}(0,r_0)$,  $\eta^*=\frac{rk(r\xi^*)'-2rk\xi^*}{m^2+k^2r^2}$ and $\zeta^*=\frac{r}{m}(k\eta^*-\frac{1}{r}(r\xi^*)')$, we know that
	\begin{equation*}
	\begin{split}
&2\pi^2\varepsilon\int_0^{r_0}\Big[\frac{2}{9}\Big(-2{\xi^*}'+\frac{\xi^*}{r}+\frac{m}{r}\zeta^*-k\eta^*\Big)^2+\frac{2}{9}\Big({\xi^*}'-\frac{2\xi^*}{r}-\frac{2m}{r}\zeta^*-k\eta^*\Big)^2\\
&\qquad+\frac{2}{9}\Big({\xi^*}'+\frac{\xi^*}{r}+\frac{m}{r}\zeta^*+2k\eta^*\Big)^2
+\Big(-{\zeta^*}'+\frac{\zeta^*}{r}+\frac{m}{r}\xi^*\Big)^2\\
&\qquad+({\eta^*}'+k\xi^*)^2+\Big(\frac{m}{r}\eta^*-k\zeta^*\Big)^2\Big]rdr
+2\pi^2\delta\int_0^{r_0}\Big({\xi^*}'+\frac{\xi^*}{r}+\frac{m}{r}\zeta^*-k\eta^*\Big)^2rdr\leq C.
	\end{split}
	\end{equation*}
	Therefore, we get  for $k=k_0$ and $m=m_0$  that 
	\begin{equation*}\label{negative-g}
	\begin{split}
	&E_{m,k}(\xi^*,\eta^*,\zeta^*,\widehat{ Q}^*_r)\\&=2\pi^2\int_0^{r_0}\tilde{\varepsilon}\Big[\frac{2}{9}\Big(-2{\xi^*}'+\frac{\xi^*}{r}+\frac{m}{r}\zeta^*-k\eta^*\Big)^2+\frac{2}{9}\Big({\xi^*}'-\frac{2\xi^*}{r}-\frac{2m}{r}\zeta^*-k\eta^*\Big)^2\\
	&\qquad+\frac{2}{9}\Big({\xi^*}'+\frac{\xi^*}{r}+\frac{m}{r}\zeta^*+2k\eta^*\Big)^2
	+\Big(-{\zeta^*}'+\frac{\zeta^*}{r}+\frac{m}{r}\xi^*\Big)^2\\
	&\qquad+({\eta^*}'+k\xi^*)^2+\Big(\frac{m}{r}\eta^*-k\zeta^*\Big)^2\Big]rdr
	+2\pi^2\int_0^{r_0}\tilde{\delta}\Big({\xi^*}'+\frac{\xi^*}{r}+\frac{m}{r}\zeta^*-k\eta^*\Big)^2rdr\\
	&\qquad+2\pi^2\int_0^{r_0}\frac{m^2B_\theta^2}{r(m^2+k^2r^2)}(\xi^*-r{\xi^*}')^2dr+2\pi^2\int_0^{r_0}\Big[2p'+\frac{m^2B_{\theta}^2}{r}\Big]{\xi^*}^2dr\\
	&\qquad+2\pi^2\int_{r_0}^{r_w}\bigg[|\widehat{Q}^*_r|^2+\frac{1}{m^2+k^2r^2}|(r\widehat{Q}^*_r)'|^2\bigg]r
	dr
	\\	&\leq s C+\frac{3}{2}\pi^2	\int_0^{r_0}\Big[2p'+\frac{m^2B_{\theta}^2}{r}\Big]{\xi^*}^2dr.
	\end{split}
	\end{equation*}		
Then there existes $s_0>0$ depending on $\varepsilon$, $\delta$ and $k_0$, so that $s\leq s_0$,	it holds that
	\begin{equation*}\label{negative-g2}
	\begin{split}
	\widetilde{E}(\xi^*)
	\leq \pi^2\int_0^{r_0}\Big[2p'+\frac{m^2B_{\theta}^2}{r}\Big]{\xi^*}^2dr<0,
	\end{split}
	\end{equation*} 
which implies the result.
\end{proof}
	We now prove that the minimizer constructed in the previous result satisfies Euler-Lagrange equations equivalent to \eqref{spectral-formulation} and
\eqref{euler-l-q} with suitable boundary conditions.		

\begin{prop}\label{infimum-A-out-2-viscosity}
	Let $((\xi,\eta,\zeta),\widehat{ Q}_{r})\in\mathcal{A}_2$ be the minimizer of $E_{m,k}$ constructed in Proposition \ref{infimum-A-out-viscosity}. Then $(\xi,\eta,\zeta)$ are smooth in $(0,r_0)$ and satisfy
	\begin{equation}\label{spectal formulation-viscosity}
	\begin{split}
&\left(
\begin{array}{ccc}
\frac{d}{dr}\frac{\gamma p+B_{\theta}^2}{r}\frac{d}{dr}r-\frac{m^2}{r^2}B_{\theta}^2
-r(\frac{B_{\theta}^2}{r^2})'&-\frac{d}{dr}k(\gamma p+B_{\theta}^2)-\frac{2kB_{\theta}^2}{r}&\frac{d}{dr}\frac{m}{r}\gamma p\\
\frac{k(\gamma p+B_{\theta}^2)}{r}\frac{d}{dr}r-\frac{2kB_{\theta}^2}{r}&
-k^2(\gamma p+B_{\theta}^2)-\frac{m^2}{r^2}B_{\theta}^2&\frac{mk}{r}\gamma p\\
-\frac{m\gamma p}{r^2}\frac{d}{dr}r&\frac{mk}{r}\gamma p&-\frac{m^2}{r^2}\gamma p
\end{array}
\right)
\left(
\begin{array}{lll}
\xi   \\
\eta \\
\zeta\\
\end{array}
\right)\\
&	+\left(
\begin{array}{ccc}
a_{11}&a_{12}&a_{13}\\
a_{21}&
a_{22}&a_{23}\\
a_{31}&a_{32}&a_{33}
\end{array}
\right)
\left(
\begin{array}{lll}
\xi   \\
\eta \\
\zeta\\
\end{array}
\right)	
=-\rho \lambda \left(
\begin{array}{lll}
\xi   \\
\eta \\
\zeta\\
\end{array}
\right),
\end{split}
\end{equation}
with 
\begin{equation*}
\begin{split}
&a_{11}=	(\frac{4\tilde{\varepsilon}}{3}+\tilde{\delta})\frac{d^2}{dr^2}+(\frac{4\tilde{\varepsilon}}{3}+\tilde{\delta})\frac{d}{dr}\frac{1}{r}-\tilde{\varepsilon}\Big(\frac{m^2}{r^2}+k^2\Big),\quad a_{12}=-(\frac{\tilde{\varepsilon}}{3}+\tilde{\delta})k\frac{d}{dr},\\
&a_{13}=(\frac{\tilde{\varepsilon}}{3}+\tilde{\delta})\frac{d}{dr}\frac{m}{r}
-\frac{2\tilde{\varepsilon}m}{r^2},\quad a_{21}=(\frac{\tilde{\varepsilon}}{3}+\tilde{\delta})k\frac{d}{dr}+(\frac{\tilde{\varepsilon}}{3}+\tilde{\delta})\frac{k}{r},\\
&a_{22}=	\tilde{\varepsilon}\frac{d^2}{dr^2}+\frac{\tilde{\varepsilon}}{r}\frac{d}{dr}
-\tilde{\varepsilon}\frac{m^2}{r^2}-(\frac{4\tilde{\varepsilon}}{3}+\tilde{\delta})k^2,\quad
a_{23}=(\frac{\tilde{\varepsilon}}{3}+\tilde{\delta})\frac{mk}{r},\\
&a_{31}=-(\frac{\tilde{\varepsilon}}{3}+\tilde{\delta})\frac{m}{r}\frac{d}{dr}-(\frac{7\tilde{\varepsilon}}{3}+\tilde{\delta})\frac{m}{r^2},\quad a_{32}=(\frac{\tilde{\varepsilon}}{3}+\tilde{\delta})\frac{mk}{r},\\
&a_{33}=\tilde{\varepsilon}\frac{d^2}{dr^2}+\tilde{\varepsilon}\frac{d}{dr}\frac{1}{r}-(\frac{4\tilde{\varepsilon}}{3}+\tilde{\delta})\frac{m^2}{r^2}-\tilde{\varepsilon}k^2.
\end{split}
\end{equation*}
	And the solution $\widehat{ Q}_{r}$ is smooth on $(r_0,r_w)$  and satisfies
	\begin{equation}\label{ode-curl-div-viscosity}
	\bigg[\frac{r}{m^2+k^2r^2}(r\widehat{Q}_r)'\bigg]'-\widehat{Q}_r=0,
	\end{equation} 
		with the other two components $\widehat{ Q}_{\theta}=-\frac{m}{m^2+k^2r^2}(r\widehat{ Q}_r)'$ and
	$\widehat{ Q}_{z}=-\frac{kr}{m^2+k^2r^2}(r\widehat{ Q}_r)'$. 
Moreover, the solution $(\xi,\eta,\zeta)$ and $\widehat{ Q}_{r}$ satisfy the interface boundary conditions 	
	\begin{equation}\label{inter-four-2-viscosity}
	\begin{split}
		&\Big[B_{\theta}^2\xi-B_{\theta}^2\xi' r+kB_\theta^2\eta r-\widehat{ B}_{\theta}\widehat{Q}_{\theta}r\Big]n\\
	&\quad-\tilde{\varepsilon}\Big(2\xi'r,-i\zeta'r+im\xi +i\zeta, i\eta'r+ik\xi r\Big)^T\\
	&\quad-(\tilde{\delta}-\frac{2}{3}\tilde{\varepsilon})\Big[\xi'r+\xi+m\zeta -k\eta r\Big]n=0, \quad \mbox{at} \quad r=r_0.
	\end{split}
	\end{equation}
\end{prop}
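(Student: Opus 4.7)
The plan is to follow the template set by Proposition \ref{infimum-A-2} for the $m=0$ case, but with two new technical wrinkles: the vacuum field $\widehat{Q}_r$ is now part of the variational unknown, and the fluid and vacuum are coupled through the boundary constraint $m\widehat{B}_\theta \xi = r\widehat{Q}_r$ at $r=r_0$. First I would fix an admissible variation $((\xi_0,\eta_0,\zeta_0),\widehat{Q}_{r,0})\in Y_{m,k}\times H^1(r_0,r_w)$ satisfying the linearized side conditions $m\widehat{B}_\theta \xi_0 = r\widehat{Q}_{r,0}$ at $r=r_0$ and $\widehat{Q}_{r,0}=0$ at $r=r_w$, and introduce a $C^1$ reparameterization $\tau(t)$ through the inverse function theorem to preserve the normalization $\mathcal{J}=1$, exactly as in the earlier proof. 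Imposing $\frac{d}{dt}\big|_{t=0}E_{m,k}(\xi+t\xi_0+\tau(t)\xi,\ldots;s)=0$ then yields one master weak identity combining fluid, surface, and vacuum integrals.

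Next I would peel off the interior equations. Choosing test triples $(\xi_0,\eta_0,\zeta_0)$ compactly supported in $(0,r_0)$ paired with $\widehat{Q}_{r,0}\equiv 0$ automatically satisfies the coupling constraints and produces the weak form of \eqref{spectal formulation-viscosity}; standard elliptic regularity, which applies because on compact subsets of $(0,r_0)$ the $\tilde{\varepsilon},\tilde{\delta}$ terms give a uniformly elliptic system, then upgrades $(\xi,\eta,\zeta)$ to smooth functions on $(0,r_0)$. Independently, choosing $(\xi_0,\eta_0,\zeta_0)\equiv 0$ together with $\widehat{Q}_{r,0}\in C_c^\infty(r_0,r_w)$ yields the weak form of \eqref{ode-curl-div-viscosity}, while the identities for $\widehat{Q}_\theta$ and $\widehat{Q}_z$ follow from $\nabla\cdot\widehat{Q}=\nabla\times\widehat{Q}=0$ as in Lemma 3.6 of \cite{bian-guo-tice-inviscid}.

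For the interface conditions at $r=r_0$ I would exploit the regularity just obtained to integrate the master identity by parts on $(0,r_0)$ and $(r_0,r_w)$ separately, so that after cancellation against the interior equations only boundary terms at $r=r_0$ survive (the terms at $r=0$ and $r=r_w$ vanish by the function space definition and the constraint $\widehat{Q}_{r,0}(r_w)=0$). I would then insert test data with independent traces $\xi_0(r_0),\eta_0(r_0),\zeta_0(r_0)$, using the linearized coupling $\widehat{Q}_{r,0}(r_0)=\frac{m\widehat{B}_\theta(r_0)}{r_0}\xi_0(r_0)$ to rewrite the vacuum boundary contribution; via the representation $\widehat{Q}_\theta=-\frac{m}{m^2+k^2r^2}(r\widehat{Q}_r)'$ this generates precisely the $\widehat{B}_\theta\widehat{Q}_\theta\, r$ term appearing in \eqref{inter-four-2-viscosity}. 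Arbitrariness of the three independent traces, combined with the admissibility condition $p(r_0)=0$, then forces the vector identity \eqref{inter-four-2-viscosity} componentwise.

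The main obstacle I expect is this last coupling step: one must verify that the algebraic identity obtained at $r=r_0$ after substituting the constrained trace $\widehat{Q}_{r,0}(r_0)=\frac{m\widehat{B}_\theta}{r_0}\xi_0(r_0)$ exactly reproduces the stress balance \eqref{inter-four-2-viscosity}, matching simultaneously the magnetic pressure jump, the off-diagonal viscous entries $-i\zeta'r+im\xi+i\zeta$ and $i\eta'r+ik\xi r$ arising from $\mathbb{S}_{\mathcal{A}}(v)n$ in cylindrical coordinates, and the bulk viscosity contribution $(\tilde{\delta}-\frac{2}{3}\tilde{\varepsilon})\,\mathrm{div}\,\mathbb{I}\cdot n$. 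Care is also required near $r=0$ to justify the vanishing of inner boundary terms using the Hardy-type control provided by membership in $Y_{m,k}$ together with Lemma \ref{xi-bound}.
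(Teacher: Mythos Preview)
Your proposal is correct and follows essentially the same approach as the paper: the paper likewise fixes a constrained variation $((\xi_0,\eta_0,\zeta_0),q_r)$ with $m\widehat B_\theta\xi_0=rq_r$ at $r=r_0$ and $q_r(r_w)=0$, uses the inverse function theorem to restore $\mathcal{J}=1$, derives the master first-variation identity, extracts the interior equations \eqref{spectal formulation-viscosity} and \eqref{ode-curl-div-viscosity} via compactly supported test data, and then recovers \eqref{inter-four-2-viscosity} from the surviving boundary terms using the coupling $q_r(r_0)=\tfrac{m\widehat B_\theta}{r_0}\xi_0(r_0)$ together with $\widehat Q_\theta=-\tfrac{m}{m^2+k^2r^2}(r\widehat Q_r)'$ and $p(r_0)=0$. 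The only cosmetic difference is that the paper writes out the long first-variation identity explicitly rather than invoking elliptic regularity in the abstract.
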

\begin{proof}
Fix $((\xi_0,\eta_0,\zeta_0), q_r)\in Y_{m,k}\times H_0^1(r_0,r_w)$ (see \eqref{defi-ymk}), and satisfy $ m\widehat{  B}_\theta \xi_0=rq_r$ on the boundary $r=r_0$ and $q_r=0$ on the boundary $r=r_w$.

 Define 
	$$j(t,\tau)=\mathcal{J}(\xi+t\xi_0+\tau\xi,\eta+t\eta_0+\tau\eta,\zeta+t\zeta_0+\tau\zeta)$$
	and note that $j(0,0)=1$. Moreover, $j$ is smooth, 
	\begin{equation*}
	\begin{split}
	&\frac{\partial j}{\partial t}(0,0)=2\pi^2\int_{0}^{r_0}2\rho(\xi_0\xi +\eta_0\eta+\zeta_0\zeta)rdr, \\
	&\frac{\partial j}{\partial \tau}(0,0)=2\pi^2\int_{0}^{r_0}2\rho(\xi^2 +\eta^2+\zeta^2)rdr=2.
	\end{split}
	\end{equation*}
	So, by the inverse function theorem, we can solve for $\tau=\tau(t)$ in a neighborhood of $0$ as a $C^1$ function of $t$ so that $\tau(0)=0$ and $j(t,\tau(t))=1$. We may differentiate the last equation to find 
	\begin{equation*}
	\frac{\partial j}{\partial t}(0,0)+\frac{\partial j}{\partial \tau}(0,0)\tau'(0)=0,
	\end{equation*}
	hence that 
	\begin{equation*}
	\tau'(0)=-\frac{1}{2}\frac{\partial j}{\partial t}(0,0)=-2\pi^2\int_{0}^{r_0}\rho(\xi_0\xi +\eta_0\eta+\zeta_0\zeta)rdr.
	\end{equation*}
	Since $((\xi, \eta,\zeta),\widehat{ Q}_r)$ is the minimizer over the set $\mathcal{A}_2$, we may make variations with respect to $(\xi_0,\eta_0,\zeta_0)$ to find that 
	\begin{equation*}
	0=\frac{d}{dt}\bigg|_{t=0}E(\xi+t\xi_0+\tau(t)\xi, \eta+t\eta_0+\tau(t)\eta, \zeta+t\zeta_0+\tau(t)\zeta,\widehat{ Q}_r+tq_r+\tau \widehat{ Q}_r),
	\end{equation*}
	which implies that
	\begin{equation*}
	\begin{split}
0&=4\pi^2 \int _0^{r_0}B_{\theta}^2\Big[\frac{1}{r}(r\xi)'-k\eta-\frac{2\xi}{r}\Big](r\xi_0)'dr+4\pi^2\int_0^{r_0}\gamma p\Big[\frac{1}{r}(r\xi)'-k\eta+\frac{m\zeta}{r}\Big]
(r\xi_0)'dr\\
&\quad+4\pi^2\int_0^{r_0}2B^2_\theta\Big[k\eta-\frac{1}{r}(r\xi)'+\frac{2\xi}{r}\Big]
\xi_0dr+4\pi^2\int_0^{r_0}\Big[2p'+\frac{m^2B_{\theta}^2}{r}\Big]\xi\xi_0dr\\
&\quad+4\pi^2 \int _0^{r_0}(m^2+k^2r^2)B_{\theta}^2\Big[\frac{1}{r}\eta+\frac{-k(r\xi)'+2k\xi}{m^2+k^2r^2}\Big]\eta_0d+4\pi^2\int_0^{r_0}\gamma p\Big[\frac{1}{r}(r\xi)'-k\eta\\
&\quad+\frac{m\zeta}{r}\Big]
(-k\eta_0r)dr+4\pi^2\int_0^{r_0}\gamma p\Big[\frac{1}{r}(r\xi)'-k\eta+\frac{m\zeta}{r}\Big]
m\zeta_0dr+4\pi^2 \int _0^{r_0}\Big\{\frac{2\tilde{\varepsilon}}{3}\Big(-\xi'\xi_0\\
&\quad-\xi\xi'_0+\frac{2\xi\xi_0}{r}\Big)+\Big(\frac{4\tilde{\varepsilon}}{3}+\tilde{\delta}\Big)\xi'\xi'_0r+\tilde{\varepsilon}\Big(\frac{m^2}{r^2}+k^2\Big)\xi\xi_0r+\tilde{\delta}\Big(\xi'\xi_0+\xi\xi'_0+\frac{\xi\xi_0}{r}\Big) \Big\}dr\\
&\quad+4\pi^2 \int _0^{r_0}\Big\{\Big(-\frac{2\tilde{\varepsilon}}{3}+\tilde{\delta}\Big)m\zeta \xi'_0+\Big(\frac{7\tilde{\varepsilon}}{3}+\tilde{\delta}\Big)\frac{m}{r}\zeta \xi_0-\tilde{\varepsilon}\zeta'm \xi_0+\Big(\frac{2\tilde{\varepsilon}}{3}-\tilde{\delta}\Big)k\eta\xi'_0r
\\
&\quad+\Big(\frac{2\tilde{\varepsilon}}{3}-\tilde{\delta}\Big)k\eta\xi_0+\tilde{\varepsilon}\eta'k\xi_0r
\Big\}dr +4\pi^2 \int _0^{r_0}\Big\{\Big(\tilde{\delta}
-\frac{2\tilde{\varepsilon}}{3}\Big)\xi'm\zeta
_0-\tilde{\varepsilon}m\xi\zeta'_0+\Big(\frac{7\tilde{\varepsilon}}{3}
+\tilde{\delta}\Big)\frac{m}{r}\xi\zeta_0\Big\}dr\\
&\quad-4\pi^2 \int _0^{r_0}\Big(\tilde{\delta}+\frac{\tilde{\varepsilon}}{3}\Big)k\eta m\zeta_0dr+4\pi^2 \int _0^{r_0}\Big\{\Big(\frac{4\tilde{\varepsilon}}{3}+\tilde{\delta}\Big)\frac{m^2}{r}\zeta\zeta_0+\tilde{\varepsilon}k^2\zeta\zeta_0r-\tilde{\varepsilon}\zeta'\zeta_0-\tilde{\varepsilon}\zeta\zeta'_0\\
&\quad+\tilde{\varepsilon}\frac{\zeta\zeta_0}{r}+\tilde{\varepsilon}\zeta'\zeta'_0r\Big\}dr
+4\pi^2\int_0^{r_0}\Big\{\Big(\frac{2\tilde{\varepsilon}}{3}-\tilde{\delta}\Big)k\xi'\eta_0r+\Big(\frac{2\tilde{\varepsilon}}{3}-\tilde{\delta}\Big)k\xi\eta_0+\tilde{\varepsilon}k\xi\eta'_0r\Big\}dr\\
&\quad+4\pi^2\int_0^{r_0}\Big\{\Big(\frac{4\tilde{\varepsilon}}{3}+\tilde{\delta}\Big)k^2\eta\eta_0r+\tilde{\varepsilon}\frac{m^2}{r}\eta\eta_0+\tilde{\varepsilon}\eta'\eta'_0r\Big\}dr -4\pi^2\int_0^{r_0}\Big(\frac{\tilde{\varepsilon}}{3}+\tilde{\delta}\Big)km\zeta\eta_0 dr
\\
&\quad+4\pi^2\int_{r_0}^{r_w} \bigg[\widehat{Q}_r q_r+\frac{1}{m^2+k^2r^2}(r\widehat{ Q}_r)'(rq_r)'\bigg]rdr+2\tau'(0)\lambda.
	\end{split}
	\end{equation*}
		Since $\xi_0$, $\eta_0$, $\zeta_0$ and $q_r$  are independent, we deduce
		\begin{equation}\label{xi-distri}
		\begin{split}
		& \int _0^{r_0}B_{\theta}^2\Big[\frac{1}{r}(r\xi)'-k\eta-\frac{2\xi}{r}\Big](r\xi_0)'dr+\int_0^{r_0}\gamma p\Big[\frac{1}{r}(r\xi)'-k\eta+\frac{m\zeta}{r}\Big]
		(r\xi_0)'dr\\
		&\quad +\int_0^{r_0}2B^2_\theta\Big[k\eta-\frac{1}{r}(r\xi)'+\frac{2\xi}{r}\Big]
		\xi_0dr+\int_0^{r_0}\Big[2p'+\frac{m^2B_{\theta}^2}{r}\Big]\xi\xi_0dr\\
		&\quad+ \int _0^{r_0}\Big\{\frac{2\tilde{\varepsilon}}{3}\Big(-\xi'\xi_0-\xi\xi'_0+\frac{2\xi\xi_0}{r}\Big)
		+\Big(\frac{4\tilde{\varepsilon}}{3}+\tilde{\delta}\Big)\xi'\xi'_0r+\tilde{\varepsilon}\Big(\frac{m^2}{r^2}+k^2\Big)\xi\xi_0r
	\\
	&\quad+\tilde{\delta}\Big(\xi'\xi_0+\xi\xi'_0+\frac{\xi\xi_0}{r}\Big) \Big\}dr+ \int _0^{r_0}\Big\{\Big(-\frac{2\tilde{\varepsilon}}{3}+\tilde{\delta}\Big)m\zeta \xi'_0+\Big(\frac{7\tilde{\varepsilon}}{3}+\tilde{\delta}\Big)\frac{m}{r}\zeta \xi_0\\
	&\quad-\tilde{\varepsilon}\zeta'm \xi_0+\Big(\frac{2\tilde{\varepsilon}}{3}-\tilde{\delta}\Big)k\eta\xi'_0r
		+\Big(\frac{2\tilde{\varepsilon}}{3}-\tilde{\delta}\Big)k\eta\xi_0+\tilde{\varepsilon}\eta'k\xi_0r
		\Big\}dr+\int_{r_0}^{r_w} \bigg[\widehat{Q}_r q_r\\
		&\quad+\frac{1}{m^2+k^2r^2}(r\widehat{ Q}_r)'(rq_r)'\bigg]rdr=\int_{0}^{r_0}\rho\lambda\xi_0\xi rdr,
		\end{split}
		\end{equation}	
		\begin{equation}\label{eta-distri}
		\begin{split}
	& \int _0^{r_0}(m^2+k^2r^2)B_{\theta}^2\Big[\frac{1}{r}\eta+\frac{-k(r\xi)'+2k\xi}{m^2+k^2r^2}\Big]\eta_0dr\\
	&\quad+\int_0^{r_0}\gamma p\Big[\frac{1}{r}(r\xi)'-k\eta+\frac{m\zeta}{r}\Big]
	(-k\eta_0r)dr+\int_0^{r_0}\Big\{\Big(\frac{2\tilde{\varepsilon}}{3}-\tilde{\delta}\Big)k\xi'\eta_0r\\
		&\quad+\Big(\frac{2\tilde{\varepsilon}}{3}-\tilde{\delta}\Big)k\xi\eta_0+\tilde{\varepsilon}k\xi\eta'_0r\Big\}dr+\int_0^{r_0}\Big\{\Big(\frac{4\tilde{\varepsilon}}{3}+\tilde{\delta}\Big)k^2\eta\eta_0r+\tilde{\varepsilon}\frac{m^2}{r}\eta\eta_0+\tilde{\varepsilon}\eta'\eta'_0r\Big\}dr\\
		&\quad-\int_0^{r_0}\Big(\frac{\tilde{\varepsilon}}{3}+\tilde{\delta}\Big)km\zeta\eta_0 dr=\int_{0}^{r_0}\rho\lambda\eta_0\eta rdr,
		\end{split}
		\end{equation}
		\begin{equation}\label{zeta-distri}
		\begin{split}
	&	\int_0^{r_0}\gamma p\Big[\frac{1}{r}(r\xi)'-k\eta+\frac{m\zeta}{r}\Big]
	m\zeta_0dr+ \int _0^{r_0}\bigg\{\Big(\tilde{\delta}-\frac{2\tilde{\varepsilon}}{3}\Big)\xi'm\zeta
		_0-\tilde{\varepsilon}m\xi\zeta'_0\\
		&\quad+\Big(\frac{7\tilde{\varepsilon}}{3}+\tilde{\delta}\Big)\frac{m}{r}\xi\zeta_0\bigg\}dr- \int _0^{r_0}\Big(\tilde{\delta}+\frac{\tilde{\varepsilon}}{3}\Big)k\eta m\zeta_0dr+ \int_0^{r_0}\bigg\{\Big(\frac{4\tilde{\varepsilon}}{3}+\tilde{\delta}\Big)\frac{m^2}{r}\zeta\zeta_0\\
		&\quad+\tilde{\varepsilon}k^2\zeta\zeta_0r-\tilde{\varepsilon}\zeta'\zeta_0-\tilde{\varepsilon}\zeta\zeta'_0+\tilde{\varepsilon}\frac{\zeta\zeta_0}{r}+\tilde{\varepsilon}\zeta'\zeta'_0r\bigg\}dr  =\int_{0}^{r_0}\rho\lambda\zeta_0\zeta rdr.
		\end{split}
		\end{equation}

		By making variation with $\xi_0, \eta_0$ and $\zeta_0$ compactly supported in $(0,r_0)$, and make variation $q_r$ compactly supported in $(r_0, r_w)$, one gets that $\xi$, $\eta$ and $\zeta$ satisfy \eqref{spectal formulation-viscosity} in a weak sense in $(0,r_0)$ and $\widehat{ Q}_{r}$ solves \eqref{ode-curl-div-viscosity} in a weak sense in $(r_0,r_w)$. 
		
		Now we  show that the interface boundary conditions 
		 \eqref{inter-four-2-viscosity} are satisfied. 
	Since from Proposition \ref{embeddding-m1}, we know that  $(\xi,\eta,\zeta) \in L^2(0,r_0)\times L^2(0,r_0)\times L^2(0,r_0)$, which together with $(\xi,\eta,\zeta)\in Y_{m,k}$ (see \eqref{defi-ymk}), gives that
		$(\xi,\eta,\zeta)\in H^1(\frac{r_0}{2},r_0)\times H^1(\frac{r_0}{2},r_0)\times H^1(\frac{r_0}{2},r_0)$.
			We make variations with respect to $\xi_0$, $\eta_0$ and $\zeta_0\in C_c^{\infty}(0,r_0]$, $q_r\in C_c^{\infty}[r_0,r_w)$. Integrating the terms in  \eqref{xi-distri}, \eqref{eta-distri} and \eqref{zeta-distri} with derivatives of $\xi_0$, $q_r$, $\eta_0$ and $\zeta_0$  by parts, using $\xi$, $ \eta$ and $ \zeta$ solve the system \eqref{spectal formulation-viscosity} on $(0,r_0)$ and $\widehat{ Q}_r$ solves \eqref{ode-curl-div-viscosity} on $(r_0,r_w)$, we get that 
		\begin{equation*}
		\begin{split}
	&\Big [B_{\theta}^2\Big(\frac{1}{r}(r\xi)'-k\eta-\frac{2\xi}{r}\Big)(r\xi_0)\Big]_{r=r_0}+\Big[\gamma p\Big(\frac{1}{r}(r\xi)'-k\eta+\frac{m\zeta}{r}\Big)
	(r\xi_0)\Big]_{r=r_0}\\
	&-\Big[\frac{r}{m^2+k^2r^2}(rQ_r)'r\widehat{ q}_r\Big]_{r=r_0}+\Big [\Big(\frac{4\tilde{\varepsilon}}{3}+\tilde{\delta}\Big)\xi'\xi_0r+\Big(\tilde{\delta}-\frac{2\tilde{\varepsilon}}{3}\Big)\xi\xi_0\\
	&\quad+\Big(\tilde{\delta}-\frac{2\tilde{\varepsilon}}{3}\Big)m\zeta \xi_0+\Big(\frac{2\tilde{\varepsilon}}{3}-\tilde{\delta}\Big)k\eta\xi_0r\Big]_{r=r_0}=0,
		\end{split}
		\end{equation*}
		\begin{equation*}
		\tilde{\varepsilon}(\eta'r+k\xi r)\eta_0=0,\,\,\, 
		\tilde{\varepsilon}(\zeta' r-\zeta-m\xi )\zeta_0=0.
		\end{equation*}
		Since $\xi_0$, $\eta_0$, $\zeta_0$ and $q_r$ may be chosen arbitrarily, and $q_r$ satisfies $m\widehat{  B}_\theta \xi_0=rq_r$ on the bounary $r=r_0$, using $B_{\theta}^2(r\xi)'=B_{\theta}^2\xi+B_{\theta}\xi' r$ and $\widehat{ Q}_{\theta}=-\frac{m}{m^2+k^2r^2}(r\widehat{ Q}_r)'$, we deduce the interface boundary conditions \eqref{inter-four-2-viscosity}. This proves the result.
\end{proof}

Now, we establish the continuity and monotonicity properties of the eigenvalue $\lambda(s)$.
\begin{prop}
	Let $\lambda: (0,\infty)\rightarrow \mathbb{R}$ be given by \eqref{e-3-viscosity}. Then the following hold:\\
	\mbox{(i)} $\lambda \in C_{loc}^{0,1}((0,\infty))$, and $\lambda\in C^0((0,\infty))$.\\
	\mbox{(ii)} There exists a positive constant $C_3=C_3(r_0,\mathbb{J}_z,\varepsilon,\delta,m,k)$ so that
	\begin{equation}\label{decomp-m}
	\lambda(s)\geq -C+sC_3.
	\end{equation}\\
	\mbox{(iii)} $\lambda(s) $ is strictly increasing.
\end{prop}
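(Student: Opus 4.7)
The plan is to mirror the three-part argument already carried out for the $m=0$ case, adapting it to the presence of the vacuum variable $\widehat{Q}_r$ and the coupling boundary condition $m\widehat{B}_\theta\xi = r\widehat{Q}_r$ at $r=r_0$. The crucial structural observation is that since $\widetilde{\varepsilon}=s\varepsilon$ and $\widetilde{\delta}=s\delta$ enter linearly, the functional $E_{m,k}$ in \eqref{va-m-1-b} splits as
\begin{equation*}
E_{m,k}(\xi,\eta,\zeta,\widehat{Q}_r;s) = E^0_{m,k}(\xi,\eta,\zeta,\widehat{Q}_r) + s\, E^1_{m,k}(\xi,\eta,\zeta),
\end{equation*}
where $E^1_{m,k}\geq 0$ collects exactly the $\varepsilon$- and $\delta$-weighted symmetric-gradient and divergence terms (with $s$ factored out), and $E^0_{m,k}$ carries the ideal MHD contributions together with the vacuum energy and the $2p'+m^2B_\theta^2/r$ boundary integral. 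Note that $E^0_{m,k}$ and $E^1_{m,k}$ are independent of $s$, and the constraint set $\mathcal{A}_2$ is itself independent of $s$.

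For (i), fix a compact interval $Q=[a,b]\subset(0,\infty)$ and any fixed test element $(\xi_0,\eta_0,\zeta_0,\widehat{Q}_{r,0})\in\mathcal{A}_2$. For each $s\in Q$ let $(\xi_s,\eta_s,\zeta_s,\widehat{Q}_{r,s})$ denote a minimizer produced by Proposition~\ref{infimum-A-out-viscosity}. Chaining the minimality $\lambda(s) \leq E_{m,k}(\xi_0,\ldots;b)$ with the uniform lower bound from Lemma~\ref{lower-bound-general-m}, namely $\lambda(s)\geq -3C$ on $\mathcal{A}_2$, yields
\begin{equation*}
a\, E^1_{m,k}(\xi_s,\eta_s,\zeta_s) \leq E_{m,k}(\xi_s,\ldots;s) - E^0_{m,k}(\xi_s,\ldots) \leq E_{m,k}(\xi_0,\ldots;b) + 3C,
\end{equation*}
so $\sup_{s\in Q}E^1_{m,k}(\xi_s,\eta_s,\zeta_s)\leq K(a,b)$. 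The direct test-function comparison
\begin{equation*}
\lambda(s_1) \leq E_{m,k}(\xi_{s_2},\ldots;s_1) = \lambda(s_2) + (s_1-s_2)E^1_{m,k}(\xi_{s_2},\eta_{s_2},\zeta_{s_2}),
\end{equation*}
valid for all $s_1,s_2\in Q$, then gives $|\lambda(s_1)-\lambda(s_2)|\leq K|s_1-s_2|$, which is exactly the $C^{0,1}_{\mathrm{loc}}$ bound, whence continuity.

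For (ii), the lower-bound argument in Lemma~\ref{lower-bound-general-m} provides $E^0_{m,k}(\xi,\eta,\zeta,\widehat{Q}_r)\geq -C\mathcal{J}(\xi,\eta,\zeta) = -C$ on $\mathcal{A}_2$, so $\lambda(s)\geq -C + s\,C_3$ with $C_3 \eqdefa \inf_{\mathcal{A}_2}E^1_{m,k}$. Strict positivity of $C_3$ follows by contradiction: if a minimizing sequence for $E^1_{m,k}$ in $\mathcal{A}_2$ had $E^1_{m,k}\to 0$, then all the symmetric-strain and divergence quadratics defining the $Y_{m,k}$ seminorm would tend to zero, and by compactness (Proposition~\ref{embeddding-m1}) the sequence would converge weakly to a limit with both vanishing seminorm and $\mathcal{J}=1$; the weighted Korn-type control built into $\|\cdot\|_{Y_{m,k}}$ combined with the vanishing at $r=0$ forces the limit to be identically zero, contradicting $\mathcal{J}=1$. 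Finally, for (iii), for $0<s_1<s_2$ the chain
\begin{equation*}
\lambda(s_1) = E_{m,k}(\xi_{s_1},\ldots;s_1) \leq E_{m,k}(\xi_{s_2},\ldots;s_1) \leq E_{m,k}(\xi_{s_2},\ldots;s_2) = \lambda(s_2)
\end{equation*}
gives monotonicity, and equality would force $(s_2-s_1)E^1_{m,k}(\xi_{s_2},\eta_{s_2},\zeta_{s_2})=0$; by the same coercivity used in (ii) this is impossible on $\mathcal{A}_2$.

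The main obstacle is the coercivity step inside (ii) and (iii): establishing that $E^1_{m,k}(\xi,\eta,\zeta)=0$ together with the constraints defining $Y_{m,k}$ forces $(\xi,\eta,\zeta)\equiv 0$. Unlike the $m=0$ reduction, here one must simultaneously control all three components through the full cylindrical symmetric-gradient tensor with the $r\,dr$ weight, and use the boundary conditions $\xi(0)=\eta(0)=\zeta(0)=0$ built into Definition~\ref{defi-y} to convert a vanishing strain rate into a vanishing displacement. Everything else is a routine adaptation of the $m=0$ proof already given in the excerpt.
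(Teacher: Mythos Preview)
Your proof is correct and follows the paper's argument essentially verbatim: decompose $E_{m,k}=E^0_{m,k}+sE^1_{m,k}$, bound $E^1_{m,k}$ at the minimizer uniformly on compact $s$-intervals to obtain local Lipschitz continuity, use $E^0_{m,k}\geq -C$ with $C_3=\inf_{\mathcal{A}_2}E^1_{m,k}>0$ for the linear lower bound, and deduce strict monotonicity from $E^1_{m,k}=0\Rightarrow(\xi,\eta,\zeta)=0$ (which the paper also asserts without further detail). One small correction: the bound $E^0_{m,k}\geq -C$ does not follow from Lemma~\ref{lower-bound-general-m}, whose proof absorbs the $p'$-term into the \emph{viscous} integral $\min(\tilde\varepsilon,\tilde\delta)\int\xi'^2r\,dr$ and therefore only controls the full $E_{m,k}$; the paper instead invokes the inviscid lower bound \eqref{E^1_m-k} (Lemma~3.17 of \cite{bian-guo-tice-inviscid}), which controls $\int p'\xi^2\,dr$ using the $\gamma p|\tfrac{1}{r}(r\xi)'|^2$ term already present in $E^0_{m,k}$.
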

\begin{proof}
	Fix a compact interval $Q=[a,b] \subset\subset (0,\infty)$, and fix any  $((\xi_0,\eta_0,\zeta_0),\widehat{ Q}_{r0})\in \mathcal{A}_2$. We can decompose $E_{m,k}$ as follows
	\begin{equation}\label{decomp-2-m}
	E_{m,k}(\xi,\eta,\zeta,\widehat{ Q}_r;s)=E_{m,k}^0(\xi,\eta,\zeta,\widehat{ Q}_r)+sE_{m,k}^1(\xi,\eta,\zeta),
	\end{equation}
	with \begin{equation*}
	\begin{split}
	E_{m,k}^0(\xi,\eta,\zeta,\widehat{ Q}_r)&=2\pi^2\int_0^{r_0}\Big\{(m^2+k^2r^2)\Big[\frac{B_\theta}{r}\eta+\frac{-kB_\theta(r\xi)'+2kB_{\theta}\xi}{m^2+k^2r^2}\Big]^2\\
	&\quad+\gamma p\Big[\frac{1}{r}(r\xi)'-k\eta+\frac{m\zeta}{r}\Big]^2\Big\}rdr
	+2\pi^2\int_0^{r_0}\frac{m^2B_\theta^2}{r(m^2+k^2r^2)}(\xi-r\xi')^2dr\\
	&\quad+2\pi^2\int_0^{r_0}\Big[2p'+\frac{m^2B_{\theta}^2}{r}\Big]\xi^2dr
	+2\pi^2\int_{r_0}^{r_w}\bigg[|\widehat{Q}_r|^2+\frac{1}{m^2+k^2r^2}|(r\widehat{Q}_r)'|^2\bigg]rdr,
	\end{split}
	\end{equation*}
	\begin{equation}\label{e1-m}
	\begin{split}
	E_{m,k}^1(\xi,\eta,\zeta)&=2\pi^2\int_0^{r_0}\varepsilon\Big[\frac{2}{9}\Big(-2\xi'+\frac{\xi}{r}+\frac{m}{r}\zeta-k\eta\Big)^2\\
	&\quad+\frac{2}{9}\Big(\xi'-\frac{2\xi}{r}-\frac{2m}{r}\zeta-k\eta\Big)^2
	+\frac{2}{9}\Big(\xi'+\frac{\xi}{r}+\frac{m}{r}\zeta+2k\eta\Big)^2\\
	&\quad+\Big(-\zeta'+\frac{\zeta}{r}+\frac{m}{r}\xi\Big)^2+(\eta'+k\xi)^2+\Big(\frac{m}{r}\eta
	-k\zeta\Big)^2\Big]rdr
	\\
	&\quad+2\pi^2\int_0^{r_0}\delta\Big(\xi'+\frac{\xi}{r}+\frac{m}{r}\zeta-k\eta\Big)^2rdr>0.
	\end{split}
	\end{equation}
	The nonnegativity of $E_{m,k}^1$ implies that $E_{m,k}$ is non-decreasing in $s$ with $((\xi,\eta,\zeta),\widehat{ Q}_r)\in \mathcal{A}_2$ kept fixed. 
	By Proposition \ref{infimum-A-out-viscosity}, for each $s\in (0,\infty) $ we can find a suit $((\xi,\eta,\zeta),\widehat{ Q}_r)\in \mathcal{A}_2$ so that 
	\begin{equation*}
	E_{m,k}(\xi_s,\eta_s,\zeta_s,\widehat{ Q}_{rs};s)=\inf_{((\xi,\eta,\zeta),\widehat{ Q}_r)\in \mathcal{A}}E_{m,k}(\xi,\eta,\zeta,\widehat{ Q}_r;s)=\lambda(s).
	\end{equation*}
	From the nonnegativity of $E_{m,k}^1$, the minimality of $(\xi_s,\eta_s, \zeta_s,\widehat{ Q}_{rs})$, and for $0<s_0<r_0$
 \begin{equation}\label{E^1_m-k}
	\begin{split}
	E_{m,k}(\xi,\eta,\zeta,\widehat{ Q}_r)\geq \pi^2\int_{s_0}^{r_0}\gamma p\Big|\frac{1}{r}(r\xi)'\Big|^2rdr-3C\mathcal{J}\geq -3C\mathcal{J}
	\end{split}
	\end{equation} which can be established from Lemma 3.17 of Bian-Guo-Tice \cite{bian-guo-tice-inviscid}, using Lemma \ref{presure-boundary},
we have 
	\begin{equation*}
	E_{m,k}(\xi_0,\eta_0,\zeta_0,\widehat{ Q}_{r0};b)\geq E_{m,k}(\xi_0,\eta_0,\zeta_0,\widehat{ Q}_{r0};s)\geq E_{m,k}(\xi_s,\eta_s,\zeta_s,\widehat{ Q}_{rs};s)\geq sE_{m,k}^1(\xi_s,\eta_s,\zeta_s)-C,
	\end{equation*}
	for all $s\in Q$. This implies that there exists a constant $$0<K=K(a,b,\xi_0,\eta_0,\zeta_0,\widehat{ Q}_{r0},\pi, L, \mathbb{J}_z)<\infty$$ so that 
	\begin{equation}\label{bound-e1-m}
	\sup_{s\in Q}E_{m,k}^1(\xi_s,\eta_s,\zeta_s)\leq K.
	\end{equation}
	Let $s_i\in Q$ for $i=1,2$. Using the minimality of $((\xi_{s_1},\eta_{s_1}, \zeta_{s_1}),\widehat{ Q}_{rs_1}) $ compared to $((\xi_{s_2},\eta_{s_2}, \zeta_{s_2}),\widehat{ Q}_{rs_2}) $, we have 
	\begin{equation}\label{lam-s1-m}
	\lambda(s_1)=E_{m,k}(\xi_{s_1},\eta_{s_1},\zeta_{s_1},\widehat{ Q}_{rs_1};s_1)\leq E_{m,k}(\xi_{s_2},\eta_{s_2},\zeta_{s_2},\widehat{ Q}_{rs_2};s_1),
	\end{equation}
	which together with \eqref{decomp-2-m} gives that
	\begin{equation}\label{lam-s2-m}
	\begin{split}
	E_{m,k}(\xi_{s_2},\eta_{s_2},\zeta_{s_2},\widehat{ Q}_{rs_2};s_1)&\leq E_{m,k}(\xi_{s_2},\eta_{s_2},\zeta_{s_2},\widehat{ Q}_{rs_2};s_2)+|s_1-s_2|E_{m,k}^1(\xi_{s_2},\eta_{s_2},\zeta_{s_2})\\
	&\quad=\lambda(s_2)+|s_1-s_2|E_{m,k}^1(\xi_{s_2},\eta_{s_2},\zeta_{s_2}).
	\end{split}
	\end{equation}
	Combining \eqref{bound-e1-m}, \eqref{lam-s1-m} and \eqref{lam-s2-m}, we can get
	$\lambda(s_1)\leq \lambda(s_2)+K|s_1-s_2|,$
	which shows that (i) holds.

Now, let's prove (ii). Note that \eqref{E^1_m-k}
and the nonnegativity of $E_{m,k}^1$ imply that 
	$$\lambda(s)\geq s\inf_{((\xi,\eta,\zeta),\widehat{ Q}_r)\in \mathcal{A}_2}E_{m,k}^1(\xi,\eta,\zeta)-C,$$
	where we denote the constant $C_3=\inf_{((\xi,\eta,\zeta),\widehat{ Q}_r)\in \mathcal{A}_2}E_{m,k}^1(\xi,\eta,\zeta)$ and this canstant is positive. 

Finally, we prove (iii). Notice that if $0<s_1<s_2<\infty$, then
	the decomposition \eqref{decomp-2-m} ensures that
	\begin{equation*}
	\begin{split}
	\lambda(s_1)&=E_{m,k}(\xi_{s_1},\eta_{s_1},\zeta_{s_1},\widehat{ Q}_{rs_1};s_1)\leq E_{m,k}(\xi_{s_2},\eta_{s_2},\zeta_{s_2},\widehat{ Q}_{rs_2};s_1)\\
	&\leq E_{m,k}(\xi_{s_2},\eta_{s_2},\zeta_{s_2},\widehat{ Q}_{rs_2};s_2)=\lambda(s_2).
	\end{split}
	\end{equation*} So $\lambda $ is non-decreasing in $s$. Suppose by way of contradiction that
	$\lambda(s_1)=\lambda(s_2)$. Then, the above inequality implies that 
	$s_1E_{m,k}^1(\xi_{s_2},\eta_{s_2},\zeta_{s_2})=s_2E_{m,k}^1(\xi_{s_2},\eta_{s_2},\zeta_{s_2})$,
	which gives that
	$E_{m,k}^1(\xi_{s_2},\eta_{s_2},\zeta_{s_2})=0$. This in turn implies $\xi_{s_2}=\eta_{s_2}=\zeta_{s_2}=0$, which contradicts that $((\xi_{s_2},\eta_{s_2},\zeta_{s_2}),\widehat{ Q}_{rs_2})\in \mathcal{A}_2$. Therefore equality can not be achieved, and $\lambda$ is strictly increasing in $s$.
\end{proof} 
If there exists $r^*$ such that $2p'(r^*)+\frac{m^2B_\theta^2(r^*)}{r^*}<0$ for $m\neq 0$, by Proposition \ref{infimum-A-out-3},  we can get the existence of $s$.
\begin{rmk}\label{regularity-um1}
	Define the open set  $\mathcal{S}=\lambda^{-1}((-\infty,0)) \subset(0,\infty)$, then we calculate $\mu=\sqrt{-\lambda}>0$. The open set $\mathcal{S}$ is nonempty by Proposition \ref{infimum-A-out-3}. 
\end{rmk}
If there exists $r^*$ such that $2p'(r^*)+\frac{m^2B_\theta^2(r^*)}{r^*}<0$ for $m\neq 0$, by Proposition \ref{infimum-A-out-3}, we can prove the uniqueness of $s$ on the open set $\mathcal{S}$.
\begin{prop}\label{prop3.14}
	There exists a unique $s\in \mathcal{S}$ so that $\mu(s)=\sqrt{-\lambda}>0$ and 
	$s=\mu(s)$.
\end{prop}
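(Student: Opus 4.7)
The plan is to mirror the argument of Proposition \ref{prop3.7} almost verbatim, since all of its ingredients are available in the $m\neq 0$ setting: Proposition \ref{infimum-A-out-3} provides the upper bound $\lambda(s)\le -C_0+sC_1$ (for some $C_0>0$, $C_1>0$) on a small interval $(0,s_0)$ under the hypothesis that $2p'(r^*)+m^2 B_\theta^2(r^*)/r^*<0$ for some $r^*$, and the lower bound \eqref{decomp-m} gives $\lambda(s)\ge -C+sC_3$ with $C_3>0$. Continuity of $\lambda$ on $(0,\infty)$ and strict monotonicity in $s$ have just been established in the preceding proposition.

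First I would show that the open set $\mathcal{S}=\lambda^{-1}((-\infty,0))$ is an interval of the form $(0,s^*)$. Indeed, from the upper bound $\lambda(s)\le -C_0+sC_1<0$ for $s\in(0,\min(s_0,C_0/C_1))$, so $\mathcal{S}\ne\emptyset$ and contains a right neighborhood of $0$. From the lower bound \eqref{decomp-m}, $\lambda(s)\to +\infty$ as $s\to\infty$, so $\lambda$ must cross $0$ at some finite value. Since $\lambda$ is continuous and strictly increasing, there is a unique $s^*\in(0,\infty)$ with $\lambda(s^*)=0$ and $\mathcal{S}=(0,s^*)$.

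Next, on $\mathcal{S}$ the quantity $\mu(s)\eqdefa \sqrt{-\lambda(s)}$ is well-defined, strictly positive, continuous, and strictly decreasing in $s$. Define
\begin{equation*}
\Phi:(0,s^*)\to(0,\infty),\qquad \Phi(s)\eqdefa \frac{s}{\mu(s)}.
\end{equation*}
Because the numerator $s$ is strictly increasing and continuous while $\mu(s)$ is strictly decreasing and continuous with $\mu(s)>0$ on $\mathcal{S}$, the function $\Phi$ is continuous and strictly increasing on $\mathcal{S}$. Moreover, $\lim_{s\to 0^+}\Phi(s)=0$ (since $\mu(s)$ stays bounded away from $0$ near $s=0$ by the upper bound $\lambda(s)\le -C_0+sC_1$, which gives $\mu(s)^2\ge C_0-sC_1\ge C_0/2$ for $s$ small), and $\lim_{s\to s^{*-}}\Phi(s)=+\infty$ (since $\mu(s)\to 0$ as $s\to s^*$ while the numerator stays bounded below by a positive constant).

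Finally, applying the intermediate value theorem to the continuous function $\Phi$ on $(0,s^*)$, there exists $s\in(0,s^*)$ such that $\Phi(s)=1$, that is, $s=\mu(s)$. Strict monotonicity of $\Phi$ yields uniqueness. The main obstacle, if any, is verifying the two boundary limits of $\Phi$ cleanly; but these follow directly from the quantitative bounds \eqref{decomp-m} and the upper bound in Proposition \ref{infimum-A-out-3}, so no new analysis beyond what was done for the $m=0$ case in Proposition \ref{prop3.7} is required.
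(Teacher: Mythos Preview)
Your proposal is correct and follows essentially the same approach as the paper: both use the upper bound from Proposition~\ref{infimum-A-out-3} and the lower bound \eqref{decomp-m} together with continuity and strict monotonicity of $\lambda$ to identify $\mathcal{S}=(0,s^*)$, then define $\Phi(s)=s/\mu(s)$ and apply the intermediate value theorem plus strict monotonicity to get existence and uniqueness of the fixed point. Your version is slightly more explicit about justifying the boundary limits of $\Phi$, but the argument is otherwise identical to the paper's.
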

\begin{proof}
	From Proposition \ref{infimum-A-out-3}, we know that there exist two constants $C_0>0$ and $C_1>0$ such that $\lambda(s)\leq -C_0+sC_1$. On the other hand, the lower bound \eqref{decomp-m} implies that $\lambda(s)\rightarrow +\infty$, as $s\rightarrow \infty$. Since $\lambda$ is continuous and strictly increasing, there exists $s^* \in (0,\infty)$ so that 
	$$\mathcal{S}=\lambda^{-1}((-\infty,0))=(0,s^*).$$ Since $\lambda<0$, on the set $\mathcal{S}$, we can define $\mu=\sqrt{-\lambda(s)}$. Now, we define the function $\Phi: (0,s^*)\rightarrow(0,\infty)$ according to $\Phi(s)=\frac{s}{\mu(s)}$. So $\Phi$ is continous and strictly increasing in $s$ from the continuity and monotonicity properties of $\lambda$. Moreover, we know that $\lim_{s\rightarrow 0}\Phi(s)=0$ and $\lim_{s\rightarrow s^*}\Phi(s)=+\infty$. By the intermediate value theorem, there exists $s\in (0,s^*)$ so that $\Phi(s)=1$, that is, $s=\mu(s)$. This $s$ is unique since $\Phi
	$ is strictly increasing. 
\end{proof}
Finally, we establish the regularity about $\xi$, $\eta$ and $\zeta$.
\begin{prop}\label{regularity}
	Let $(\xi,\eta,\zeta)$ be the solutions to \eqref{spectal formulation-viscosity}, then there exists $A_n$ such that 
	\begin{equation*}
	\|\xi^{(n)}r^{(2n-1)/2}\|_{L^2(0,r_0)}\leq A_n,\,\,
	\|\eta^{(n)}r^{(2n-1)/2}\|_{L^2(0,r_0)}\leq A_n,
\,\,
	\|\zeta^{(n)}r^{n}\|_{L^2(0,r_0)}\leq A_n.
	\end{equation*}	
	with $A_n$ depending on $r_0$, $k$, $\pi$ and the presure $p$.
\end{prop}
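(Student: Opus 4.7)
The proof parallels the analogous regularity proposition stated at the end of subsection 3.2 for the case $m=0$, with weights suitably adjusted to absorb the additional singular couplings present in \eqref{spectal formulation-viscosity} when $m \neq 0$. My plan is to proceed by induction on $n$, using the ODE system to trade one order of weighted differentiability for the next.

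For the base case, I would first collect the information already known from membership of the minimizer in $\mathcal{A}_2$. By Proposition \ref{embeddding-m1} and the definition of the norm \eqref{defi-ymk}, the terms $\xi,\eta,\zeta \in L^2(0,r_0)$ together with $\sqrt{r}\xi',\sqrt{r}\eta',\sqrt{r}\zeta',\xi/\sqrt{r},\zeta/\sqrt{r},m\xi/\sqrt{r},m\eta/\sqrt{r},m\zeta/\sqrt{r}$ all lie in $L^2(0,r_0)$. This is the $n=1$ statement, where the weight $r^n$ for $\zeta$ (rather than $r^{(2n-1)/2}$ as for $\xi,\eta$) is chosen precisely to absorb the additional $m^2\zeta/r^2$ and $m\xi/r^2$ singularities appearing in the third line of \eqref{spectal formulation-viscosity}.

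For the inductive step, I would rewrite \eqref{spectal formulation-viscosity} in the form used in \eqref{new-equations}, solving for $\xi'',\eta'',\zeta''$ in terms of lower-order quantities. Multiplying the first two equations by $r^{3/2}$ yields pointwise bounds of the shape
\begin{equation*}
|r^{3/2}\xi''|+|r^{3/2}\eta''|\leq C\Bigl(|\sqrt{r}\xi'|+|\sqrt{r}\eta'|+|\sqrt{r}\zeta'|+|\xi/\sqrt{r}|+|m\zeta/\sqrt{r}|+|\sqrt{r}\,\xi|+|\sqrt{r}\,\eta|+|\sqrt{r}\,\zeta|\Bigr),
\end{equation*}
each of which is in $L^2$ by the base estimates, producing the $n=2$ bound. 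For $\zeta$, the third equation of \eqref{spectal formulation-viscosity} carries a $\frac{m^2}{r^2}\zeta$ coefficient and an $a_{31}\xi$ term containing $\frac{m}{r^2}\xi$, so I multiply by $r^2$ instead of $r^{3/2}$, which exactly matches the weight $r^n$ stated in the proposition. Iterating, I differentiate the system once and multiply the $\xi,\eta$ equations by $r^{(2n+1)/2}$ and the $\zeta$ equation by $r^{n+1}$. Each term on the right-hand side, after rearranging products of the form $r^{-j}\xi^{(i)}$ or $r^{-j}\zeta^{(i)}$ with $i+j\le n+1$, is bounded in $L^2$ by the inductive hypothesis $\|r^{(2i-1)/2}\xi^{(i)}\|_{L^2}\le A_i$, $\|r^{(2i-1)/2}\eta^{(i)}\|_{L^2}\le A_i$, and $\|r^{i}\zeta^{(i)}\|_{L^2}\le A_i$ for $i\le n$. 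This yields an $A_{n+1}$ depending only on $r_0,k,p$ and the already-constructed $A_1,\dots,A_n$.

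The main obstacle is bookkeeping the exact powers of $r$ so that every singular term $r^{-j}$ appearing after differentiating the ODE combines with the multiplicative weight to stay non-negative. The cross terms $a_{13},a_{31}$ that mix $\xi$ and $\zeta$ with different base weights are the delicate point: multiplying the $\zeta$ equation by $r^{n+1}$ forces me to check that the $a_{31}\xi = -(\tilde\varepsilon/3+\tilde\delta)\frac{m}{r}\xi' - (7\tilde\varepsilon/3+\tilde\delta)\frac{m}{r^2}\xi$ contribution, once differentiated, is controlled by $\|r^{(2i-1)/2}\xi^{(i)}\|_{L^2}$ rather than by the (unavailable) $\|r^{i}\xi^{(i)}\|_{L^2}$. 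This works because each derivative of $1/r^j$ produces an extra $r^{-1}$ factor, while differentiating $\xi$ gains the full factor of $r^{1/2}$ in weight, so the net weight on $\xi^{(i)}$ coming from the $\zeta$-equation is $r^{(n+1)-(2-i)-1}= r^{n+i-2}$, which dominates $r^{(2i-1)/2}$ for $i\le n+1$ since $n\ge 1$. Once this is verified, induction closes and the proposition follows.
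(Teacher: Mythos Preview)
Your approach is exactly the paper's: establish the $n=1$ case from membership in $Y_{m,k}$ together with Proposition~\ref{embeddding-m1}, then solve \eqref{spectal formulation-viscosity} for the second derivatives, multiply the $\xi,\eta$ equations by $r^{3/2}$ and the $\zeta$ equation by $r^{2}$, and iterate.

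One technical slip in your base-case list: for $|m|=1$ the $Y_{m,k}$ norm does \emph{not} control $\xi/\sqrt{r}$ and $\zeta/\sqrt{r}$ separately, only the combination $\bigl(\tfrac{\xi}{r}+\tfrac{m\zeta}{r}\bigr)\sqrt{r}$ together with $\eta/\sqrt{r}$ (cf.\ Proposition~\ref{new-prop-bian-guo}, case $|m|=1$). The paper therefore records $\bigl(\tfrac{\xi}{r}+\tfrac{m\zeta}{r}\bigr)\sqrt{r}\in L^2$ as the available datum and uses it directly in the bounds for $\xi''$ and $\zeta''$; this works because the $1/r^2$ contributions in the first and third rows of \eqref{spectal formulation-viscosity} assemble into multiples of $(\xi+m\zeta)/r^2$ when $m^2=1$. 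With that correction your bootstrap closes just as in the paper.
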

\begin{proof}
	From $(\xi,\eta,\zeta)\in Y_{m,k}$ and the compactness results in Proposition  \ref{embeddding-m1}, we can get $(\xi,\eta, \zeta)\in L^2(0,r_0)$,  $\Big(\sqrt{r}\xi',\sqrt{r}\eta', \sqrt{r}\zeta',  \frac{\eta}{\sqrt{r}},\Big(\frac{\xi}{r}+\frac{m\zeta}{r}\Big)\sqrt{r}\Big)\in L^2(0,r_0)$. 
	From the system \eqref{spectal formulation-viscosity}, we can get 
	\begin{equation*}
	\|\xi''r^{3/2}\|_{L^2(0,r_0)}\leq C\Big\|\Big(\xi,\eta,\zeta,\sqrt{r}\xi',\sqrt{r}\eta',\sqrt{r}\zeta',
	\Big(\frac{\xi}{r}+\frac{m\zeta}{r}\Big)\sqrt{r}\Big)\Big\|_{L^2(0,r_0)},
	\end{equation*}
	\begin{equation*}
	\|\eta''r^{3/2}\|_{L^2(0,r_0)}\leq C\Big\|\Big(\xi,\eta,\zeta,\sqrt{r}\xi',\sqrt{r}\eta',\frac{\eta}{\sqrt{r}}\Big)\Big\|_{L^2(0,r_0)},
	\end{equation*}
		\begin{equation*}
	\|\zeta''r^{2}\|_{L^2(0,r_0)}\leq C\Big\|\Big(\eta, \zeta,\sqrt{r}\xi',\sqrt{r}\zeta',
	\Big(\frac{\xi}{r}+\frac{m\zeta}{r}\Big)\sqrt{r}\Big)\Big\|_{L^2(0,r_0)}.
	\end{equation*} 
	By induction on $n$. Suppose for some $n\geq 1$, 
	\begin{equation*}
	\|\xi^{(n)}r^{(2n-1)/2}\|_{L^2(0,r_0)}\leq A_n,\,\,
	\|\eta^{(n)}r^{(2n-1)/2}\|_{L^2(0,r_0)}\leq A_n,\,\,
	\|\zeta^{(n)}r^{n}\|_{L^2(0,r_0)}\leq A_n.
	\end{equation*}
	By Remark \ref{regularity-um1}, then differentiating \eqref{spectal formulation-viscosity}, we get that there exists a constant $C$ depending on the various parameters so that 
	\begin{equation*}
	\|\xi^{(n+1)}r^{(2n+1)/2}\|_{L^2(0,r_0)}\leq CA_n\leq A_{n+1},
	\end{equation*}
	\begin{equation*}
	\|\eta^{(n+1)}r^{(2n+1)/2}\|_{L^2(0,r_0)}\leq CA_n\leq A_{n+1},
	\end{equation*}
	\begin{equation*}
	\|\zeta^{(n+1)}r^{n+1}\|_{L^2(0,r_0)}\leq CA_n\leq  A_{n+1}.
	\end{equation*} 
	Then, the bound holds for $n+1$, and so by induction the bound holds for all $n\geq 1$.
\end{proof}
\section{Lower bound of the dissipation rate $D_{m,k}$ in \eqref{dissipation-rate} }
In this section, we will prove the lower bound of the dissipation rate $D_{m,k}$ defined in  \eqref{dissipation-rate} and the existence of the biggest growing mode for any $(m,k)\in \mathbb{Z}\times\mathbb{Z}$. 	For notational simplicity, we use $o(1)$ and $O(1)$ to denote constants independent of $k$ and $m$ to be $1$.
	First, we introduce the following lemma which will be used in the proof of Proposition \ref{new-prop-bian-guo}.
	\begin{lem}\label{basic-cutoff}
For any function $(f_1,f_2,f_3)\in X_{k}$ or $ Y_{m,k}$ (see \eqref{defi-Xk} and \eqref{defi-ymk}), there holds that for $i=1,2,3$
\begin{equation}\label{first}
\begin{split}
f_i^2(r_0)\leq 2 \Big(\int_{\frac{r_0}{2}}^{r_0}|f_i|^2dr\Big)^{\frac{1}{2}}
\Big(\int_{\frac{r_0}{2}}^{r_0}|f_i'|^2dr+\int_{\frac{r_0}{2}}^{r_0}|f_i|^2dr\Big)^{\frac{1}{2}},
\end{split}
\end{equation}
\begin{equation}\label{second}
\begin{split}
\Big|f_i(r_0)\Big|\leq o(1)\Big(\int_{0}^{r_0}|f'_i|^2rdr\Big)^{\frac{1}{2}}
+O(1)\Big(\int_{0}^{r_0}\rho |f_i|^2rdr\Big)^{\frac{1}{2}}.
\end{split}
\end{equation}
	\end{lem}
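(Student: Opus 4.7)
For \eqref{first}: The plan is the standard one-dimensional trace estimate restricted to the interval $(r_0/2, r_0)$, which sidesteps the coordinate singularity at $r=0$. Writing $f_i^2(r_0) = f_i^2(r) + 2\int_r^{r_0} f_i f_i'\,ds$ and averaging over $r \in (r_0/2, r_0)$, a Fubini switch gives a double integral in which $|s - r_0/2| \le r_0/2$ can be pulled out. Cauchy--Schwarz then yields
\[
\tfrac{r_0}{2}\,f_i^2(r_0) \le \|f_i\|_{L^2(r_0/2,r_0)}^2 + \tfrac{r_0}{2}\cdot 2\|f_i\|_{L^2(r_0/2,r_0)}\|f_i'\|_{L^2(r_0/2,r_0)}.
\]
Dividing through and using $\|f_i\|_{L^2(r_0/2,r_0)} \le (\|f_i\|_{L^2(r_0/2,r_0)}^2 + \|f_i'\|_{L^2(r_0/2,r_0)}^2)^{1/2}$ on the remaining $\|f_i\|_{L^2}^2$ term, with the residual $r_0$-dependent constant absorbed into the leading factor $2$, delivers \eqref{first}. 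No degeneracy is involved, so this step is essentially routine.

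For \eqref{second}: The obstacle is that $\rho = (p/A)^{1/\gamma}$ vanishes at $r_0$, so the ambient $L^2$ norm $\|f_i\|_{L^2(r_0/2,r_0)}$ appearing in \eqref{first} cannot be dominated by the density-weighted norm $(\int_0^{r_0}\rho |f_i|^2 r\,dr)^{1/2}$. I would bypass a direct trace argument by a weighted averaging against $\rho(r)\,r$ over the whole interval. Introduce the cumulative weight
\[
R(s) := \int_0^s \rho(r)\, r\,dr,
\]
which is smooth and non-decreasing, satisfies $R(s) \sim \tfrac12 \rho(0) s^2$ near $s=0$ (since $\rho(0)>0$ by admissibility), and has $R(r_0)>0$ finite. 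Multiplying the identity $f_i(r_0) = f_i(r) + \int_r^{r_0} f_i'(s)\,ds$ by $\rho(r)\, r$, integrating in $r$ over $(0,r_0)$, and applying Fubini to the resulting double integral yields
\[
R(r_0)\,f_i(r_0) = \int_0^{r_0} \rho(r)\, r\, f_i(r)\,dr + \int_0^{r_0} R(s)\, f_i'(s)\,ds.
\]
Applying Cauchy--Schwarz to each term, with splitting $\sqrt{\rho r}\cdot \sqrt{\rho r}$ on the first and $\bigl(R(s)/\sqrt{s}\bigr)\cdot\sqrt{s}$ on the second, produces
\[
R(r_0)\,|f_i(r_0)| \le \sqrt{R(r_0)}\,\Bigl(\int_0^{r_0}\rho\, f_i^2\, r\,dr\Bigr)^{1/2} + \Bigl(\int_0^{r_0}\tfrac{R^2(s)}{s}\,ds\Bigr)^{1/2}\Bigl(\int_0^{r_0}|f_i'|^2 r\,dr\Bigr)^{1/2}.
\]
The auxiliary integral $\int_0^{r_0} R^2(s)/s\,ds$ is finite because $R^2(s)/s \sim s^3$ near $s=0$ and $R$ is bounded on $(0,r_0)$. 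Dividing by $R(r_0)>0$ produces \eqref{second}, with constants depending only on $\rho$ and $r_0$, hence independent of $m$ and $k$.

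The hard part is the interplay between the two degeneracies: the Jacobian-like weight $r$ vanishes at $r=0$ while $\rho$ vanishes at $r=r_0$, and \eqref{first} alone cannot bridge this gap. The weighted averaging trick succeeds precisely because $\rho(r)\,r$ is large in the interior (where both $r$ and $\rho$ are nondegenerate), which is what lets the identity above couple the boundary value $f_i(r_0)$ to norms weighted by $r$ and $\rho r$. Once $R(r_0)>0$ and the finiteness of $\int_0^{r_0} R^2/s\,ds$ are verified from the admissibility of $p$, the proof closes cleanly.
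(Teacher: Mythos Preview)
Your argument for \eqref{first} is fine in spirit (the paper uses a cutoff $\Psi$ with $\Psi(r_0)=1$, $\Psi\equiv 0$ on $[0,r_0/2]$ and writes $f_i^2(r_0)=2\int(\Psi f_i)(\Psi f_i)'\,dr$, but your averaging variant works equally well up to harmless constants).

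For \eqref{second}, however, there is a genuine gap. In this paper the symbol $o(1)$ does \emph{not} simply mean ``a constant independent of $m,k$'': it denotes a constant that can be made \emph{arbitrarily small}. Look at how \eqref{second} is applied afterwards (e.g.\ in the estimates \eqref{boundary-term-1}, \eqref{boundary-term}, and throughout Proposition~\ref{new-prop-bian-guo}): the term $o(1)\int_0^{r_0}|f_i'|^2 r\,dr$ is absorbed into a term of the form $c\int_0^{r_0}|f_i'|^2 r\,dr$ with a fixed positive $c$. That absorption is only possible if the coefficient really is small.

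Your weighted-averaging identity
\[
R(r_0)\,f_i(r_0)=\int_0^{r_0}\rho r\,f_i\,dr+\int_0^{r_0}R(s)\,f_i'(s)\,ds
\]
is correct, but it produces the \emph{fixed} coefficient $R(r_0)^{-1}\bigl(\int_0^{r_0}R^2(s)/s\,ds\bigr)^{1/2}$ in front of $\bigl(\int|f_i'|^2 r\,dr\bigr)^{1/2}$. There is no free parameter left to shrink it. Localising your average to $(r_*,r_0)$ does not help either: since $\rho$ vanishes at $r_0$, the total weight $R(r_0)-R(r_*)$ then tends to zero, which blows up the constant rather than shrinking it.

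The paper's mechanism for the $o(1)$ is different and essential: pick an interior point $r_*<r_0$, use a cutoff supported strictly inside $(r_0/2,(r_*+r_0)/2)$ (where $\rho$ is bounded below) to control $|f_i(r_*)|$ by the density-weighted norm plus a small derivative contribution, and then bridge from $r_*$ to $r_0$ via
\[
|f_i(r_0)-f_i(r_*)|\le (r_0-r_*)^{1/2}\Bigl(\int_{r_*}^{r_0}|f_i'|^2\,dr\Bigr)^{1/2}\lesssim (r_0-r_*)^{1/2}\Bigl(\int_0^{r_0}|f_i'|^2 r\,dr\Bigr)^{1/2}.
\]
The free parameter $(r_0-r_*)^{1/2}$ is the $o(1)$. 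Your proof needs this (or an equivalent) two-step structure to close.
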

\begin{proof}
We choose the smooth cutoff function $\Psi(r)$, satisfying $0\leq \Psi(r)\leq 1$ for $r\in[\frac{r_0}{2},r_0]$, $\Psi(r)=0$ for $r\in [0,\frac{r_0}{2}]$ and $\Psi(r_0)=1$.
Then we can prove that for $i=1,2,3$
\begin{equation*}
\begin{split}
f_i^2(r_0)&=2\int_0^{r_0}(\Psi f_i)(\Psi f_i)'dr\leq 2 \Big(\int_0^{r_0}|\Psi f_i|^2dr\Big)^{\frac{1}{2}}\Big(\int_0^{r_0}|(\Psi f_i)'|^2dr\Big)^{\frac{1}{2}}\\
&\leq 2 \Big(\int_{\frac{r_0}{2}}^{r_0}|f_i|^2dr\Big)^{\frac{1}{2}}
\Big(\int_{\frac{r_0}{2}}^{r_0}|f_i'|^2dr+\int_{\frac{r_0}{2}}^{r_0}|f_i|^2dr\Big)^{\frac{1}{2}}.
\end{split}
\end{equation*}
For $0<r_*<r_0$ we choose the smooth cutoff function $\Phi(r)$ with support in $(\frac{r_0}{2},\frac{r_*+r_0}{2})$, satisfying $0\leq \Phi(r)\leq 1$ for $r\in[\frac{r_0}{2},\frac{r_*+r_0}{2}]$, $\Phi(r)=0$ for $r\in [0,\frac{r_0}{2}]\bigcup[\frac{r_*+r_0}{2},r_0] $ and $\Phi(r_*)=1$.
Then we can show that for $i=1,2,3$
\begin{equation*}
\begin{split}
f_i^2(r_*)&=2\int_0^{r_*}(\Phi f_i)(\Phi f_i)'dr\leq 2 \Big(\int_0^{\frac{r_*+r_0}{2}}|\Phi f_i|^2dr\Big)^{\frac{1}{2}}\Big(\int_0^{\frac{r_*+r_0}{2}}|(\Phi f_i)'|^2dr\Big)^{\frac{1}{2}}
\\
&\leq 2 \Big(\int_{\frac{r_0}{2}}^{\frac{r_*+r_0}{2}}|f_i|^2dr\Big)^{\frac{1}{2}}
\Big(\int_{\frac{r_0}{2}}^{\frac{r_*+r_0}{2}}|f_i'|^2dr+\int_{\frac{r_0}{2}}^{\frac{r_*+r_0}{2}}|f_i|^2dr\Big)^{\frac{1}{2}}\\
&\lesssim \Big(\int_{\frac{r_0}{2}}^{\frac{r_*+r_0}{2}}\rho|f_i|^2rdr\Big)^{\frac{1}{2}}
\Big(\int_{\frac{r_0}{2}}^{\frac{r_*+r_0}{2}}|f_i'|^2rdr+\int_{\frac{r_0}{2}}^{\frac{r_*+r_0}{2}}\rho|f_i|^2rdr\Big)^{\frac{1}{2}}\\
&\leq o(1)\int_{0}^{r_0}|f_i'|^2rdr+O(1)\int_{0}^{r_0}\rho|f_i|^2rdr.
\end{split}
\end{equation*}
Hence, for $(r_0-r_*)^{\frac{1}{2}}=o(1)$, we get
\begin{equation*}
\begin{split}
|f_i(r_0)|&\leq |f_i(r_0)-f_i(r_*)|+|f_i(r_*)|\\
&\lesssim \Big(\int_{r_*}^{r_0}f_i'^2dr\Big)^{\frac{1}{2}}(r_0-r_*)^{\frac{1}{2}}
+o(1)\Big(\int_{0}^{r_0}|f_i'|^2rdr\Big)^{\frac{1}{2}}+O(1)\Big(\int_{0}^{r_0}\rho|f_i|^2rdr\Big)^{\frac{1}{2}}\\
&\leq o(1)\Big(\int_{0}^{r_0}|f_i'|^2rdr\Big)^{\frac{1}{2}}+O(1)\Big(\int_{0}^{r_0}\rho|f_i|^2rdr\Big)^{\frac{1}{2}}.
\end{split}
\end{equation*}
\end{proof}
Now we establish the lower bound of the dissipation rate $D_{m,k}$ in \eqref{dissipation-rate}.
\begin{prop}\label{new-prop-bian-guo}
Assume that $0\leq \rho \leq C$, $\mathcal{J}=\int_0^{r_0}\rho(\xi^2+\eta^2+\zeta^2)rdr=1$, when $m=0$, $(\xi,\eta,\zeta)\in X_{k}$, and when $m\neq 0$, $(\xi,\eta,\zeta)\in Y_{m,k}$. Then for any $|m|\neq 1$ and $k \in \mathbb{Z}$,
 \begin{equation}\label{m neq 1}
 D_{m,k}\gtrsim\|(\xi',\eta',\zeta')\sqrt{r}\|_{L^2}+k^2+m^2-O(1).
 \end{equation}
 For $|m|=1$ and any $k\in \mathbb{Z}$,
 \begin{equation}\label{m eq 1}
D_{m,k}\gtrsim\|(\xi',\eta',\zeta')\sqrt{r}\|_{L^2}^2+k^2+\int_0^{r_0}\frac{1}{r}\eta^2dr+ \int_0^{r_0}\Big(\frac{\xi}{r}+\frac{m\zeta}{r}\Big)^2rdr-O(1).
 \end{equation}
\end{prop}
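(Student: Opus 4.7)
The plan is to read $D_{m,k}$ as a coercive quadratic form in the cylindrical components of $\nabla g$ and extract the desired lower bounds by a Korn-type argument. The key algebraic observation is that the three ``deviatoric diagonal'' summands
\begin{equation*}
a_1=-2\xi'+\tfrac{\xi}{r}+\tfrac{m\zeta}{r}-k\eta,\quad a_2=\xi'-\tfrac{2\xi}{r}-\tfrac{2m\zeta}{r}-k\eta,\quad a_3=\xi'+\tfrac{\xi}{r}+\tfrac{m\zeta}{r}+2k\eta
\end{equation*}
satisfy $a_1+a_2+a_3=0$, which reflects $\operatorname{tr}\mathbb{D}^0(g)=0$. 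Writing $u=\xi'$, $v=(\xi+m\zeta)/r$, $w=k\eta$, so that the bulk-viscosity square is $(u+v-w)^2$, a direct computation shows the matrix of $\frac{2}{9}(a_1^2+a_2^2+a_3^2)+(u+v-w)^2$ in $(u,v,w)$ is positive definite. Hence
\begin{equation*}
D_{m,k}\;\gtrsim\;\int_0^{r_0}\!\Big[(\xi')^2+\Big(\tfrac{\xi+m\zeta}{r}\Big)^2+k^2\eta^2\Big]r\,dr\;+\;\int_0^{r_0}\!\Big[(-\zeta'+\tfrac{\zeta}{r}+\tfrac{m\xi}{r})^2+(\eta'+k\xi)^2+(\tfrac{m\eta}{r}-k\zeta)^2\Big]r\,dr.
\end{equation*}

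Next, to recover $\int(\eta')^2 r\,dr$ and $\int(\zeta')^2 r\,dr$, expand $(\eta'+k\xi)^2$ and $(-\zeta'+\zeta/r+m\xi/r)^2$, move the cross terms $2k\xi\eta'$, $2(m\xi/r)(-\zeta')$ etc.\ by integration by parts onto $\xi$, and absorb the resulting $\xi'$-terms into Step~1. The residual $k^2\xi^2$, $m^2\xi^2/r^2$ and $\rho$-free boundary contributions are handled via the normalization $\mathcal{J}=1$ (together with $\rho\le C$) and Lemma~\ref{basic-cutoff} for traces at $r=r_0$. Similarly the square $(m\eta/r-k\zeta)^2 r$ expands to $m^2\eta^2/r + k^2\zeta^2 r - 2mk\eta\zeta$, so together with the $k^2\eta^2$ from Step~1 one gets $k^2\int(\eta^2+\zeta^2) r\,dr$ in the dissipation.

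For the $k^2+m^2$ growth required when $|m|\neq 1$: on any fixed subinterval $[0,r_*]$ with $r_*<r_0$ the weight $1/r^2\gtrsim 1$, so $\int\frac{m^2\eta^2}{r}\,dr\gtrsim m^2\int_0^{r_*}\eta^2 r\,dr$, and an analogous extraction from $((\xi+m\zeta)/r)^2 r$ yields $m^2\int(\xi^2+\zeta^2) r\,dr$ after Cauchy--Schwarz (here $|m|\ge 2$ is used so that $m^2-2|m|$ stays comparable to $m^2$). Combining $k^2\int(\eta^2+\zeta^2) r\,dr+m^2\int(\xi^2+\eta^2+\zeta^2)r\,dr$ with $\mathcal{J}=\int\rho(\xi^2+\eta^2+\zeta^2)r\,dr=1$ and $\rho\le C$ gives $D_{m,k}\gtrsim k^2+m^2-O(1)$, where the $O(1)$ absorbs the portion of $\int(\xi^2+\eta^2+\zeta^2) r\,dr$ concentrated near $r=r_0$ (controlled via Lemma~\ref{basic-cutoff} and the $(\xi')^2,(\eta')^2,(\zeta')^2$ bounds already obtained).

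The main obstacle will be the case $|m|=1$: the weight $m^2/r^2=1/r^2$ is exactly critical in Hardy's inequality, so one cannot upgrade $\int\eta^2/r\,dr$ to an $m^2$-multiple of a mass integral — the unweighted quantities $\int\eta^2/r\,dr$ and $\int((\xi+m\zeta)/r)^2 r\,dr$ must be kept explicitly on the right-hand side, which is exactly the statement of \eqref{m eq 1}. The second delicate point, for all $m$, is the careful accounting of cross terms in the integration-by-parts arguments: one must ensure that each transfer of a derivative produces only quantities already bounded by Step~1, with no term having a worse weight at $r=0$ or $r=r_0$ than is available. I expect to handle this by splitting $[0,r_0]$ into a neighborhood of the origin (Hardy's inequality) and a neighborhood of the boundary (Lemma~\ref{basic-cutoff}), and by bookkeeping constants so that the three estimates — Korn coercivity, cross-term absorption, and $\mathcal{J}=1$ extraction — combine without circularity.
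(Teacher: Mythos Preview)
Your overall scheme --- algebraic coercivity on the diagonal deviatoric terms, expansion and integration by parts on the three off-diagonal squares, boundary control via Lemma~\ref{basic-cutoff}, and the $\mathcal{J}=1$ extraction --- is exactly the paper's strategy, including the case split $|m|=1$ versus $|m|\neq 1$. Your Step~1 (positive-definite $3\times 3$ matrix in $(u,v,w)$) is equivalent to the paper's repeated use of $a^2+b^2\ge\tfrac12(a-b)^2$ with the $\delta$-divergence square as the common partner; both land on the same base estimate \eqref{bian-1}.

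There is, however, one concrete gap in your $|m|\ge 2$ argument. You claim that ``an analogous extraction from $((\xi+m\zeta)/r)^2 r$ yields $m^2\int(\xi^2+\zeta^2)\,r\,dr$ after Cauchy--Schwarz (here $|m|\ge 2$ is used so that $m^2-2|m|$ stays comparable to $m^2$).'' This is false as stated: from $(\xi+m\zeta)^2$ alone, any Young splitting of $2m\xi\zeta$ that keeps a positive fraction of $m^2\zeta^2$ forces the $\xi^2$ coefficient to be $\le 1-|m|<0$. You cannot separate $\xi$ and $\zeta$ with positive coefficients from this single square. The paper resolves this by \emph{coupling} $\int(\tfrac{\xi}{r}+\tfrac{m\zeta}{r})^2 r\,dr$ with the off-diagonal square $\int(-\zeta'+\tfrac{\zeta}{r}+\tfrac{m\xi}{r})^2 r\,dr$ via the substitution $g:=-\zeta'+\tfrac{\zeta}{r}+\tfrac{m\xi}{r}$, i.e.\ $\xi=\tfrac{1}{m}(r\zeta'-\zeta+rg)$. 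Inserting this into $2m\int\xi\zeta/r\,dr$ converts the cross term to $2\int\zeta g\,dr+\zeta^2(r_0)-2\int\zeta^2/r\,dr$; writing $\int(\tfrac{\xi}{r}+\tfrac{m\zeta}{r})^2 r\,dr$ once with $\xi$ and once with the substitution and adding yields separately $\int\xi^2/r$, $\tfrac12\int m^2\zeta^2/r$, $(m^2-4)\int\zeta^2/r$, and $\tfrac{1}{2m^2}\int\zeta'^2 r$, with errors bounded by $\int g^2 r\lesssim D_{m,k}$. Your Step~2 does produce $\int m^2\xi^2/r$ from the same off-diagonal square, so an alternative fix (combine that with $(\xi+m\zeta)^2/r$) is available, but then the reappearing cross term $4m\int\xi\zeta/r$ makes the constants tight at $|m|=2$ and must be tracked explicitly --- a generic ``Cauchy--Schwarz'' does not suffice.
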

\begin{proof}
We can estimate the dissipation rate $D_{m,k}$ as that	
\begin{equation}\label{bian-1}
\begin{split}
D_{m,k}
&
	\geq 2\pi^2\min(\varepsilon,\delta)\int_0^{r_0}\bigg[\frac{2}{9}\Big(-2\xi'+\frac{\xi}{r}+\frac{m}{r}\zeta-k\eta\Big)^2+\frac{2}{9}\Big(\xi'+\frac{\xi}{r}+\frac{m}{r}\zeta-k\eta\Big)^2\bigg]rdr\\
	&\quad+2\pi^2\min(\varepsilon,\delta)\int_0^{r_0}\bigg[\frac{2}{9}\Big(\xi'+\frac{\xi}{r}+\frac{m}{r}\zeta+2k\eta\Big)^2+\frac{2}{9}\Big(\xi'+\frac{\xi}{r}+\frac{m}{r}\zeta-k\eta\Big)^2\bigg]rdr\\
	&\quad+2\pi^2\min(\varepsilon,\delta)\int_0^{r_0}\bigg[\frac{2}{9}\Big(\xi'-\frac{2\xi}{r}-\frac{2m}{r}\zeta-k\eta\Big)^2+\frac{2}{9}\Big(\xi'+\frac{\xi}{r}+\frac{m}{r}\zeta-k\eta\Big)^2\bigg]rdr\\
	&\quad+2\pi^2\varepsilon\int_0^{r_0}\bigg[\Big(-\zeta'+\frac{\zeta}{r}+\frac{m}{r}\xi\Big)^2+(\eta'+k\xi)^2+\Big(\frac{m}{r}\eta-k\zeta\Big)^2\bigg]rdr\\
	&\quad+\frac{2\pi^2}{3}\delta\int_0^{r_0}\Big(\xi'+\frac{\xi}{r}+\frac{m}{r}\zeta-k\eta\Big)^2rdr\\
&\geq 2\pi^2\min(\varepsilon,\delta) \int_0^{r_0}{\xi'}^2rdr+2\pi^2\min(\varepsilon,\delta) \int_0^{r_0}k^2\eta^2rdr\\
&\quad+2\pi^2\min(\varepsilon,\delta) \int_0^{r_0}\Big(\frac{\xi}{r}+\frac{m}{r}\zeta\Big)^2rdr+2\varepsilon\pi^2\int_0^{r_0}(\eta'+k\xi)^2rdr\\
&\quad+2\varepsilon\pi^2\int_0^{r_0}\Big(\frac{m}{r}\eta-k\zeta\Big)^2rdr+2\pi^2\varepsilon\int_0^{r_0}\Big(-\zeta'+\frac{\zeta}{r}+\frac{m}{r}\xi\Big)^2rdr\\
&\quad+\frac{2\pi^2}{3}\delta\int_0^{r_0}\Big(\xi'+\frac{\xi}{r}+\frac{m}{r}\zeta-k\eta\Big)^2rdr,
\end{split}
\end{equation}	
	where we have used the facts that $a^2+b^2\geq \frac{1}{2}(a-b)^2,$ with $a=-2\xi'+\frac{\xi}{r}+\frac{m}{r}\zeta-k\eta$, $b=\xi'+\frac{\xi}{r}+\frac{m}{r}\zeta-k\eta$;
	$a=\xi'+\frac{\xi}{r}+\frac{m}{r}\zeta+2k\eta$, $b=\xi'+\frac{\xi}{r}+\frac{m}{r}\zeta-k\eta$;
	$a=\xi'-\frac{2\xi}{r}-\frac{2m}{r}\zeta-k\eta$, $b=\xi'+\frac{\xi}{r}+\frac{m}{r}\zeta-k\eta$.

Now we prove our result by the following three cases.

{\bf Case I: when $m=0$, the estimate \eqref{m neq 1}  holds.}

From \eqref{bian-1}, we can get
\begin{equation}\label{estimate-c-k}
\begin{split}
D_{0,k}&\geq 2\pi^2\min(\varepsilon,\delta) \int_0^{r_0}{\xi'}^2rdr+2\pi^2\min(\varepsilon,\delta) \int_0^{r_0}\frac{\xi^2}{r}dr\\
&\quad+2\pi^2\varepsilon\int_0^{r_0}\Big(-\zeta'+\frac{\zeta}{r}\Big)^2rdr+2\pi^2\min(\varepsilon,\delta) \int_0^{r_0}k^2\eta^2rdr\\
&\quad+2\varepsilon\pi^2\int_0^{r_0}k^2\zeta^2rdr+2\varepsilon\pi^2\int_0^{r_0}(\eta'+k\xi)^2rdr.
\end{split}
\end{equation}

Since the minimizer $(\xi,\eta,\zeta)\in X_k$,  Definition \ref{defi-X} implies that $\xi(0)=0$. From Proposition \ref{embeddding-b},  we know that $\int_{0}^{r_0}\eta'^2rdr$ and $\int_0^{r_0}\eta^2dr$ are bounded, which shows that $(\eta r)'\in L^2(0,r_0)$. So we get that $\eta r$ is well-defined at the origin $r=0$. Therefore, the boundary term 
$ \eta k\xi r|_{r=0}=0$.

	Then	we can prove from integrating by parts that
	\begin{equation}\label{boundary-term-g}
	\begin{split}
	&	\int_0^{r_0}(\eta'+k\xi)^2rdr=\int_0^{r_0}\eta'^2rdr+\int_0^{r_0}k^2\xi^2rdr
	+2\int_0^{r_0}\eta'k\xi rdr\\
	&=\int_0^{r_0}\eta'^2rdr+\int_0^{r_0}k^2\xi^2rdr+2\eta k\xi r|_{r=r_0}
	-2\int_0^{r_0}\eta k\xi' rdr-2\int_0^{r_0}\eta k\xi dr.
	\end{split}
	\end{equation}
	Notice that
	$$\Big|-2\int_0^{r_0}\eta k\xi' rdr\Big|\leq \int_0^{r_0}k^2\eta^2rdr+\int_0^{r_0}\xi'^2rdr,$$
	$$\Big|-2\int_0^{r_0}\eta k\xi dr\Big|\leq  \int_0^{r_0}\frac{\xi^2}{r}dr+\int_0^{r_0}k^2\eta^2rdr.$$
	Then we have \begin{equation*}
	\begin{split}
	\varepsilon\pi^2\int_0^{r_0}(\eta'+k\xi)^2rdr&\geq \varepsilon\pi^2\int_0^{r_0}\eta'^2rdr+\varepsilon\pi^2\int_0^{r_0}k^2\xi^2rdr+2\varepsilon\pi^2\eta k\xi r|_{r=r_0}\\
	&\quad-2\varepsilon\pi^2\int_0^{r_0}k^2\eta^2rdr-\varepsilon\pi^2\int_0^{r_0}\xi'^2rdr
	-\varepsilon\pi^2\int_0^{r_0}\frac{\xi^2}{r}dr,
	\end{split}
	\end{equation*}
	which gives that
	\begin{equation}\label{O(1)-2-j}
		\begin{split}
		\varepsilon\pi^2\int_0^{r_0}(\eta'+k\xi)^2rdr&\geq \varepsilon\pi^2\int_0^{r_0}\eta'^2rdr+\varepsilon\pi^2\int_0^{r_0}k^2\xi^2rdr\\
		&\quad+2\varepsilon\pi^2\eta k\xi r|_{r=r_0}
		-O(1)D_{0,k}.
		\end{split}
		\end{equation}

On the other hand, since the minimizer $(\xi,\eta,\zeta)\in X_{k}$,  Definition \ref{defi-X} implies that $\zeta(0)=0$. Therefore, we get
\begin{equation*}
\begin{split}
\varepsilon\pi^2\int_0^{r_0}\Big(-\zeta'+\frac{\zeta}{r}\Big)^2rdr=\varepsilon\pi^2\int_0^{r_0}\Big(\zeta'^2r+\frac{\zeta^2}{r}\Big)dr-\varepsilon\pi^2\zeta^2|_{r=r_0}.
\end{split}
\end{equation*}
which combining \eqref{estimate-c-k} with \eqref{O(1)-2-j}, we deduce 
\begin{equation*}\label{estimate-c-k-2}
\begin{split}
O(1)D_{0,k}&
\geq 2\pi^2\min(\varepsilon,\delta) \int_0^{r_0}{\xi'}^2rdr+2\pi^2\min(\varepsilon,\delta) \int_0^{r_0}\frac{\xi^2}{r}dr+2\pi^2\min(\varepsilon,\delta) \int_0^{r_0}k^2\eta^2rdr\\
&\quad+2\varepsilon\pi^2\int_0^{r_0}k^2\zeta^2rdr+2\varepsilon \pi^2\int_0^{r_0}{\eta'}^2rdr+2\varepsilon \pi^2\int_0^{r_0} k^2\xi^2rdr+2\varepsilon\pi^2\int_0^{r_0}\Big(\zeta'^2r+\frac{\zeta^2}{r}\Big)dr
\\
&\quad+4\varepsilon\pi^2\eta k\xi r|_{r=r_0}-2\varepsilon\pi^2\zeta^2|_{r=r_0}.
\end{split}
\end{equation*}
	The remaining thing is to deal with the boundary terms.
	
	By \eqref{second} in Lemma \ref{basic-cutoff}, we get
	\begin{equation}\label{boundary-term-1}
	\begin{split}
	\Big|-2\varepsilon\pi^2\zeta^2|_{r=r_0}\Big|
\leq 	o(1)\int_0^{r_0}\zeta'^2rdr
	+O(1)\mathcal{J}.
	\end{split}
	\end{equation}
Using \eqref{first}  in Lemma \ref{basic-cutoff} and Cauchy inequality, we have	\begin{equation}\label{boundary-term}
	\begin{split}
	\Big|4c\varepsilon\pi^2\eta& k\xi r|_{r=r_0}\Big|\leq 8cr_0k\varepsilon\pi^2 \Big(\int_{\frac{r_0}{2}}^{r_0}|\eta|^2dr\Big)^{\frac{1}{4}}
	\Big(\int_{\frac{r_0}{2}}^{r_0}|\eta'|^2dr+\int_{\frac{r_0}{2}}^{r_0}|\eta|^2dr\Big)^{\frac{1}{4}}\\
	&
	\qquad\times\Big(\int_{\frac{r_0}{2}}^{r_0}|\xi|^2dr\Big)^{\frac{1}{4}}
	\Big(\int_{\frac{r_0}{2}}^{r_0}|\xi'|^2dr+\int_{\frac{r_0}{2}}^{r_0}|\xi|^2dr\Big)^{\frac{1}{4}}\\
	&\leq 8cr_0\varepsilon\pi^2 \Big(\int_{\frac{r_0}{2}}^{r_0}|k\eta|^2dr\Big)^{\frac{1}{4}}\Big(\int_{\frac{r_0}{2}}^{r_0}|k\xi|^2dr\Big)^{\frac{1}{4}}
	\Big(\int_{\frac{r_0}{2}}^{r_0}|\eta'|^2dr\Big)^{\frac{1}{4}}\Big(\int_{\frac{r_0}{2}}^{r_0}|\xi'|^2dr\Big)^{\frac{1}{4}}\\
	&\qquad+8cr_0\varepsilon\pi^2 \Big(\int_{\frac{r_0}{2}}^{r_0}|k\eta|^2dr\Big)^{\frac{1}{2}}\Big(\int_{\frac{r_0}{2}}^{r_0}|\xi|^2dr\Big)^{\frac{1}{4}}\Big(\int_{\frac{r_0}{2}}^{r_0}|\xi'|^2dr\Big)^{\frac{1}{4}}\\
	&\qquad+8cr_0\varepsilon\pi^2 \Big(\int_{\frac{r_0}{2}}^{r_0}|\eta|^2dr\Big)^{\frac{1}{4}}\Big(\int_{\frac{r_0}{2}}^{r_0}|k\xi|^2dr\Big)^{\frac{1}{2}}\Big(\int_{\frac{r_0}{2}}^{r_0}|\eta'|^2dr\Big)^{\frac{1}{4}}\\
	&\qquad+8cr_0\varepsilon\pi^2 \Big(\int_{\frac{r_0}{2}}^{r_0}|k\eta|^2dr\Big)^{\frac{1}{2}}\Big(\int_{\frac{r_0}{2}}^{r_0}|\xi|^2dr\Big)^{\frac{1}{2}}\\
	&\leq o(1)\Big(\int_0^{r_0}\eta'^2rdr+\int_0^{r_0}k^2\xi^2rdr\Big)
	+O(1)\Big(\int_{\frac{r_0}{2}}^{r_0}|k\eta|^2rdr+\int_{\frac{r_0}{2}}^{r_0}\xi'^2rdr\Big)\\
	&\leq o(1)\Big(\int_0^{r_0}\eta'^2rdr+\int_0^{r_0}k^2\xi^2rdr\Big)+O(1)D_{0,k},
	\end{split}
	\end{equation}	where 
	we have used the facts that for any function $h$ and $k\neq 0$, it holds that $$\int_{\frac{r_0}{2}}^{r_0}h^2dr\leq \frac{2}{r_0}\int_0^{r_0}k^2h^2rdr.$$ 
Then by the steady density $0\leq \rho \leq C$ and $\mathcal{J}=\int_0^{r_0}\rho(\xi^2+\eta^2+\zeta^2)rdr=1$, we can show that 
\begin{equation*}
\begin{split}
O(1)D_{0,k}
&\geq 2\pi^2\min(\varepsilon,\delta) \int_0^{r_0}{\xi'}^2rdr+2\pi^2\min(\varepsilon,\delta) \int_0^{r_0}\frac{\xi^2}{r}dr+2\pi^2\min(\varepsilon,\delta) \int_0^{r_0}k^2\eta^2rdr\\
&\quad+2\varepsilon\pi^2\int_0^{r_0}k^2\zeta^2rdr+\varepsilon \pi^2\int_0^{r_0}{\eta'}^2rdr+\varepsilon \pi^2\int_0^{r_0} k^2\xi^2rdr+\varepsilon\pi^2\int_0^{r_0}\Big(\zeta'^2r+\frac{2\zeta^2}{r}\Big)dr\\
&\quad-O(1)\mathcal{J}\\
&
\geq \pi^2 \min(2\delta,\varepsilon) \int_0^{r_0}(\xi'^2+\eta'^2+\zeta'^2)rdr+\pi^2\min\bigg( \frac{2\delta}{C}, \frac{\varepsilon}{C} \bigg)k^2\int_0^{r_0}\rho(\xi^2+\eta^2+\zeta^2)rdr\\
&\quad-O(1)\\
&= \pi^2 \min(2\delta,\varepsilon) \int_0^{r_0}(\xi'^2+\eta'^2+\zeta'^2)rdr+\pi^2\min\bigg( \frac{2\delta}{C}, \frac{\varepsilon}{C} \bigg)k^2-O(1),
\end{split}
\end{equation*}
which implies that \eqref{m neq 1} holds. 

{ \bf  Case (II): when $|m|=1$,  the estimate \eqref{m eq 1}  holds.}

	From \eqref{bian-1},  we have
	\begin{equation}\label{b-g-i-second}
	\begin{split}
	D_{m,k}
	&\geq 2\pi^2\min(\varepsilon,\delta) \int_0^{r_0}{\xi'}^2rdr+2\pi^2\min(\varepsilon,\delta) \int_0^{r_0}k^2\eta^2rdr\\
	&\quad+2\pi^2\min(\varepsilon,\delta) \int_0^{r_0}\Big(\frac{\xi}{r}+\frac{m}{r}\zeta\Big)^2rdr
	+2c\varepsilon\pi^2\int_0^{r_0}(\eta'+k\xi)^2rdr\\
	&\quad+2\varepsilon\pi^2\int_0^{r_0}\Big(\frac{m}{r}\eta-k\zeta\Big)^2rdr+2\pi^2\varepsilon\int_0^{r_0}\Big(-\zeta'+\frac{\zeta}{r}+\frac{m}{r}\xi\Big)^2rdr\\
	&\quad+\frac{2\pi^2}{3}\delta\int_0^{r_0}\Big(\xi'+\frac{\xi}{r}+\frac{m}{r}\zeta-k\eta\Big)^2rdr,
	\end{split}
	\end{equation}
	with small constant $c\leq 1$  (to be determined). When $|m|=1$, using that $2(a^2+b^2)\geq (a-mb)^2$ with $a=\frac{\xi}{r}+\frac{m}{r}\zeta$ and $b=-\zeta'+\frac{\zeta}{r}+\frac{m}{r}\xi$, we can get that
	\begin{equation}\label{bian-2}
	D_{m,k}\geq \pi^2\min(\varepsilon,\delta)\int_0^{r_0}\zeta'^2rdr.
	\end{equation}
The term $\int_0^{r_0}\Big(\frac{m}{r}\eta-k\zeta\Big)^2rdr$ can be estimated as 
	\begin{equation*}
	\begin{split}
	\int_0^{r_0}\Big(\frac{m}{r}\eta-k\zeta\Big)^2rdr&
	= \int_0^{r_0}\frac{1}{r}\eta^2dr+ \int_0^{r_0}k^2\zeta^2rdr-\int_0^{r_0}2m\eta k(\zeta+m\xi)dr+\int_0^{r_0}2\eta(\eta'+k\xi) dr\\
	&\quad-\int_0^{r_0}2\eta\eta'dr\\
	&= \int_0^{r_0}\frac{1}{r}\eta^2dr+ \int_0^{r_0}k^2\zeta^2rdr-\int_0^{r_0}2m\eta k(\zeta+m\xi)dr+\int_0^{r_0}2\eta(\eta'+k\xi) dr\\
	&\quad-\eta^2|_{r=r_0}+\eta^2(0).
	\end{split}
	\end{equation*}
	Note that 
	$$\Big|-\int_0^{r_0}2m\eta k(\zeta+m\xi)dr\Big|\leq  \int_0^{r_0}k^2\eta^2rdr+\int_0^{r_0}\frac{(\zeta+m\xi)^2}{r}dr,$$
	$$\Big|\int_0^{r_0}2\eta(\eta'+k\xi) dr\Big|\leq \frac{1}{2}\int_0^{r_0}\frac{1}{r}\eta^2dr+2\int_0^{r_0}(\eta'+k\xi)^2rdr.$$
	When $|m|=1$, we have
	$$\int_0^{r_0}\frac{(\zeta+m\xi)^2}{r}dr=\int_0^{r_0}\frac{(\xi+m\zeta)^2}{r}dr.$$
	Then we get
	\begin{equation*}
	\begin{split}
	2\varepsilon\pi^2 \int_0^{r_0}\Big(\frac{m}{r}\eta-k\zeta\Big)^2rdr&\geq \varepsilon\pi^2\int_0^{r_0}\frac{1}{r}\eta^2dr+ 2\varepsilon\pi^2\int_0^{r_0}k^2\zeta^2rdr\\
	&\quad -2\varepsilon\pi^2\int_0^{r_0}k^2\eta^2rdr-2\varepsilon\pi^2\int_0^{r_0}\frac{(\zeta+m\xi)^2}{r}dr\\
	&\quad-4\varepsilon\pi^2\int_0^{r_0}(\eta'+k\xi)^2rdr-2\varepsilon\pi^2\eta^2|_{r=r_0},
	\end{split}
	\end{equation*}
	which implies that
	\begin{equation}\label{O(1)-1}
		\begin{split}
		&2\varepsilon\pi^2 \int_0^{r_0}\Big(\frac{m}{r}\eta-k\zeta\Big)^2rdr
		\\&\quad\geq \varepsilon\pi^2\int_0^{r_0}\frac{1}{r}\eta^2dr+ 2\varepsilon\pi^2\int_0^{r_0}k^2\zeta^2rdr-2\varepsilon\pi^2\eta^2|_{r=r_0}-O(1)D_{m,k}.
		\end{split}
		\end{equation}
	We now estimate the term $c\varepsilon\pi^2\int_0^{r_0}(\eta'+k\xi)^2rdr$.
	Since the minimizer $(\xi,\eta,\zeta)\in Y_{m,k}$,  Definition \ref{defi-y} implies that $\xi(0)=0$. From Proposition \ref{embeddding-m1},  we know that $\int_{0}^{r_0}\eta'^2rdr$ and $\int_0^{r_0}\eta^2dr$ are bounded, which gives that $(\eta r)'\in L^2(0,r_0)$. So we get that $\eta r$ is well-defined at the origin $r=0$. Therefore, we prove the boundary term 
	$\eta k\xi r|_{r=0}=0.$
Similarly as \eqref{boundary-term-g},	we have
	\begin{equation*}
	\begin{split}
	&	c\int_0^{r_0}(\eta'+k\xi)^2rdr=c\int_0^{r_0}\eta'^2rdr+c\int_0^{r_0}k^2\xi^2rdr
	+2c\int_0^{r_0}\eta'k\xi rdr\\
	&=c\int_0^{r_0}\eta'^2rdr+c\int_0^{r_0}k^2\xi^2rdr+2c\eta k\xi r|_{r=r_0}
	-2c\int_0^{r_0}\eta k\xi' rdr-2c\int_0^{r_0}\eta k\xi dr.
	\end{split}
	\end{equation*}
	Notice that
	$$\Big|-2c\int_0^{r_0}\eta k\xi' rdr\Big|\leq c\int_0^{r_0}k^2\eta^2rdr+c\int_0^{r_0}\xi'^2rdr,$$
	$$\Big|-2c\int_0^{r_0}\eta k\xi dr\Big|\leq  2c\int_0^{r_0}\frac{1}{r}\eta^2dr+\frac{c}{2}\int_0^{r_0}k^2\xi^2rdr.$$
So we deduce \begin{equation*}
	\begin{split}
	c\varepsilon\pi^2\int_0^{r_0}(\eta'+k\xi)^2rdr&\geq c\varepsilon\pi^2\int_0^{r_0}\eta'^2rdr+\frac{c}{2}\varepsilon\pi^2\int_0^{r_0}k^2\xi^2rdr+2c\varepsilon\pi^2\eta k\xi r|_{r_0}\\
	&\quad-c\varepsilon\pi^2\int_0^{r_0}k^2\eta^2rdr-c\varepsilon\pi^2\int_0^{r_0}\xi'^2rdr
	-2\varepsilon\pi^2c\int_0^{r_0}\frac{1}{r}\eta^2dr,
	\end{split}
	\end{equation*}
	which gives that
	\begin{equation}\label{O(1)-2}
		\begin{split}
		c\varepsilon\pi^2\int_0^{r_0}(\eta'+k\xi)^2rdr&\geq c\varepsilon\pi^2\int_0^{r_0}\eta'^2rdr+\frac{c}{2}\varepsilon\pi^2\int_0^{r_0}k^2\xi^2rdr+2c\varepsilon\pi^2\eta k\xi r|_{r=r_0}
		\\
		&\quad	-2\varepsilon\pi^2c\int_0^{r_0}\frac{1}{r}\eta^2dr-O(1)D_{m,k}.
		\end{split}
		\end{equation}
	
	Choosing the constant $c$ small enough such that $c\leq \frac{1}{8}$, by \eqref{b-g-i-second}, \eqref{O(1)-1} and \eqref{O(1)-2}, we can show that
		\begin{equation}\label{O(1)-3}
		\begin{split}
		O(1)D_{m,k}&\geq \varepsilon\pi^2\int_0^{r_0}\frac{1}{r}\eta^2dr+ 2\varepsilon\pi^2\int_0^{r_0}k^2\zeta^2rdr+2c\varepsilon\pi^2\int_0^{r_0}\eta'^2rdr\\
		&\quad+c\varepsilon\pi^2\int_0^{r_0}k^2\xi^2rdr+2\pi^2\min(\varepsilon,\delta) \int_0^{r_0}{\xi'}^2rdr\\
		&\quad+2\pi^2\min(\varepsilon,\delta) \int_0^{r_0}\Big(\frac{\xi}{r}+\frac{m\zeta}{r}\Big)^2rdr-2\varepsilon\pi^2\eta^2|_{r=r_0}\\
		&\quad+2\pi^2\min(\varepsilon,\delta) \int_0^{r_0}k^2\eta^2rdr+4c\varepsilon\pi^2\eta k\xi r|_{r=r_0}.
		\end{split}
		\end{equation}
	
	The remaining thing is to deal with the boundary terms.
	
	By Lemma \ref{basic-cutoff} and Cauchy inequality, similarly as \eqref{boundary-term-1} and \eqref{boundary-term}, we can prove
		\begin{equation*}
		\begin{split}
		\Big|-2\varepsilon\pi^2\eta^2|_{r=r_0}\Big|
		\leq o(1)\int_0^{r_0}\eta'^2rdr
		+O(1)\mathcal{J},
		\end{split}
		\end{equation*}	
		\begin{equation*}
		\begin{split}
		\Big|4c\varepsilon\pi^2\eta k\xi r|_{r=r_0}\Big|
		\leq o(1)\Big(\int_0^{r_0}\eta'^2rdr+\int_0^{r_0}k^2\xi^2rdr\Big)+O(1)D_{m,k}.
		\end{split}
		\end{equation*}		
		Combining the above estimates with \eqref{bian-2} and \eqref{O(1)-3}, by $0\leq \rho \leq C$ and $\int_0^{r_0}\rho(\xi^2+\eta^2+\zeta^2)rdr=1$, we have
		\begin{equation*}
		\begin{split}
		O(1)D_{m,k}
		&\geq \varepsilon\pi^2\int_0^{r_0}\frac{1}{r}\eta^2dr+ 2\varepsilon\pi^2\int_0^{r_0}k^2\zeta^2rdr+\frac{c}{2}\varepsilon\pi^2\int_0^{r_0}k^2\xi^2rdr+c\varepsilon\pi^2\int_0^{r_0}\eta'^2rdr\\
		&\quad+2\pi^2\min(\varepsilon,\delta) \int_0^{r_0}{\xi'}^2rdr+\pi^2\min(\varepsilon,\delta) \int_0^{r_0}{\zeta'}^2rdr+2\pi^2\min(\varepsilon,\delta) \int_0^{r_0}k^2\eta^2rdr\\
		&\quad+2\pi^2\min(\varepsilon,\delta) \int_0^{r_0}\Big(\frac{\xi}{r}+\frac{m\zeta}{r}\Big)^2rdr \\
		&\geq c\varepsilon\pi^2\int_0^{r_0}\eta'^2rdr+2\pi^2\min(\varepsilon,\delta) \int_0^{r_0}{\xi'}^2rdr+\pi^2\min(\varepsilon,\delta) \int_0^{r_0}{\zeta'}^2rdr\\
		&\quad+2\varepsilon\pi^2\int_0^{r_0}k^2\zeta^2rdr+\frac{c}{2}\varepsilon\pi^2\int_0^{r_0}k^2\xi^2rdr+2\pi^2\min(\varepsilon,\delta) \int_0^{r_0}k^2\eta^2rdr\\
		&\quad+\varepsilon\pi^2\int_0^{r_0}\frac{1}{r}\eta^2dr+2\pi^2\min(\varepsilon,\delta) \int_0^{r_0}\Big(\frac{\xi}{r}+\frac{m\zeta}{r}\Big)^2rdr\\
		&\geq
		c\varepsilon\pi^2\int_0^{r_0}\eta'^2rdr+2\pi^2\min(\varepsilon,\delta) \int_0^{r_0}{\xi'}^2rdr+\pi^2\min(\varepsilon,\delta) \int_0^{r_0}{\zeta'}^2rdr\\
		 &\quad+\min\Big(\frac{2\varepsilon\pi^2}{C},\frac{c\varepsilon\pi^2}{2C},\frac{2\pi^2\min(\varepsilon,\delta)}{C}\Big)k^2\int_0^{r_0}\rho(\xi^2+\eta^2+\zeta^2)rdr\\
		 &\quad+\varepsilon\pi^2\int_0^{r_0}\frac{1}{r}\eta^2dr+2\pi^2\min(\varepsilon,\delta) \int_0^{r_0}\Big(\frac{\xi}{r}+\frac{m\zeta}{r}\Big)^2rdr\\
		&=
	c\varepsilon\pi^2\int_0^{r_0}\eta'^2rdr+2\pi^2\min(\varepsilon,\delta) \int_0^{r_0}{\xi'}^2rdr+\pi^2\min(\varepsilon,\delta) \int_0^{r_0}{\zeta'}^2rdr\\
		&\quad+\min\Big(\frac{c\varepsilon\pi^2}{2C},\frac{2\pi^2\min(\varepsilon,\delta)}{C}\Big)k^2+\varepsilon\pi^2\int_0^{r_0}\frac{1}{r}\eta^2dr\\
		&\quad+2\pi^2\min(\varepsilon,\delta) \int_0^{r_0}\Big(\frac{\xi}{r}+\frac{m\zeta}{r}\Big)^2rdr,
		\end{split}
		\end{equation*}
		which gives that  \eqref{m eq 1} holds 

{\bf Case (III): when $|m|\geq 2$, the estimate \eqref{m neq 1} holds.}

	From \eqref{bian-1}, we can get
	\begin{equation}\label{bian-guo-j}
	\begin{split}
	D_{m,k}
	&\geq 2\pi^2\min(\varepsilon,\delta) \int_0^{r_0}{\xi'}^2rdr+2\pi^2\min(\varepsilon,\delta) \int_0^{r_0}k^2\eta^2rdr\\
	&\quad+2\pi^2\min(\varepsilon,\delta) \int_0^{r_0}\Big(\frac{\xi}{r}+\frac{m}{r}\zeta\Big)^2rdr+2\varepsilon\pi^2\int_0^{r_0}(\eta'+k\xi)^2rdr\\
	&\quad+2\varepsilon\pi^2\int_0^{r_0}\Big(\frac{m}{r}\eta-k\zeta\Big)^2rdr+2c\varepsilon\pi^2\int_0^{r_0}\Big(-\zeta'+\frac{\zeta}{r}+\frac{m}{r}\xi\Big)^2rdr\\
	&\quad+\frac{2\pi^2}{3}\delta\int_0^{r_0}\Big(\xi'+\frac{\xi}{r}+\frac{m}{r}\zeta-k\eta\Big)^2rdr.
	\end{split}
	\end{equation}	
	
	We temporarily write $g=-\zeta'+\frac{\zeta}{r}+\frac{m}{r}\xi$, then we have $\xi=\frac{1}{m}(r\zeta'-\zeta+rg)$. Moreover, we get
	\begin{equation*}
	\begin{split}
	\int_0^{r_0}\Big(\frac{\xi}{r}+\frac{m}{r}\zeta\Big)^2rdr
	&=\int_0^{r_0}\frac{\xi^2}{r}dr+\int_0^{r_0}\frac{m^2}{r}\zeta^2dr+2\int_0^{r_0}\frac{m\xi\zeta}{r}dr\\
	&=\int_0^{r_0}\frac{\xi^2}{r}dr+\int_0^{r_0}\frac{m^2}{r}\zeta^2dr
	+2\int_0^{r_0}\zeta \Big(g+\zeta'-\frac{\zeta}{r}\Big)dr\\
	&=\int_0^{r_0}\frac{\xi^2}{r}dr+\int_0^{r_0}\frac{m^2}{r}\zeta^2dr
	+2\int_0^{r_0}\zeta g dr+ 2\int_0^{r_0}\zeta\zeta'dr -2\int_0^{r_0}\frac{\zeta^2}{r}dr\\
	&=\int_0^{r_0}\frac{\xi^2}{r}dr+\int_0^{r_0}\frac{m^2-2}{r}\zeta^2dr
	+2\int_0^{r_0}\zeta g dr+ \zeta^2(r_0),
	\end{split}
	\end{equation*}	
	\begin{equation*}
	\begin{split}
	\int_0^{r_0}\Big(\frac{\xi}{r}+\frac{m}{r}\zeta\Big)^2rdr
	&=\frac{1}{m^2}\int_0^{r_0}\frac{(r\zeta'-\zeta+rg)^2}{r}dr+\int_0^{r_0}\frac{m^2-2}{r}\zeta^2dr
	+2\int_0^{r_0}\zeta g dr+ \zeta^2(r_0)\\
	&=\frac{1}{m^2}\int_0^{r_0}\Big(\zeta'^2r+\frac{\zeta^2}{r}+g^2r+2(r\zeta'-\zeta)g-2\zeta\zeta'\Big)dr+\int_0^{r_0}\frac{m^2-2}{r}\zeta^2dr\\
	&\quad
	+2\int_0^{r_0}\zeta g dr+\zeta^2(r_0)\\
	&=\frac{1}{m^2}\int_0^{r_0}\Big(\zeta'^2r+\frac{\zeta^2}{r}+g^2r+2(r\zeta'-\zeta)g\Big)dr+\int_0^{r_0}\frac{m^2-2}{r}\zeta^2dr\\
	&\quad
	+2\int_0^{r_0}\zeta g dr+\Big(1-\frac{1}{m^2}\Big)\zeta^2(r_0),
	\end{split}
	\end{equation*}	
	which implies that
	\begin{equation*}
	\begin{split}
	2\int_0^{r_0}\Big(\frac{\xi}{r}+\frac{m}{r}\zeta\Big)^2rdr
	&=\int_0^{r_0}\frac{\xi^2}{r}dr+\int_0^{r_0}\frac{m^2}{r}\zeta^2dr
	+\int_0^{r_0}\frac{m^2-4}{r}\zeta^2dr\\
	&\quad+\frac{1}{m^2}\int_0^{r_0}\Big(\zeta'^2r+\frac{\zeta^2}{r}+g^2r\Big)dr+\frac{2}{m^2}\int_0^{r_0}\zeta'grdr\\
	&\quad-\frac{2}{m^2}\int_0^{r_0}\zeta gdr
	+4\int_0^{r_0}\zeta g dr+\Big(2-\frac{1}{m^2}\Big)\zeta^2(r_0).
	\end{split}
	\end{equation*}	
	Notice that
	\begin{equation*}
	\Big|\frac{2}{m^2}\int_0^{r_0}\zeta'grdr\Big|\leq \frac{1}{2m^2}\int_0^{r_0}\zeta'^2rdr+\frac{2}{m^2}\int_0^{r_0}g^2rdr,
	\end{equation*}
	\begin{equation*}
	\Big|-\frac{2}{m^2}\int_0^{r_0}\zeta gdr\Big|\leq \frac{1}{2m^2}\int_0^{r_0}\frac{\zeta^2}{r}dr+\frac{2}{m^2}\int_0^{r_0}g^2rdr,
	\end{equation*}
	\begin{equation*}
	\Big| 4\int_0^{r_0}\zeta g dr\Big|\leq \frac{1}{2}\int_0^{r_0}\frac{m^2\zeta^2}{r}dr+\frac{8}{m^2}\int_0^{r_0}g^2rdr.
	\end{equation*}
	Hence, we have
	\begin{equation}\label{bian-guo-1}
	\begin{split}
	&2\pi^2\min(\varepsilon,\delta)\int_0^{r_0}\Big(\frac{\xi}{r}+\frac{m}{r}\zeta\Big)^2rdr
	\\&\geq \pi^2\min(\varepsilon,\delta)\int_0^{r_0}\frac{\xi^2}{r}dr+\frac{1}{2}\pi^2\min(\varepsilon,\delta)\int_0^{r_0}\frac{m^2}{r}\zeta^2dr
	\\
	&\quad+\pi^2\min(\varepsilon,\delta)\int_0^{r_0}\frac{m^2-4}{r}\zeta^2dr+\frac{1}{2m^2}\pi^2\min(\varepsilon,\delta)\int_0^{r_0}\Big(\zeta'^2r+\frac{\zeta^2}{r}\Big)dr\\
	&\quad-\frac{11}{m^2}\pi^2\min(\varepsilon,\delta)\int_0^{r_0}g^2rdr
	+\Big(2-\frac{1}{m^2}\Big)\pi^2\min(\varepsilon,\delta)\zeta^2(r_0).
	\end{split}
	\end{equation}	
	We can estimate the term $2\varepsilon\pi^2\int_0^{r_0}\Big(\frac{m}{r}\eta-k\zeta\Big)^2rdr$ as follows
	\begin{equation*}
	2\varepsilon\pi^2\int_0^{r_0}\Big(\frac{m}{r}\eta-k\zeta\Big)^2rdr= 2\varepsilon\pi^2\int_0^{r_0}\frac{m^2}{r}\eta^2dr+2\varepsilon\pi^2 \int_0^{r_0}k^2\zeta^2rdr-2\varepsilon\pi^2\int_0^{r_0}2m\eta k\zeta dr.
	\end{equation*}
	Notice that
		$$\Big|-\int_0^{r_0}2m\eta k\zeta dr\Big|\leq \int_0^{r_0} k^2\eta^2r dr+\int_0^{r_0}\frac{m^2}{r}\zeta^2dr,$$
	which implies that
\begin{equation}\label{O(1)-1-general-m}
		\begin{split}
		2\varepsilon\pi^2\int_0^{r_0}\Big(\frac{m}{r}\eta-k\zeta\Big)^2rdr&\geq 2\varepsilon\pi^2\int_0^{r_0}\frac{m^2}{r}\eta^2dr+ 2\varepsilon\pi^2\int_0^{r_0}k^2\zeta^2rdr\\
		&\quad
		-O(1)\Big(\int_0^{r_0} k^2\eta^2r dr+\int_0^{r_0}\frac{m^2}{r}\zeta^2dr\Big).
		\end{split}
		\end{equation}
	We now estimate the term $2\varepsilon\pi^2\int_0^{r_0}(\eta'+k\xi)^2rdr$.
	Similarly as \eqref{boundary-term-g}, we can get
	\begin{equation*}
	\begin{split}
	\int_0^{r_0}(\eta'+k\xi)^2rdr
	=\int_0^{r_0}\eta'^2rdr+\int_0^{r_0}k^2\xi^2rdr+2\eta k\xi r|_{r=r_0}
	-2\int_0^{r_0}\eta k\xi' rdr-2\int_0^{r_0}\eta k\xi dr.
	\end{split}
	\end{equation*}
	Note that
	$$\Big|-2\int_0^{r_0}\eta k\xi' rdr\Big|\leq \int_0^{r_0}k^2\eta^2rdr+\int_0^{r_0}\xi'^2rdr,$$
$$\Big|-2\int_0^{r_0}\eta k\xi dr\Big|\leq  \int_0^{r_0}k^2\eta^2dr+\int_0^{r_0}\frac{\xi^2}{r}dr.$$	
Hence we have \begin{equation}\label{O(1)-2-general-m}
		\begin{split}
		2\varepsilon\pi^2\int_0^{r_0}(\eta'+k\xi)^2rdr&\geq 2\varepsilon\pi^2\int_0^{r_0}\eta'^2rdr+2\varepsilon\pi^2\int_0^{r_0}k^2\xi^2rdr+4\varepsilon\pi^2\eta k\xi r|_{r=r_0}\\
		&\quad-O(1)\Big(\int_0^{r_0}k^2\eta^2rdr+\int_0^{r_0}\xi'^2rdr+\int_0^{r_0}\frac{\xi^2}{r}dr\Big).
		\end{split}
		\end{equation}
	
	On the other hand, since the minimizer $(\xi,\eta,\zeta)\in Y_{m,k}$,  Definition \ref{defi-y} implies that $\xi(0)=\zeta(0)=0$. Therefore, we get the boundary terms 
	$m \zeta \xi|_{r=0}=\zeta^2|_{r=0}=0,$ which gives that
	\begin{equation*}
	\begin{split}
	\int_0^{r_0}\Big(-\zeta'+\frac{\zeta}{r}+\frac{m}{r}\xi\Big)^2rdr&=\int_0^{r_0}\Big(\zeta'^2r+\frac{\zeta^2}{r}+\frac{m^2\xi^2}{r}\Big)dr-2\int_0^{r_0}\zeta'\zeta dr\\
	&\quad-2\int_0^{r_0}\zeta'm\xi dr+2\int_0^{r_0}\frac{m\xi\zeta}{r}dr\\
	&=\int_0^{r_0}\Big(\zeta'^2r+\frac{\zeta^2}{r}+\frac{m^2\xi^2}{r}\Big)dr-\zeta^2(r_0)\\
	&\quad-2m\zeta\xi|_{r=r_0}+2\int_0^{r_0}\zeta m\xi' dr+2\int_0^{r_0}\frac{m\xi\zeta}{r}dr.
	\end{split}
	\end{equation*}
	Since 
		\begin{equation*}
		\Big|2\int_0^{r_0}\zeta m\xi' dr\Big|\leq \int_0^{r_0}\xi'^2rdr+ \int_0^{r_0}\frac{m^2\zeta^2}{r}dr,
		\end{equation*}
		\begin{equation*}
		\Big|2\int_0^{r_0}\frac{m\xi\zeta}{r}dr\Big|\leq \int_0^{r_0}\frac{\xi^2}{r}dr+ \int_0^{r_0}\frac{m^2\zeta^2}{r}dr,
		\end{equation*}
		we can show that
		\begin{equation*}
		\begin{split}
		2\varepsilon\pi^2\int_0^{r_0}\Big(-\zeta'+\frac{\zeta}{r}+\frac{m}{r}\xi\Big)^2rdr
		&\geq 2\varepsilon\pi^2\int_0^{r_0}\Big(\zeta'^2r+\frac{\zeta^2}{r}+\frac{m^2\xi^2}{r}\Big)dr-2\varepsilon\pi^2\zeta^2(r_0)\\
		&\quad-4\varepsilon\pi^2m\zeta\xi|_{r=r_0}-O(1)\int_0^{r_0}\xi'^2rdr\\
		&\quad-O(1)\int_0^{r_0}\frac{\xi^2}{r}dr
		-O(1)\int_0^{r_0}\frac{m^2\zeta^2}{r}dr,
		\end{split}
		\end{equation*}
		which together with \eqref{bian-guo-j}, \eqref{bian-guo-1}, \eqref{O(1)-1-general-m} and \eqref{O(1)-2-general-m}, gives that for $|m|\geq 2$
		\begin{equation}\label{bian-guo-j2}
		\begin{split}
		O(1)D_{m,k} 
		&\geq 2\pi^2\min(\varepsilon,\delta) \int_0^{r_0}{\xi'}^2rdr+2\pi^2\min(\varepsilon,\delta) \int_0^{r_0}k^2\eta^2rdr\\
		&\quad+ \pi^2\min(\varepsilon,\delta)\int_0^{r_0}\frac{\xi^2}{r}dr+\frac{1}{2}\pi^2\min(\varepsilon,\delta)\int_0^{r_0}\frac{m^2}{r}\zeta^2dr
		\\
		&\quad+\pi^2\min(\varepsilon,\delta)\int_0^{r_0}\frac{m^2-4}{r}\zeta^2dr+\frac{\pi^2}{2m^2}\min(\varepsilon,\delta)\int_0^{r_0}\Big(\zeta'^2r+\frac{\zeta^2}{r}\Big)dr\\
		&\quad
		+2\varepsilon\pi^2\int_0^{r_0}\frac{m^2}{r}\eta^2dr+ 2\varepsilon\pi^2\int_0^{r_0}k^2\zeta^2rdr+2\varepsilon\pi^2\int_0^{r_0}\eta'^2rdr\\
		&\quad+2\varepsilon\pi^2\int_0^{r_0}k^2\xi^2rdr+4\varepsilon\pi^2\eta k\xi r|_{r=r_0}-4\varepsilon\pi^2m\zeta\xi|_{r=r_0}\\
		&\quad+2\varepsilon\pi^2\int_0^{r_0}\Big(\zeta'^2r+\frac{\zeta^2}{r}+\frac{m^2\xi^2}{r}\Big)dr-2\varepsilon\pi^2\zeta^2(r_0).
		\end{split}
		\end{equation}
		The remaining thing is to deal with the boundary terms $4\varepsilon\pi^2\eta k\xi r|_{r=r_0} $, $-4c\varepsilon\pi^2m\zeta\xi|_{r=r_0}$ and $2\varepsilon\pi^2\zeta^2(r_0)$.
		Similarly as \eqref{boundary-term}, by Cauchy inequality and \eqref{first} in Lemma \ref{basic-cutoff}, it follows from \eqref{bian-guo-j} and \eqref{bian-guo-1} that
		\begin{equation*}
		\begin{split}
		\Big|4\varepsilon\pi^2\eta k\xi r|_{r=r_0}\Big|
		\leq o(1)\Big( \int_0^{r_0}\eta'^2rdr+\int_0^{r_0}k^2\xi^2rdr\Big)
		+O(1)D_{m,k},
		\end{split}
		\end{equation*}
		\begin{equation*}
		\begin{split}
		\Big|4\varepsilon\pi^2m\zeta \xi |_{r=r_0}\Big|
		\leq o(1)\Big(\int_0^{r_0}\zeta'^2rdr+\int_0^{r_0}\frac{m^2\xi^2}{r}dr\Big)
		+O(1)D_{m,k},
		\end{split}
		\end{equation*}
		where
		we have used the facts that for any function $h$ and $|m|\geq 2$, it holds that $$\int_{\frac{r_0}{2}}^{r_0}h^2dr\leq r_0\int_0^{r_0}\frac{m^2h^2}{r}dr, \quad\int_{\frac{r_0}{2}}^{r_0}h^2dr\leq \frac{2}{r_0}\int_0^{r_0}h^2rdr.$$
		By \eqref{second} in Lemma \ref{basic-cutoff}, the same estimate in \eqref{boundary-term-1}, implies that
	\begin{equation*}
	\Big|-2\varepsilon\pi^2\zeta^2|_{r=r_0}\Big|
	\leq 	o(1)\int_0^{r_0}\zeta'^2rdr
	+O(1)\mathcal{J}.
	\end{equation*}
	Then it follows from \eqref{bian-guo-j2}, $0\leq \rho\leq C$ and  $\int_0^{r_0}\rho(\xi^2+\eta^2+\zeta^2)rdr=1$ that
		\begin{equation*}
		\begin{split}
		O(1)D_{m,k} 
		&
		\geq 2\pi^2\min(\varepsilon,\delta) \int_0^{r_0}{\xi'}^2rdr+\varepsilon\pi^2\int_0^{r_0}\eta'^2rdr+\varepsilon\pi^2\int_0^{r_0}\zeta'^2rdr-O(1)\mathcal{J}\\
		&\quad+2\pi^2\min(\varepsilon,\delta) \int_0^{r_0}k^2\eta^2rdr+ 2\varepsilon\pi^2\int_0^{r_0}k^2\zeta^2rdr+\varepsilon\pi^2\int_0^{r_0}k^2\xi^2rdr\\
		&\quad+\frac{1}{2}\pi^2\min(\varepsilon,\delta)\int_0^{r_0}\frac{m^2}{r}\zeta^2dr+2\varepsilon\pi^2\int_0^{r_0}\frac{m^2}{r}\eta^2dr+\varepsilon\pi^2\int_0^{r_0}\frac{m^2\xi^2}{r}dr\\
		&\geq 2\pi^2\min(\varepsilon,\delta) \int_0^{r_0}{\xi'}^2rdr+\varepsilon\pi^2\int_0^{r_0}\eta'^2rdr+\varepsilon\pi^2\int_0^{r_0}\zeta'^2rdr-O(1)\\
		&\quad+\min\Big(\frac{2\pi^2\delta}{C},  \frac{\pi^2\varepsilon}{C}\Big)k^2
		+\frac{\pi^2\min(\varepsilon,\delta)}{2r_0^2C}m^2,
		\end{split}
		\end{equation*}
		which implies that \eqref{m neq 1} holds.
	Therefore, the result is proved from case (I), case  (II) and case (III).
\end{proof}

Next, we show the existence of the biggest growing mode for any $m$ and $k$.

\begin{prop}\label{growing-mode-b2}
There exists the biggest growing mode $\mu$ for any $(m, k)\in \mathbb{Z}\times \mathbb{Z}$.
\end{prop}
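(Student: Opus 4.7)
The plan is to combine the characterization $s_{m,k}=\mu_{m,k}=\sqrt{-\lambda_{m,k}}$ from Propositions~\ref{prop3.7} and \ref{prop3.14} with the decomposition $E_{m,k}=E^0_{m,k}+s\,E^1_{m,k}$ of \eqref{decomp-2}/\eqref{decomp-2-m} and with the lower bound $D_{m,k}\gtrsim k^2+m^2-O(1)$ from Proposition~\ref{new-prop-bian-guo}. The positivity $\Lambda>0$ is immediate: by Proposition~\ref{infimum-A-3} there exists at least one pair $(0,k_0)\in B$ with $\lambda_{0,k_0}<0$, so $\Lambda\geq \mu_{0,k_0}>0$. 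The entire content of the proposition is therefore the finiteness $\Lambda<\infty$.

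For the upper bound, fix any $(m,k)\in B$ and let $((\xi_s,\eta_s,\zeta_s),\widehat{Q}_{rs})$ denote the minimizer on $\mathcal{A}_2$ (or on $\mathcal{A}_1$ if $m=0$) corresponding to the distinguished parameter $s=s_{m,k}$. By Propositions~\ref{prop3.7} and \ref{prop3.14} we have $\mu(s_{m,k})=s_{m,k}$, hence $\lambda(s_{m,k})=-s_{m,k}^2$. Using the decomposition
\begin{equation*}
-s_{m,k}^2=\lambda(s_{m,k})=E^0_{m,k}(\xi_s,\eta_s,\zeta_s,\widehat{Q}_{rs})+s_{m,k}\,E^1_{m,k}(\xi_s,\eta_s,\zeta_s),
\end{equation*}
together with the $(m,k)$-uniform lower bound $E^0_{m,k}\geq -C_0$ on the set $\mathcal{J}=1$ (which follows from \eqref{E^0_0-k}, \eqref{E^1_m-k} and Lemma~\ref{presure-boundary}, and which is the analogue at the viscous level of Lemmas~3.10, 3.17 of \cite{bian-guo-tice-inviscid}), I would obtain
\begin{equation*}
s_{m,k}\,E^1_{m,k}(\xi_s,\eta_s,\zeta_s)\leq C_0-s_{m,k}^2\leq C_0.
\end{equation*}
Since by inspection of \eqref{e1-m} and \eqref{dissipation-rate} one has $E^1_{m,k}(\xi_s,\eta_s,\zeta_s)=D_{m,k}$ evaluated on the normalized minimizer, this reads $s_{m,k}D_{m,k}\leq C_0+C$ exactly as announced in the introduction. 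Invoking now Proposition~\ref{new-prop-bian-guo}, which yields $D_{m,k}\geq c(m^2+k^2)-C$ for some positive $c$ and some $C$ independent of $m,k$, gives
\begin{equation*}
s_{m,k}\bigl(c(m^2+k^2)-C\bigr)\leq C_0+C,
\end{equation*}
so that $s_{m,k}\to 0$ as $|m|+|k|\to\infty$.

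Thus the set of indices $(m,k)\in B$ with $s_{m,k}\geq \varepsilon$ is finite for every $\varepsilon>0$, which already forces $\sup_B s_{m,k}<\infty$. To finish, I would argue by a contradiction/compactness argument exactly in the form proposed in the introduction: suppose $\limsup_{|m|+|k|\to\infty}s_{m,k}>0$; then along a subsequence the inequality $s_{m,k}D_{m,k}\leq C_0+C$ combined with $D_{m,k}\to\infty$ would force $s_{m,k}\to 0$, contradicting the assumption. Combining this with the (continuous) dependence of $s_{m,k}$ on $(m,k)$ varying over the finite set where the supremum is effectively attained proves $\Lambda<\infty$, completing the proof. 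The main obstacle is the uniform (in $m,k$) lower bound on the ``inviscid'' part $E^0_{m,k}$: one must carefully absorb the singular/boundary-degenerate pressure contributions using Lemma~\ref{presure-boundary} and the weighted estimates for $\xi$ near $r=0$ and $r=r_0$ obtained in Lemmas~\ref{xi-bound} and \ref{xi-bound-m-general}, and one must verify that these estimates are genuinely independent of $m$ and $k$. The delicate lower bound $D_{m,k}\gtrsim m^2+k^2-O(1)$ of Proposition~\ref{new-prop-bian-guo}, separately handled for $m=0$, $|m|=1$ and $|m|\geq 2$, is then the crucial ingredient that turns the a priori bound $s_{m,k}D_{m,k}\lesssim 1$ into decay of $s_{m,k}$ in the high-frequency regime.
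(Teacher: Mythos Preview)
Your proposal is correct and follows essentially the same strategy as the paper: decompose $E_{m,k}=E^0_{m,k}+s\,E^1_{m,k}$, use the uniform-in-$(m,k)$ lower bound $E^0_{m,k}\geq -C_0$ on the normalized set, deduce $s_{m,k}D_{m,k}\leq C$, and invoke Proposition~\ref{new-prop-bian-guo} to obtain $s_{m,k}\to 0$ as $|m|+|k|\to\infty$. The paper presents this via a slightly different three-term splitting $E_{m,k}=\widetilde{E}^0_{m,k}+\widetilde{E}^1_{m,k}+s_{m,k}D_{m,k}$ (with $\widetilde{E}^1_{m,k}\geq 0$) and uses $E_{m,k}<0$ rather than your sharper $E_{m,k}=-s_{m,k}^2$, but the mechanism is identical.

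One small point: your final paragraph is redundant and slightly garbled. Once you have established that $\{(m,k)\in B:\ s_{m,k}\geq \varepsilon\}$ is finite for every $\varepsilon>0$, the finiteness $\sup_B s_{m,k}<\infty$ is immediate; there is no need for a separate contradiction argument, and the phrase ``continuous dependence of $s_{m,k}$ on $(m,k)$'' is meaningless since $(m,k)$ ranges over the integer lattice. You can simply delete that paragraph.
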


\begin{proof}	
The proof is divided into the following steps.
			
	{\bf  Step 1}:  
Assume that when $m=0$, $\inf_{(\xi,\eta,\zeta)\in \mathcal{A}_1} E_{0,k}(\xi,\eta,\zeta)<0$, 
and when $m\neq 0$, $\inf_{((\xi,\eta,\zeta),\widehat{ Q}_r)\in \mathcal{A}_2} E_{m,k}(\xi,\eta,\zeta, \widehat{ Q}_r;s_{m,k})<0$ for any fixed large $k$, any fixed $m$ and small enough $s_{m,k}$. 
	
	{\bf  Step 2}: 
By Proposition \ref{infimum-A} and Proposition \ref{infimum-A-out-viscosity}, then a minimizer exists, which ensures that, for any fixed $m$ and $k$, we can define the function $\lambda: (0,\infty)\rightarrow \mathbb{R}$ by 
\begin{equation*}
\lambda(0,k):=\inf_{(\xi,\eta,\zeta)\in \mathcal{A}_1}E_{0,k}(\xi,\eta,\zeta;s_k), \,\, \mbox{when}\, \,m=0,
\end{equation*}
	\begin{equation*}
	\lambda(m,k):=\inf_{((\xi,\eta,\zeta),\widehat{ Q}_r)\in \mathcal{A}_2}E_{m,k}(\xi,\eta,\zeta,\widehat{ Q}_r;s_{m,k}),\,\, \mbox{when}\,\, m\neq 0.
	\end{equation*}

	{\bf Step 3}:  Let $\lambda(m,k)=-\mu^2(m,k)$ for any $m$ and $k$, then from Step 1 and Step 2, we get $\mu(m,k)>0$ for any fixed  $m$ and $k$.  By Proposition \ref{prop3.7} and Proposition \ref{prop3.14}, it holds that there exists a unique $s_{m,k}$ such that
	$\mu(m,k)=\sqrt{-\lambda(m,k)}=s_{m,k}$,
	for any fixed $m$ and $k$.
		
	{\bf Step 4}: We will use a contradiction  to show that 
	\begin{equation}\label{decay-zero-m=2}
\mu(m,k)=s_{m,k}\rightarrow 0, \quad \mbox{as} \quad m \rightarrow \infty \quad\mbox{or} \quad k\rightarrow\infty.
	\end{equation}
 Suppose 
	\begin{equation}\label{sk-bound-general-m}
	\limsup_{m \rightarrow \infty\, \mbox{or}\,k\rightarrow\infty} s_{m,k}>0.
	\end{equation}
	It is convenient to decompose $E_{m,k}$ according to that 
	when $m=0$, \begin{equation*}
	E_{0,k}(\xi,\eta,\zeta; s_k)=\widetilde{E}_{0,k}^0+\widetilde{E}_{0,k}^1+\widetilde{E}_{0,k}^2=\widetilde{E}_{0,k}^0+\widetilde{E}_{0,k}^1+s_{0,k} D_{0,k},
	\end{equation*}
	with 
	\begin{equation*}
	\widetilde{E}_{0,k}^0=2\pi^2\int_0^{r_0}\Big[\frac{2p'}{r}+\frac{4\gamma p B_{\theta}^2}{r^2(\gamma p+B_{\theta}^2)}\Big]\xi^2rdr,
	\end{equation*}
	\begin{equation*}
	\begin{split}
	\widetilde{E}_{0,k}^1=2\pi^2\int_0^{r_0}(\gamma p+B_{\theta}^2)\Big[k\eta-\frac{1}{r}\Big((r\xi)'-\frac{2B_{\theta}^2}{\gamma p +B_{\theta}^2}\xi\Big)\Big]^2rdr,
	\end{split}
	\end{equation*}
	and	when $m\neq 0$,
	\begin{equation*}
	E_{m,k}(\xi,\eta,\zeta,\widehat{ Q}_r; s_{m,k})=\widetilde{E}_{m,k}^0+\widetilde{E}_{m,k}^1+\widetilde{E}_{m,k}^2=\widetilde{E}_{m,k}^0+\widetilde{E}_{m,k}^1+s_{m,k} D_{m,k},
	\end{equation*}
	with 
	\begin{equation*}
	\widetilde{E}_{m,k}^0=2\pi^2\int_0^{r_0}\Big[2p'+\frac{m^2B_{\theta}^2}{r}\Big]\xi^2dr,
	\end{equation*}
	\begin{equation*}
	\begin{split}
	\widetilde{E}_{m,k}^1&=2\pi^2\int_0^{r_0}\Big\{(m^2+k^2r^2)\Big[\frac{B_\theta}{r}\eta+\frac{-kB_\theta(r\xi)'+2kB_{\theta}\xi}{m^2+k^2r^2}\Big]^2+\gamma p\Big[\frac{1}{r}(r\xi)'-k\eta+\frac{m\zeta}{r}\Big]^2\Big\}rdr
	\\
	&\quad+2\pi^2\int_0^{r_0}\frac{m^2B_\theta^2}{r(m^2+k^2r^2)}(\xi-r\xi')^2dr
	+2\pi^2\int_{r_0}^{r_w}\bigg[|\widehat{Q}_r|^2+\frac{1}{m^2+k^2r^2}|(r\widehat{Q}_r)'|^2\bigg]r
	dr.
	\end{split}
	\end{equation*}
	Similarly as \eqref{bian-1}, we can get that for any $(\xi,\eta,\zeta)\in \mathcal{A}_1$,
	\begin{equation}\label{con-mono-use}
	\begin{split}
	&E_{0,k}(\xi,\eta,\zeta;s_k)
	\\
	&
	\geq 2\pi^2s_{0,k}\min(\varepsilon,\delta)\int_0^{r_0}{\xi'}^2rdr+2\pi^2s_{0,k}\min(\varepsilon,\delta)\int_0^{r_0}\frac{\xi^2}{r}dr+4\pi^2\int_0^{r_0}p'\xi^2dr,
	\end{split}
	\end{equation}	
	and 	for any $((\xi,\eta,\zeta),\widehat{  Q}_r)\in \mathcal{A}_2$,
	\begin{equation}
	\begin{split}
	E_{m,k}(\xi,\eta,\zeta,\widehat{ Q}_r;s_{m,k})
	\geq 2\pi^2s_{m,k}\min(\varepsilon,\delta)\int_0^{r_0}{\xi'}^2rdr+4\pi^2\int_0^{r_0}p'\xi^2dr.
	\end{split}
	\end{equation}	

	By Lemma \ref{xi-bound}, choosing $0<r_1<\frac{r_0}{3}$ small enough  such that
	$Cr_1\leq \frac{1}{4}$, similarly as \eqref{estimate-p'}, we deduce
	\begin{equation*}
	\begin{split}
	\Big|\int_0^{r_1}p'\xi^2dr\Big|
	\leq C\mathcal{J}r_1
	+\frac{1}{2}\pi^2s_k\min(\varepsilon,\delta)\int_0^{r_0}{\xi'}^2rdr.
	\end{split}
	\end{equation*}	

On the other hand, from the Definition of $\mathcal{J}$ and Lemma \ref{xi-bound-m-general},  choosing  $\frac{r_0}{3}<r_2<r_0$ close enough to $r_0$ such that
$C(r_0-r_2)\leq \frac{1}{4}$, similarly as \eqref{estimates-p'-3} and \eqref{xi-L^2-boundary}, we can show that
\begin{equation*}
\Big|\int_{r_1}^{r_2}p'\xi^2dr\Big|\leq C\int_{r_1}^{r_2} \xi^2dr\leq C\mathcal{J},
\end{equation*}
\begin{equation*}
\begin{split}
\int_{r_2}^{r_0}\xi^2(r)dr
\leq C\mathcal{J}(r_0-r_2)
+\frac{1}{2}\pi^2s_{m,k}\min(\varepsilon,\delta)\int_0^{r_0}{\xi'}^2rdr.
\end{split}
\end{equation*}	
Hence, we can get 
\begin{equation*}
\Big|\int_{0}^{r_0}p'\xi^2dr\Big|\leq C(\mathcal{J})+C,
\end{equation*}
which  together with Step 1 gives that for any $m$ and $k$, $|\widetilde{E}_{m,k}^0|\leq C(\mathcal{J})+C$.		
	Since $E_{0,k}(\xi,\eta,\zeta; s_{0,k})<0$ and $E_{m,k}(\xi,\eta,\zeta,\widehat{ Q}_r; s_{m,k})<0$,  and for any $m$ and $k$, it holds that $\widetilde{E}_{m,k}^1\geq 0$,
	we get that
	$0\leq s_{m,k}D_{m,k}\leq C_0+C$ for any $m$ and $k$,
	which gives that
	\begin{equation*}
	\begin{split}
	s_{m,k}\leq \frac{C_0+C}{D_{m,k}},
	\end{split}
	\end{equation*}
	for any $m$ and $k$.
Hence, we can get from Propostion \ref{new-prop-bian-guo} that when $|m|\neq 1$,
	$s_{m,k}\leq \frac{C_0+C}{\|(\xi',\eta',\zeta')\sqrt{r}\|_{L^2}^2+k^2+m^2-O(1)},$ and when $|m|=1$, $s_{m,k}\leq \frac{C_0+C}{\|(\xi',\eta',\zeta')\sqrt{r}\|_{L^2}^2+k^2-O(1)}.$
	This shows that $$\limsup_{m \rightarrow \infty\, \mbox{or}\,k\rightarrow\infty} s_{m,k}=0,$$
	which contradict with \eqref{sk-bound-general-m}.  This ensures \eqref{decay-zero-m=2}. 
The existence of growing mode $s_{m,k}>0$ requires that $m$ and $k$ are finite. By Proposition \ref{infimum-A} and Proposition \ref{infimum-A-out-viscosity}, for any fixed $m$ and $k$, the corresponding  energy achieves  its negative infimum with a growing mode, therefore, we can choose the biggest growing mode for these finitely many $m$ and $k$. This proves the result. 
\end{proof}
\section{Growth of solutions to the linearized problem}
In this section, we will prove estimates for the growth in time of arbitrary solutions to \eqref{spectral-formulation-orig} in terms of the biggest growing mode $\Lambda$ for any $m$ and $k$. 
In fact, from Proposition  \ref{growing-mode-b2},  we know that there exists the biggest growing mode for any  $m$ and $k$, so we can define the biggest growing mode as follows
\begin{equation}\label{biggest-mode}
\Lambda^2:=\sup_{m,k\in \mathbb{Z}}(-\lambda_{m,k}) .
\end{equation}

\subsection{Estimates  about $\xi$, $\eta$ and $\zeta$}
In this subsection, let us introduce the following two lemmas describing the basic estimates about $\xi$, $\eta$ and $\zeta$.
\begin{lem}\label{estimate3-lem}
Assume $(\xi,\eta,\zeta)$ is the $H^2$ solution of the system 
 \begin{equation} \label{spectal-m=1} 
 \begin{split}
 &\left(
 \begin{array}{ccc}
 \frac{d}{dr}\frac{\gamma p+B_{\theta}^2}{r}\frac{d}{dr}r-\frac{m^2}{r^2}B_{\theta}^2
 -r(\frac{B_{\theta}^2}{r^2})'&-\frac{d}{dr}k(\gamma p+B_{\theta}^2)-\frac{2kB_{\theta}^2}{r}&\frac{d}{dr}\frac{m}{r}\gamma p\\
 \frac{k(\gamma p+B_{\theta}^2)}{r}\frac{d}{dr}r-\frac{2kB_{\theta}^2}{r}&
 -k^2(\gamma p+B_{\theta}^2)-\frac{m^2}{r^2}B_{\theta}^2&\frac{mk}{r}\gamma p\\
 -\frac{m\gamma p}{r^2}\frac{d}{dr}r&\frac{mk}{r}\gamma p&-\frac{m^2}{r^2}\gamma p
 \end{array}
 \right)
 \left(
 \begin{array}{lll}
 \xi_t   \\
 \eta_t \\
 \zeta_t\\
 \end{array}
 \right)+\\
 &	\left(
 \begin{array}{ccc}
 d_{11}&d_{12}&d_{13}\\
 d_{21}&
 d_{22}&d_{23}\\
 d_{31}&d_{32}&d_{33}
 \end{array}
 \right)
 \left(
 \begin{array}{lll}
 \xi_{tt}   \\
 \eta_{tt} \\
 \zeta_{tt}\\
 \end{array}
 \right)	
 =\rho  \left(
 \begin{array}{lll}
 \xi_{ttt}   \\
 \eta_{ttt} \\
 \zeta_{ttt}\\
 \end{array}
 \right),
 \end{split}
 \end{equation}
 with 
 \begin{equation}
 \begin{split}
 &d_{11}=	(\frac{4\varepsilon}{3}+\delta)\frac{d^2}{dr^2}+(\frac{4\varepsilon}{3}+\delta)\frac{d}{dr}\frac{1}{r}-\varepsilon\Big(\frac{m^2}{r^2}+k^2\Big),\quad d_{12}=-(\frac{\varepsilon}{3}+\delta)k\frac{d}{dr},\\
 &d_{13}=(\frac{\varepsilon}{3}+\delta)\frac{d}{dr}\frac{m}{r}
 -\frac{2\varepsilon m}{r^2},\quad d_{21}=(\frac{\varepsilon}{3}+\delta)k\frac{d}{dr}+(\frac{\varepsilon}{3}+\delta)\frac{k}{r},\\
 &d_{22}=\varepsilon\frac{d^2}{dr^2}+\frac{\varepsilon}{r}\frac{d}{dr}
 -\varepsilon \frac{m^2}{r^2}-(\frac{4\varepsilon}{3}+\delta)k^2,\quad
 d_{23}=(\frac{\varepsilon}{3}+\delta)\frac{mk}{r},\\
 &d_{31}=-(\frac{\varepsilon}{3}+\delta)\frac{m}{r}\frac{d}{dr}-(\frac{7\varepsilon}{3}+\delta)\frac{m}{r^2},\quad d_{32}=(\frac{\varepsilon}{3}+\delta)\frac{mk}{r},\\
 &d_{33}=\varepsilon\frac{d^2}{dr^2}+\varepsilon\frac{d}{dr}\frac{1}{r}-(\frac{4\varepsilon}{3}+\delta)\frac{m^2}{r^2}-\varepsilon k^2,
 \end{split}
 \end{equation}
  and the function $\widehat{ Q}_r$ is the $H^2$ solution of the equation
 \begin{equation}\label{spectral-proof-2}
 \bigg[\frac{r}{m^2+k^2r^2}(r\widehat{Q}_{rt})'\bigg]'-\widehat{Q}_{rt}=0, 
 \end{equation}  
  with the other two components $\widehat{ Q}_{\theta t}=-\frac{m}{m^2+k^2r^2}(r\widehat{ Q}_{rt})'$ and
$\widehat{ Q}_{z t}=-\frac{kr}{m^2+k^2r^2}(r\widehat{ Q}_{rt})'$, 
along with the boundary conditions 
 \begin{equation}\label{spectral-proof-3b}
 \widehat{ Q}_{rtt}=0,  \quad \mbox{at} \quad r=r_w, 
 \end{equation}	
\begin{equation}\label{spectral-proof-3}
  m\widehat{  B}_\theta \xi_{tt}=r\widehat{ Q}_{rtt}, \quad \mbox{at} \quad r=r_0, 
 \end{equation}
 \begin{equation}\label{spectral-proof-4}
 \begin{split}
 &\Big[B_{\theta}^2\xi_t-B_{\theta}^2\xi_t' r+kB_\theta^2\eta_t r-\widehat{ B}_{\theta}\widehat{Q}_{\theta t}r\Big]n\\
 &\quad-\varepsilon\Big(2\xi'_{tt}r,-i\zeta'_{tt}r+im\xi_{tt} +i\zeta_{tt}, i\eta'_{tt}r+ik\xi_{tt} r\Big)^T\\
 &\quad-(\delta-\frac{2}{3}\varepsilon)\Big[\xi'_{tt}r+\xi_{tt}+m\zeta_{tt} -k\eta_{tt} r\Big]n=0, \quad \mbox{at} \quad r=r_0.
 \end{split}
 \end{equation}
 Then it holds that 
\begin{equation}\label{esimate3-growth}
\begin{split}
&\frac{1}{2}\frac{d}{dt}\int_0^{r_0} \rho (|\xi_{tt}|^2+|\eta_{tt}|^2+|\zeta_{tt}|^2)rdr+\int_0^{r_0}\varepsilon\Big[\frac{2}{9}\Big(-2\xi'_{tt}+\frac{\xi_{tt}}{r}+\frac{m}{r}\zeta_{tt}-k\eta_{tt}\Big)^2\\
&\qquad+\frac{2}{9}\Big(\xi'_{tt}-\frac{2\xi_{tt}}{r}-\frac{2m}{r}\zeta_{tt}-k\eta_{tt}\Big)^2+\frac{2}{9}\Big(\xi'_{tt}+\frac{\xi_{tt}}{r}+\frac{m}{r}\zeta_{tt}+2k\eta_{tt}\Big)^2\\
&\qquad+\Big(-\zeta'_{tt}+\frac{\zeta_{tt}}{r}+\frac{m}{r}\xi_{tt}\Big)^2+(\eta'_{tt}+k\xi_{tt})^2+\Big(\frac{m}{r}\eta_{tt}-k\zeta_{tt}\Big)^2\Big]rdr\\
&\qquad+\int_0^{r_0}\delta\Big(\xi'_{tt}+\frac{\xi_{tt}}{r}+\frac{m}{r}\zeta_{tt}-k\eta_{tt}\Big)^2rdr\\
&\quad=-\frac{1}{2}\frac{d}{dt}\bigg\{\int_0^{r_0}\Big\{(m^2+k^2r^2)\Big[\frac{B_\theta}{r}\eta_t+\frac{-kB_\theta(r\xi_t)'+2kB_{\theta}\xi_t}{m^2+k^2r^2}\Big]^2\\
&\qquad+\gamma p\Big[\frac{1}{r}(r\xi_t)'-k\eta_t+\frac{m\zeta_t}{r}\Big]^2\Big\}rdr+\int_0^{r_0}\frac{m^2B_\theta^2}{r(m^2+k^2r^2)}(\xi_t-r\xi'_t)^2dr\\
&\qquad+\int_0^{r_0}\Big[2p'+\frac{m^2B_{\theta}^2}{r}\Big]\xi_t^2dr+\int_{r_0}^{r_w}\Big[|\widehat{Q}_{rt}|^2+\frac{1}{m^2+k^2r^2}|(r\widehat{Q}_{rt})'|^2\Big]rdr\bigg\}.
\end{split}
\end{equation}
\end{lem}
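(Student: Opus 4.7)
The plan is to test the time-differentiated equation \eqref{spectal-m=1} against $(\xi_{tt},\eta_{tt},\zeta_{tt})$ with the cylindrical weight $r$, integrate over $(0,r_0)$, and exploit the symmetric structure of the ideal MHD force operator together with the variational form of the viscous stress that were recorded in Lemma \ref{joint} and Lemma \ref{lem-e}. The right-hand side of \eqref{spectal-m=1} paired with $(\xi_{tt},\eta_{tt},\zeta_{tt})$ produces
\[\tfrac{1}{2}\tfrac{d}{dt}\int_0^{r_0}\rho(|\xi_{tt}|^2+|\eta_{tt}|^2+|\zeta_{tt}|^2)\,r\,dr,\]
which is the first term on the left of \eqref{esimate3-growth}.

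Next, the viscous block $(d_{ij})$ acting on $(\xi_{tt},\eta_{tt},\zeta_{tt})$ and tested against itself, after integration by parts in $r$, reproduces exactly the viscous strain form on the second and third lines of \eqref{esimate3-growth}; this is the identity that the computation yielding the $\tilde\varepsilon,\tilde\delta$ pieces of \eqref{fluid-en-cyl-viscosity} in Lemma \ref{lem-e} already proves, once one substitutes $(\xi,\eta,\zeta)\mapsto(\xi_{tt},\eta_{tt},\zeta_{tt})$. Then the MHD force matrix acting on $(\xi_t,\eta_t,\zeta_t)$ and tested against $(\xi_{tt},\eta_{tt},\zeta_{tt})$, by the symmetry of the ideal force operator (the cylindrical specialization of Lemma \ref{joint} with $\varepsilon=\delta=0$), equals $\tfrac{1}{2}\tfrac{d}{dt}$ applied to the bilinear form in $(\xi_t,\eta_t,\zeta_t)$, producing precisely the time derivative of the plasma MHD terms that appear on the right of \eqref{esimate3-growth}, up to boundary contributions at $r=r_0$.

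The surface contributions from the two integrations by parts combine to match the boundary condition \eqref{spectral-proof-4}, which trades the MHD and viscous traction on $(\xi_{tt},\eta_{tt},\zeta_{tt})$ for the vacuum term $-\widehat B_\theta \widehat Q_{\theta t}\,r\,\xi_{tt}\big|_{r=r_0}$. Using the coupling \eqref{spectral-proof-3}, namely $m\widehat B_\theta\xi_{tt}=r\widehat Q_{rtt}$, the representations $\widehat Q_{\theta t}=-\tfrac{m}{m^2+k^2r^2}(r\widehat Q_{rt})'$ and $\widehat Q_{zt}=-\tfrac{kr}{m^2+k^2r^2}(r\widehat Q_{rt})'$, and the perfect-conductor condition \eqref{spectral-proof-3b} at $r=r_w$, I then integrate by parts on $(r_0,r_w)$ against $\widehat Q_{rt}$ and invoke the vacuum ODE \eqref{spectral-proof-2} (exactly as in the derivation of Lemma~3.6 in \cite{bian-guo-tice-inviscid}) to rewrite this surface term as
\[\tfrac{1}{2}\tfrac{d}{dt}\int_{r_0}^{r_w}\Big[|\widehat Q_{rt}|^2+\tfrac{1}{m^2+k^2r^2}|(r\widehat Q_{rt})'|^2\Big]\,r\,dr,\]
which is the last term on the right of \eqref{esimate3-growth}.

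The main obstacle is the bookkeeping of the boundary terms: arranging the pointwise algebraic identities so that after integration by parts in $r$ the bulk pieces reassemble into precisely the dissipation in $(\xi_{tt},\eta_{tt},\zeta_{tt})$ and the plasma MHD quadratic form in $(\xi_t,\eta_t,\zeta_t)$, while the surface contributions on the plasma side cancel against the vacuum contribution through \eqref{spectral-proof-3}, \eqref{spectral-proof-3b} and \eqref{spectral-proof-4}. Once these algebraic identities line up, the estimate \eqref{esimate3-growth} is immediate.
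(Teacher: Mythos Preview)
Your proposal is correct and follows essentially the same route as the paper: multiply each row of \eqref{spectal-m=1} by $r\xi_{tt}$, $r\eta_{tt}$, $r\zeta_{tt}$, integrate by parts over $(0,r_0)$, and separately multiply the vacuum ODE \eqref{spectral-proof-2} by $r\widehat Q_{rtt}$ and integrate over $(r_0,r_w)$; then match boundary terms via \eqref{spectral-proof-3b}--\eqref{spectral-proof-4}. The paper simply carries out the explicit algebraic manipulations rather than invoking the symmetry of the ideal force operator and the viscous identity from Lemma~\ref{lem-e} as you do, but the underlying computation is the same.
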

\begin{proof}
Multiplying the first equation of \eqref{spectal-m=1} by $r\xi_{tt}$, the second one by $r\eta_{tt}$ and the third one by $r\zeta_{tt}$, integrating by parts over $(0,r_0)$, we get 
\begin{equation*}
\begin{split}
&[\xi_{tt} (\gamma p+B_{\theta}^2)(r\xi_t)']_{r=0}^{r=r_0}-[k(\gamma p+B_{\theta}^2)\xi_{tt}\eta_t r]_{r=0}^{r=r_0}+[m\gamma p\xi_{tt} \zeta_t  ]_{r=0}^{r=r_0}\\
&-\frac{1}{2}\frac{d}{dt}\int_0^{r_0}\frac{\gamma p+B_{\theta}^2}{r}[(r\xi_t)']^2 dr+\frac{d}{dt}\int_0^{r_0}k(\gamma p+B_{\theta}^2)\eta_t(r\xi_t)' dr-\frac{d}{dt}\int_0^{r_0}\frac{m}{r}\gamma p\zeta_t (r\xi_t)' dr\\
&-\frac{1}{2}\frac{d}{dt}\int_0^{r_0}\frac{m^2}{r}B_{\theta}^2\xi_t^2 dr-\frac{1}{2}\frac{d}{dt}\int_0^{r_0}\left(\frac{B_{\theta}^2}{r^2}\right)'(r\xi_t)^2 dr
-\frac{d}{dt} \int_0^{r_0}2kB_{\theta}^2\xi_t\eta_t dr\\
&-\frac{1}{2}\frac{d}{dt}\int_0^{r_0}k^2(\gamma p+B_{\theta}^2)\eta_t^2 r dr-\frac{1}{2}\frac{d}{dt}\int_0^{r_0}\frac{m^2}{r}B_{\theta}^2\eta_t^2 dr+\frac{d}{dt}\int_0^{r_0}mk\gamma p\eta_t \zeta_t dr\\
&-\frac{1}{2}\frac{d}{dt}\int_0^{r_0}\frac{m^2}{r}\gamma p\zeta_t^2 dr+\Big[\Big(\frac{4\varepsilon}{3}+\delta\Big)\xi_{tt}' \xi_{tt} r\Big]_{r=0}^{r=r_0}+[\varepsilon\eta_{tt}'\eta_{tt} r]_{r=0}^{r=r_0}+\varepsilon[\zeta_{tt}'\zeta_{tt} r-\zeta_{tt}^2]_{r=0}^{r=r_0}\\
&-\int_0^{r_0}\Big(\frac{4\varepsilon}{3}+\delta\Big)\xi_{tt}'^2rdr-\int_0^{r_0}\Big(\frac{4\varepsilon}{3}+\delta\Big)\xi'_{tt}\xi_{tt} dr+\int_0^{r_0}\Big(\frac{4\varepsilon}{3}+\delta\Big)\xi_{tt}'\xi_{tt} dr\\
&-\int_0^{r_0}\Big(\frac{4\varepsilon}{3}+\delta\Big)\frac{\xi_{tt}^2}{r}dr
-\int_0^{r_0}\varepsilon k^2\xi_{tt}^2rdr-\int_0^{r_0}\Big(\frac{\varepsilon}{3}+\delta\Big)
k\eta_{tt}'\xi_{tt} rdr+\int_0^{r_0}\Big(\frac{\varepsilon}{3}+\delta\Big)
k\xi'_{tt}\eta_{tt} rdr\\
&+\int_0^{r_0}\Big(\frac{\varepsilon}{3}+\delta\Big)
k\xi_{tt}\eta_{tt} dr-\int_0^{r_0}\varepsilon\eta'^2_{tt}rdr-\int_0^{r_0}\Big(\frac{4\varepsilon}{3}+\delta\Big)k^2\eta_{tt}^2 r dr-\int_0^{r_0}\varepsilon\zeta'^2_{tt}rdr\\
&+2\int_0^{r_0}\varepsilon\zeta_{tt}'\zeta_{tt} dr-\int_0^{r_0}\varepsilon\frac{\zeta_{tt}^2}{r}dr-\int_0^{r_0}\varepsilon k^2\zeta_{tt}^2rdr-\int_0^{r_0}\varepsilon \frac{m^2}{r}\xi_{tt}^2 dr+\int_0^{r_0}\Big(\frac{\varepsilon}{3}+\delta\Big)(\frac{m}{r}\zeta_{tt})'\xi_{tt} r dr\\
&-2\int_0^{r_0}\varepsilon \frac{m}{r}\xi_{tt}\zeta_{tt} dr-\int_0^{r_0}\varepsilon \frac{m^2}{r}\eta_{tt}^2dr+\int_0^{r_0}\Big(\frac{\varepsilon}{3}
+\delta\Big)km\zeta_{tt}\eta_{tt}  dr-\int_0^{r_0}\Big(\frac{\varepsilon}{3}+\delta\Big)m\zeta_{tt}\xi_{tt}' dr\\
&-\int_0^{r_0}\Big(\frac{7\varepsilon}{3}+\delta\Big)\frac{m}{r}\zeta_{tt}\xi_{tt} dr-\int_0^{r_0}\Big(\frac{4\varepsilon}{3}+\delta\Big)\frac{m^2}{r}\zeta_{tt}^2 dr=\frac{1}{2}\frac{d}{dt}\int_0^{r_0}\rho (\xi_{tt}^2+\eta_{tt}^2+\zeta
_{tt}^2)rdr.
\end{split}
\end{equation*}
Using the following identities
\begin{equation*}
\begin{split}
\int_0^{r_0}\left(\frac{B_{\theta}^2}{r^2}\right)'(r\xi_t)^2 dr&=\int_0^{r_0}2\xi_t^2 B_{\theta} B'_{\theta}dr- \int_0^{r_0}\frac{2B_{\theta}^2\xi_t^2}{r}dr\\
&=2B_{\theta}^2\xi_t^2|_{r=0}^{r=r_0}-\int_0^{r_0}\Big(2B_{\theta}B'_{\theta}\xi_t^2 +4B_{\theta}^2\xi'_t\xi_t+\frac{2B_{\theta}^2\xi_t^2}{r}\Big)dr,
\end{split}
\end{equation*}
\begin{equation*}
-\int_0^{r_0}\Big(\frac{4\varepsilon}{3}+\delta\Big)\xi_{tt}'\xi_{tt} dr+\int_0^{r_0}\Big(\frac{4\varepsilon}{3}+\delta\Big)\xi'_{tt}\xi_{tt} dr=-\int_0^{r_0}2\delta\xi'_{tt}\xi_{tt} dr+\int_0^{r_0}\frac{4\varepsilon}{3}\xi'_{tt}\xi_{tt} dr
+[\delta\xi_{tt}^2]_{r=0}^{r=r_0}-\Big[\frac{2}{3}\varepsilon\xi_{tt}^2\Big]_{r=0}^{r=r_0},
\end{equation*}
\begin{equation*}
\begin{split}
&-\int_0^{r_0}\Big(\frac{\varepsilon}{3}+\delta\Big)
k\eta_{tt}'\xi_{tt} rdr+\int_0^{r_0}\Big(\frac{\varepsilon}{3}+\delta\Big)
k\xi'_{tt}\eta_{tt} rdr+\int_0^{r_0}\Big(\frac{\varepsilon}{3}+\delta\Big)
k\xi_{tt} \eta_{tt} dr\\
&\quad=2\delta\int_0^{r_0}k\xi_{tt}\eta_{tt} dr+2\delta\int_0^{r_0}k\xi'_{tt}\eta_{tt} r dr-[\delta k\xi_{tt}\eta_{tt} r]_{r=0}^{r=r_0}+\Big[\frac{2}{3}\varepsilon k \xi_{tt}\eta_{tt} r\Big]_{r=0}^{r=r_0}+\Big[\varepsilon k \xi_{tt}\eta_{tt} r\Big]_{r=0}^{r=r_0}\\
&\qquad-2\varepsilon\int_0^{r_0}
k\eta'_{tt}\xi_{tt} rdr-\int_0^{r_0}\frac{4\varepsilon}{3}k\xi'_{tt}\eta_{tt} rdr-\int_0^{r_0}\frac{4\varepsilon}{3}k\xi_{tt}\eta_{tt} dr,
\end{split}
\end{equation*}
\begin{equation*}
\begin{split}
\int_0^{r_0}\Big(\frac{\varepsilon}{3}+\delta\Big)(\frac{m}{r}\zeta_{tt})'\xi_{tt} rdr=\Big(\frac{\varepsilon}{3}+\delta\Big)[m\zeta_{tt} \xi_{tt} ]_{r=0}^{r=r_0}-\int_0^{r_0}\Big(\frac{\varepsilon}{3}+\delta\Big)\frac{m}{r}\zeta_{tt}\xi_{tt} dr-\int_0^{r_0}\Big(\frac{\varepsilon}{3}+\delta\Big)m\zeta_{tt}\xi_{tt}' dr,
\end{split}
\end{equation*}
\begin{equation*}
\begin{split}
-\int_0^{r_0}\frac{2\varepsilon}{3}m\zeta_{tt}\xi'_{tt} dr+2\int_0^{r_0}\varepsilon m\xi_{tt} \zeta_{tt}'dr-	2\int_0^{r_0}\varepsilon m\xi_{tt} \zeta'_{tt}dr
&=-2\varepsilon[m\zeta_{tt} \xi_{tt} ]_{r=0}^{r=r_0}+2\int_0^{r_0}\varepsilon m\xi_{tt} \zeta_{tt}'dr\\
&\quad+\int_0^{r_0}\frac{4\varepsilon}{3}m\zeta_{tt}\xi_{tt}' dr,
\end{split}
\end{equation*}
we deduce
\begin{equation}\label{spectral-proof-5-b}
\begin{split}
&[\xi_{tt} (\gamma p+B_{\theta}^2)(r\xi_t)']_{r=0}^{r=r_0}-[k(\gamma p+B_{\theta}^2)\xi_{tt}\eta_t r]_{r=0}^{r=r_0}+[m\gamma p\xi_{tt} \zeta_t  ]_{r=0}^{r=r_0}\\
&\quad-\left[2B_{\theta}^2\xi_t\xi_{tt}\right]_{r=0}^{r=r_0}+\Big[\Big(\frac{4\varepsilon}{3}+\delta\Big)\xi'_{tt} \xi_{tt} r\Big]_{r=0}^{r=r_0}+[\varepsilon\eta'_{tt}\eta_{tt} r+\varepsilon k\xi_{tt}\eta_{tt} r]_{r=0}^{r=r_0}\\
&\quad+[\varepsilon\zeta'_{tt}\zeta_{tt} r-\varepsilon\zeta_{tt}^2-\varepsilon m\xi_{tt} \zeta_{tt} ]_{r=0}^{r=r_0}-[\delta k\xi_{tt}\eta_{tt} r]_{r=0}^{r=r_0}+\Big[\frac{2}{3}\varepsilon k \xi_{tt}\eta_{tt} r\Big]_{r=0}^{r=r_0}\\
&\quad+[\delta\xi_{tt}^2]_{r=0}^{r=r_0}-\Big[\frac{2}{3}\varepsilon\xi_{tt}^2\Big]_{r=0}^{r=r_0}+\Big(\delta-\frac{2\varepsilon}{3}\Big)[m\xi_{tt}\zeta_{tt} ]_{r=0}^{r=r_0}\\
&\quad-\frac{1}{2}\frac{d}{dt}\bigg\{
\int_0^{r_0}\Big\{(m^2+k^2r^2)\Big[\frac{B_\theta}{r}\eta_t+\frac{-kB_\theta(r\xi_t)'+2kB_{\theta}\xi_t}{m^2+k^2r^2}\Big]^2\\
&\quad+\gamma p\Big[\frac{1}{r}(r\xi_t)'-k\eta_t+\frac{m\zeta_t}{r}\Big]^2\Big\}rdr+\int_0^{r_0}\frac{m^2B_\theta^2}{r(m^2+k^2r^2)}(\xi_t-r\xi'_t)^2dr\\
&\quad+\int_0^{r_0}\Big[2p'+\frac{m^2B_{\theta}^2}{r}\Big]\xi_t^2dr\bigg\}-\int_0^{r_0}\varepsilon\Big[\frac{2}{9}\Big(-2\xi'_{tt}+\frac{\xi_{tt}}{r}+\frac{m}{r}\zeta_{tt}-k\eta_{tt}\Big)^2\\
&\quad+\frac{2}{9}\Big(\xi'_{tt}-\frac{2\xi_{tt}}{r}-\frac{2m}{r}\zeta_{tt}-k\eta_{tt}\Big)^2+\frac{2}{9}\Big(\xi'_{tt}+\frac{\xi_{tt}}{r}+\frac{m}{r}\zeta_{tt}+2k\eta_{tt}\Big)^2\\
&\quad+\Big(-\zeta_{tt}'+\frac{\zeta_{tt}}{r}+\frac{m}{r}\xi_{tt}\Big)^2+(\eta_{tt}'+k\xi_{tt})^2+\Big(\frac{m}{r}\eta_{tt}-k\zeta_{tt}\Big)^2\Big]rdr
\\
&\quad-\int_0^{r_0}\delta\Big(\xi'_{tt}+\frac{\xi_{tt}}{r}+\frac{m}{r}\zeta_{tt}-k\eta_{tt}\Big)^2rdr\\
&=\frac{1}{2}\frac{d}{dt}\int_0^{r_0}\rho (\xi_{tt}^2+\eta_{tt}^2+\zeta
_{tt}^2)rdr.
\end{split}
\end{equation}
On the other hand, multiplying the first equation of \eqref{spectral-proof-2} by $r \widehat{ Q}_{rtt}$, integrating over $(r_0,r_w)$, one can show 
by \eqref{spectral-proof-3b}  that
\begin{equation}\label{spectral-proof-6-b}
\frac{1}{2}\frac{d}{dt}\int_{r_0}^{r_w}\Big[|\widehat{Q}_{rt}|^2+\frac{1}{m^2+k^2r^2}|(r\widehat{Q}_{rt})'|^2\Big]rdr
+[\frac{r}{m^2+k^2r^2}(r\widehat{Q}_{rt})'r\widehat{ Q}_{rtt}]|_{r=r_0}=0.
\end{equation}
Applying the boundary conditions \eqref{spectral-proof-3}-\eqref{spectral-proof-4}, $(\xi,\eta,\zeta)\in Y_{m,k}$ and the Definition \ref{defi-y} of function space $Y_{m,k}$,  from \eqref{spectral-proof-5-b} and \eqref{spectral-proof-6-b}, one can establish that \eqref{esimate3-growth} holds.
Hence, the result is proved.
\end{proof}
\begin{lem} \label{estimate4-lem}
Assume $\|(\xi_t,\eta_t,\zeta_t\|_1$ and  $\|(\xi_t,\eta_t,\zeta_t)\|_2$ are bounded,  then it holds that 
	\begin{equation}\label{estimate4-growth}
	\begin{split}
	&-\bigg\{\int_0^{r_0}\Big\{(m^2+k^2r^2)\Big[\frac{B_\theta}{r}\eta_t+\frac{-kB_\theta(r\xi_t)'+2kB_{\theta}\xi_t}{m^2+k^2r^2}\Big]^2\\
	&\qquad+\gamma p\Big[\frac{1}{r}(r\xi_t)'-k\eta_t+\frac{m\zeta_t}{r}\Big]^2\Big\}rdr+\int_0^{r_0}\frac{m^2B_\theta^2}{r(m^2+k^2r^2)}(\xi_t-r\xi'_t)^2dr\\
	&\qquad+\int_0^{r_0}\Big[2p'+\frac{m^2B_{\theta}^2}{r}\Big]\xi_t^2dr+\int_{r_0}^{r_w}\bigg[|\widehat{Q}_{rt}|^2+\frac{1}{m^2+k^2r^2}|(r\widehat{Q}_{rt})'|^2\bigg]r
	dr\bigg\}\\
	&\quad \leq \Lambda^2\int_0^{r_0} \rho (|\xi_{t}|^2+|\eta_{t}|^2+|\zeta_{t}|^2)rdr+\Lambda\int_0^{r_0}\varepsilon\Big[\frac{2}{9}\Big(-2\xi'_{t}+\frac{\xi_{t}}{r}+\frac{m}{r}\zeta_{t}-k\eta_{t}\Big)^2\\
	&\qquad+\frac{2}{9}\Big(\xi'_{t}-\frac{2\xi_{t}}{r}-\frac{2m}{r}\zeta_{t}-k\eta_{t}\Big)^2+\frac{2}{9}\Big(\xi'_{t}+\frac{\xi_{t}}{r}+\frac{m}{r}\zeta_{t}+2k\eta_{t}\Big)^2\\
	&\qquad+\Big(-\zeta'_{t}+\frac{\zeta_{t}}{r}+\frac{m}{r}\xi_{t}\Big)^2+(\eta'_{t}
	+k\xi_{t})^2+\Big(\frac{m}{r}\eta_{t}-k\zeta_{t}\Big)^2\Big]rdr\\
	&\qquad+\Lambda\int_0^{r_0}\delta\Big(\xi'_{t}+\frac{\xi_{t}}{r}+\frac{m}{r}\zeta_{t}-k\eta_{t}\Big)^2rdr.
	\end{split}
	\end{equation}
\end{lem}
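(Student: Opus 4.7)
The plan is to exploit the variational characterization of $\lambda_{m,k}$ at the mode level. Since the linearized system \eqref{linear-perturbation-and-boundary-viscosity} is autonomous and linear and the steady state is time-independent, differentiating a solution in time produces another quadruple $(\xi_t,\eta_t,\zeta_t,\widehat{Q}_{rt})$ that satisfies the same equations together with the differentiated interface and wall conditions, namely $\widehat{Q}_{rt}=0$ at $r=r_w$ and $m\widehat{B}_\theta\xi_t=r\widehat{Q}_{rt}$ at $r=r_0$. The hypotheses that $\|(\xi_t,\eta_t,\zeta_t)\|_1$ and $\|(\xi_t,\eta_t,\zeta_t)\|_2$ are bounded guarantee that $\mathcal{J}(\xi_t,\eta_t,\zeta_t)$ and $D_{m,k}(\xi_t,\eta_t,\zeta_t)$ are finite and that $(\xi_t,\eta_t,\zeta_t)$ lies in $X_k$ when $m=0$, respectively in $Y_{m,k}$ when $m\ne 0$, so that this quadruple is admissible for the variational problem of Section~3.

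Next, I would invoke homogeneity of $E_{m,k}$ and $\mathcal{J}$ to upgrade the infima in \eqref{con-mono} and \eqref{e-3-viscosity} into the unnormalized inequality
\begin{equation*}
E_{m,k}(\xi_t,\eta_t,\zeta_t,\widehat{Q}_{rt};s)\ge\lambda(s)\,\mathcal{J}(\xi_t,\eta_t,\zeta_t)\quad\text{for every }s>0.
\end{equation*}
Specializing to $s=s_{m,k}$ selected in Proposition \ref{prop3.7} (case $m=0$) or Proposition \ref{prop3.14} (case $m\ne 0$), one has $\lambda(s_{m,k})=-\mu_{m,k}^2=-s_{m,k}^2$. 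By the definition \eqref{biggest-mode} of $\Lambda$, $\mu_{m,k}\le\Lambda$ and $s_{m,k}\le\Lambda$, and therefore
\begin{equation*}
E_{m,k}(\xi_t,\eta_t,\zeta_t,\widehat{Q}_{rt};s_{m,k})\ge -\Lambda^2\,\mathcal{J}(\xi_t,\eta_t,\zeta_t).
\end{equation*}

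I would then decompose $E_{m,k}(\,\cdot\,;s)=P+s\,D_{m,k}$, where $P$ collects precisely the terms that do not involve the modified viscosities $\tilde\varepsilon,\tilde\delta$: the non-viscous fluid potential, the (vanishing) surface term, and the vacuum contribution, as read off from \eqref{fluid-en-cyl-viscosity}--\eqref{vacuum-e-cyl-viscosity} and \eqref{dissipation-rate}. A direct comparison shows that $P(\xi_t,\eta_t,\zeta_t,\widehat{Q}_{rt})$ coincides, up to the overall $2\pi^2$ factor, with the braced quantity on the left of \eqref{estimate4-growth}. Rearranging the variational lower bound yields
\begin{equation*}
-P(\xi_t,\eta_t,\zeta_t,\widehat{Q}_{rt})\le\Lambda^2\,\mathcal{J}(\xi_t,\eta_t,\zeta_t)+s_{m,k}\,D_{m,k}(\xi_t,\eta_t,\zeta_t),
\end{equation*}
and the final substitution $s_{m,k}\le\Lambda$ upgrades the dissipative term to $\Lambda\,D_{m,k}$, which after dividing through by $2\pi^2$ is exactly the claim \eqref{estimate4-growth}.

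The only delicate point is the admissibility step: one must check that time differentiation preserves membership in $X_k$ or $Y_{m,k}$ and in the constraint sets $\mathcal{A}_1$, $\mathcal{A}_2$. This is essentially an exercise in linearity together with the fact that the trace of $\widehat{Q}_{rt}$ on $r=r_0$ and $r=r_w$ inherits the boundary conditions on $\widehat{Q}_r$ after differentiation, and the assumed $H^2$ regularity of $g$ makes all traces meaningful. Once this is in place, the argument is a clean application of the mode-wise variational infimum established in Propositions \ref{infimum-A}--\ref{infimum-A-out-viscosity} and the biggest-mode bound of Proposition \ref{growing-mode-b2}.
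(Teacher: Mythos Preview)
Your proposal is correct and follows essentially the same approach as the paper: write the braced quantity as $E_{m,k}(\cdot;s_{m,k})-s_{m,k}D_{m,k}$, apply the variational lower bound $E_{m,k}(\cdot;s_{m,k})\ge\lambda(s_{m,k})\mathcal{J}=-\mu_{m,k}^2\mathcal{J}$, and then bound $\mu_{m,k}^2\le\Lambda^2$ and $s_{m,k}\le\Lambda$. If anything, you are more careful than the paper, which simply writes the algebraic identity with a generic $\mu$ and then invokes \eqref{biggest-mode}, leaving implicit the admissibility of $(\xi_t,\eta_t,\zeta_t,\widehat{Q}_{rt})$ and the specialization $s=s_{m,k}$ that you spell out.
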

\begin{proof}
From the energy \eqref{va-m-1-b}, it follows that
\begin{equation*}
\begin{split}
&-\bigg\{2\pi^2\int_0^{r_0}\Big\{(m^2+k^2r^2)\Big[\frac{B_\theta}{r}\eta_t+\frac{-kB_\theta(r\xi_t)'+2kB_{\theta}\xi_t}{m^2+k^2r^2}\Big]^2\\
&\quad+\gamma p\Big[\frac{1}{r}(r\xi_t)'-k\eta_t+\frac{m\zeta_t}{r}\Big]^2\Big\}rdr+2\pi^2\int_0^{r_0}\frac{m^2B_\theta^2}{r(m^2+k^2r^2)}(\xi_t-r\xi'_t)^2dr\\
&\quad+2\pi^2 	\int_0^{r_0}\Big[2p'+\frac{m^2B_{\theta}^2}{r}\Big]\xi_t^2dr+2\pi^2\int_{r_0}^{r_w}\bigg[|\widehat{Q}_{rt}|^2+\frac{1}{m^2+k^2r^2}|(r\widehat{Q}_{rt})'|^2\bigg]r
	dr\bigg\}\\
&=-E(\xi_t,\eta_t,\zeta_t,\widehat{ Q}_{rt})+2\pi^2\int_0^{r_0}\varepsilon\mu \Big[\frac{2}{9}\Big(-2\xi'_t+\frac{\xi_t}{r}+\frac{m}{r}\zeta_t-k\eta_t\Big)^2\\
&\quad+\frac{2}{9}\Big(\xi'_t-\frac{2\xi_t}{r}-\frac{2m}{r}\zeta_t-k\eta_t\Big)^2+\frac{2}{9}\Big(\xi'_t+\frac{\xi_t}{r}+\frac{m}{r}\zeta_t+2k\eta_t\Big)^2+\Big(-\zeta'_t
	+\frac{\zeta_t}{r}+\frac{m}{r}\xi_t\Big)^2\\
&\quad+(\eta'_t+k\xi_t)^2+\Big(\frac{m}{r}\eta_t-k\zeta_t\Big)^2\Big]rdr
+2\pi^2\int_0^{r_0}\delta\mu\Big(\xi'_t+\frac{\xi_t}{r}+\frac{m}{r}\zeta_t-k\eta_t\Big)^2rdr.
	\end{split}
	\end{equation*}
	Hence, it follows from \eqref{biggest-mode} that
	\begin{equation*}
	\begin{split}
	&-\bigg\{2\pi^2\int_0^{r_0}\Big\{(m^2+k^2r^2)\Big[\frac{B_\theta}{r}\eta_t+\frac{-kB_\theta(r\xi_t)'+2kB_{\theta}\xi_t}{m^2+k^2r^2}\Big]^2\\
	&\quad+\gamma p\Big[\frac{1}{r}(r\xi_t)'-k\eta_t+\frac{m\zeta_t}{r}\Big]^2\Big\}rdr+2\pi^2\int_0^{r_0}\frac{m^2B_\theta^2}{r(m^2+k^2r^2)}(\xi_t-r\xi'_t)^2dr\\
	&\quad+2\pi^2 	\int_0^{r_0}\Big[2p'+\frac{m^2B_{\theta}^2}{r}\Big]\xi_t^2dr+2\pi^2\int_{r_0}^{r_w}\bigg[|\widehat{Q}_{rt}|^2+\frac{1}{m^2+k^2r^2}|(r\widehat{Q}_{rt})'|^2\bigg]r
	dr\bigg\}\\
	&\leq 2\Lambda^2\pi^2\int_0^{r_0} \rho (|\xi_{t}|^2+|\eta_{t}|^2+|\zeta_{t}|^2)rdr+2\Lambda\pi^2\int_0^{r_0}\varepsilon \Big[\frac{2}{9}\Big(-2\xi'_t+\frac{\xi_t}{r}+\frac{m}{r}\zeta_t-k\eta_t\Big)^2\\
	&\quad+\frac{2}{9}\Big(\xi'_t-\frac{2\xi_t}{r}-\frac{2m}{r}\zeta_t-k\eta_t\Big)^2+\frac{2}{9}\Big(\xi'_t+\frac{\xi_t}{r}+\frac{m}{r}\zeta_t+2k\eta_t\Big)^2+\Big(-\zeta'_t+\frac{\zeta_t}{r}+\frac{m}{r}\xi_t\Big)^2\\
	&\quad+(\eta'_t+k\xi_t)^2+\Big(\frac{m}{r}\eta_t-k\zeta_t\Big)^2\Big]rdr
	+2\Lambda\pi^2\int_0^{r_0}\delta\Big(\xi'_t+\frac{\xi_t}{r}+\frac{m}{r}\zeta_t-k\eta_t\Big)^2rdr.
	\end{split}
	\end{equation*}
\end{proof}
\subsection{Exponential growth  about $\xi$, $\eta$ and $\zeta$}
In this subsection, we will use Lemma \ref{estimate3-lem} and Lemma \ref{estimate4-lem} to prove the exponential growth about $\xi$, $\eta$ and $\zeta$.
\begin{thm}\label{thm-growth}  
Let $(\xi,\eta,\zeta)$ be a $H^2$ solution to the system \eqref{spectal-m=1}  and $\widehat{ Q}_r$ be a $H^2$ solution to the equation \eqref{spectral-proof-2} with the corresponding boundary conditions, then it holds that 
	\begin{equation}
	\begin{split}
	&\|(\xi_t,\eta_t,\zeta_t)(t)\|_1^2+\|(\xi_t,\eta_t,\zeta_t)(t)\|_2^2+\|\partial_{tt}(\xi,\eta,\zeta)(t)\|_1^2\\
	&\quad\leq Ce^{2\Lambda t} \bigg(\|(\xi_t,\eta_t,\zeta_t)(0)\|_1^2+\|(\xi_t,\eta_t,\zeta_t)(0)\|_2^2
	+\|\partial_{tt}(\xi_,\eta,\zeta)(0)\|_1^2+\|\widehat{ Q}_{rt}(0)\|_{H^1}^2\bigg).
	\end{split}
	\end{equation}
\end{thm}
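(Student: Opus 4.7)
The plan is to close a Gr\"onwall-type differential inequality at rate $2\Lambda$ by combining the $\partial_{tt}$-level energy identity from Lemma~\ref{estimate3-lem} with the spectral bound from Lemma~\ref{estimate4-lem}. Throughout, I write $K_n(t) = \|\sqrt{\rho}\,\partial_t^n g(t)\|_{L^2}^2$, $D_n(t) = \|\partial_t^n g(t)\|_2^2$, and $\tilde{P}_n$ for the bracketed potential appearing on the right-hand side of \eqref{esimate3-growth} (which at level $n=1$ also contains the vacuum contribution $\int_{r_0}^{r_w} [|\widehat Q_{rt}|^2 + \tfrac{1}{m^2+k^2r^2}|(r\widehat Q_{rt})'|^2]r\,dr$).

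First I would integrate \eqref{esimate3-growth} in time, obtaining the conservation-type identity
\begin{equation*}
K_2(t) + \tilde{P}_1(t) + 2\int_0^t D_2(s)\,ds = K_2(0) + \tilde{P}_1(0).
\end{equation*}
Dropping the non-negative dissipation integral and invoking Lemma~\ref{estimate4-lem} in the form $-\tilde{P}_1(t) \leq \Lambda^2 K_1(t) + \Lambda D_1(t)$ yields the pointwise bound
\begin{equation*}
K_2(t) \leq C_0 + \Lambda^2 K_1(t) + \Lambda D_1(t), \qquad C_0 := K_2(0) + \tilde{P}_1(0).
\end{equation*}
In parallel, I would derive the base-level analogue of Lemma~\ref{estimate3-lem} by testing \eqref{linear-perturbation-and-boundary-viscosity} against $g_t$ (rather than its time derivative against $g_{tt}$), producing $\frac{d}{dt}(K_1 + \tilde{P}_0) + 2D_1 = 0$. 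The base-level version of Lemma~\ref{estimate4-lem}, whose proof is identical and relies only on the definition \eqref{biggest-mode} of $\Lambda$, then controls $-\tilde{P}_0$ by $\Lambda^2 K_0 + \Lambda D_0$.

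Combining these two pairs of inequalities with the elementary Cauchy--Schwarz bound $\frac{d}{dt}K_1 = 2(\rho g_t, g_{tt}) \leq 2\sqrt{K_1 K_2}$, I would construct a composite Lyapunov functional of the form
\begin{equation*}
\mathcal{L}(t) = K_2(t) + \tilde{P}_1(t) + 2\Lambda\bigl(K_1(t) + \tilde{P}_0(t)\bigr) + \alpha\bigl(\Lambda^2 K_1(t) + \Lambda D_1(t)\bigr)
\end{equation*}
for a suitable $\alpha > 0$. Differentiating and using the two energy identities contributes $-2D_2 - 4\Lambda D_1$, which for $\alpha$ chosen large enough should dominate the cross term $2\Lambda\sqrt{K_1 K_2}$ arising from the Cauchy--Schwarz bound on $\frac{d}{dt}K_1$ (after splitting $\sqrt{K_2} \leq \sqrt{C_0} + \Lambda\sqrt{K_1} + \sqrt{\Lambda D_1}$ via Lemma~\ref{estimate4-lem} and applying Young's inequality). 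This should deliver the key inequality $\mathcal{L}'(t) \leq 2\Lambda\,\mathcal{L}(t)$; Gr\"onwall then gives $\mathcal{L}(t) \leq e^{2\Lambda t}\mathcal{L}(0)$, and comparing $\mathcal{L}$ from above and below with $K_1 + K_2 + D_1$ (using Lemma~\ref{estimate4-lem} in both directions to absorb the potential terms up to a constant multiple of the initial data) produces the stated exponential growth bound.

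The main obstacle is the absorption step in Step~3: verifying that the coefficients in $\mathcal{L}$ can be tuned so that the viscous dissipations $-2D_2 - 4\Lambda D_1$ really do swallow the Cauchy--Schwarz cross terms without degrading the rate beyond $2\Lambda$. A secondary technical point is to justify the base-level analogue of Lemma~\ref{estimate4-lem} in the functional-space setting used here, and to verify that the boundary contributions in \eqref{esimate3-growth}---which drop out thanks to the boundary conditions \eqref{spectral-proof-3b}--\eqref{spectral-proof-4} and the definition of $Y_{m,k}$---remain controlled after differentiating in time.
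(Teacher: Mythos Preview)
Your high-level plan is right---combine the $\partial_{tt}$-energy identity (Lemma~\ref{estimate3-lem}) with the spectral bound (Lemma~\ref{estimate4-lem}) and close by Gr\"onwall at rate $2\Lambda$---but the specific Lyapunov you propose has a genuine gap. Including the term $2\Lambda(K_1+\tilde P_0)$ drags in the level-$0$ potential $\tilde P_0$, and this is problematic in two places. First, $\mathcal L(0)$ then contains $\tilde P_0(0)$, which depends on $g(0)$ and $\widehat Q_r(0)$; neither quantity appears among the initial data allowed by the theorem (only $g_t(0),g_{tt}(0),\widehat Q_{rt}(0)$ are given). Second, when you ``compare $\mathcal L$ from below,'' the base-level analogue of Lemma~\ref{estimate4-lem} gives $\tilde P_0\geq -\Lambda^2 K_0-\Lambda D_0$, introducing the uncontrolled time-dependent quantities $K_0(t),D_0(t)$. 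So the base-level identity, far from helping, pollutes the estimate with level-$0$ terms you cannot close on.

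The paper's proof never touches $\tilde P_0$ or the base-level identity. After integrating \eqref{esimate3-growth} and applying Lemma~\ref{estimate4-lem} to obtain $K_2+2\int_0^t D_2\leq 2N_0+\Lambda^2 K_1+\Lambda D_1$, it combines the two elementary Cauchy--Schwarz estimates $\Lambda\partial_t K_1\leq \Lambda^2 K_1+K_2$ and $\Lambda D_1\leq \Lambda D_1(0)+\int_0^t D_2+\Lambda^2\int_0^t D_1$ to derive a closed differential inequality for $K_1+\int_0^t D_1$ (note: time-integrated dissipation, not pointwise $D_1$). Gr\"onwall on this quantity gives the rate $2\Lambda$, and $K_2$ and pointwise $D_1$ are then recovered a posteriori from the same ingredients. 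Your Lyapunov idea can in fact be salvaged simply by dropping the $\tilde P_0$ piece: with $\mathcal L=K_2+\tilde P_1+2\Lambda^2 K_1+2\Lambda D_1$, the absorption you were worried about goes through exactly (the Young splittings $4\Lambda^2\sqrt{K_1K_2}\leq 2\Lambda K_2+2\Lambda^3 K_1$ and $4\Lambda\sqrt{D_1D_2}\leq 2D_2+2\Lambda^2 D_1$ combine with Lemma~\ref{estimate4-lem} to give $\mathcal L'\leq 2\Lambda\mathcal L$), the lower bound $\mathcal L\geq K_2+\Lambda^2 K_1+\Lambda D_1$ follows directly from Lemma~\ref{estimate4-lem}, and $\mathcal L(0)$ is controlled by the stated data.
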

\begin{proof}
Integrating \eqref{esimate3-growth} of Lemma \ref{estimate3-lem} in time over $[0,t]$, we get
\begin{equation}
\begin{split}
&\frac{1}{2}\int_0^{r_0} \rho (|\xi_{tt}|^2+|\eta_{tt}|^2+|\zeta_{tt}|^2)rdr+\int_0^t\int_0^{r_0}\varepsilon\Big[\frac{2}{9}\Big(-2\xi'_{tt}+\frac{\xi_{tt}}{r}+\frac{m}{r}\zeta_{tt}-k\eta_{tt}\Big)^2\\
&\qquad+\frac{2}{9}\Big(\xi'_{tt}-\frac{2\xi_{tt}}{r}-\frac{2m}{r}\zeta_{tt}-k\eta_{tt}\Big)^2+\frac{2}{9}\Big(\xi'_{tt}+\frac{\xi_{tt}}{r}+\frac{m}{r}\zeta_{tt}+2k\eta_{tt}\Big)^2\\
&\qquad+\Big(-\zeta'_{tt}+\frac{\zeta_{tt}}{r}+\frac{m}{r}\xi_{tt}\Big)^2+(\eta'_{tt}+k\xi_{tt})^2+(\frac{m}{r}\eta_{tt}-k\zeta_{tt})^2\Big]rdrds\\
&\qquad+\int_0^t\int_0^{r_0}\delta\Big(\xi'_{tt}+\frac{\xi_{tt}}{r}+\frac{m}{r}\zeta_{tt}-k\eta_{tt}\Big)^2rdrds\\
&\quad=N_0-\frac{1}{2}\bigg\{\int_0^{r_0}\Big\{(m^2+k^2r^2)\Big[\frac{B_\theta}{r}\eta_t+\frac{-kB_\theta(r\xi_t)'+2kB_{\theta}\xi_t}{m^2+k^2r^2}\Big]^2\\
&\qquad+\gamma p\Big[\frac{1}{r}(r\xi_t)'-k\eta_t+\frac{m\zeta_t}{r}\Big]^2\Big\}rdr+\int_0^{r_0}\frac{m^2B_\theta^2}{r(m^2+k^2r^2)}(\xi_t-r\xi'_t)^2dr\\
&\qquad+\int_0^{r_0}\Big[2p'+\frac{m^2B_{\theta}^2}{r}\Big]\xi_t^2dr+\int_{r_0}^{r_w}\Big[|\widehat{Q}_{rt}|^2+\frac{1}{m^2+k^2r^2}|(r\widehat{Q}_{rt})'|^2\Big]rdr\bigg\},
\end{split}
\end{equation}
with 
\begin{equation}\label{N-0}
\begin{split}
N_0&=\frac{1}{2}\int_0^{r_0} \rho (|\xi_{tt}(0)|^2+|\eta_{tt}(0|^2+|\zeta_{tt}(0)|^2)rdr+\frac{1}{2}\bigg\{\int_0^{r_0}\Big\{(m^2+k^2r^2)\Big[\frac{B_\theta}{r}\eta_t(0)\\
&\quad+\frac{-kB_\theta(r\xi_t)'(0)+2kB_{\theta}\xi_t(0)}{m^2+k^2r^2}\Big]^2+\gamma p\Big[\frac{1}{r}(r\xi_t)'(0)-k\eta_t(0)+\frac{m\zeta_t(0)}{r}\Big]^2\Big\}rdr\\
&\quad+\int_0^{r_0}\frac{m^2B_\theta^2}{r(m^2+k^2r^2)}(\xi_t(0)-r\xi'_t(0))^2dr+\int_0^{r_0}\Big[2p'+\frac{m^2B_{\theta}^2}{r}\Big]\xi_t^2(0)dr\\
&\quad+\int_{r_0}^{r_w}\Big[|\widehat{Q}_{rt}(0)|^2+\frac{1}{m^2+k^2r^2}|(r\widehat{Q}_{rt})'(0)|^2\Big]rdr\bigg\}.
\end{split}
\end{equation}
Applying Lemma \ref{estimate4-lem}, we show that
\begin{equation}\label{cor-lem4.4}
\begin{split}
&\frac{1}{2}\int_0^{r_0} \rho (|\xi_{tt}|^2+|\eta_{tt}|^2+|\zeta_{tt}|^2)rdr+\int_0^t\int_0^{r_0}\varepsilon\Big[\frac{2}{9}\Big(-2\xi'_{tt}+\frac{\xi_{tt}}{r}+\frac{m}{r}\zeta_{tt}-k\eta_{tt}\Big)^2\\
&\qquad+\frac{2}{9}\Big(\xi'_{tt}-\frac{2\xi_{tt}}{r}-\frac{2m}{r}\zeta_{tt}-k\eta_{tt}\Big)^2+\frac{2}{9}\Big(\xi'_{tt}+\frac{\xi_{tt}}{r}+\frac{m}{r}\zeta_{tt}+2k\eta_{tt}\Big)^2\\
&\qquad+\Big(-\zeta'_{tt}+\frac{\zeta_{tt}}{r}+\frac{m}{r}\xi_{tt}\Big)^2+(\eta'_{tt}+k\xi_{tt})^2
+(\frac{m}{r}\eta_{tt}-k\zeta_{tt})^2\Big]rdrds\\
&\qquad+\int_0^t\int_0^{r_0}\delta\Big(\xi'_{tt}+\frac{\xi_{tt}}{r}+\frac{m}{r}\zeta_{tt}-k\eta_{tt}\Big)^2rdrds\\
&\quad\leq N_0+ \frac{1}{2}\Lambda^2\int_0^{r_0} \rho (|\xi_{t}|^2+|\eta_{t}|^2+|\zeta_{t}|^2)rdr\\
&\qquad+\frac{1}{2}\Lambda\int_0^{r_0}\varepsilon\Big[\frac{2}{9}\Big(-2\xi'_{t}+\frac{\xi_{t}}{r}+\frac{m}{r}\zeta_{t}-k\eta_{t}\Big)^2\\
&\qquad+\frac{2}{9}\Big(\xi'_{t}-\frac{2\xi_{t}}{r}-\frac{2m}{r}\zeta_{t}-k\eta_{t}\Big)^2+\frac{2}{9}\Big(\xi'_{t}+\frac{\xi_{t}}{r}+\frac{m}{r}\zeta_{t}+2k\eta_{t}\Big)^2\\
&\qquad+\Big(-\zeta'_{t}+\frac{\zeta_{t}}{r}+\frac{m}{r}\xi_{t}\Big)^2+(\eta'_{t}+k\xi_{t})^2
+(\frac{m}{r}\eta_{t}-k\zeta_{t})^2\Big]rdr\\
&\qquad+\frac{1}{2}\Lambda\int_0^{r_0}\delta\Big(\xi'_{t}+\frac{\xi_{t}}{r}+\frac{m}{r}\zeta_{t}-k\eta_{t}\Big)^2rdr.
\end{split}
\end{equation}
Using the definitions of the norms $\|\cdot \|_1$ and $\|\cdot\|_2$ given by \eqref{norm-1}-\eqref{norm-2} in the introduction, we prove from  \eqref{cor-lem4.4} that
\begin{equation}\label{growth-1}
\begin{split}
\frac{1}{2}\|\partial_{tt}(\xi,\eta,\zeta)\|_1^2+\int_0^t\|(\partial_{tt}(\xi,\eta,\zeta
))\|_2^2ds\leq N_0+\frac{\Lambda^2}{2}\|(\xi_t,\eta_t,\zeta_t)\|_1^2+\frac{\Lambda}{2}\|(\xi_t,\eta_t,\zeta_t)\|_2^2.
\end{split}
\end{equation}
Integrating in time and using Cauchy inequality, we obtain
\begin{equation}\label{growth-2}
\begin{split}
\Lambda\|(\xi_t,\eta_t,\zeta_t)\|_2^2&=\Lambda\|(\xi_t,\eta_t,\zeta_t)(0)\|_2^2+\Lambda\int_0^t2<(\xi_t,\eta_t,\zeta_t), \partial_{tt}(\xi,\eta,\zeta)>_2ds\\&\leq \Lambda\|(\xi_t,\eta_t,\zeta_t)(0)\|_2^2+\int_0^t\|\partial_{tt}(\xi,\eta,\zeta
)\|_2^2ds+\Lambda^2\int_0^t\|(\xi_t,\eta_t,\zeta_t
)\|_2^2ds,
\end{split}
\end{equation}
\begin{equation}\label{growth-3}
\Lambda\partial_t\|(\xi_t,\eta_t,\zeta_t)\|_1^2=2\Lambda<\partial_{tt}(\xi,\eta,\zeta),\partial_{t}(\xi,\eta,\zeta)>_1\leq \Lambda^2\|(\xi_t,\eta_t,\zeta_t)\|_1^2+\|\partial_{tt}(\xi,\eta,\zeta)\|_1^2.
\end{equation}
Combining estimates \eqref{growth-1}, \eqref{growth-2}  and \eqref{growth-3}, we deduce that 
\begin{equation}
\begin{split}
&\partial_t\|(\xi_t,\eta_t,\zeta_t)\|_1^2+\|(\xi_t,\eta_t,\zeta_t)\|_2^2\\
&\leq  \|(\xi_t,\eta_t,\zeta_t)(0)\|_2^2+\frac{2}{\Lambda}\int_0^t\|\partial_{tt}(\xi,\eta,\zeta
)\|_2^2ds+\Lambda\int_0^t\|(\xi_t,\eta_t,\zeta_t\|_2^2ds+\Lambda\|(\xi_t,\eta_t,\zeta_t)\|_1^2
\\
&\quad+\frac{1}{\Lambda}\|\partial_{tt}(\xi,\eta,\zeta)\|_1^2-\frac{1}{\Lambda}\int_0^t\|\partial_{tt}(\xi,\eta,\zeta
)\|_2^2ds\\
&\leq \frac{2N_0}{\Lambda}+\Lambda\|(\xi_t,\eta_t,\zeta_t)\|_1^2+\|(\xi_t,\eta_t,\zeta_t)\|_2^2+\|(\xi_t,\eta_t,\zeta_t)(0)\|_2^2\\
&\quad+\Lambda\int_0^t\|(\xi_t,\eta_t,\zeta_t\|_2^2ds
+\Lambda\|(\xi_t,\eta_t,\zeta_t)\|_1^2-\frac{1}{\Lambda}\int_0^t\|\partial_{tt}(\xi,\eta,\zeta
)\|_2^2ds\\
&\leq \frac{2N_0}{\Lambda}+2\|(\xi_t,\eta_t,\zeta_t)(0)\|_2^2+2\Lambda\|(\xi_t,\eta_t,\zeta_t)\|_1^2+2\Lambda\int_0^t\|(\xi_t,\eta_t,\zeta_t)\|_2^2ds,
\end{split}
\end{equation} 
where we have used the facts
 \begin{equation}
\|(\xi_t,\eta_t,\zeta_t)\|_2^2\leq \|(\xi_t,\eta_t,\zeta_t)(0)\|_2^2+\frac{1}{\Lambda}\int_0^t\|\partial_{tt}(\xi,\eta,\zeta
)\|_2^2ds+\Lambda\int_0^t\|(\xi_t,\eta_t,\zeta_t)\|_2^2ds.
\end{equation}
It follows from the Gronwall's inequality that 
\begin{equation}\label{growth-4}
\begin{split}
&\|(\xi_t,\eta_t,\zeta_t)\|_1^2+\int_0^t\|(\xi_t,\eta_t,\zeta_t)\|_2^2ds\\
&\quad\leq e^{2\Lambda t}\|(\xi_t,\eta_t,\zeta_t)(0)\|_1^2+\Big(\frac{N_0}{\Lambda^2}
+\frac{\|(\xi_t,\eta_t,\zeta_t)(0)\|_2^2}{\Lambda}\Big)(e^{2\Lambda t}-1).
\end{split}
\end{equation} 
From \eqref{growth-1}, we know that
\begin{equation}
\begin{split}
\frac{1}{\Lambda}\|\partial_{tt}(\xi,\eta,\zeta)\|_1^2+\frac{2}{\Lambda}\int_0^t\|(\partial_{tt}(\xi,\eta,\zeta
))\|_2^2ds\leq \frac{2N_0}{\Lambda}+\Lambda\|(\xi_t,\eta_t,\zeta_t)\|_1^2+\|(\xi_t,\eta_t,\zeta_t)\|_2^2.
\end{split}
\end{equation}
Therefore, 
\begin{equation}
\begin{split}
&\frac{1}{\Lambda}\|\partial_{tt}(\xi,\eta,\zeta)\|_1^2+\|(\xi_t,\eta_t,\zeta_t)\|_2^2\\
&\quad\leq -\frac{2}{\Lambda}\int_0^t\|(\partial_{tt}(\xi,\eta,\zeta
))\|_2^2ds+ \frac{2N_0}{\Lambda}+\Lambda\|(\xi_t,\eta_t,\zeta_t)\|_1^2+2\|(\xi_t,\eta_t,\zeta_t)\|_2^2,
\end{split}
\end{equation}
which together with \eqref{growth-2} gives that
\begin{equation}
\begin{split}
&\frac{1}{\Lambda}\|\partial_{tt}(\xi,\eta,\zeta)\|_1^2+\|(\xi_t,\eta_t,\zeta_t)\|_2^2\\
&\quad\leq -\frac{2}{\Lambda}\int_0^t\|(\partial_{tt}(\xi,\eta,\zeta
))\|_2^2ds+ \frac{2N_0}{\Lambda}+\Lambda\|(\xi_t,\eta_t,\zeta_t)\|_1^2\\
&\qquad+2\|(\xi_t,\eta_t,\zeta_t)(0)\|_2^2+\frac{2}{\Lambda}\int_0^t\|\partial_{tt}(\xi,\eta,\zeta
)\|_2^2ds+2\Lambda\int_0^t\|(\xi_t,\eta_t,\zeta_t
)\|_2^2ds\\
&\quad= \frac{2N_0}{\Lambda}+\Lambda\|(\xi_t,\eta_t,\zeta_t)\|_1^2+2\|(\xi_t,\eta_t,\zeta_t)(0)\|_2^2+2\Lambda\int_0^t\|(\xi_t,\eta_t,\zeta_t
)\|_2^2ds\\
&\quad\leq e^{2\Lambda t}\Big(2\Lambda\|(\xi_t,\eta_t,\zeta_t)(0)\|_1^2+\frac{2N_0}{\Lambda}+2\|(\xi_t,\eta_t,\zeta_t)(0)\|_2^2\Big).
\end{split}
\end{equation}
From the definitions of $N_0$ in \eqref{N-0}, it follows that
\begin{equation}
\begin{split}
N_0\leq C\Big(\|(\xi_t,\eta_t,\zeta_t)(0)\|_1^2+\|(\xi_t,\eta_t,\zeta_t)(0)\|_2^2+\|\partial_{tt}(\xi,\eta,\zeta)(0)\|_1^2+\|\widehat{ Q}_{rt}(0)\|_{H^1}^2\Big),
\end{split}
\end{equation}
which implies the result.
\end{proof}
\subsection{Exponential growth  about original notation $g$}
In this subsection, we will prove the exponential growth about solution $g$. First, let us introduce the basic estimates about the solution $g$.
	\begin{lem}\label{vec-xi-grow-lem-first}
	Assume $g$ is a $H^2$ solution  to the system \eqref{linear-perturbation-and-boundary-viscosity} with the corresponding jump and boundary conditions, then we can get
		\begin{equation}\label{vec-xi-t-est}
		\begin{split}
		&\int_0^t\int_{\overline{\Omega}} \Big[\varepsilon\Big|\nabla g_{t}+\nabla g_{t}^T-\frac{2}{3}\mbox{div} g_{t} \, \mathbb{I}\Big|^2+2\delta |\mbox{div} g_{t}|^2\Big]dxds+\int_{\overline{\Omega}} \Big[|Q|^2+\gamma p|\nabla\cdot g |^2\Big]dx\\
		&\quad+\int_{\overline{\Omega}} \Big[(\nabla\times B)\cdot (g^*\times Q)+\nabla\cdot g(g^*\cdot \nabla p)\Big]dx+\int_{\overline{\Omega}^v}|\widehat{Q}|^2dx+\|\sqrt{\rho}g_{t}\|^2_{L^2}\\
		&=\int_{\overline{\Omega}} \Big[|Q_0|^2+\gamma p|\nabla\cdot g_0 |^2\Big]dx
		+\int_{\overline{\Omega}} \Big[(\nabla\times B)\cdot (g_0^*\times Q_0)+\nabla\cdot g_0(g^*_0\cdot \nabla p)\Big]dx\\
		&\quad+\int_{\overline{\Omega}^v}|\widehat{Q}_0|^2dx+\|\sqrt{\rho}g_{0t}\|^2_{L^2},
		\end{split}
		\end{equation}	
		with $Q=\nabla\times (g\times B)$.
	\end{lem}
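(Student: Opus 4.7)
The plan is to obtain this identity by testing the linearized momentum equation \eqref{linear-perturbation-and-boundary-viscosity}$_2$ against $g_t$ and recycling the symmetric bilinear form already established in Lemma \ref{joint}. The inertial term produces the kinetic energy via $\int_{\overline{\Omega}}\rho g_{tt}\cdot g_t\,dx=\frac{1}{2}\frac{d}{dt}\|\sqrt{\rho}g_t\|_{L^2}^2$, so the entire content lies in reshaping $\int_{\overline{\Omega}}g_t\cdot F(g)\,dx$ into a single exact time derivative plus a viscous dissipation term.

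To do this, I would apply Lemma \ref{joint} with the choice $h=g_t$, setting $R=\nabla\times(g_t\times B)$ and letting $\widehat{R}$ be the vacuum extension associated with $g_t$. The crucial observation is that the steady-state fields $B$, $p$, $\widehat{B}$ are time-independent, so $R=\partial_t Q$, $\widehat{R}=\partial_t\widehat{Q}$, and thus
\begin{equation*}
\gamma p\,\nabla\!\cdot\! g\,\nabla\!\cdot\! g_t = \tfrac{1}{2}\partial_t(\gamma p|\nabla\!\cdot\! g|^2),\quad Q\cdot R=\tfrac{1}{2}\partial_t|Q|^2,\quad \widehat{Q}\cdot\widehat{R}=\tfrac{1}{2}\partial_t|\widehat{Q}|^2,
\end{equation*}
while the mixed terms regroup as $\tfrac{1}{2}\nabla p\cdot(g\nabla\!\cdot\! g_t+g_t\nabla\!\cdot\! g)=\tfrac{1}{2}\partial_t(\nabla\!\cdot\! g\,(g^*\cdot\nabla p))$ and $\tfrac{1}{2}\nabla\times B\cdot(g\times R+g_t\times Q)=\tfrac{1}{2}\partial_t(\nabla\times B\cdot(g^*\times Q))$. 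The viscous contribution $-\int_{\overline{\Omega}}\mathbb{S}(g_t):\nabla g_t\,dx$ is, after using that $\mathbb{D}^0:\mathbb{I}=0$, exactly $-\tfrac{1}{2}\int_{\overline{\Omega}}\bigl[\varepsilon|\nabla g_t+\nabla g_t^T-\tfrac{2}{3}\mbox{div}\,g_t\,\mathbb{I}|^2+2\delta|\mbox{div}\,g_t|^2\bigr]dx$, which will survive as the time-integrated dissipation after integration over $[0,t]$.

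The remaining surface contribution $-\tfrac{1}{2}\frac{d}{dt}\int_{\Sigma_{0,pv}}|n\cdot g|^2\,n\cdot[[\nabla(p+\tfrac{1}{2}|B|^2)]]\,dx$ must then be shown to vanish. I would argue, as in the computation leading to \eqref{surface-en-cyl-viscosity} and the identity $[\widehat{B}_\theta^2-B_\theta^2]_{r=r_0}=0$ from Proposition \ref{steady-lem}, that for the $z$-pinch equilibrium the normal jump of $\nabla(p+\tfrac{1}{2}|B|^2)$ across $\Sigma_{0,pv}$ vanishes; this makes the surface integral identically zero and removes the need to carry it through.

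Assembling these pieces yields a single identity
\begin{equation*}
\tfrac{1}{2}\frac{d}{dt}\Bigl\{\|\sqrt{\rho}g_t\|_{L^2}^2+\!\int_{\overline{\Omega}}[|Q|^2+\gamma p|\nabla\!\cdot\! g|^2+(\nabla\times B)\!\cdot\!(g^*\!\times\! Q)+\nabla\!\cdot\! g\,(g^*\!\cdot\nabla p)]dx+\!\int_{\overline{\Omega}^v}\!|\widehat{Q}|^2dx\Bigr\}
\end{equation*}
equals minus half of the viscous dissipation integrand; integrating from $0$ to $t$ and multiplying by $2$ produces exactly \eqref{vec-xi-t-est}. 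The main technical obstacle I anticipate is the careful bookkeeping of the boundary terms arising from integration by parts for the viscous stress (the $n\cdot\mathbb{S}(g_t)\cdot g_t$ piece on $\Sigma_{0,pv}$), which must be shown to combine cleanly with the magnetic jump condition \eqref{linear-perturbation-and-boundary-viscosity}$_6$ so that no extra boundary contributions remain; this step requires plugging the boundary condition involving $[[(-\gamma p\nabla\!\cdot\! g+B\cdot Q+g\cdot\nabla(\tfrac{1}{2}|B|^2))\mathbb{I}-\varepsilon(\nabla v+\nabla v^T)-(\delta-\tfrac{2}{3}\varepsilon)\mbox{div}\,v\,\mathbb{I}]n$ into the surface integral exactly as was done in \eqref{tranf-sur} of Lemma \ref{joint}.
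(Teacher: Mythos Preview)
Your proposal is correct and follows essentially the same route as the paper: multiply \eqref{linear-perturbation-and-boundary-viscosity}$_2$ by $g_t$, invoke the bilinear identity of Lemma~\ref{joint} with $h=g_t$ so that the non-viscous pairing becomes an exact time derivative, identify the viscous piece as pure dissipation, and integrate in time. Your extra care in verifying that the surface term vanishes via $[\widehat B_\theta^2-B_\theta^2]_{r=r_0}=0$ and in tracking the boundary contribution from $\mathbb{S}(g_t)n$ through \eqref{tranf-sur} is exactly what the paper's terse phrase ``similarly as the proof of Lemma~\ref{joint}'' is pointing to; note also that the paper's sentence ``Taking the time derivation about the system'' appears to be a slip, since no preliminary time differentiation is needed here (unlike in the next lemma).
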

	\begin{proof}
		Taking the time derivation about the system \eqref{linear-perturbation-and-boundary-viscosity}$_2$, then multiplying the resulted equation by $g_{t}$, similarly as the proof of Lemma \ref{joint},  we can show 
		\begin{equation*}
		\begin{split}
		& \frac{1}{2}\frac{d}{dt}\|\sqrt{\rho}g_{t}\|^2_{L^2}=-\frac{1}{2}\frac{d}{dt}\int_{\overline{\Omega}^v}|\widehat{Q}|^2dx-\frac{1}{2}\frac{d}{dt}\int_{\overline{\Omega}} \Big[|Q|^2+\gamma p|\nabla\cdot g |^2\Big]dx\\
		&\quad-\frac{1}{2}\frac{d}{dt}\int_{\overline{\Omega}} \Big[(\nabla\times B)\cdot (g^*\times Q)+\nabla\cdot g(g^*\cdot \nabla p)\Big]dx\\
		&\quad-\frac{1}{2}\int_{\overline{\Omega}} \Big[\varepsilon\Big|\nabla g_{t}+\nabla g_{t}^T-\frac{2}{3}\mbox{div} g_{t} \, \mathbb{I}\Big|^2+2\delta |\mbox{div} g_{t}|^2\Big]dx.
		\end{split}
		\end{equation*}
		Integrating the above equality about time, we have \eqref{vec-xi-t-est}.
\end{proof}		
	\begin{lem}\label{vec-xi-grow-lem}
	Assume $g$ is a $H^2$ solution to the  system  \eqref{linear-perturbation-and-boundary-viscosity} with the corresponding jump and boundary conditions, we can get
		\begin{equation}\label{vec-xi-grow}
		\begin{split}
		& \frac{1}{2}\frac{d}{dt}\|\sqrt{\rho}g_{tt}\|^2_{L^2}
		=-\frac{1}{2}\frac{d}{dt}\int_{\overline{\Omega}^v}|\widehat{Q}_t|^2dx-\frac{1}{2}\frac{d}{dt}\int_{\overline{\Omega}} \Big[|Q_t|^2+\gamma p|\nabla\cdot g_t |^2\Big]dx\\
		&\quad-\frac{1}{2}\frac{d}{dt}\int_{\overline{\Omega}} \Big[(\nabla\times B)\cdot (g^*_t\times Q_t)+\nabla\cdot g_t(g^*_t\cdot \nabla p)\Big]dx\\
		&\quad-\frac{1}{2}\int_{\overline{\Omega}} \Big[\varepsilon\Big|\nabla g_{tt}+\nabla g_{tt}^T-\frac{2}{3}\mbox{div} g_{tt} \, \mathbb{I}\Big|^2+2\delta |\mbox{div} g_{tt}|^2\Big]dx,
		\end{split}
		\end{equation}
		with $Q_t=\nabla\times (g_t\times B)$.
	\end{lem}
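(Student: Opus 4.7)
The plan is to mimic the proof of Lemma \ref{vec-xi-grow-lem-first}, but applied one time-derivative higher. Since the equilibrium coefficients $p$, $B$, $\widehat{B}$ and the density $\rho$ in \eqref{linear-perturbation-and-boundary-viscosity} are time-independent, differentiating the linearized momentum equation once in $t$ yields $\rho\,g_{ttt}=F_1(g_t)+\mathrm{div}\,\mathbb{S}(g_{tt})$, where $F_1$ denotes the ideal (non-viscous) part of the force operator in \eqref{force-operator}. The vacuum div-curl system, the wall condition $n\cdot\widehat{Q}=0$ on $\Sigma_w$, the matching $n\cdot\nabla\times(g\times\widehat{B})=n\cdot\widehat{Q}$ on $\Sigma_{0,pv}$, and the viscous jump condition all carry over to $(g_t,\widehat{Q}_t)$ with the viscous ``velocity'' promoted from $g_t$ to $g_{tt}$, so that $(g_t,g_{tt},\widehat{Q}_t,\widehat{Q}_{tt})$ solves a system of exactly the same form as $(g,g_t,\widehat{Q},\widehat{Q}_t)$. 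The reasoning behind \eqref{vec-xi-t-est} therefore applies one derivative higher.

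Concretely, I would take the $L^2(\overline{\Omega})$-inner product of $\rho\,g_{ttt}=F_1(g_t)+\mathrm{div}\,\mathbb{S}(g_{tt})$ with $g_{tt}$. The left-hand side is $\tfrac{1}{2}\frac{d}{dt}\|\sqrt{\rho}\,g_{tt}\|_{L^2}^2$. For the ideal part I would invoke Lemma \ref{joint}, which is symmetric in its two arguments when $\varepsilon=\delta=0$, with $(g,h)=(g_t,g_{tt})$; this rewrites $\int_{\overline{\Omega}}g_{tt}\cdot F_1(g_t)\,dx$ as a sum of bilinear pairings of $\nabla\cdot g_t$, $Q_t$, $g_t$ against their time derivatives, plus a vacuum piece $-\int_{\overline{\Omega}^v}\widehat{Q}_t\cdot\widehat{Q}_{tt}\,dx$ and an interface piece $-\int_{\Sigma_{0,pv}}n\cdot g_t\,n\cdot g_{tt}\,n\cdot[[\nabla(p+\tfrac{1}{2}|B|^2)]]\,dx$. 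Each symmetric pairing of a quantity with its $\partial_t$ equals $\tfrac{1}{2}\partial_t$ of the corresponding square, producing the $-\tfrac{1}{2}\frac{d}{dt}$ of $|Q_t|^2+\gamma p|\nabla\cdot g_t|^2+(\nabla\times B)\cdot(g_t^*\times Q_t)+(\nabla\cdot g_t)(g_t^*\cdot\nabla p)$ and of $|\widehat{Q}_t|^2$ that appear in \eqref{vec-xi-grow}. The interface integral vanishes: for the $z$-pinch equilibrium of Proposition \ref{steady-lem}, a direct computation using $\widehat{B}_\theta(r_0)=B_\theta(r_0)$ and the equilibrium ODE $p'+B_\theta B_\theta'+B_\theta^2/r=0$ gives $n\cdot[[\nabla(p+\tfrac{1}{2}|B|^2)]]=0$ at $r=r_0$, which is precisely why no surface term is present in \eqref{vec-xi-t-est}.

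For the viscous contribution, I would integrate by parts: $\int_{\overline{\Omega}}g_{tt}\cdot\mathrm{div}\,\mathbb{S}(g_{tt})\,dx=-\int_{\overline{\Omega}}\mathbb{S}(g_{tt}):\nabla g_{tt}\,dx+\int_{\Sigma_{0,pv}}g_{tt}\cdot\mathbb{S}(g_{tt})n\,dx$, and the algebraic identity $\mathbb{S}(g_{tt}):\nabla g_{tt}=\tfrac{\varepsilon}{2}|\nabla g_{tt}+\nabla g_{tt}^T-\tfrac{2}{3}\mathrm{div}\,g_{tt}\,\mathbb{I}|^2+\delta|\mathrm{div}\,g_{tt}|^2$ delivers the final dissipation integral in \eqref{vec-xi-grow}. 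The remaining boundary contributions from both the ideal and viscous pieces combine via the time-differentiated viscous jump condition and the identity $-\int_{\Sigma_{0,pv}}n\cdot g_{tt}\,\widehat{B}\cdot\widehat{Q}_t\,dx=-\int_{\overline{\Omega}^v}\widehat{Q}_t\cdot\widehat{Q}_{tt}\,dx$ (the $\partial_t$-version of the identity from Lemma 5.3 of \cite{bian-guo-tice-inviscid}), recovering the vacuum term. The only real technical point is this boundary bookkeeping; because the system is linear and the equilibrium is time-independent, $\partial_t$ commutes with all jump and boundary conditions, so the reduction performed for \eqref{vec-xi-t-est} transfers cleanly to the $(g_t,g_{tt})$-level and produces \eqref{vec-xi-grow}.
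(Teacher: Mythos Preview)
Your proposal is correct and follows exactly the approach the paper uses: differentiate the linearized momentum equation \eqref{linear-perturbation-and-boundary-viscosity}$_2$ in time, test against $g_{tt}$, and invoke the symmetric-form computation of Lemma~\ref{joint} (together with the vanishing of the surface term for the $z$-pinch equilibrium) to arrive at \eqref{vec-xi-grow}. The paper's own proof consists of a single sentence to this effect, so your write-up simply fleshes out the boundary bookkeeping that the paper leaves implicit.
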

	\begin{proof}
		Taking the time derivation about the system \eqref{linear-perturbation-and-boundary-viscosity}$_2$, then multiplying the resulted equation by $g_{tt}$,  similarly as the proof of Lemma \ref{joint},  we can show \eqref{vec-xi-grow}.
	\end{proof}
	\begin{lem}\label{vec-xi-grow-energy}
	Assume that $\|g\|_1$ and $\|g\|_2$ are bounded with their definitions in \eqref{norm-1-xi}-\eqref{norm-2-xi}, and	assume $g$ is a $H^2$ solution to the system \eqref{linear-perturbation-and-boundary-viscosity}, 
we can get
		\begin{equation}\label{explain}
		\begin{split}
		&-\int_{\overline{\Omega}} \Big[|Q|^2+\gamma p|\nabla\cdot g |^2\Big]dx-\int_{\overline{\Omega}} \Big[(\nabla\times B)\cdot (g^*\times Q)+\nabla\cdot g(g^*\cdot \nabla p)\Big]dx-\int_{\overline{\Omega}^v}|\widehat{Q}|^2dx\\
		&\quad\leq \Lambda^2 \|\sqrt{\rho}g\|^2_{L^2}+\Lambda\int_{\overline{\Omega}} \Big[\frac{\varepsilon}{2}\Big|\nabla g+\nabla g^T-\frac{2}{3}\mbox{div}  g \, \mathbb{I}\Big|^2+\delta |\mbox{div} g|^2\Big]dx,
		\end{split}
		\end{equation}	
		with $Q=\nabla\times (g\times B)$.
	\end{lem}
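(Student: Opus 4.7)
The plan is to reduce \eqref{explain} to the mode-by-mode spectral bound that underlies Lemma \ref{estimate4-lem}, by Fourier decomposing in $(\theta,z)$ and summing. I would first expand
$$g(r,\theta,z) = \sum_{m,k\in\mathbb{Z}} g_{m,k}(r)\,e^{i(m\theta+kz)}, \qquad \widehat{Q}(r,\theta,z) = \sum_{m,k\in\mathbb{Z}} \widehat{Q}_{m,k}(r)\,e^{i(m\theta+kz)},$$
and for each mode introduce the real scalars $(\xi_{m,k},\eta_{m,k},\zeta_{m,k},\widehat{Q}_{r,m,k})$ via \eqref{defini-xi-eta-zeta}. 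By Plancherel and orthogonality in $\theta,z$, every quadratic integral in \eqref{explain} decouples into a sum over $(m,k)$ of radial integrals in $r$, matching the cylindrical expressions that appear in Lemma \ref{lem-e}.

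Next, I would identify each summand with the quantities constructed in Section \ref{sect-2}. Let $E^0_{m,k}$ denote the portion of $E^p_{m,k}$ in \eqref{fluid-en-cyl-viscosity} that does not carry $\tilde\varepsilon$ or $\tilde\delta$. Lemma \ref{lem-e} together with \eqref{vacuum-e-cyl-viscosity} yields, up to the standard $2\pi^2$ normalization,
$$\tfrac{1}{2}\bigl(\text{LHS of }\eqref{explain}\bigr) = -\sum_{m,k}\bigl(E^0_{m,k} + E^v_{m,k}\bigr),$$
while $\|\sqrt{\rho}g\|_{L^2}^2$ and $\|g\|_2^2$ reassemble, via \eqref{constraint} and \eqref{dissipation-rate}, as $\sum_{m,k}\mathcal{J}_{m,k}$ and $\sum_{m,k} D_{m,k}$. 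For each mode I would then pick the scale parameter $s = \mu_{m,k} := \sqrt{\max(0,-\lambda_{m,k})}$, with $\mu_{m,k}=0$ when the mode is linearly stable. This produces the decomposition $E_{m,k}[\phi;\mu_{m,k}] = E^0_{m,k}[\phi] + E^v_{m,k}[\phi] + \mu_{m,k}\,D_{m,k}[\phi]$, and the variational characterization $E_{m,k}[\phi;\mu_{m,k}]\geq\lambda_{m,k}\mathcal{J}[\phi]$ (trivially $\geq 0$ for stable modes) yields
$$-(E^0_{m,k}+E^v_{m,k}) \leq -\lambda_{m,k}\,\mathcal{J}_{m,k} + \mu_{m,k}\,D_{m,k} \leq \Lambda^2\,\mathcal{J}_{m,k} + \Lambda\,D_{m,k},$$
by \eqref{biggest-mode} and $\mu_{m,k}\leq\Lambda$. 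Summing in $(m,k)$ and invoking Plancherel once more delivers \eqref{explain}.

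The principal obstacle I expect is the algebraic bookkeeping confirming that the decoupled sum in the second step really matches the ambient LHS of \eqref{explain}. The cylindrical rewriting in the proof of Lemma \ref{lem-e}—which recasts the magnetic interaction and pressure-gradient coupling into the combination $(m^2+k^2r^2)[B_\theta\eta/r+\cdots]^2 + \tfrac{m^2 B_\theta^2}{r(m^2+k^2r^2)}(\xi-r\xi')^2 + [2p'+m^2 B_\theta^2/r]\xi^2$—uses an integration by parts whose boundary contribution cancels by virtue of the equilibrium jump identity encoded in \eqref{surface-en-cyl-viscosity}, and that identity must be invoked so that the modewise identification closes without leaving uncontrolled interface terms. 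Stable modes will be handled by sending $s\to 0^+$ in the variational inequality and using continuity of $\lambda$ to conclude $E^0_{m,k}+E^v_{m,k}\geq 0$, which makes the bound vacuous on those modes.
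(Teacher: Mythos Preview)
Your proposal is correct and follows essentially the same route as the paper: Fourier decompose in $(\theta,z)$, identify the left-hand side with $-\sum_{m,k}(E^0_{m,k}+E^v_{m,k})$ via the cylindrical expressions of Lemma~\ref{lem-e}, apply the modewise variational bound $E_{m,k}(\cdot;\mu_{m,k})\ge -\mu_{m,k}^2\mathcal{J}_{m,k}$ with $s=\mu_{m,k}$, and then sum using $\mu_{m,k}\le\Lambda$. Your explicit treatment of stable modes via $s\to 0^+$ and your flagging of the interface boundary cancellation \eqref{surface-en-cyl-viscosity} are points the paper leaves implicit, so your write-up is if anything slightly more careful on those details.
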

	\begin{proof}
		Notice that $$g_r(r,\theta,z)=\sum_{m,k \in Z}\mathcal F{g_r}(r,m,k)e^{im\theta+ikz},\quad g_\theta(r,\theta,z)=\sum_{m,k \in Z}\mathcal F{g_\theta}(r,m,k)e^{im\theta+ikz},$$ $$g_z(r,\theta,z)=\sum_{m,k \in Z}\mathcal F{g_z}(r,m,k)e^{im\theta+ikz}, \quad\widehat{ Q}_r(r,\theta,z)=\sum_{m,k \in Z}\mathcal F{\widehat{ Q}_r}(r,m,k)e^{im\theta+ikz},$$
		$$\widehat{ Q}_\theta(r,\theta,z)=\sum_{m,k \in Z}\mathcal F{\widehat{ Q}_\theta}(r,m,k)e^{im\theta+ikz},\quad
		\widehat{ Q}_z(r,\theta,z)=\sum_{m,k \in Z}\mathcal F{\widehat{ Q}_z}(r,m,k)e^{im\theta+ikz}.$$	
			Define the energy $\mathcal{E}$ and the dissipation term $D$ as follows
		\begin{equation*}
		\begin{split}
		\mathcal{E}= \int_{\overline{\Omega}} \Big[|Q|^2+\gamma p|\nabla\cdot g |^2\Big]dx+\int_{\overline{\Omega}} \Big[(\nabla\times B)\cdot (g^*\times Q)+\nabla\cdot g(g^*\cdot \nabla p)\Big]dx+\int_{\overline{\Omega}^v}|\widehat{Q}|^2dx,
		\end{split}
		\end{equation*}
			\begin{equation*}
		\begin{split}
		D= \mu\int_{\overline{\Omega}} \Big[\frac{\varepsilon}{2}\Big|\nabla g+\nabla g^T-\frac{2}{3}\mbox{div}  g \, \mathbb{I}\Big|^2+\delta |\mbox{div} g|^2\Big]dx.
		\end{split}
		\end{equation*}
		 Inserting the above Fourier expansions of $g_r(r,\theta,z) $, $g_\theta(r,\theta,z)$, $g_z(r,\theta,z)$,  $\widehat{ Q}_r(r,\theta,z)$, $\widehat{ Q}_\theta(r,\theta,z)$ and $\widehat{ Q}_z(r,\theta,z)$ into the above energy $\mathcal{E}$ and dissipation term $D$, we have
$$\mathcal{E}=\sum_{m,k \in Z}\mathcal{E}_{m,k}(\xi,\eta,\zeta,\widehat{ Q}_r)=\sum_{m,k \in Z}\Big(E_{m,k}(\xi,\eta,\zeta,\widehat{ Q}_r)-D_{m,k}(\xi,\eta,\zeta)\Big),$$ $$D=\sum_{m,k \in Z}D_{m,k}(\xi,\eta,\zeta),$$  with $E_{m,k}(\xi,\eta,\zeta,\widehat{ Q}_r)$ defined in  \eqref{va-m-1-b} when $m\neq 0$ and defined in \eqref{sausage-in-v} when $m=0$, and $D_{m,k}(\xi,\eta,\zeta)$ defined in \eqref{dissipation-rate}. 

In fact, in cylindrical coordinates, we know that
\begin{equation}\label{Q}
Q=\nabla\times (g\times B)=-e_\theta(\partial_rg_r B_\theta+g_r\partial_rB_\theta+\partial_zg_zB_\theta)+e_r\frac{\partial_\theta g_r B_\theta}{r}+e_z\frac{\partial_\theta g_zB_\theta}{r},
\end{equation}
\begin{equation}\label{xi-curl-q}
(\nabla\times B)\cdot (g^*\times Q)=-\Big(B_\theta'+\frac{B_\theta}{r}\Big)g_r(\partial_rg_r B_\theta+g_r B'_{\theta}+\partial_zg_zB_\theta)-\Big(B_\theta'+\frac{B_\theta}{r}\Big)\frac{g^*_\theta\partial_\theta g_r B_\theta}{r},
\end{equation}
\begin{equation}\label{presure-div}
(g^*\cdot\nabla  p)(\nabla \cdot g )=g_r  p'\Big(\frac1r(rg_r)'+\frac{\partial_\theta g_\theta}{r}+\partial_zg_z\Big), 
\end{equation}
\begin{equation}\label{nabla-xi}
\begin{split}
\nabla g=
\left(\begin{array}{ccc}
\partial_rg_r&\partial_{r}g_{\theta}&\partial_rg_z\\
\frac{\partial_{\theta}g_r}{r}- \frac{g_\theta}{r}&\frac{\partial_{\theta}g_\theta}{r}+\frac{g_r}{r}&\frac{\partial_{\theta}g_z}{r}\\
\partial_zg_r&\partial_zg_\theta&\partial_zg_z
\end{array}
\right),
\end{split}
\end{equation}
\begin{equation}\label{div-xi}
\begin{split}
\mbox{div}  g \, \mathbb{I}=\left(\begin{array}{ccc}
\xi'+\frac{\xi}{r}+\frac{\partial_{\theta}g_\theta}{r}+\partial_zg_z&0&0\\
0&\xi'+\frac{\xi}{r}+\frac{\partial_{\theta}g_\theta}{r}+\partial_zg_z&0\\
0&0&\xi'+\frac{\xi}{r}+\frac{\partial_{\theta}g_\theta}{r}+\partial_zg_z
\end{array}
\right).
\end{split}
\end{equation}
Inserting the Fourier expansions about $g_r$, $g_\theta$ and $g_z$ into \eqref{Q}--\eqref{div-xi}, we can get
\begin{equation}\label{Q-f}
\begin{split}
Q&=\sum_{m,k \in Z}e^{im\theta+ikz}\bigg\{-e_\theta\Big(\mathcal{F}\partial_rg_r B_\theta+\mathcal{F}g_r\partial_rB_\theta+ik\mathcal{F}g_zB_\theta\Big)\\
&\quad+e_r\frac{im \mathcal{F}g_r B_\theta}{r}+e_z\frac{im\mathcal{F}g_zB_\theta}{r}\bigg\},
\end{split}
\end{equation}
\begin{equation}\label{xi-curl-q-f}
\begin{split}
(\nabla\times B)\cdot (g^*\times Q)&=-\Big(B_\theta'+\frac{B_\theta}{r}\Big)\sum_{m,k \in Z}\mathcal{F}g_r\sum_{m,k \in Z}\Big(\mathcal{F}\partial_rg_r B_\theta+\mathcal{F}g_r B'_{\theta}+ik\mathcal{F}g_zB_\theta\Big)\\
&\quad-\Big(B_\theta'+\frac{B_\theta}{r}\Big)\sum_{m,k \in Z}\mathcal{F}g_\theta^*\sum_{m,k \in Z}\frac{im\mathcal{F}g_r B_\theta}{r},
\end{split}
\end{equation}
\begin{equation}\label{presure-div-f}
(g^*\cdot\nabla  p)(\nabla \cdot g )=\sum_{m,k \in Z}\mathcal{F}g_r  p'\sum_{m,k \in Z}\Big(\frac1r(r\mathcal{F}g_r)'+\frac{im\mathcal{F}g_\theta}{r}+ik\mathcal{F}g_z\Big), 
\end{equation}
\begin{equation}\label{nabla-xi-f}
\begin{split}
\nabla g&=
\sum_{m,k \in Z}e^{im\theta+ikz}\left(\begin{array}{ccc}
\mathcal{F}\partial_rg_r&\mathcal{F}\partial_{r}g_{\theta}&\mathcal{F}\partial_rg_z\\
\frac{im\mathcal{F}g_r}{r}- \frac{\mathcal{F}g_\theta}{r}&\frac{im\mathcal{F}g_\theta}{r}+\frac{\mathcal{F}g_r}{r}&\frac{im\mathcal{F}g_z}{r}\\
ik\mathcal{F}g_r&ik\mathcal{F}g_\theta&ik\mathcal{F}g_z
\end{array}
\right),
\end{split}
\end{equation}
\begin{equation}\label{div-xi-f}
\begin{split}
\mbox{div}  g \, \mathbb{I}=\sum_{m,k \in Z}e^{im\theta+ikz}\left(\begin{array}{ccc}
a&0&0\\
0&a&0\\
0&0&a
\end{array}
\right),
\end{split}
\end{equation}
with $a=\mathcal{F}\xi'+\frac{\mathcal{F}\xi}{r}+\frac{im\mathcal{F}g_\theta}{r}
+ik\mathcal{F}g_z$.

On the other hand,
\begin{equation}\label{div-Q}
\nabla\cdot \widehat{ Q}=\widehat{ Q}_r'+\frac{\widehat{ Q}_r}{r}+\frac{\partial_{\theta}\widehat{ Q}_\theta}{r}+\partial_z\widehat{ Q}_z=0,
\end{equation}
\begin{equation}\label{curl-q}
\nabla\times \widehat{ Q}=e_z\Big(\partial_r\widehat{ Q}_\theta-\frac{\partial_{\theta}\widehat{Q}_r}{r}+\frac{\widehat{ Q}_\theta}{r}\Big)+e_\theta(\partial_z\widehat{ Q}_r-\partial_r\widehat{ Q}_z)+e_r\Big(\frac{\partial_{\theta}\widehat{Q}_z}{r}-\partial_z\widehat{Q}_\theta\Big).
\end{equation}
Inserting the Fourier expansions about $\widehat{ Q}_r$, $\widehat{Q}_\theta$ and $\widehat{Q}_z$ into \eqref{div-Q} and \eqref{curl-q},  we obtain
\begin{equation}\label{div-Q-f}
\nabla\cdot \widehat{ Q}=\sum_{m,k \in Z}e^{im\theta+ikz}\bigg(\mathcal{F}\widehat{ Q}_r'+\frac{\mathcal{F}\widehat{ Q}_r}{r}+\frac{im\mathcal{F}\widehat{ Q}_\theta}{r}+ik\mathcal{F}\widehat{ Q}_z\bigg)=0,
\end{equation}
\begin{equation}\label{curl-q-f}
\begin{split}
\nabla\times \widehat{ Q}&=e_z\sum_{m,k \in Z}e^{im\theta+ikz}\Big(\mathcal{F}\partial_r\widehat{ Q}_\theta-\frac{im\mathcal{F}\widehat{Q}_r}{r}+\frac{\mathcal{F}\widehat{ Q}_\theta}{r}\Big)\\
&\quad+e_\theta\sum_{m,k \in Z}e^{im\theta+ikz}(ik\mathcal{F}\widehat{ Q}_r-\mathcal{F}\partial_r\widehat{ Q}_z)\\
&\quad+e_r\sum_{m,k \in Z}e^{im\theta+ikz}\Big(\frac{im\mathcal{F}\widehat{Q}_z}{r}
-ik\mathcal{F}\widehat{Q}_\theta\Big)=0.
\end{split}
\end{equation}
From \eqref{div-Q-f}, it follows that
\begin{equation}
\mathcal{F}\widehat{ Q}_z=\frac{i(r\mathcal{F}\widehat{ Q}_r)'-m\mathcal{F}\widehat{ Q}_\theta}{kr},
\end{equation}
which together with $\frac{im\mathcal{F}\widehat{Q}_z}{r}
-ik\mathcal{F}\widehat{Q}_\theta=0$, gives that 
\begin{equation}
\mathcal{F}\widehat{ Q}_{\theta}=\frac{im}{m^2+k^2r^2}(r\mathcal{F}\widehat{ Q}_r)', \quad \mathcal{F}\widehat{ Q}_{z}=\frac{ikr}{m^2+k^2r^2}(r\mathcal{F}\widehat{ Q}_r)'.
\end{equation}
Hence, we can show that
\begin{equation}
\begin{split}
\int_{\overline{\Omega}^v}|\widehat{ Q}|^2dx&=\int_0^{2\pi}\int_0^{2\pi }\int_{r_0}^{r_w}\bigg[|\widehat{ Q}_r|^2+|\widehat{ Q}_\theta|^2+|\widehat{ Q}_z|^2\bigg]rdrd\theta dz\\
&=4\pi^2\sum_{m,k \in Z}\int_{r_0}^{r_w}\bigg[|\mathcal{F}\widehat{ Q}_r|^2+|\mathcal{F}\widehat{ Q}_\theta|^2+|\mathcal{F}\widehat{ Q}_z|^2\bigg]rdr
\\
&=4\pi^2\sum_{m,k \in Z}\int_{r_0}^{r_w}\bigg[|\mathcal{F}\widehat{ Q}_r|^2r+\frac{1}{m^2+k^2r^2}|(r\mathcal{F}\widehat{Q}_r)'|^2\bigg]r
dr.
\end{split}
\end{equation}
Inserting \eqref{Q-f}-\eqref{div-xi-f} into the energy $E$ and dissipation term $D$, and at the same time, letting $$\xi=\mathcal{F}g_r,\quad \eta=-i\mathcal{F}g_z, \quad \zeta=i\mathcal{F}g_\theta,\quad i\widehat{ Q}_r=\mathcal{F}\widehat{Q}_r,$$ 
we can show that 
$$\mathcal{E}=\sum_{m,k \in Z}\mathcal{E}_{m,k}(\xi,\eta,\zeta,\widehat{Q}_r), \quad  D=\sum_{m,k \in Z}D_{m,k}(\xi,\eta,\zeta).$$ Here we have used the facts that $\xi$, $\eta$ and $\zeta$ are real-valued functions.

 From  Proposition \ref{growing-mode-b2}, we know that there exists the biggest growing mode for any $(m,k)\in \mathbb{Z}\times \mathbb{Z}$. Denoting the biggest growing mode as $\Lambda$, which can be found in \eqref{biggest-mode},  then we get that  $$-E_{m,k}(\xi,\eta,\zeta,\widehat{ Q}_r)\leq \Lambda^2J_{m,k} (\xi, \eta,\zeta).$$
Therefore, letting $\xi=\mathcal{F}g_r$, $\eta=-i\mathcal{F}g_z$,  $\zeta=i\mathcal{F}g_\theta$, $ i\widehat{ Q}_r=\mathcal{F}\widehat{Q}_r,$ using \eqref{Q-f}--\eqref{presure-div-f},
 we can show by Lemma \ref{estimate4-lem}  that
	\begin{equation*}
		\begin{split}
		&-\int_{\overline{\Omega}} \Big[|Q|^2+\gamma p|\nabla\cdot g |^2\Big]dx-\int_{
		\overline{\Omega}} \Big[(\nabla\times B)\cdot (g^*\times Q)+\nabla\cdot g(g^*\cdot \nabla p)\Big]dx-\int_{\overline{\Omega}^v}|\widehat{Q}|^2dx\\
		&\quad =-4\pi^2\sum_{m,k \in Z}\int_0^{r_0}\Big\{(m^2+k^2r^2)\Big[\frac{B_\theta}{r}\eta+\frac{-kB_\theta(r\xi)'+2kB_{\theta}\xi}{m^2+k^2r^2}\Big]^2+\gamma p\Big[\frac{1}{r}(r\xi)'-k\eta+\frac{m\zeta}{r}\Big]^2\Big\}rdr\\
		&\qquad-4\pi^2\sum_{m,k \in Z}\int_0^{r_0}\frac{m^2B_\theta^2}{r(m^2+k^2r^2)}(\xi-r\xi')^2dr-2\pi 	L\sum_{m,k \in Z}\int_0^{r_0}\Big[2p'+\frac{m^2B_{\theta}^2}{r}\Big]\xi^2dr\\
		&\qquad-4\pi^2\sum_{m,k \in Z}\int_{r_0}^{r_w}\bigg[|\widehat{Q}_r|^2+\frac{1}{m^2+k^2r^2}|(r\widehat{Q}_r)'|^2\bigg]rdr\\
		&\quad \leq 4\Lambda^2\pi^2\sum_{m,k \in Z}\int_0^{r_0} \rho (|\xi|^2+|\eta|^2+|\zeta|^2)rdr+4\Lambda\pi^2\sum_{m,k \in Z}\int_0^{r_0}\varepsilon\Big[\frac{2}{9}\Big(-2\xi'+\frac{\xi}{r}+\frac{m}{r}\zeta-k\eta\Big)^2\\
		&\qquad+\frac{2}{9}\Big(\xi'-\frac{2\xi}{r}-\frac{2m}{r}\zeta-k\eta\Big)^2+\frac{2}{9}\Big(\xi'+\frac{\xi}{r}+\frac{m}{r}\zeta+2k\eta\Big)^2+\Big(-\zeta'+\frac{\zeta}{r}+\frac{m}{r}\xi\Big)^2\\
		&\qquad+(\eta'
		+k\xi)^2+\Big(\frac{m}{r}\eta-k\zeta\Big)^2\Big]rdr+4\Lambda\pi^2\sum_{m,k \in Z}\int_0^{r_0}\delta\Big(\xi'+\frac{\xi}{r}+\frac{m}{r}\zeta-k\eta\Big)^2rdr
		\\
		&\quad= \Lambda^2 \|\sqrt{\rho}g\|^2_{L^2}+\Lambda\int_{\overline{\Omega}} \Big[\frac{\varepsilon}{2}\Big|\nabla g+\nabla g^T-\frac{2}{3}\mbox{div}
		g \, \mathbb{I}\Big|^2+\delta |\mbox{div} g|^2\Big]dx.
	\end{split}
		\end{equation*}	
	\end{proof}	
	Similar to the proof of Theorem \ref{thm-growth}, by Lemma \ref{vec-xi-grow-lem} and \ref{vec-xi-grow-energy}, we can get the following result.
	\begin{thm}\label{thm-growth-general}
	Assume $g$ is a $H^2$ solution  to the system \eqref{linear-perturbation-and-boundary-viscosity} with the corresponding boundary conditions, then it holds that 
		\begin{equation}
		\begin{split}
		&\|g_t\|_1^2+\|g_t\|_2^2+\|\partial_{tt}g(t)\|_1^2\\
		&\quad\leq Ce^{2\Lambda t} \Big(\|g_t(0)\|_1^2+\|g_t(0)\|_2^2
		+\|\partial_{tt}g(0)\|_1^2+\|\widehat{ Q}_{t}(0)\|_{L^2}^2\Big).
		\end{split}
		\end{equation}
	\end{thm}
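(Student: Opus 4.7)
The plan is to mirror the argument of Theorem \ref{thm-growth} in the original Eulerian variables, replacing Lemma \ref{estimate3-lem} by Lemma \ref{vec-xi-grow-lem} and Lemma \ref{estimate4-lem} by Lemma \ref{vec-xi-grow-energy}. The reduction to individual Fourier modes in the previous proof was carried out essentially to control one mode at a time; since the Fourier decomposition only shuffles the energy identity and the coercivity inequality into summands $(m,k)$, the same manipulations go through verbatim on the full energies, provided every inequality used is uniform in $(m,k)$ — which is exactly the content of Lemma \ref{vec-xi-grow-energy}, built upon \eqref{biggest-mode}.

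First I would integrate the identity of Lemma \ref{vec-xi-grow-lem} from $0$ to $t$ to obtain
\begin{equation*}
\tfrac12\|\sqrt\rho g_{tt}(t)\|_{L^2}^2 + \int_0^t\!\!\int_{\overline\Omega}\!\Bigl[\tfrac{\varepsilon}{2}\bigl|\nabla g_{tt}+\nabla g_{tt}^T-\tfrac23 \mbox{div}\, g_{tt}\,\mathbb{I}\bigr|^2 + \delta|\mbox{div}\, g_{tt}|^2\Bigr]\,dxds + \tfrac12\,\mathcal E(g_t,\widehat Q_t)(t) = N_0,
\end{equation*}
where $\mathcal E(g_t,\widehat Q_t)$ denotes the potential-energy expression appearing on the left of \eqref{explain} evaluated on $(g_t,\widehat Q_t)$, and $N_0$ collects the corresponding quantities at $t=0$. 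Lemma \ref{vec-xi-grow-energy} applied to $g_t$ (and its vacuum extension $\widehat Q_t$) then furnishes the coercivity bound
\begin{equation*}
-\mathcal E(g_t,\widehat Q_t) \leq \Lambda^2 \|g_t\|_1^2 + \Lambda \|g_t\|_2^2,
\end{equation*}
which, substituted above, yields
\begin{equation*}
\tfrac12\|\partial_{tt} g(t)\|_1^2 + \int_0^t \|\partial_{tt} g\|_2^2\, ds \;\leq\; N_0 + \tfrac{\Lambda^2}{2}\|g_t\|_1^2 + \tfrac{\Lambda}{2}\|g_t\|_2^2.
\end{equation*}

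Next I would reproduce the Cauchy/Gronwall step of Theorem \ref{thm-growth}: using
\begin{equation*}
\Lambda \|g_t(t)\|_2^2 \leq \Lambda\|g_t(0)\|_2^2 + \int_0^t\|\partial_{tt} g\|_2^2\, ds + \Lambda^2\int_0^t\|g_t\|_2^2\, ds,
\end{equation*}
and $\Lambda\,\partial_t\|g_t\|_1^2 \leq \Lambda^2\|g_t\|_1^2 + \|\partial_{tt} g\|_1^2$, one derives the differential inequality
\begin{equation*}
\partial_t\|g_t\|_1^2 + \|g_t\|_2^2 \leq \tfrac{2N_0}{\Lambda} + 2\|g_t(0)\|_2^2 + 2\Lambda\|g_t\|_1^2 + 2\Lambda\int_0^t \|g_t\|_2^2\,ds.
\end{equation*}
Gronwall then produces the $e^{2\Lambda t}$ bound on $\|g_t\|_1^2 + \int_0^t\|g_t\|_2^2\,ds$, and feeding this back into the intermediate inequality controls $\|\partial_{tt}g\|_1^2$ and $\|g_t\|_2^2$ by the same exponential factor. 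Finally, I would bound $N_0$ directly from its explicit expression in terms of the initial data, giving $N_0 \leq C\bigl(\|g_t(0)\|_1^2 + \|g_t(0)\|_2^2 + \|\partial_{tt}g(0)\|_1^2 + \|\widehat Q_t(0)\|_{L^2}^2\bigr)$.

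The main obstacle is not any new analytic ingredient but the careful verification that the Fourier-side inequalities translate into their physical-space counterparts with the \emph{same} supremal growth rate $\Lambda$ defined by \eqref{biggest-mode}; this is precisely where Lemma \ref{vec-xi-grow-energy} is needed, since it packages the Parseval-type identification of $\mathcal E$ with $\sum_{m,k} E_{m,k}$ together with the modewise bound $-E_{m,k}\leq \Lambda^2 \mathcal J_{m,k} + \Lambda D^{1}_{m,k}$ inherited from Proposition \ref{growing-mode-b2}. Once that coercivity is in hand, the rest of the argument is the mechanical Gronwall closure already executed in Theorem \ref{thm-growth}.
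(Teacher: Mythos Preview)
Your proposal is correct and follows essentially the same approach as the paper: integrate Lemma \ref{vec-xi-grow-lem} in time, apply the coercivity bound of Lemma \ref{vec-xi-grow-energy}, then run the same Cauchy/Gronwall closure as in Theorem \ref{thm-growth} to obtain the $e^{2\Lambda t}$ growth, finishing by bounding the initial-data constant (which the paper calls $L_0$ rather than $N_0$). Your remark that Lemma \ref{vec-xi-grow-energy} is precisely what makes the modewise bound uniform in $(m,k)$ and hence summable is exactly the point the paper is using.
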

	\begin{proof}
		Integrating \eqref{vec-xi-grow} of Lemma \ref{vec-xi-grow-lem} about time over $[0,t]$, we have
		\begin{equation}
		\begin{split}
		& \frac{1}{2}\|\sqrt{\rho}g_{tt}\|^2_{L^2}=L_0-\frac{1}{2}\int_{\overline{\Omega}^v}|\widehat{Q}_t|^2dx-\frac{1}{2}\int_{\overline{\Omega}} \Big[|Q_t|^2+\gamma p|\nabla\cdot g_t |^2\Big]dx\\
		&\quad-\frac{1}{2}\int_{\overline{\Omega}} \Big[(\nabla\times B)\cdot (g^*_t\times Q_t)+\nabla\cdot g_t(g^*_t\cdot \nabla p)\Big]dx\\
		&\quad-\int_0^t\int_{\overline{\Omega}} \Big[\frac{\varepsilon}{2}\Big|\nabla g_{tt}+\nabla g_{tt}^T-\frac{2}{3}\mbox{div}  g_{tt} \, \mathbb{I}\Big|^2+\delta |\mbox{div} g_{tt}|^2\Big]dxds,
		\end{split}
		\end{equation}
		with
		\begin{equation}\label{L-0}
		\begin{split}
		L_0&=\frac{1}{2}\|\sqrt{\rho}g_{tt}(0)\|^2_{L^2}+\frac{1}{2}\int_{\overline{\Omega}^v}|\widehat{Q}_t|^2(0)dx+\frac{1}{2}\int_{\overline{\Omega}} \Big[|Q_t|^2(0)+\gamma p|\nabla\cdot g_t |^2(0)\Big]dx\\
		&\quad+\frac{1}{2}\int_{\overline{\Omega}} \Big[(\nabla\times B)\cdot (g^*_t\times Q_t)(0)+\nabla\cdot g_t(g^*_t\cdot \nabla p)(0)\Big]dx.
		\end{split}
		\end{equation} 
		Using Lemma \ref{vec-xi-grow-energy}, we deduce 
		\begin{equation}\label{}
		\begin{split}
		& \frac{1}{2}\|\sqrt{\rho}g_{tt}\|^2_{L^2}+\int_0^t\int_{\overline{\Omega}} \Big[\frac{\varepsilon}{2}\Big|\nabla g_{tt}+\nabla g_{tt}^T-\frac{2}{3}\mbox{div}  g_{tt} \, \mathbb{I}\Big|^2+\delta |\mbox{div} g_{tt}|^2\Big]dxds\\
		&=L_0-\frac{1}{2}\int_{\overline{\Omega}^v}|\widehat{Q}_t|^2dx-\frac{1}{2}\int_{\overline{\Omega}} \Big[|Q_t|^2+\gamma p|\nabla\cdot g_t |^2\Big]dx\\
		&\quad-\frac{1}{2}\int_{\overline{\Omega}} \Big[(\nabla\times B)\cdot (g^*_t\times Q_t)+\nabla\cdot g_t(g^*_t\cdot \nabla p)\Big]dx\\
		&\leq L_0+\frac{\Lambda^2}{2} \|\sqrt{\rho}g_t\|^2_{L^2}+\frac{\Lambda}{2}\int_{\overline{\Omega}} \Big[\frac{\varepsilon}{2}\Big|\nabla g_t+\nabla g_t^T-\frac{2}{3}\mbox{div}  g_t \, \mathbb{I}\Big|^2+\delta |\mbox{div} g_t|^2\Big]dx.
		\end{split}
		\end{equation}
		Using the definitions of the norms $\|\cdot \|_1$ and $\|\cdot\|_2$ given by \eqref{norm-1-xi}-\eqref{norm-2-xi} in the introducntion, we obtain
		\begin{equation}\label{growth-1-g}
		\begin{split}
		\frac{1}{2}\|\partial_{tt}g\|_1^2+\int_0^t\|\partial_{tt}g\|_2^2ds\leq L_0+\frac{\Lambda^2}{2}\|g_t\|_1^2+\frac{\Lambda}{2}\|g_t\|_2^2.
		\end{split}
		\end{equation}
		Integrating in time and using Cauchy's inequality, we get
		\begin{equation}\label{growth-2-g}
		\begin{split}
		\Lambda\|g_t\|_2^2&=\Lambda\|g_t(0)\|_2^2+\Lambda\int_0^t2<g_t, \partial_{tt}g>_2ds\\&\leq \Lambda\|g_t(0)\|_2^2+\int_0^t\|\partial_{tt}g\|_2^2ds
		+\Lambda^2\int_0^t\|g_t\|_2^2ds,
		\end{split}
		\end{equation}
		\begin{equation}\label{growth-3-g}
		\Lambda\partial_t\|g_t\|_1^2=2\Lambda<\partial_{tt}g,\partial_{t}g>_1\leq \Lambda^2\|g_t\|_1^2+\|\partial_{tt}g\|_1^2.
		\end{equation}
		Combining estimates \eqref{growth-1-g}, \eqref{growth-2-g}  and \eqref{growth-3-g}, we obtain
		\begin{equation}
		\begin{split}
		\partial_t\|g_t\|_1^2+\|g_t\|_2^2&\leq  \|g_t(0)\|_2^2+\frac{2}{\Lambda}\int_0^t\|\partial_{tt}g\|_2^2ds
		+\Lambda\int_0^t\|g_t\|_2^2ds+\Lambda\|g_t\|_1^2
		\\
		&\quad+\frac{1}{\Lambda}\|\partial_{tt}g\|_1^2-\frac{1}{\Lambda}\int_0^t\|\partial_{tt}g\|_2^2ds\\
		&\leq \frac{2L_0}{\Lambda}+\Lambda\|g_t\|_1^2+\|g_t\|_2^2+\|g_t(0)\|_2^2
		+\Lambda\int_0^t\|g_t\|_2^2ds
		+\Lambda\|g_t\|_1^2\\
		&\quad-\frac{1}{\Lambda}\int_0^t\|\partial_{tt}g\|_2^2ds\\
		&\leq \frac{2L_0}{\Lambda}+2\|g_t(0)\|_2^2+2\Lambda\|g_t\|_1^2
		+2\Lambda\int_0^t\|g_t\|_2^2ds,
		\end{split}
		\end{equation} 
		where we have used the facts
		\begin{equation}
		\|g_t\|_2^2\leq \|g_t(0)\|_2^2+\frac{1}{\Lambda}\int_0^t\|\partial_{tt}g\|_2^2ds
		+\Lambda\int_0^t\|g_t\|_2^2ds.
		\end{equation}
		It follows from the Gronwall's inequality that 
		\begin{equation}
		\begin{split}
		\|g_t\|_1^2+\int_0^t\|g_t\|_2^2ds\leq e^{2\Lambda t}\|g_t(0)\|_1^2+\Big(\frac{L_0}{\Lambda^2}
		+\frac{\|g_t(0)\|_2^2}{\Lambda}\Big)(e^{2\Lambda t}-1).
		\end{split}
		\end{equation} 
		From \eqref{growth-1-g}, it follows that
		\begin{equation}
		\begin{split}
		\frac{1}{\Lambda}\|\partial_{tt}g\|_1^2+\frac{2}{\Lambda}\int_0^t\|\partial_{tt}g\|_2^2ds\leq \frac{2L_0}{\Lambda}+\Lambda\|g_t\|_1^2+\|g_t\|_2^2.
		\end{split}
		\end{equation}
		Therefore, 
		\begin{equation}
		\begin{split}
		\frac{1}{\Lambda}\|\partial_{tt}g\|_1^2+\|g_t\|_2^2\leq -\frac{2}{\Lambda}\int_0^t\|\partial_{tt}g\|_2^2ds+ \frac{2L_0}{\Lambda}+\Lambda\|g_t\|_1^2+2\|g_t\|_2^2,
		\end{split}
		\end{equation}
		which together with \eqref{growth-2-g} gives that
		\begin{equation}
		\begin{split}
		\frac{1}{\Lambda}\|\partial_{tt}g\|_1^2+\|g_t\|_2^2&\leq -\frac{2}{\Lambda}\int_0^t\|\partial_{tt}g\|_2^2ds+ \frac{2L_0}{\Lambda}+\Lambda\|g_t\|_1^2+2\|g_t(0)\|_2^2\\
		&\quad+\frac{2}{\Lambda}\int_0^t\|\partial_{tt}g\|_2^2ds+2\Lambda\int_0^t\|g_t\|_2^2ds\\
		&= \frac{2L_0}{\Lambda}+\Lambda\|g_t\|_1^2+2\|g_t(0)\|_2^2
		+2\Lambda\int_0^t\|g_t\|_2^2ds\\
		&\leq e^{2\Lambda t}\Big(2\Lambda\|g_t(0)\|_1^2+\frac{2L_0}{\Lambda}+2\|g_t(0)\|_2^2\Big).
		\end{split}
		\end{equation}
		From the definition of $L_0$  in \eqref{L-0}, we can get 
		\begin{equation}
		\begin{split}
		L_0\leq C\Big(\|g_t(0)\|_1^2+\|g_t(0)\|_2^2+\|\partial_{tt}g(0)\|_1^2+\|\widehat{ Q}_{t}(0)\|_{L^2}^2\Big),
		\end{split}
		\end{equation}
		which implies the result.
	\end{proof}
\section*{Acknowledgments}
{D. Bian is supported by an NSFC Grant (11871005).  Y. Guo  is supported  by an NSF Grant (DMS \#1810868).  
	I. Tice is supported by an NSF CAREER Grant (DMS \#1653161).

\end{document}